\theoremstyle{plain}
\newtheorem{theorem}{Theorem}[section]
\newtheorem{lemma}[theorem]{Lemma}
\newtheorem{corollary}[theorem]{Corollary}
\newtheorem{proposition}[theorem]{Proposition}
\newtheorem*{claim*}{Claim}
\newtheorem*{subclaim*}{Subclaim}
\theoremstyle{definition}
\newtheorem{definition}[theorem]{Definition}
\newtheorem{question}[theorem]{Question}
\newcommand{\betrag}[1]{\vert{#1}\vert}
\newcommand{\dom}[1]{{{\rm{dom}}(#1)}}
\newcommand{\crit}[1]{{{\rm{crit}}\left({#1}\right)}}
\newcommand{\cof}[1]{{{\rm{cof}}(#1)}}
\newcommand{\otp}[1]{{{\rm{otp}}\left(#1\right)}}
\newcommand{\ran}[1]{{{\rm{ran}}(#1)}}
\newcommand{\Hull}{{{\rm{Hull}}}}
\newcommand{\length}[1]{{\rm{lh}}({#1})}
\newcommand{\POT}[1]{{\mathcal{P}}({#1})}
\newcommand{\map}[3]{{#1}:{#2}\longrightarrow{#3}}
\newcommand{\Map}[5]{{#1}:{#2}\longrightarrow{#3};~{#4}\longmapsto{#5}}
\newcommand{\Set}[2]{\{{#1}~\vert~{#2}\}}
\newcommand{\seq}[2]{\langle{#1}~\vert~{#2}\rangle}
\newcommand{\goedel}[2]{{\prec}{#1},{#2}{\succ}}
\newcommand{\anf}[1]{{\text{``}\hspace{0.3ex}{#1}\hspace{0.3ex}\text{''}}}
\newcommand{\HH}[1]{{\rm{H}}(#1)}
\newcommand{\Ult}[2]{{\mathrm{Ult}}({#1},{#2})}
\newcommand{\Add}[2]{{\rm{Add}}({#1},{#2})}
\newcommand{\Col}[2]{{\rm{Col}}({#1},{#2})}
\newcommand{\id}{{\rm{id}}}
\newcommand{\Lim}{{\rm{Lim}}}
\newcommand{\On}{{\rm{On}}}
\newcommand{\Reg}{\mathrm{Reg}}
\newcommand{\LL}{{\rm{L}}}
\newcommand{\ZF}{{\rm{ZF}}}
\newcommand{\ZFC}{{\rm{ZFC}}}
\newcommand{\AD}{{\rm{AD}}}
\newcommand{\DC}{{\rm{DC}}}
\newcommand{\PFA}{{\rm{PFA}}}
\newcommand{\BPFA}{{\rm{BPFA}}}
\newcommand{\can}{\text{can}}
\newcommand{\OD}{{\rm{OD}}}
\newcommand{\HOD}{{\rm{HOD}}}
\newcommand{\PPP}{{\mathbb{P}}}
\newcommand{\RRR}{{\mathbb{R}}}
\newcommand{\KK}{{\rm{K}}}
\newcommand{\VV}{{\rm{V}}}
\newcommand{\calE}{\mathcal{E}}
\newcommand{\calG}{\mathcal{G}}
\newcommand{\calO}{\mathcal{O}}
\newcommand{\calR}{\mathcal{R}}
\newcommand{\calW}{\mathcal{W}}
 \newcommand{\CH}{{\rm{CH}}}
 \newcommand{\GCH}{{\rm{GCH}}}
\title[$\Sigma_1$-definability at higher cardinals]{$\mathbf{\Sigma}_1$-definability at higher cardinals: Thin sets, almost disjoint families and long well-orders}
\author{Philipp L\"ucke}
\address{Departament de Matem\`atiques i Inform\`atica, Universitat de Barcelona. Gran Via de les Corts Catalanes, 585, 08007 Barcelona, Spain.}
\email{philipp.luecke@ub.edu}
\author{Sandra M\"uller}
\address{Institut f\"ur Diskrete Mathematik und Geometrie, TU Wien, Wiedner Hauptstra{\ss}e 8-10/104, 1040 Wien, Austria.}
\email{sandra.mueller@tuwien.ac.at}
\thanks{The authors would like to thank William Chan for the permission to include his proof of Theorem \ref{theorem:AD-MAD-omega_1} in this paper. 
 In addition, the authors are thankful to the anonymous referees for the careful reading of the manuscript and several helpful comments. 
   This project has received funding from the European Union’s Horizon 2020 research and innovation programme under the Marie Sk{\l}odowska-Curie grant agreement No 842082 of the first author (Project \emph{SAIFIA: Strong Axioms of Infinity -- Frameworks, Interactions and Applications}). 
   The second author gratefully acknowledges funding from L'OR\'{E}AL Austria, in collaboration with the Austrian UNESCO Commission and in cooperation with the Austrian Academy of Sciences - Fellowship \emph{Determinacy and Large Cardinals}.
   Furthermore, the second author was supported by FWF Elise Richter grant number V844.  
}
\subjclass[2020]{03E47; 03E35, 03E45, 03E55} 
\keywords{$\Sigma_1$-definability, large cardinals, iterated ultrapowers, perfect subsets, almost disjoint families, definable well-orders}
\begin{document}

\begin{abstract}
 Given an uncountable cardinal $\kappa$, we consider the question of whether subsets of the power set of $\kappa$ that are usually constructed with the help of the Axiom of Choice are definable by $\Sigma_1$-formulas that only use the cardinal $\kappa$ and sets of hereditary cardinality less than $\kappa$ as parameters. 
 For  limits of measurable cardinals, we prove a \emph{perfect set theorem} for sets definable in this way and use it to  generalize two classical non-definability results to higher cardinals. 
 First, we show that a classical result of Mathias on the complexity of  maximal almost disjoint families of sets of natural numbers can be generalized to measurable  limits of measurables. 
 Second, we prove that for a limit of countably many measurable cardinals, the existence of a simply definable well-ordering of subsets of $\kappa$ of length at least $\kappa^+$ implies the existence of a projective well-ordering of the reals. 
 In addition, we determine the exact consistency strength of the non-existence of $\Sigma_1$-definitions of certain  objects at singular strong limit cardinals. 
 Finally, we show that both  large cardinal assumptions and forcing axioms cause analogs of these statements to hold at the first uncountable cardinal $\omega_1$. 
\end{abstract}

\maketitle



\section{Introduction}

Mathematical objects whose existence are usually proved with the \emph{Axiom of Choice} are often referred to as \emph{pathological sets}.  
 Important examples of such objects are Hamel bases of the vector space of real numbers over the field of rational numbers, non-principal ultrafilters on infinite sets and bistationary ({i.e.} stationary and costationary) subsets of uncountable regular cardinals. 
For many types of pathological sets of real numbers, it is possible to use results from descriptive set theory to show that these objects 
cannot be defined by simple formulas in second-order arithmetic. 
Moreover, many canonical extensions of the axioms of $\ZFC$ prove that these objects are not definable in second-order arithmetic at all and this implication is often viewed as a desirable feature of such extensions, because it allows us to clearly separate pathological sets of real numbers from the  explicitly constructed sets of reals.

In this paper, we study  the \emph{set-theoretic} definability of pathological sets of higher cardinalities. 
More specifically, we aim to generalize classical non-definability results for sets of real numbers to subsets of the power set $\POT{\kappa}$ of an uncountable cardinal $\kappa$ that are definable by $\Sigma_1$-formulas\footnote{See {\cite[p. 5]{MR1994835}} for the definition of the \emph{Levy hierarchy} of formulas.} with parameters in $\HH{\kappa}\cup\{\kappa\}$. 
This bound on the  complexity of the used formulas is motivated by the observation that the assumption $\VV=\HOD$ implies the $\Sigma_2$-definability of various pathological sets (see {\cite[Proposition 3.9]{Sigma1Partitions}}) and this assumption is compatible with many canonical extensions of $\ZFC$. 
The restriction of the  set of parameters is motivated by the existence of highly potent coding forcings at uncountable regular cardinals $\kappa$ that can be used to make highly pathological subsets of $\POT{\kappa}$ definable by a $\Sigma_1$-formula with parameters in $\HH{\kappa^+}$. 
For example, the results of {\cite[Section 3]{MR2987148}} show that, if $\kappa$ is an uncountable cardinal satisfying $\kappa=\kappa^{{<}\kappa}$ and $A$ is a subset $\POT{\kappa}$, then, in some cofinality-preserving forcing extension $\VV[G]$  of the ground model $\VV$, the sets $A$ and  $\POT{\kappa}^{\VV[G]}\setminus A$ are definable by $\Sigma_1$-formulas with parameters in $\HH{\kappa^+}$. 
Moreover, the main result of \cite{HL} shows that for every cardinal $\kappa$ with these properties, there is a cofinality-preserving forcing extension in which a well-ordering of $\POT{\kappa}$ is definable by a $\Sigma_1$-formula with parameters in $\HH{\kappa^+}$, and the results of \cite{MR3320593} show that various large cardinal properties of $\kappa$ can be preserved by such coding forcings. 
Finally, results of Caicedo and Veli{\v{c}}kovi{\'c} in \cite{MR2231126} show that the \emph{Bounded Proper Forcing Axiom $\BPFA$} outright implies the existence of a well-ordering of $\POT{\omega_1}$ that is definable by a $\Sigma_1$-formula with parameters in $\HH{\omega_2}$ (see the proof of Proposition \ref{proposition:SimplyDefOmega2} below).

 Previous work in this direction (see  \cite{Sigma1Partitions}, \cite{MR3694344} and \cite{welch_2020}) has already provided important examples that show that we can achieve the above aim when we work in one of the following scenarios: 
 \begin{itemize}
     \item The cardinal $\kappa$ is a limit of  cardinals possessing certain large cardinal properties, like measurability. 
     
     \item The cardinal $\kappa$ is the first uncountable cardinal $\omega_1$ and either certain large cardinals exist above $\kappa$ or strong forcing axioms hold. 
 \end{itemize}
 
In the following, we will derive structural results for simply definable sets  that will allow us to prove the non-definability of  several types of pathological sets in the above settings. These implications can again be seen as desirable features of the corresponding axiom systems. Moreover, for most of our results about  singular limits of large cardinals, we prove that the used large cardinal assumption is optimal for the corresponding non-definability statement at singular cardinals.

The starting point of our work is a \emph{perfect set theorem} for $\Sigma_1$-definable sets at limits of measurable cardinals. 
 In order to formulate this result, we generalize some basic topological concepts to higher function spaces and power sets. 
 Given a cardinal $\kappa>0$ and an infinite cardinal $\mu$, we equip the set ${}^\mu\kappa$ of all functions from $\mu$ to $\kappa$ with the topology whose basic open sets consists of all functions that extend a given function $\map{s}{\xi}{\kappa}$ with $\xi<\mu$. 
 In the same way, we equip the power set $\POT{\nu}$ of an infinite cardinal $\nu$ with the topology whose basic open sets consists of all subsets of $\nu$ whose intersection with a given ordinal $\eta<\nu$ is equal to a fixed subset of $\eta$. 
 We then say that an injection $\map{\iota}{{}^\mu\kappa}{\POT{\nu}}$ is a \emph{perfect embedding} if it induces a homeomorphism between ${}^\mu\kappa$ and the subspace $\ran{\iota}$ of $\POT{\nu}$.  
 The following result now shows that, analogously to the perfect set property of analytic sets of reals, simply definable \emph{thin} sets of subsets of limits of measurable cardinals have small cardinality.

\begin{theorem}\label{theorem:PerfectSubset}
 Let $\kappa$ be a limit of measurable cardinals and let $D$ be a subset of $\POT{\kappa}$ that is definable by a $\Sigma_1$-formula with parameters in $\HH{\kappa}\cup\{\kappa\}$.  
 If $D$ has cardinality greater than $\kappa$, then there is a perfect embedding $\map{\iota}{{}^{\cof{\kappa}}\kappa}{\POT{\kappa}}$ with $\ran{\iota}\subseteq D$. 
\end{theorem}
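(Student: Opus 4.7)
My plan is to construct the perfect embedding by iterating normal measures at a cofinal sequence of measurable cardinals below $\kappa$, exploiting the $\Sigma_1$-persistence of membership in $D$. The strategy parallels the Mansfield--Solovay tree argument for $\boldsymbol{\Sigma}^1_2$ sets of reals, but the tree will now have height $\cof{\kappa}$ and width $\kappa$ at each level. I will fix a $\Sigma_1$-formula $\varphi$ and a parameter $p \in \HH{\kappa}$ with $D = \Set{x \in \POT{\kappa}}{\varphi(x, p, \kappa)}$, an increasing cofinal sequence $\seq{\mu_n}{n < \cof{\kappa}}$ of measurable cardinals with $\rank{p} < \mu_0$, and a normal measure $U_n$ on each $\mu_n$.

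First, I would extract a ``movable'' witness. Since $\varphi$ is $\Sigma_1$, each $x \in D$ is witnessed by a transitive set of cardinality at most $\kappa$, but any single such set contains only $\kappa$ subsets of $\kappa$. The assumption $|D| > \kappa$ will therefore yield some $x_0 \in D$ together with a witness $M_0$ such that $x_0$ is genuinely moved by $j_{U_n} \restriction M_0$ for cofinally many $n$; this is the initial extraction step and plays the role that \emph{non-constructibility} of a witness plays in the classical Mansfield--Solovay argument.

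Next, I would produce splittings via iterated ultrapowers. For each $n$, iterating $U_n$ linearly $\kappa$ many times produces embeddings $j_{0,\alpha}^{(n)}$ with $\alpha < \kappa$ that fix $p$, since $\rank{p} < \crit{U_n}$. To ensure that $\kappa$ is also fixed (so that $\Sigma_1$-elementarity gives $j_{0,\alpha}^{(n)}(x_0) \in D$), I would pass to a suitable Skolem hull $H \prec \HH{\kappa^+}$ of size less than $\kappa$ and transitize, exploiting the fact that $\kappa$ is a limit of measurables above $\mu_n$ so that its role as ``$\kappa$'' in the hull is preserved. Varying $\alpha$ then produces $\kappa$ many pairwise distinct images of $x_0$ at level $n$, which I will assemble into a tree of composite iterations indexed by $s \in {}^{<\cof{\kappa}}\kappa$. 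Taking direct limits along branches $f \in {}^{\cof{\kappa}}\kappa$ yields $\iota(f) := j_f(x_0) \subseteq \kappa$, and the bookkeeping will be arranged so that $\iota(f) \cap \mu_n$ depends only on $f \restriction n$ and so that distinct choices of $f(n)$ produce images disagreeing before $\mu_{n+1}$, giving continuity, injectivity, and a continuous inverse.

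The hardest part will be controlling the interaction between successive iterations: the $n$-th splitting must leave enough flexibility for the later levels, preserve enough of the large-cardinal structure coding $U_m$ for $m > n$, and simultaneously not disturb the portion of $\iota(f) \cap \mu_n$ already determined by $f \restriction n$. The natural tool is a tower of elementary submodels of $\HH{\kappa^+}$ together with a careful critical-point analysis, and this, combined with the extraction of a movable witness in the first step, is where I expect the main technical work to concentrate.
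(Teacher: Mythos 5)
Your overall architecture --- a tree of linear iterations indexed by ${}^{<\cof{\kappa}}\kappa$, a single ``movable'' witness $x_0\in D$ whose images under the branch embeddings form the perfect set, and $\Sigma_1$-upward absoluteness to land back in $D$ --- is exactly the paper's. But two of your concrete steps do not work as stated. First, the extraction of $x_0$: counting $\Sigma_1$-witnesses is not the right move, since there are $2^\kappa$ transitive sets of size at most $\kappa$, and a single witness tells you nothing about how the iteration maps act on $x_0$. What the paper counts instead is $\POT{\kappa}^{N^\xi_\kappa}$, where $N^\xi_\kappa$ is the $\kappa$-th linear iterate of $\VV$ by $U_\xi$; this has size $\kappa$ because $j^\xi_{0,\kappa}(\kappa_\xi)=\kappa$, so $|D|>\kappa$ yields one $x\in D$ lying outside $N^\xi_\kappa$ for every $\xi<\cof{\kappa}$ simultaneously. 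That gives $x\cap j^\xi_{0,\lambda}(\kappa_\xi)\neq j^\xi_{0,\lambda}(x\cap\kappa_\xi)$ for all sufficiently large $\lambda<\kappa$, which is the engine producing, at each node $s$, $\kappa$-many pairwise distinct continuations (obtained by iterating $U_s$ for $\lambda_\beta$-many further steps, $\beta<\kappa$) that already disagree below a fixed bound. ``$x_0$ is genuinely moved by $j_{U_n}\restriction M_0$'' is neither what your counting establishes nor strong enough for this.

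Second, the Skolem hull of size less than $\kappa$ is both unnecessary and harmful. Unnecessary: each node iteration $I_s$ has length $<\kappa$ and uses only measures on cardinals in a bounded interval below $\kappa$, so since $\kappa$ is a limit of inaccessibles (and $\cof{\kappa}$ avoids that interval) one gets $i^{I_s}_{0,\infty}(\kappa)=\kappa$ and $i^{I_s}_{0,\infty}(p)=p$ outright, with no hull; along a full branch the same follows because each $\kappa_{c\restriction\xi}$ is mapped into itself and these cardinals are cofinal in $\kappa$. Harmful: collapsing a hull of size $<\kappa$ sends $\kappa$ to some $\bar\kappa<\kappa$ and $x_0$ to its trace below $\bar\kappa$, so to return to $D$ you would have to re-stretch $\bar\kappa$ to $\kappa$ by a cofinal iteration, and in doing so you lose precisely the diagonalization property of $x_0$ (the collapsed witness may well lie in the relevant iterates) and the ability to produce $\kappa$-many images per level that are distinguishable below a fixed bound. (The paper does use the collapse-and-restretch device, but only later, for the $\mathbf{\Sigma}^1_3$ well-order result, where binary splitting via mutually Cohen-generic reals suffices.) One further point you should address explicitly is the well-foundedness of the direct limit along a full branch of length $\cof{\kappa}$; this follows from the linear iterability of $\langle\VV,\calE\rangle$ for any set $\calE$ of $\sigma$-complete normal ultrafilters.
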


In the case of singular limits of measurable cardinals, we will use \emph{core model theory} developed in \cite{MR926749} and, for example, in \cite{Ze02} to show that the  consistency strength of the assumption of this theorem is optimal for its conclusion.

\begin{theorem}\label{theorem:PerfectSubsetOptimal}
Let $\kappa$ be a singular strong limit cardinal with the property that for every subset $D$ of $\POT{\kappa}$ of cardinality greater than $\kappa$ that is definable by a $\Sigma_1$-formula with parameters in $\HH{\kappa}\cup\{\kappa\}$, there is a perfect embedding $\map{\iota}{{}^{\cof{\kappa}}\kappa}{\POT{\kappa}}$ with $\ran{\iota}\subseteq D$. 
Then there is an inner model with a sequence of measurable cardinals of length $\cof{\kappa}$. 
\end{theorem}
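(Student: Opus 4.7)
The plan is to establish the contrapositive using core model theory. Assume toward a contradiction that there is no inner model with a sequence of measurable cardinals of length $\mu := \cof{\kappa}$. Under this anti-large-cardinal hypothesis, the theory of core models below $\mu$ measurables, as developed for instance in \cite{Ze02}, produces a canonical inner model $\KK$ that is $\Sigma_1$-definable without parameters, carries a $\Sigma_1$-definable global well-ordering, satisfies the $\GCH$, and satisfies the \emph{weak covering lemma}: $(\lambda^+)^{\KK} = \lambda^+$ for every singular cardinal $\lambda$ above a small cutoff. Applied at the singular strong limit $\kappa$, this together with $\GCH$ in $\KK$ gives $\betrag{\POT{\kappa} \cap \KK} = (\kappa^+)^{\KK} = \kappa^+$.

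As candidate set I would take $D := \POT{\kappa} \cap \KK$. Since $\KK$ is $\Sigma_1$-definable without parameters and the predicate ``$x \subseteq \kappa$'' is $\Delta_0$ in $\kappa$, the set $D$ is $\Sigma_1$-definable using only the parameter $\kappa \in \HH{\kappa} \cup \{\kappa\}$, so the hypothesis of the theorem applies and supplies a perfect embedding $\map{\iota}{{}^\mu\kappa}{\POT{\kappa}}$ with $\ran{\iota} \subseteq D \subseteq \KK$.

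The heart of the argument is to convert the existence of such an $\iota$ with range inside $\KK$ into a coherent sequence of $\mu$ measures living in $\KK$, contradicting the starting assumption. Continuity of $\iota$ and of $\iota^{-1}$ gives, for each $s \in {}^{<\mu}\kappa$, an ordinal $\eta_s < \kappa$ and a subset $Y_s \subseteq \eta_s$ such that $\iota$ maps the basic open set determined by $s$ onto $\{x \in \ran{\iota} : x \cap \eta_s = Y_s\}$; along each branch $f \in {}^\mu\kappa$ one recovers $\iota(f) = \bigcup_{\xi < \mu} Y_{f \restriction \xi}$. This exhibits $\ran{\iota}$ as the set of branches through a perfect subtree $T \subseteq {}^{<\mu}\kappa$ of cardinality greater than $\kappa$ which lies in $\KK$ (via the $\KK$-least codes for the $Y_s$) and splits into incompatible extensions on cofinally many levels below $\mu$. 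Feeding $T$ and the $\KK$-least mice collapsing the individual $\iota(f)$ into the comparison and iterability apparatus of $\KK$ should, at each splitting level $\xi$, produce an ultrapower embedding of $\KK$ with critical point $\kappa_\xi < \kappa$ obtained by Mitchell-style measure derivation; the resulting sequence $\seq{\kappa_\xi}{\xi < \mu}$ is then the forbidden $\mu$-sequence of measurables in $\KK$.

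The main obstacle is this last extraction step: converting the topological splittings of $\iota$ into genuine derived measures in $\KK$ and checking that they assemble into a single coherent sequence of length exactly $\mu$, rather than being absorbed by the few measurables that may already exist in $\KK$ under the hypothesis. This will require iterability for the relevant $\KK$-initial segments (standard below our anti-large-cardinal assumption) together with a coherence argument matching the $\mu = \cof{\kappa}$ topological splittings bijectively with new critical points, which is exactly what makes the lower bound in Theorem \ref{theorem:PerfectSubset} optimal.
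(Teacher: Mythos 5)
There is a genuine gap, and it is located exactly where you flag the ``main obstacle'': the step that converts the topological splittings of $\iota$ into measures in $\KK$ cannot work, and the root cause is your choice of witness $D=\POT{\kappa}\cap\KK$. That set is too fat to yield a contradiction. Consider $\VV=\LL$ (where your anti-large-cardinal hypothesis holds and $\KK=\LL$): then $D=\POT{\kappa}$, and the canonical coding map $f\mapsto\Set{\goedel{\xi}{f(\xi)}}{\xi<\cof{\kappa}}$ is already a perfect embedding of ${}^{\cof{\kappa}}\kappa$ into $D$. So the existence of a perfect embedding into $\POT{\kappa}\cap\KK$ carries no large-cardinal strength whatsoever, and any argument extracting a $\mu$-sequence of measurables from it would ``prove'' that $\LL$ has measurable cardinals. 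Relatedly, your cardinality bookkeeping already shows the tension: $\ran{\iota}$ has size $\kappa^{\cof{\kappa}}=2^\kappa$ while $\betrag{D}=\kappa^+$, so the approach only even gets off the ground when $2^\kappa=\kappa^+$, and then nothing forces the derived tree of approximations $\seq{Y_s}{s}$ (a $\VV$-indexed system) to lie in $\KK$. The logic has to run the other way: one must exhibit a \emph{thin}, highly structured $\Sigma_1$-definable $D\subseteq\POT{\kappa}\cap\KK$ of size $\kappa^+$ for which a perfect embedding is provably impossible, so that the hypothesis of the theorem fails. The paper does this in two quite different ways depending on $\cof{\kappa}$: for $\cof{\kappa}=\omega$ it takes $D$ to be the codes of the $<_\KK$-least well-orderings of $\kappa$ in each order type in $[\kappa,\kappa^+)$ and rules out a continuous injection via a boundedness lemma for $\mathbf{\Sigma}^1_1$-subsets of ${}^\omega\kappa$; for $\cof{\kappa}>\omega$ it builds a Kurepa-style tree of hulls of initial segments of $\KK$ with $\kappa^+$ branches, takes $D$ to be codes of its cofinal branches, and derives a contradiction because a perfect embedding would let $\Add{\cof{\kappa}}{1}$ add a new branch, while generic absoluteness of $\KK$ shows no such forcing can do so.

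Two smaller points. First, the assertion that $\KK$ is ``$\Sigma_1$-definable without parameters'' is stronger than what is true or needed; what one actually establishes (and what the paper proves) is that suitable initial segments of $\KK$ of size $\kappa$ are $\Sigma_1$-definable with parameters in $\HH{\kappa}\cup\{\kappa\}$, using that below your anti-large-cardinal hypothesis the relevant mice are simple, so iterability is $\Sigma_1$ in the parameter $\omega_1$. Second, the countable and uncountable cofinality cases genuinely require separate treatment (different core model technology and different combinatorial obstructions), and your proposal does not distinguish them; in particular the weak covering and singularity-in-$\KK$ facts you invoke are obtained by different means in the two cases.
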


The next type of pathological sets that we will study in this paper are \emph{almost disjoint families} of large cardinalities. 
Given an infinite cardinal $\kappa$, a set  $A$ of unbounded subsets of $\kappa$ is an \emph{almost disjoint family in $\POT{\kappa}$} if $x\cap y$ is bounded in $\kappa$ for all distinct $x,y\in A$. 
In addition, we say that such a family $A$ is \emph{maximal} if for every unbounded subset $x$ of $\kappa$, there exists $y\in A$ with the property that $x\cap y$ is unbounded in $\kappa$.  %
Motivated by a classical result of Mathias in \cite{MR491197} that shows that all analytic maximal almost disjoint families in $\POT{\omega}$ are finite and many additional influential results on maximal almost disjoint families by Mathias, A. Miller, Törnquist, Horowitz and Shelah, Neeman and Norwood, Bakke-Haga, Fischer, Schrittesser, Weinert, and others (see \cite{doi:10.1142/S0219061321500264,  FISCHER2021102909, MR3928385, MR491197, MR983001, MR3835078,  MR4012549, MR3156517}), we will use the techniques developed in the proof of Theorem \ref{theorem:PerfectSubset} to prove that, if a cardinal $\kappa$ possesses sufficiently strong large cardinal properties, then every simply definable almost disjoint family in $\POT{\kappa}$ has cardinality at most $\kappa$. In particular, by a simple diagonalization argument, all simply definable maximal almost disjoint families in $\POT{\kappa}$ have cardinality less than $\kappa$ in this case.

In order to reduce the large cardinal assumptions used in our arguments, we recall the notion of \emph{iterable cardinals}, introduced by Sharpe and Welch in \cite{MR2817562} and studied extensively in \cite{MR2830435}. 
 An uncountable cardinal $\kappa$ is \emph{iterable} if for every subset $x$ of $\kappa$, there exists a transitive model $M$ of $\ZFC^-$ of cardinality $\kappa$ with $\kappa,x\in M$  and a weakly amenable $M$-ultrafilter $U$ on $\kappa$ such that the structure $\langle M,U\rangle$ is iterable. 
 Note that all iterable cardinals are weakly compact and  all Ramsey cardinals are iterable (see, for example,  {\cite[Theorem 1.3]{MR2830415}}). In particular, all measurable limits of measurable cardinals satisfy the assumptions of the following result.

\begin{theorem}\label{theorem:AlmostDisjoint}
 Let $\kappa$ be an iterable cardinal that is a limit of measurable cardinals and let $A$ be a subset of $\POT{\kappa}$ that is definable by a $\Sigma_1$-formula with parameters in $\HH{\kappa}\cup\{\kappa\}$.  
  If $A$ has cardinality greater than $\kappa$, then there exist distinct $x,y\in A$ with the property that  $x\cap y$ is unbounded in $\kappa$. 
\end{theorem}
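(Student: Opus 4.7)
The plan is to apply Theorem~\ref{theorem:PerfectSubset} to reduce the problem to showing that no perfect embedding into $A$ can have an almost disjoint range, and then to derive a contradiction from the iterability of $\kappa$. Since $\kappa$ is a limit of measurable cardinals, $A$ is $\Sigma_1$-definable with parameters in $\HH{\kappa}\cup\{\kappa\}$, and $\betrag{A}>\kappa$, Theorem~\ref{theorem:PerfectSubset} produces a perfect embedding $\map{\iota}{{}^{\cof{\kappa}}\kappa}{\POT{\kappa}}$ with $\ran{\iota}\subseteq A$. Setting $\lambda=\cof{\kappa}$, I suppose toward a contradiction that no two distinct elements of $\ran{\iota}$ have unbounded intersection in $\kappa$.

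The continuity of $\iota$ is encoded by a coherent sequence $\vec{c}=\seq{(\eta_s,S_s)}{s\in{}^{<\lambda}\kappa}$ with $\eta_s<\kappa$, $S_s\subseteq\eta_s$, and $\iota[U_s]\subseteq\Set{x\subseteq\kappa}{x\cap\eta_s=S_s}$, where $U_s$ denotes the basic open neighborhood of $s$ in ${}^{\lambda}\kappa$. Since $\kappa$ is a strong limit and $\lambda<\kappa$, we have $\betrag{{}^{<\lambda}\kappa}=\kappa$, so $\vec{c}$ can be coded as a subset of $\kappa$. By iterability of $\kappa$ applied to this coding, I fix a transitive model $M$ of $\ZFC^-$ of cardinality $\kappa$ containing $\kappa$, the parameter $p$, and $\vec{c}$, together with a weakly amenable $M$-ultrafilter $\calU$ on $\kappa$ such that $\langle M,\calU\rangle$ is iterable. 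Iterating $\langle M,\calU\rangle$ along $\On$ yields iterates $\seq{\langle M_\alpha,\calU_\alpha\rangle}{\alpha\in\On}$ with iteration maps $\map{j_{\alpha\beta}}{M_\alpha}{M_\beta}$ and critical points $\kappa_0=\kappa<\kappa_1<\cdots$ forming a closed unbounded class of indiscernibles for $M$ with parameters from $M\cap\HH{\kappa}$.

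The main step is to exploit this setup to produce two distinct functions $f_0,g_0\in{}^{\lambda}\kappa\cap\VV$ whose images under $\iota$ have unbounded intersection in $\kappa$. First I choose $f_0\in M$ with $\iota(f_0)\in M$ unbounded in $\kappa$; such $f_0$ exists because $\betrag{\ran{\iota}}=\kappa^\lambda>\kappa$ by K\"onig's theorem while $\kappa$ has only $\kappa$ bounded subsets, and $\iota(f_0)$ is definable in $M$ from $\vec{c}$ and $f_0$. Since each $j_{0\alpha}$ fixes $\lambda$ and every element of ${}^{<\lambda}\kappa$, the restriction of $j_{0\alpha}(\vec{c})$ to ${}^{<\lambda}\kappa$ is still $\vec{c}$; combining this with the indiscernibility of $\seq{\kappa_\alpha}{\alpha\in\On}$ and upward $\Sigma_1$-absoluteness of $\varphi$ should yield a second function $g_0\neq f_0$ in ${}^{\lambda}\kappa\cap\VV$ such that the tags along $g_0$ force $\iota(g_0)$ to meet $\iota(f_0)$ cofinally in $\kappa$, contradicting the almost disjointness assumption.

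The main obstacle will be this last step: extracting $g_0$ in $\VV$ from the iteration so that the continuity of $\iota$ forces $\iota(g_0)\cap\iota(f_0)$ to be unbounded in $\kappa$. The difficulty is that the iteration maps $j_{0\alpha}$ move $\kappa$ to $\kappa_\alpha$ and produce codes $j_{0\alpha}(\vec{c})$ for perfect embeddings into $\POT{\kappa_\alpha}$, so the combinatorial information about almost disjoint ranges at the higher levels must be transferred back to the base level by a delicate reflection argument using the indiscernibility of the critical points together with a careful analysis of how the extended tags $j_{0\alpha}(\eta_s,S_s)$ for $s\in{}^{<\lambda}\kappa_\alpha\setminus{}^{<\lambda}\kappa$ interact with the intersection pattern within $\ran{\iota}$.
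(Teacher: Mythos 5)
Your overall strategy has a fatal flaw that no amount of work on the ``main obstacle'' can repair: the existence of a perfect embedding into $A$ is simply not in tension with $A$ being almost disjoint. Perfect almost disjoint families of cardinality $2^\kappa$ exist outright --- fix an injective enumeration $\seq{s_\beta}{\beta<\kappa}$ of ${}^{{<}\kappa}2$ and map each $x\in{}^{\kappa}2$ to $\Set{\beta<\kappa}{s_\beta\subseteq x}$; this is a perfect embedding whose range is almost disjoint (the paper makes exactly this observation, with a parameter $z\subseteq\kappa$, in the remark following the theorem). So reducing the theorem to ``no perfect embedding into $A$ has almost disjoint range'' replaces the true statement by a false one, and the extra information you would need (that the tags $\vec{c}$ and the sets $\iota(f)$ are tied to a $\Sigma_1$-definition with parameters in $\HH{\kappa}\cup\{\kappa\}$ only) is precisely what your sketch never exploits. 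Indeed, the step you flag as the obstacle --- extracting $g_0$ so that $\iota(g_0)\cap\iota(f_0)$ is unbounded --- is the entire content of the theorem, and ``indiscernibility plus upward $\Sigma_1$-absoluteness should yield'' is not an argument. There is also a basic slip at the outset: an iterable cardinal is weakly compact, hence regular, so $\cof{\kappa}=\kappa$ and your running assumption $\lambda=\cof{\kappa}<\kappa$ is false (the perfect embedding of Theorem~\ref{theorem:PerfectSubset} here goes from ${}^{\kappa}\kappa$, not from a short product).

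For comparison, the paper's proof does not pass through Theorem~\ref{theorem:PerfectSubset} at all. It fixes a genuine inaccessible $\lambda<\kappa$ above the parameter, uses Lemma~\ref{lemma:TechnicalLemmaIterationsTree} to build an $\Add{\lambda}{1}$-name $\dot{x}$ whose interpretations are pairwise distinct members of $A$ over mutually generic filters, and then brings in a weakly compact embedding $\map{j}{M_0}{M_1}$ with critical point $\kappa$ to define a name $\dot{\gamma}$ for $\min(j(\dot{x})^G\setminus\kappa)$. The heart of the argument is the claim that mutually generic filters give distinct values of $\dot{\gamma}$: if some condition forced equality, it would force $\dot{x}_l\cap\dot{x}_r$ to be unbounded in $\kappa$, and the iterability of $\kappa$ is then used to collapse the situation to a structure of size $\lambda$, re-iterate it $\kappa$ steps so that $\kappa$ and the parameters are fixed, and actually build a generic filter in $\VV$ by ${<}\lambda$-closure --- producing two real elements of $A$ with unbounded intersection, a contradiction. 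Finally, $\kappa^+$ mutually generic filters give an injection of $\kappa^+$ into $j(\kappa)<\kappa^+$, which is absurd. If you want to salvage your approach, you would have to import this forcing-and-reflection machinery anyway; the perfect set alone carries no contradiction.
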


The third type of pathological sets studied in this paper are \emph{long well-orders}, {i.e.} well-orderings of subsets of the power set $\POT{\kappa}$ of an infinite cardinal $\kappa$ of order-type at least $\kappa^+$. 
The study of the definability of these objects is motivated by the classical fact that \emph{Projective Determinacy} implies that all well-orderings  definable in second-order arithmetic have countable length. 
In the case of limits of measurable cardinals $\kappa$, it is possible to use arguments contained in the proof of {\cite[Lemma 1.3]{MR3845129}} to show that for every well-ordering of $\kappa$, the collection of proper initial segments of the given order is not definable by a $\Sigma_1$-formula with parameters in $\HH{\kappa}\cup\{\kappa\}$. 
In Section \ref{section:LongWOSingular} below, we will show that it is possible to use  classical results of Dehornoy in \cite{DEHORNOY1978109} to show that for all such limits $\kappa$, no well-ordering of $\POT{\kappa}$ is definable in the above way (see Corollary \ref{corollary:NoSigma1WOorGoodLong}). 
We will then proceed by using ideas  from the proof of Theorem \ref{theorem:PerfectSubset} to prove results about well-orderings whose domain is a large proper subset of $\POT{\kappa}$. 
 The following theorem provides a scenario in which such orders have no simple definition.

\begin{theorem}\label{MainTheorem:LimitMeasurables}
  Let $\kappa$ be a cardinal of countable cofinality that is a limit of measurable cardinals. 
  If there exists a well-ordering of a subset of $\POT{\kappa}$ of cardinality greater than $\kappa$ that is definable by a $\Sigma_1$-formula with parameter $\kappa$, then there is a $\mathbf{\Sigma}^1_3$-well-ordering of the reals.  
\end{theorem}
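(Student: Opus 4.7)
The plan is to feed the domain of the given well-ordering into Theorem \ref{theorem:PerfectSubset}, use the assumption $\cof{\kappa}=\omega$ to produce a perfect embedding from ${}^\omega\kappa$, restrict to the Baire space ${}^\omega\omega\subseteq{}^\omega\kappa$, and then pull back the $\Sigma_1$-well-ordering to obtain a $\mathbf{\Sigma}^1_3$-well-ordering of the reals. Write $<_W$ for the given $\Sigma_1(\{\kappa\})$-definable well-ordering of a set $W\subseteq\POT{\kappa}$ with $|W|>\kappa$. Since $W$ is itself $\Sigma_1(\{\kappa\})$-definable (as the domain of $<_W$), Theorem \ref{theorem:PerfectSubset} produces a perfect embedding $\iota\colon{}^\omega\kappa\to\POT{\kappa}$ with $\ran{\iota}\subseteq W$. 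Identifying ${}^\omega\omega$ with a closed subspace of ${}^\omega\kappa$, the restriction $j=\iota\restriction{}^\omega\omega$ is a continuous injection into $W$, and the pull-back $x\prec y\Leftrightarrow j(x)<_W j(y)$ is a well-ordering of ${}^\omega\omega$ of length at least the continuum.

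The core task is then to bound the descriptive complexity of $\prec$. Inspecting the proof of Theorem \ref{theorem:PerfectSubset}, the embedding $\iota$ can be built from a cofinal $\omega$-sequence $\seq{\kappa_n}{n<\omega}$ of measurable cardinals below $\kappa$ together with fixed normal measures $U_n$ on $\kappa_n$, so that for $x\in{}^\omega\omega$ the value $\iota(x)$ is the image of a fixed seed under a countable iterated ultrapower driven by $x$. Consequently, there is a real parameter $z$ encoding first-order data about these measures together with the $\Sigma_1$-formula defining $<_W$, such that $x\prec y$ is equivalent to the existence of a countable transitive model $\calM$ containing $z$ in which the iterated ultrapower construction from the proof of Theorem \ref{theorem:PerfectSubset}, applied with inputs $(x,y)$, produces images satisfying the $\Sigma_1$-formula defining $<_W$, and whose iterability guarantees that this truth reflects up to $V$. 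Iterability of such countable premice is $\Pi^1_2(z)$, while the outer existential quantifier over $\calM$ is $\Sigma^1_1$, yielding the required $\mathbf{\Sigma}^1_3$-bound.

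The main technical obstacle will be the absoluteness step in this analysis: one must argue that a true $\Sigma_1$-fact $\iota(x)<_W\iota(y)$ in $V$ reflects into a countable iterable witness $\calM$, and conversely, that iterability of $\calM$ transfers $\Sigma_1$-truth back to $V$. This is expected to follow from the Silver/Kunen-style analysis of iterated ultrapowers that already underpins the proof of Theorem \ref{theorem:PerfectSubset}, combined with the fact that $\cof{\kappa}=\omega$ confines the entire construction to countable premice accessible by second-order arithmetic; the countable cofinality hypothesis is crucial here, since it is what allows the iteration governing $j$ and the $\Sigma_1$-verification to be coded by a single real.
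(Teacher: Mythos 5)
Your overall strategy---reflect the construction into countable structures and use iterability to transfer $\Sigma_1$-truth back to $\VV$---is the right instinct, and your complexity bookkeeping (existential quantifier over countable models is $\Sigma^1_1$, iterability is roughly $\Pi^1_2$, total $\mathbf{\Sigma}^1_3$) matches the paper's final count. But there is a genuine gap at the first step. The perfect embedding $\iota$ produced by Theorem \ref{theorem:PerfectSubset} sends $c$ to $i^{I_c}_{0,\infty}(x)$, where $I_c$ is a linear iteration \emph{of $\VV$} whose length is in general a large ordinal below $\kappa$ (the lengths $\lambda_\beta$ in Lemma \ref{lemma:TechnicalLemmaIterationsTree} are not countable even for $\beta<\omega$), using normal measures on uncountable measurable cardinals $\kappa_n$ and a seed $x\subseteq\kappa$. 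None of this is second-order-arithmetic data, so the claim that $\iota\restriction{}^\omega\omega$ is ``the image of a fixed seed under a countable iterated ultrapower'' coded by a single real $z$ is false as stated, and the pulled-back order $\prec$ has no visible projective definition. Relatedly, your absoluteness step is silent on how the uncountable parameter $\kappa$ in the $\Sigma_1$-definition of $<_W$ is recovered from a countable witness model; saying the model's iterability ``guarantees that this truth reflects up to $V$'' is exactly the point that needs an argument.

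The paper's proof repairs both problems simultaneously, and this is why it does not route through Theorem \ref{theorem:PerfectSubset} at all. It fixes a countable elementary submodel $X\prec\HH{\theta}$ containing the data from Lemma \ref{lemma:TechnicalLemmaIterationsTree}, collapses it to a countable $M$, and then iterates $M$ linearly for exactly $\kappa$ steps to obtain $N$ with $j(\bar\kappa)=\kappa$; the well-ordered family of subsets of $\kappa$ is obtained by evaluating a Cohen name $j(\pi(\dot x))$ at filters $G_c$ for $c\in{}^\omega 2$, with Mycielski's theorem supplying a continuous $p$ making all pairs mutually generic over $M$. The relation $a\blacktriangleleft b$ is then characterized by the existence of a countable iterable $\ZFC^-$-model $W$ containing $M$, $p(a)$, $p(b)$ in which the analogous statement $\psi$ holds at $W$'s own singular cardinal $\delta$; the verification that this is correct iterates $W$ out to length $\kappa$ so that $\delta$ is sent to $\kappa$ and $I$ is sent to $\bar I$, after which $\Sigma_1$-upward absoluteness applies. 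If you want to salvage your outline, you would have to replace ``restrict the perfect embedding to ${}^\omega\omega$'' by this hull-and-reiterate mechanism; the countable cofinality of $\kappa$ is indeed used, but to make the length-$\kappa$ re-iteration of the countable hull hit $\kappa$ cofinally, not to make the original iterations countable.
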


In addition, the theory developed in this paper allow us to determine the exact consistency strength of the non-existence of $\Sigma_1$-definable long well-orderings of subsets of a singular strong limit cardinal of countable cofinality. 
The following theorem is proven by combining our techniques with results about \emph{short core models} from \cite{MR926749} in one direction and \emph{diagonal Prikry forcing} in the other direction.

\begin{theorem}\label{theorem:EquiLongWO}
  The following statements are equiconsistent over $\ZFC$: 
  \begin{enumerate}
      \item There exist infinitely many measurable cardinals. 
      
      \item There exists a singular cardinal $\kappa$ with the property that no well-ordering of a subset of $\POT{\kappa}$ of cardinality greater than $\kappa$ is definable by a $\Sigma_1$-formula with parameters in $\HH{\kappa}\cup\{\kappa\}$. 
  \end{enumerate}
\end{theorem}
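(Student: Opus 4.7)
The proof will have two directions, each going through a separate machinery. For the direction from (1) to the consistency of (2), the plan is to employ a diagonal Prikry forcing construction. Begin in a ground model $V$ carrying a strictly increasing sequence $\langle\kappa_n : n<\omega\rangle$ of measurable cardinals, fix normal measures $U_n$ on $\kappa_n$ and set $\kappa=\sup_{n<\omega}\kappa_n$. Forcing with the corresponding diagonal Prikry poset $\PPP$ yields an extension $V[G]$ in which all cardinals and cofinalities are preserved, $\kappa$ is a singular strong limit of cofinality $\omega$, and the Prikry sequence $\langle t_n : n<\omega\rangle$ with $t_n<\kappa_n$ is cofinal in $\kappa$. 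The remaining task is to verify that no $\Sigma_1$-definable well-ordering of a subset of $\POT{\kappa}$ of cardinality greater than $\kappa$ with parameters in $\HH{\kappa}\cup\{\kappa\}$ exists in $V[G]$. The approach is to suppose towards a contradiction that such a well-order $<^*$ exists, invoke the Prikry property to pull the defining formula and parameter back to $V$ modulo a finite stem of the generic sequence, and then run an iteration-of-measures argument in the style of the proofs of Theorems \ref{theorem:PerfectSubset} and \ref{MainTheorem:LimitMeasurables}. Iterating the $U_n$ simultaneously by sufficiently long ordinals produces an elementary embedding $\map{j}{V}{M}$ which fixes enough of the reflected parameter data to force $j(<^*)$ and $<^*$ to agree on a common initial segment whose length exceeds $\kappa$, and a length comparison then contradicts well-foundedness once the length of $<^*$ surpasses $\kappa$.

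For the direction from (2) to the consistency of (1), argue by contrapositive: assuming there is no inner model with infinitely many measurable cardinals, produce a $\Sigma_1$-definable long well-ordering at every singular strong limit cardinal $\kappa$. Under this anti-large-cardinal hypothesis, the short core model $\KK$ developed in \cite{MR926749} is well-defined, is an inner model of $\ZFC+\GCH$, and satisfies a sufficiently strong version of the Covering Lemma to imply $\betrag{\POT{\kappa}\cap \KK}\geq\kappa^+$ in $V$ for any singular strong limit $\kappa$. Moreover, $\KK$ carries a canonical global well-order whose restriction to the level $\POT{\kappa}\cap \KK$ is $\Sigma_1$-definable with parameter $\kappa$. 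This restricted order is then a $\Sigma_1$-definable well-ordering of a subset of $\POT{\kappa}$ of length at least $\kappa^+$, contradicting (2).

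The principal obstacle is the upper-bound direction, specifically the Prikry-style reflection step that transfers a $\Sigma_1$-definable long well-order in $V[G]$ to a ground-model object amenable to iteration-of-measures arguments. The subtlety is twofold: the set $\HH{\kappa}^{V[G]}$ may strictly contain $\HH{\kappa}^V$, and the Prikry sequence itself is an admissible parameter encoding a great deal of generic information. One therefore needs a careful Prikry-property absoluteness result showing that every $\Sigma_1$-formula with parameters in $\HH{\kappa}^{V[G]}\cup\{\kappa\}$ defining a well-order can be reduced to a $\Sigma_1$-formula over $V$ whose parameters lie in $\HH{\kappa}^V\cup\{\kappa\}$, modulo finitely many coordinates of the Prikry sequence. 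Once this reduction is secured, the existence of infinitely many measurable cardinals below $\kappa$ in $V$ suffices to refute the putative well-order by the iteration machinery already developed in the proofs of Theorems \ref{theorem:PerfectSubset} and \ref{MainTheorem:LimitMeasurables}.
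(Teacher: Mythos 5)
Your overall architecture matches the paper's: diagonal Prikry forcing over a model with infinitely many measurable cardinals for the upper bound, and Koepke's short core model for the lower bound. The lower-bound direction is essentially the paper's Lemma \ref{lemma:Sigma1Injection} — weak covering gives $(\kappa^+)^{\KK}=\kappa^+$, the relevant measure sequence $U_{\can}\restriction\kappa$ lies in $\HH{\kappa}$, and coding the $<_\KK$-least bijections between $\kappa$ and ordinals $\gamma<\kappa^+$ as subsets of $\kappa$ yields the long well-order. One small correction: drop the ``strong limit'' restriction. Statement (ii) only asserts the existence of \emph{some} singular $\kappa$ without long well-orders, so the contrapositive must produce such a well-order at \emph{every} singular cardinal; the core-model argument does not use that $\kappa$ is a strong limit, so this costs nothing.

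The upper-bound direction has a genuine gap in the endgame. First, the obstacle you single out is not the real one: diagonal Prikry forcing adds no bounded subsets of $\kappa$, so $\HH{\kappa}^{\VV[G]}=\HH{\kappa}^{\VV}$ and every admissible parameter is already in $\VV$, while the Prikry sequence itself is not an admissible parameter because its hereditary cardinality is $\kappa$. Second, and more seriously, your proposed contradiction — an embedding $j$ fixing the parameters so that $j(\lhd)$ and $\lhd$ ``agree on an initial segment whose length exceeds $\kappa$,'' followed by a ``length comparison contradicting well-foundedness'' — does not work as stated: a well-order of length greater than $\kappa$ is perfectly well-founded, and the Dehornoy-style intersection argument that refutes $\Sigma_1$-definable well-orderings of \emph{all} of $\POT{\kappa}$ (Theorem \ref{theorem:Dehorney}) breaks down when the domain is a proper subset $D$. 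Indeed, Theorem \ref{MainTheorem:LimitMeasurables} does not refute such well-orders at all; it only extracts a $\mathbf{\Sigma}^1_3$ well-order of the reals from one. The paper's actual contradiction runs as follows: the Boolean completion of the diagonal Prikry forcing is weakly homogeneous (this already needs Fuchs's characterization of generic sequences), whence the domain $D$ of the putative well-order satisfies $D\subseteq\VV$; one then builds, in $\VV[G]$, an injection $c\mapsto x_c=i^{I_c}_{0,\infty}(x_*)$ from $({}^\omega\kappa)^{\VV[G]}$ into $D$ definable from ground-model parameters, where the crucial step is that the pointwise image $i^{I_c}_{0,\infty}\circ c_G$ of the Prikry sequence remains generic over the iterate $M^{I_c}_\infty$ (Lemma \ref{lemma:MoreMeasurables} together with Fuchs's criterion), which is what guarantees $x_c\in D$ via $\Sigma_1$-upward absoluteness. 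Since $D\subseteq\VV$ and the map is injective, $c_G$ is definable in $\VV[G]$ from parameters in $\VV$ and hence lies in $\VV$ by homogeneity — a contradiction. None of this machinery (the homogeneity argument, the genericity of the iterated images of $c_G$, and the recovery of $c_G$ from the ground-model set $D$) appears in your sketch, and it is precisely what closes the proof.
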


We now continue by considering analogues of the above results for pathological sets consisting of subsets of the first uncountable cardinal $\omega_1$. 
Using results of Woodin in \cite{MR1713438}, a perfect subset theorem for subsets of $\POT{\omega_1}$ definable by a $\Sigma_1$-formula with parameters in $\HH{\aleph_1}\cup\{\omega_1\}$ is provided by {\cite[Theorem 4.9]{MR3694344}} that shows that if the non-stationary ideal on $\omega_1$ is saturated and there is a measurable cardinal, then every such subset either contains a continuous image of ${}^{\omega_1}\omega_1$ or is a subset of $\LL(\RRR)$. 
 As observed in \cite{MR3694344}, it is, in general, not possible to strengthen the second  alternative to state that the given set has cardinality at most $\aleph_1$, because the failure of $\CH$ implies that $\Set{x\in{}^{\omega_1}\omega_1}{\forall\alpha<\omega_1 ~ x(\omega+\alpha)=0}$ is a subset of ${}^{\omega_1}\omega_1$ of cardinality greater than $\aleph_1$ that is definable by a $\Sigma_1$-formula with parameter $\omega_1$ and does not contain a perfect subset. 
 The following result now shows that analogs of Theorems \ref{theorem:AlmostDisjoint} and \ref{MainTheorem:LimitMeasurables} for $\omega_1$ follow both from strong large cardinal assumptions and the validity of strong forcing axioms.

\begin{theorem}\label{theorem:ResultsOmega1}
 Assume that either there is a measurable cardinal above infinitely many Woodin cardinals or Woodin's Axiom $(*)$ holds. 
 \begin{enumerate}
     \item\label{item:Omega1-1} No well-ordering of a subset of $\POT{\omega_1}$ of cardinality greater than $\aleph_1$ is definable by a $\Sigma_1$-formula  with parameters in $\HH{\aleph_1}\cup\{\omega_1\}$. 
     
     \item\label{item:Omega1-2} If $A$ is a set of cardinality greater than $\aleph_1$ that consists of unbounded subsets of $\omega_1$ and is definable by a $\Sigma_1$-formula with parameters in $\HH{\aleph_1}\cup\{\omega_1\}$, then there exist distinct $x,y\in A$ with the property that $x\cap y$ is unbounded in $\omega_1$. 
     \end{enumerate}
\end{theorem}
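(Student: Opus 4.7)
My plan is to leverage the perfect set dichotomy for $\omega_1$ from \cite[Theorem 4.9]{MR3694344}: assuming the non-stationary ideal on $\omega_1$ is saturated and a measurable cardinal exists, every subset of $\POT{\omega_1}$ that is definable by a $\Sigma_1$-formula with parameters in $\HH{\aleph_1}\cup\{\omega_1\}$ either contains a continuous injective image of ${}^{\omega_1}\omega_1$ or is contained in $\LL(\RRR)$. First I would check that both of our hypotheses supply these inputs: a measurable above infinitely many Woodin cardinals yields the saturation of the nonstationary ideal and the existence of a measurable by classical results of Shelah and of Foreman--Magidor--Shelah, and $(\ast)$ directly implies both; moreover, both hypotheses imply $\AD^{\LL(\RRR)}$, so we may freely use this inside the inner model.

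For part \ref{item:Omega1-2}, I would argue by contradiction: suppose $A$ is an almost disjoint family of cardinality greater than $\aleph_1$, and apply the dichotomy. If $A\subseteq\LL(\RRR)$, then extending $A$ inside $\LL(\RRR)$ to a maximal almost disjoint family on $\omega_1$ produces a MAD family of cardinality strictly greater than $\aleph_1$ in $\LL(\RRR)$, which, via $\AD^{\LL(\RRR)}$, contradicts Chan's Theorem \ref{theorem:AD-MAD-omega_1}. If instead $A$ contains a continuous injective image $\iota[{}^{\omega_1}\omega_1]$, an adaptation of the tree--and--pigeonhole argument used to prove Theorem \ref{theorem:AlmostDisjoint} produces distinct $f,g\in{}^{\omega_1}\omega_1$ with $\iota(f)\cap\iota(g)$ unbounded in $\omega_1$, again contradicting that $A$ is almost disjoint.

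For part \ref{item:Omega1-1}, I would assume that $\lessdot$ is a $\Sigma_1$-definable well-ordering of some $D\subseteq\POT{\omega_1}$ of length at least $\omega_2$, and apply the dichotomy to $D$. In the case $D\subseteq\LL(\RRR)$, the initial-segment relation $\{(x,y):y\lessdot x\}$ is itself $\Sigma_1$-definable with parameters from $\HH{\aleph_1}\cup\{\omega_1\}$; coding the hereditarily countable parameters by reals of $\LL(\RRR)$, this yields a $\Sigma_1$-definable (in $\LL(\RRR)$) well-ordering of length at least $\omega_2^{\LL(\RRR)}$ on a subset of $\POT{\omega_1}$, and the proof of Theorem \ref{MainTheorem:LimitMeasurables} adapts --- with the perfect-set dichotomy of \cite{MR3694344} replacing the one of Theorem \ref{theorem:PerfectSubset} --- to give a contradiction. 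In the case that $D$ contains a continuous injective image of ${}^{\omega_1}\omega_1$ via $\iota$, the well-ordering pulls back through $\iota$ to a $\Sigma_1$-definable well-ordering of length $\geq\omega_2$ on ${}^{\omega_1}\omega_1$, and the tree-based argument used to prove Theorem \ref{theorem:PerfectSubset} derives a contradiction.

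The hard part will be the perfect-set case in both items: one must translate the almost-disjointness of $A$ or the well-ordering $\lessdot$ through the continuous embedding $\iota$ and then extract the desired contradiction from the combinatorics of the box-topology on ${}^{\omega_1}\omega_1$. Adapting the tree arguments developed for limits of measurable cardinals to the $\omega_1$-setting requires care, since the large cardinal input is now absorbed into the perfect set dichotomy itself rather than being available as a direct tool for iterating ultrapowers.
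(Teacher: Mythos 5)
Your opening move already fails: the hypotheses of the dichotomy in {\cite[Theorem 4.9]{MR3694344}} (saturation of the non-stationary ideal on $\omega_1$ \emph{plus} a measurable cardinal) do not follow from either alternative of the theorem. A measurable above infinitely many Woodin cardinals does \emph{not} imply that $\mathrm{NS}_{\omega_1}$ is saturated --- Shelah's result is that saturation can be \emph{forced} from a Woodin cardinal, and large cardinals are compatible with its failure (e.g.\ with $\CH$); conversely, Axiom $(*)$ implies saturation but does not provide a measurable cardinal in $\VV$. So the dichotomy is simply not available under the stated hypotheses. Even granting it, the perfect-set alternative cannot be refuted by combinatorics alone: there \emph{do} exist continuous injective images of ${}^{\omega_1}2$ in $\POT{\omega_1}$ that are almost disjoint families (take the branch family $\Set{\Set{\beta}{s_\beta\subseteq x}}{x\in I}$ for an injective enumeration $\seq{s_\beta}{\beta<\omega_1}$ of a suitable tree, as in the remark following Theorem \ref{theorem:AlmostDisjoint}), and consistently ones carrying definable well-orders; such families are only excluded because their definitions require a parameter outside $\HH{\aleph_1}$. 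Hence no ``adaptation of the tree-and-pigeonhole argument'' can produce overlapping elements from an arbitrary continuous injection --- the contradiction must exploit definability, and your sketch does not say how. Further gaps: in the $\LL(\RRR)$ case of \eqref{item:Omega1-2} you extend $A$ to a maximal almost disjoint family \emph{inside} $\LL(\RRR)$, which requires choice and is unavailable under $\AD$ (and is unnecessary, since Theorem \ref{theorem:AD-MAD-omega_1} applies to arbitrary families); moreover the dichotomy only gives $A\subseteq\LL(\RRR)$, whereas applying Chan--Jackson--Trang requires $A\in\LL(\RRR)$. Finally, for \eqref{item:Omega1-1} you never identify the fact that actually closes the argument, namely that $\ZF+\DC+\AD$ refutes any injection of $\omega_2$ into $\POT{\omega_1}$ via the Kleinberg/Martin--Paris ${<}\omega_2$-complete club ultrafilter (Lemma \ref{lemma:NormalFilterLONG} and Corollary \ref{corollary:ADnoLONGWO}); adapting Theorem \ref{MainTheorem:LimitMeasurables}, whose conclusion is the existence of a $\mathbf{\Sigma}^1_3$ well-order rather than a contradiction, does not substitute for this.

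The paper's route is structurally different and avoids all of these problems by working entirely through $\PPP_{max}$. Under $(*)$, $\Sigma_1$-reflection places the definable object in $\OD(\RRR)^{\LL(\POT{\omega_1})}$, where $\LL(\POT{\omega_1})$ is a $\PPP_{max}$-extension of $\LL(\RRR)$; under the large cardinal hypothesis, one instead passes to a $\PPP_{max}$-generic $\LL(\RRR)[G]$ and uses $\Pi_2$-maximality of $\langle\HH{\aleph_2}^{\LL(\RRR)[G]},\in\rangle$ to transfer the $\Pi_2$-statement asserting the existence of such a definable object. The role you assign to the perfect set dichotomy is played for \eqref{item:Omega1-2} by Woodin's $\left({*\atop*}\right)$ and {\cite[Theorem 5.84]{MR1713438}} (whose tree $\pi$ has the extra coordinate-reading property along the set $U$ that forces two distinct branches with unbounded intersection, something a bare continuous injection lacks), and for \eqref{item:Omega1-1} by the countable closure and homogeneity of $\PPP_{max}$, which pull the well-order down into $\LL(\RRR)$ where the ultrafilter argument applies. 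You would need to rebuild your proof around these ingredients.
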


We will end this paper by observing that the above results cannot be generalized from $\omega_1$ to $\omega_2$. More specifically, we will show that all large cardinal assumptions are compatible with the existence of an almost disjoint family of cardinality $2^{\aleph_2}$ in $\POT{\omega_2}$ that is definable by a $\Sigma_1$-formula with parameter $\omega_2$ (see Proposition \ref{proposition:SimplyDefOmega2} below).


\section{A perfect subset theorem for limits of measurable cardinals}\label{section:PSPlimitsMeasurables}

 In this section, we prove Theorem \ref{theorem:PerfectSubset} with the help of iterated ultrapowers. 
 Throughout this paper, we will use two types of iterated ultrapower constructions for transitive $\ZFC^-$-models $M$: iterated ultrapowers of $M$ constructed using a single weakly amenable $M$-ultrafilter (as defined in {\cite[Section 19]{MR1994835}}) and iterated ultrapowers of $M$ constructed using a set of normal measures in $M$ (as defined in {\cite[Section 3]{MR3411035}}). 
 In order to establish  notation, we now discuss some details of the second type of construction.  
 Given a transitive model $M$ of $\ZFC^-$ and $\calE\in M$ with $$M\models\anf{\textit{$\calE$ consists of normal ultrafilters on measurable cardinals}},$$ a \emph{linear iteration of $\langle M,\calE\rangle$} is a sequence $I=\seq{U_\alpha}{\alpha<\lambda}$ with $\lambda>0$ and the property that there exists a directed system $$\langle\seq{M_\alpha}{\alpha<\lambda},\seq{\map{i_{\alpha,\beta}}{M_\alpha}{M_\beta}}{\alpha\leq\beta<\lambda}\rangle$$ of transitive $\ZFC^-$-models and elementary embeddings such that the following statements hold: 
 \begin{enumerate}
  \item $M_0=M$. 
  
  \item $U_\alpha\in i_{0,\alpha}(\calE)$  for all $\alpha<\lambda$. 
  
  \item If $\alpha$ is an ordinal with $\alpha+1<\lambda$, then $M_{\alpha+1}$ is the  (transitive collapse) of the ultrapower of $M_\alpha$ constructed using $U_\alpha$ and $i_{\alpha,\alpha+1}$ is the corresponding ultrapower embedding. 
  
  \item If $\eta<\lambda$ is a limit ordinal, then $\langle M_\eta,\seq{i_{\alpha,\eta}}{\alpha<\eta}\rangle$ is a direct limit of the directed system $\langle\seq{M_\alpha}{\alpha<\eta},\seq{\map{i_{\alpha,\beta}}{M_\alpha}{M_\beta}}{\alpha\leq\beta<\eta}\rangle$. 
 \end{enumerate}
 The ordinal $\lambda$ is then called the \emph{length of $I$} and we use $\length{I}$ to refer to this ordinal.

 It is easy to see that the above system is uniquely determined by the sequence $I$ and therefore we write $U^I_\alpha=U_\alpha$, $M^I_\alpha=M_\alpha$ and $i^I_{\alpha,\beta}=i_{\alpha,\beta}$ for all $\alpha\leq\beta<\length{I}$. We then let $\langle M^I_\infty,\seq{\map{i^I_{\alpha,\infty}}{M^I_\alpha}{M^I_\infty}}{\alpha<\length{I}}\rangle$ denote  the direct limit of the above system and, if the model $M^I_\infty$ is well-founded, then we identify it with its transitive collapse. 
 Finally, the pair $\langle M,\calE\rangle$ is called \emph{linearly iterable} if the model $M^I_\infty$ is well-founded for every linear iteration $I$ of $\langle M,\calE\rangle$. Note that {\cite[Theorem 3.3]{MR3411035}} shows that, if every element of $\calE$ is $\sigma$-complete in $\VV$, then the pair is  $\langle M,\calE\rangle$ is linearly iterable. In particular, the pair $\langle\VV,\calE\rangle$ is linearly iterable for every set $\calE$ of normal ultrafilters.

 Given a transitive $ZFC^-$-model $M$ and $U\in M$ with $$M\models\anf{\textit{$U$ is a normal ultrafilters on a measurable cardinal}},$$ the set $U$ is a weakly amenable $M$-ultrafilter and the pair $\langle M,U\rangle$ is iterable (in the sense of {\cite[Section 19]{MR1994835}}) if and only if the pair $\langle M,\{U\}\rangle$ is linearly iterable (in the above sense). Moreover, if $\langle M,U\rangle$ is iterable, $$\langle\seq{M_\alpha}{\alpha\in\On},\seq{\map{j_{\alpha,\beta}}{M_\alpha}{M_\beta}}{\alpha\leq\beta\in\On}\rangle$$ denotes the  iteration of $\langle M,U\rangle$ (as defined in {\cite[Section 19]{MR1994835}}) and $\lambda>0$ is an ordinal, then $\seq{j_{0,\alpha}(U)}{\alpha<\lambda}$ is the unique linear iteration $I(U,\lambda)$ of $\langle M,\{U\}\rangle$ of length $\lambda$ and we have $M^{I(U,\lambda)}_\alpha=M_\alpha$ and $i^{I(U,\lambda)}_{\alpha,\beta}=j_{\alpha,\beta}$ for all $\alpha\leq\beta<\lambda$.

 The following technical lemma about the existence of certain systems of linear iterations is the starting point of the proofs of most of the results about limits of measurable cardinals stated in the introduction:

\begin{lemma}\label{lemma:TechnicalLemmaIterationsTree}
 Let $\mu$ be an infinite regular cardinal, let $\kappa$ be a limit of measurable cardinals with $\cof{\kappa}=\mu$ and let $\calE$ denote the collection of all normal ultrafilters on cardinals smaller than $\kappa$. 
 Given an element    $z$ of $\HH{\kappa}$ and  a subset $D$ of $\POT{\kappa}$ of cardinality $\kappa^+$, there exists  
  \begin{itemize}
  
   \item an element $x$ of $D$, 
   
   \item a system $\seq{\nu_s}{s\in{}^{{<}\mu}\kappa}$ of inaccessible cardinals smaller than $\kappa$, 
       
   \item a system $\seq{\kappa_s}{s\in{}^{{<}\mu}\kappa}$ of measurable cardinals smaller than $\kappa$, 
      
   \item a system $\seq{U_s}{s\in{}^{{<}\mu}\kappa}$ of elements of $\calE$,  and 
   
   \item a system $\seq{I_s}{s\in{}^{{<}\mu}\kappa}$ of linear iterations of $\langle\VV,\calE\rangle$ of length less than $\kappa$
  \end{itemize}
 such that the following statements hold for all $s,t\in{}^{{<}\mu}\kappa$: 
  \begin{enumerate}   
   \item $z\in\HH{\nu_\emptyset}$ and $\mu<\kappa$ implies that $\mu<\nu_\emptyset$. 
     
   \item $U_s$ is an ultrafilter on $\kappa_s$.
   
   \item  $I_s$ is a linear iteration of $\langle\VV,\Set{U_{s\restriction\xi}}{\xi\in\dom{s}}\rangle$. 
   
   \item\label{item:Cofinal} The sequence $\seq{\min\Set{\kappa_s}{s\in{}^\xi\kappa}}{\xi<\mu}$ is cofinal in $\kappa$. 

  \item\label{item:LimitLength} If $I_s$ is non-trivial, then $\length{I_s}\in\Lim$. 

   \item\label{item:Incvreasing} If $s\subsetneq t$, then $\length{I_s}{}<\nu_s<\kappa_s<\nu_t$. 
   
   \item\label{item:CardinalsFixed} $i^{I_s}_{0,\infty}(\nu_s)=\nu_s$ and $i^{I_s}_{0,\infty}(\kappa_s)=\kappa_s$. 

  \item\label{item:ParametersFixed} $i^{I_s}_{0,\infty}(\mu)=\mu$, $i^{I_s}_{0,\infty}(\kappa)=\kappa$ and $i^{I_s}_{0,\infty}(z)=z$. 
         
   \item\label{item:Extensions} If $s\subseteq t$, then $\length{I_s}{}\leq\length{I_t}{}$ and $U^{I_s}_\alpha=U^{I_t}_\alpha$ for all $\alpha<\length{I_s}{}$.\footnote{Note that this directly implies that $M^{I_s}_\alpha=M^{I_t}_\alpha$ and $i^{I_s}_{\alpha,\beta}=i^{I_t}_{\alpha,\beta}$ holds for all $\alpha\leq\beta<\length{I_s}{}$. Moreover, if $1<\length{I_s}{}<\length{I_t}{}$, then  \eqref{item:LimitLength}  implies that $M^{I_s}_\infty=M^{I_t}_{\length{I_s}{}}$ and $i^{I_s}_{0,\infty}=i^{I_t}_{0,\length{I_s}{}}$. Finally, if $\length{I_s}{}=1<\length{I_t}{}$, then   $M^{I_s}_\infty=M^{I_t}_0$ and $i^{I_s}_{0,\infty}=\id_{M^{I_t}_0}$.} 
   
   \item\label{item:ProperInitialSegments} If $s\subseteq t$ with $\length{I_s}{}<\length{I_t}{}$, then  $\HH{\kappa_s}^{M^{I_s}_\infty}=\HH{\kappa_s}^{M^{I_t}_\infty}$ and $$i^{I_t}_{\length{I_s}{},\infty}\restriction\HH{\kappa_s}^{M^{I_s}_\infty} ~ = ~ \id_{\HH{\kappa_s}^{M^{I_s}_\infty}}.$$ 
   
   \item\label{item:Split} If $\xi\in\dom{s}\cap\dom{t}$ satisfying $s\restriction\xi=t\restriction\xi$ and $s(\xi)< t(\xi)$, 
    then $\nu_{s\restriction(\xi+1)}\leq\nu_{t\restriction(\xi+1)}$ and $$i^{I_s}_{0,\infty}(x)\cap \nu_{s\restriction(\xi+1)} ~ \neq ~ i^{I_t}_{0,\infty}(x)\cap\nu_{s\restriction(\xi+1)}.$$ 
  \end{enumerate}
\end{lemma}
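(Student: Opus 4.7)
The proof will proceed by a simultaneous transfinite recursion on $s \in {}^{{<}\mu}\kappa$, ordered by length and then lexicographically within each level, constructing all of $x, \nu_s, \kappa_s, U_s, I_s$ at once. In outline:

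The \textbf{base case} sets $I_\emptyset$ to be the trivial iteration of length $1$. Since $\kappa$ is a limit of measurable cardinals, we pick $\nu_\emptyset < \kappa$ inaccessible with $z \in \HH{\nu_\emptyset}$ and $\mu < \nu_\emptyset$ (if $\mu<\kappa$), then a measurable $\kappa_\emptyset$ with $\nu_\emptyset < \kappa_\emptyset < \kappa$, and a normal ultrafilter $U_\emptyset$ on $\kappa_\emptyset$. The element $x$ will be chosen from $D$ by a pigeonhole argument explained below.

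At the \textbf{successor step} $s \mapsto s^\frown\langle\alpha\rangle$, we define $I_{s^\frown\langle\alpha\rangle}$ as an extension of $I_s$ by some number $\gamma_\alpha$ of further ultrapower steps using (images of) $U_s$ and, where useful, older $U_{s\restriction\xi}$. We take $\gamma_\alpha$ strictly increasing with $\alpha$, so that the iteration lengths along each branch are monotone and limit (ensuring items \eqref{item:LimitLength} and \eqref{item:Extensions}); we also arrange compatibility so that $I_{s^\frown\langle\alpha\rangle}\restriction\length{I_s}=I_s$. We then pick $\nu_{s^\frown\langle\alpha\rangle}$ inaccessible just above the last critical point $i^{I_s}_{0,\infty}(\kappa_s)^{\gamma_\alpha}$ of the newly added ultrapowers (this forces items \eqref{item:CardinalsFixed} and \eqref{item:ParametersFixed}), and $\kappa_{s^\frown\langle\alpha\rangle}$ measurable above $\nu_{s^\frown\langle\alpha\rangle}$ with normal ultrafilter $U_{s^\frown\langle\alpha\rangle}$. \textbf{Limit steps} are handled by taking direct limits, which stay of length below $\kappa$ because $\mu\leq\cof{\kappa}$ bounds the relevant suprema. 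Cofinality \eqref{item:Cofinal} is guaranteed by always choosing $\kappa_s$ large below $\kappa$.

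The heart of the proof, and the \textbf{main obstacle}, is engineering the splitting property \eqref{item:Split}. The critical-point calculus of iterated ultrapowers shows that, for a normal ultrafilter $U$ on a measurable $\eta$, the $\gamma$-th iterate $j_\gamma$ satisfies $\eta_\gamma := j_\gamma(\eta)\in j_\gamma(y)$ iff $\eta \in y$, while for $\delta>\gamma$ one instead has $\eta_\gamma\in j_\delta(y)$ iff $y\cap\eta \in U$. Consequently, as long as $x$ and $U_s$ can be arranged so that the truth values of \anf{$\kappa_s\in i^{I_s}_{0,\infty}(x)$} and \anf{$i^{I_s}_{0,\infty}(x)\cap\kappa_s\in i^{I_s}_{0,\infty}(U_s)$} disagree, two siblings that iterate $U_s$ a different number of times beyond $I_s$ will disagree exactly at $\kappa_s^{\gamma_\alpha}$, which lies in the chosen interval $(\nu_s,\nu_{s^\frown\langle\alpha\rangle})$. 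This gives \eqref{item:Split} once we also note that items \eqref{item:Extensions} and \eqref{item:ProperInitialSegments} reduce the clause for longer $s,t$ to the one for the sibling pair $s\restriction(\xi+1),t\restriction(\xi+1)$.

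Arranging this disagreement uniformly is where the assumption $|D|=\kappa^+$ enters. Because $\kappa$ is a limit of measurables and hence a strong limit, $\betrag{\POT{\nu}}<\kappa$ for all $\nu<\kappa$, and we use a tree-style pigeonhole on $D$: by recursively thinning the $\kappa^+$ candidates using the two-valued classification above with respect to each relevant $\kappa_s$ and each normal ultrafilter available on $\kappa_s$, we extract a single $x\in D$ (fixed once and for all before the recursion proper begins) together with, for each node $s$, a normal ultrafilter $U_s$ on $\kappa_s$ for which the XOR condition holds. The $\Sigma_1$-absoluteness of \anf{$y\in D$} ensures that every iterate $i^{I_s}_{0,\infty}(x)$ stays in $D$, so the system witnesses the conclusion of the lemma.
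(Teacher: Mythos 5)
Your recursion skeleton (trivial $I_\emptyset$, siblings obtained by iterating $U_s$ different ordinal lengths on top of $I_s$, direct limits at limit stages, inaccessibles $\nu_s$ placed above the new critical points to secure clauses (v)--(x)) matches the paper's construction. The gap is in your splitting engine, and it is fatal as stated. Your mechanism distinguishes $j_{0,\gamma}(x_*)$ from $j_{0,\delta}(x_*)$ ($\gamma<\delta$) only at the critical point $\eta_\gamma=j_{0,\gamma}(\kappa_s)$, via the correct observation that $\eta_\gamma\in j_{0,\gamma}(x_*)$ iff $\kappa_s\in x_*$ while $\eta_\gamma\in j_{0,\delta}(x_*)$ iff $x_*\cap\kappa_s\in U_s$. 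But the required XOR of \anf{$\kappa_s\in x$} and \anf{$x\cap\kappa_s\in U_s$} (by elementarity this is the same whether computed in $\VV$ or in $M^{I_s}_\infty$) may be unachievable for \emph{every} element of $D$, \emph{every} measurable $\eta<\kappa$ and \emph{every} normal ultrafilter on $\eta$: take $D$ to be any family of $\kappa^+$ many sets of successor ordinals below $\kappa$. Then no measurable $\eta$ lies in any $x\in D$, and $x\cap\eta$ is disjoint from the club of limit ordinals, hence lies in no normal ultrafilter on $\eta$ --- both sides of your XOR are false everywhere. No amount of "recursive thinning" of $D$ can produce a witness that does not exist; the pigeonhole only tells you some class of the classification is large, not that the class you need is nonempty. (Note the lemma must handle arbitrary $D$ of size $\kappa^+$; no definability is assumed at this stage.)

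The paper's mechanism is strictly stronger and is where $\betrag{D}=\kappa^+$ is actually used: for each $\xi<\mu$ one iterates $U_\xi$ out to length $\kappa$, observes that $\betrag{\POT{\kappa}^{N^\xi_\kappa}}=\kappa$, and picks $x\in D$ with $x\notin N^\xi_\kappa$ for all $\xi$. This yields $x\neq j^\xi_{0,\kappa}(x)\cap\kappa$, hence $x\cap j^\xi_{0,\lambda}(\kappa_\xi)\neq j^\xi_{0,\lambda}(x\cap\kappa_\xi)$ for all sufficiently large $\lambda<\kappa$ --- a disagreement of the whole set $x\cap\kappa_\xi$ with its image, not merely a membership fact about critical points. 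Siblings are then separated by making the index $\beta$ correspond to iterating $U_s$ for $\lambda_\beta$ further steps, where $\lambda_{\beta+1}=\lambda_\beta+j^\xi_{0,\lambda_\beta}(\lambda)$, so that consecutive siblings differ by (the image of) $\lambda$-many steps and the displayed inequality transfers by elementarity to give clause (xi) below $\nu_{s^\frown\langle\beta\rangle}$. You need to replace your XOR criterion and its pigeonhole by this argument; the rest of your outline then goes through.
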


\begin{proof}
 Pick a strictly increasing, cofinal sequence $\seq{\kappa_\xi}{\xi<\mu}$ of measurable cardinals in $\kappa$ with the property that $\mu<\kappa$ implies that $\mu<\kappa_0$. 
  Given $\xi<\mu$, fix a normal ultrafilter $U_\xi$ on $\kappa_\xi$ and let $$\langle\seq{N^\xi_\alpha}{\alpha\in\On},\seq{\map{j^\xi_{\alpha,\beta}}{N^\xi_\alpha}{N^\xi_\beta}}{\alpha\leq\beta\in\On}\rangle$$ denote the  iteration of $\langle\VV,U_\xi\rangle$. 
 Given $\xi<\mu$, we then have $j^\xi_{0,\kappa}(\kappa_\xi)=\kappa$ and  $j^\xi_{0,\alpha}(\kappa)=\kappa$ for all $\alpha<\kappa$.  
  In particular, we know that $\betrag{\POT{\kappa}^{N^\xi_\kappa}}=\kappa$ holds for all $\xi<\mu$. 
  Therefore, we can find $x\in D$ with $x\notin N^\xi_\kappa$ for all $\xi<\mu$. 
  Given $\xi<\mu$, we then have $x\neq j^\xi_{0,\kappa}(x)\cap\kappa$ and hence we know that 
  \begin{equation}\label{equation:PreSplit}
   x\cap j^\xi_{0,\lambda}(\kappa_\xi) ~ \neq ~ j^\xi_{0,\lambda}(x\cap\kappa_\xi)
  \end{equation} 
  holds for all sufficiently large $\lambda<\kappa$.

  By earlier remarks, the pair $\langle\VV,\calE\rangle$ is linearly iterable. In the following, we inductively construct   systems with the properties listed above while also ensuring that for every $s\in{}^{{<}\mu}\kappa$,  there exists $\dom{s}\leq\xi<\mu$ with $\kappa_s=\kappa_\xi$ and $U_s=U_\xi$. 
  Note that this additional property will directly ensure that \eqref{item:Cofinal} holds in the end.

  First, we define  $I_\emptyset$ to be the trivial linear iteration of $\langle\VV,\calE\rangle$. Moreover,  we pick some inaccessible cardinal $\nu_\emptyset<\kappa$ such that  $z\in\HH{\nu_\emptyset}$ and $\mu<\kappa$ implies $\mu<\nu_\emptyset$.

 Next, assume that $\zeta\in\Lim\cap\mu$ and the objects $\nu_t$, $\kappa_t$, $U_t$ and $I_t$ are defined for all $t\in{}^{{<}\zeta}\kappa$. 
  Fix $s\in{}^\zeta\kappa$ and define $I_s$ to be the unique linear iteration of $\langle\VV,\Set{U_{s\restriction\eta}}{\eta<\zeta}\rangle$ of length $\sup_{\eta<\zeta}\length{I_{s\restriction\eta}}{}<\kappa$ with the property that $U^{I_s}_\alpha=U^{I_{s\restriction\eta}}_\alpha$ holds for all $\eta<\zeta$ and $\alpha<\length{I_{s\restriction\eta}}{}$. 
 In addition, define $\nu_s$ to be an inaccessible cardinal smaller than $\kappa$ and bigger than both $\sup_{\eta<\zeta}\kappa_{s\restriction\eta}$ and $\length{I_s}{}$. 
 This setup ensures that $\length{I_s}>1$ implies that $\length{I_s}\in\Lim$, and therefore we know that  \eqref{item:LimitLength} holds. 
  Moreover, these definitions directly ensure that the relevant parts of \eqref{item:Incvreasing} and \eqref{item:CardinalsFixed} hold in this case. 
  In addition, since $\length{I_s}<\nu_s$ and $I_s$ only makes use of ultrafilter on cardinals contained in the interval $(\nu_\emptyset,\nu_s)$, 
  the fact that the cofinality of $\kappa$ is not contained in this interval allows us to conclude that \eqref{item:ParametersFixed} holds in this case. 
  Next, notice that our construction directly ensures that \eqref{item:Extensions} holds in this case. 
  Moreover, if $\eta<\zeta$ with $\length{I_{s\restriction\eta}}{}<\length{I_s}{}$, then the fact that \eqref{item:LimitLength} and \eqref{item:ProperInitialSegments} hold for all $\eta<\rho<\zeta$ ensures that $$\HH{\kappa_{s\restriction\eta}}^{M^{I_{s\restriction\eta}}_\infty} ~ = ~ \HH{\kappa_{s\restriction\eta}}^{M^{I_s}_\infty}$$ and $$i^{I_s}_{\length{I_{s\restriction\eta}}{},\infty}\restriction\HH{\kappa_{s\restriction\eta}}^{M^{I_s}_\infty} ~ = ~ \id_{\HH{\kappa_{s\restriction\eta}}^{M^{I_s}_\infty}}.$$  
  By the definition of $I_s$, this shows that \eqref{item:ProperInitialSegments} also holds in this case. 
  Finally, pick $t\in{}^{{<}\mu}\kappa$ with $\dom{t}\leq\zeta$ and $\xi\in\dom{t}$ with $s\restriction\xi=t\restriction\xi$ and $s(\xi)\neq t(\xi)$. Set $\rho=\min(\nu_{s\restriction(\xi+1)},\nu_{t\restriction(\xi+1)})$. 
  Since we know that $\rho<\min(\kappa_{s\restriction(\xi+1)},\kappa_{t\restriction(\xi+1)})$, we can use \eqref{item:ProperInitialSegments} and \eqref{item:Split} to show that $$i^{I_s}_{0,\infty}(x)\cap\rho ~ = ~ i^{I_{s\restriction(\xi+1)}}_{0,\infty}(x)\cap\rho ~ \neq ~ i^{I_{t\restriction(\xi+1)}}_{0,\infty}(x)\cap\rho ~ = ~ i^{I_t}_{0,\infty}(x)\cap\rho.$$
  By the properties of $\nu_{s\restriction(\xi+1)}$ ensured by our induction hypothesis, these computations show that \eqref{item:Split} also holds in this case. 
   
  \medskip
  
  Now,  assume that $\zeta<\mu$ and the objects $\nu_t$, $\kappa_u$, $U_u$ and $I_t$ are defined for all $t,u\in{}^{{<}\mu}\kappa$ with $\dom{t}\leq\zeta$ and $\dom{u}<\zeta$. 
  Fix $s\in{}^\zeta\kappa$ and pick $\zeta\leq\xi<\mu$ with $\kappa_\xi>\nu_s$. 
   Set $\kappa_s=\kappa_\xi$ and $U_s=U_\xi$. 
   By \eqref{equation:PreSplit}, there exists a limit ordinal  $\kappa_s<\lambda<\kappa$ with the property that 
   \begin{equation}\label{equation:Splitting}
    x\cap j^\xi_{0,\lambda}(\kappa_s) ~ \neq ~ j^\xi_{0,\lambda}(x\cap\kappa_s).
   \end{equation} 
   Let $\seq{\lambda_\beta}{\beta<\kappa}$ denote the unique continuous sequence of ordinals with $\lambda_0=0$ and $\lambda_{\beta+1}=\lambda_\beta+j^\xi_{0,\lambda_\beta}(\lambda)$ for all $\beta<\lambda$. 
   Since $\kappa$ is a limit of inaccessible cardinals, we know that $\lambda_\beta<\kappa$ holds for all $\beta<\kappa$. 
   Given $\beta<\kappa$, define $I_{s^\frown\langle\beta\rangle}$ to be the unique linear iteration of $\langle\VV,\calE\rangle$ of length $\length{I_s}+i^{I_s}_{0,\infty}(\lambda_\beta)$ 
   with $U^{I_{s^\frown\langle\beta\rangle}}_\alpha=U^{I_s}_\alpha$ for all $\alpha<\length{I_s}{}$ and $U^{I_{s^\frown\langle\beta\rangle}}_\alpha=i^{I_{s^\frown\langle\beta\rangle}}_{0,\alpha}(U_s)$ for all $\length{I_s}{}\leq\alpha<\length{I_{s^\frown\langle\beta\rangle}}{}$. That means we linearly iterate $U_s$ on top of what we already have to obtain $I_{s^\frown\langle\beta\rangle}$.
  Moreover, for every $\beta<\kappa$, we define $\nu_{s^\frown\langle\beta\rangle}$ to be the least inaccessible cardinal greater than $\length{I_{s^\frown\langle\beta+1\rangle}}{}$. 
 These definitions then directly ensure that \eqref{item:LimitLength} and \eqref{item:Extensions} hold. 
 In addition, for all $\beta<\kappa$, we have $$\length{I_s} ~ < ~ \nu_s ~ < ~ \kappa_s ~ < ~ \lambda ~ \leq ~ \length{I_{s^\frown\langle\beta+1\rangle}} ~ < ~ \nu_{s^\frown\langle\beta\rangle}$$ and this can be used to conclude that $i^{I_s}_{0,\infty}(\kappa_s)=\kappa_s$ and $i^{I_{s^\frown\langle\beta\rangle}}_{0,\infty}(\nu_{s^\frown\langle\beta\rangle})=\nu_{s^\frown\langle\beta\rangle}$. 
  This shows that the relevant instances of \eqref{item:Incvreasing} and \eqref{item:CardinalsFixed}  hold in this case. 
 Moreover, the fact that all linear iterations of the form $I_{s^\frown\langle\beta\rangle}$ with $\beta<\kappa$ have length less than $\kappa$ and only make use of ultrafilters on cardinals contained in the interval $[\kappa_0,\kappa)$ directly implies that \eqref{item:ParametersFixed} holds in this case as $\mu < \kappa_0$ in case $\mu < \kappa$. 
  Next, notice that, if $0<\beta<\kappa$ and $\length{I_s}{}\leq\alpha<\length{I_{s^\frown\langle\beta\rangle}}{}$, then our construction ensures that  $$\HH{\kappa_s}^{M^{I_s}_\infty} ~ = ~ \HH{\kappa_s}^{M^{I_{s^\frown\langle\beta\rangle}}_{\length{I_s}{}}} ~ = ~ \HH{\kappa_s}^{M^{I_{s^\frown\langle\beta\rangle}}_\alpha}$$ and  $$i^{I_{s^\frown\langle\beta\rangle}}_{\length{I_s}{},\alpha}\restriction\HH{\kappa_s}^{M^{I_s}_\infty} ~ = ~ \id_{\HH{\kappa_s}^{M^{I_s}_\infty}}.$$ 
  This directly implies that \eqref{item:ProperInitialSegments} holds in this case. 
  Finally, fix $\beta<\gamma<\kappa$. 
  Then $\length{I_{s^\frown\langle\beta+1\rangle}}{}<\length{I_{s^\frown\langle\gamma+1\rangle}}{}$ and hence we know that $\nu_{s^\frown\langle\beta\rangle}\leq\nu_{s^\frown\langle\gamma\rangle}$.

  \begin{claim*}
   $i^{I_{s^\frown\langle\beta\rangle}}_{0,\infty}(x)\cap\nu_{s^\frown\langle\beta\rangle} ~ \neq ~ i^{I_{s^\frown\langle\gamma\rangle}}_{0,\infty}(x)\cap\nu_{s\frown\langle\beta\rangle}$.
  \end{claim*}
  
  \begin{proof}[Proof of the Claim]
  Let $$\langle\seq{N_\alpha}{\alpha\in\On},\seq{\map{j_{\alpha_0,\alpha_1}}{N_{\alpha_0}}{N_{\alpha_1}}}{\alpha_0\leq\alpha_1\in\On}\rangle$$ denote the  iteration of $\langle M^{I_s}_\infty,i^{I_s}_{0,\infty}(U_s)\rangle$. 
  Given $\delta<\kappa$ and $\alpha<i^{I_s}_{0,\infty}(\lambda_\delta)$, the definition of $I_{s^\frown\langle\delta\rangle}$  ensures that  the following statements hold: 
  \begin{itemize}
   \item $M^{I_{s^\frown\langle\delta\rangle}}_{\length{I_s}+\alpha}=N_\alpha$ and $M^{I_{s^\frown\langle\delta\rangle}}_\infty=N_{i^{I_s}_\infty(\lambda_\delta)}$. 
   
   \item $i^{I_{s^\frown\langle\delta\rangle}}_{0,\length{I_s}+\alpha}=j_{0,\alpha}\circ i^{I_s}_{0,\infty}$ and $i^{I_{s^\frown\langle\delta\rangle}}_{0,\infty}=j_{0,i^{I_s}_{0,\infty}(\lambda_\delta)}\circ i^{I_s}_{0,\infty}$. 
  \end{itemize}
  
 Now, set $M_*=M^{I_{s^\frown\langle\beta\rangle}}_\infty$,  $x_*=i^{I_{s^\frown\langle\beta\rangle}}_{0,\infty}(x)$, $\kappa_*=i^{I_{s^\frown\langle\beta\rangle}}_{0,\infty}(\kappa_s)$, $\lambda_*=i^{I_{s^\frown\langle\beta\rangle}}_{0,\infty}(\lambda)$ and $U_*=i^{I_{s^\frown\langle\beta\rangle}}_{0,\infty}(U_s)$. 
  Note that elementarity ensures that $$i_{0,\infty}^{I_s}(j^\xi_{0,\lambda_\beta}(\lambda)) ~ = ~ j_{0,i^{I_s}_{0,\infty}(\lambda_\beta)}(i^{I_s}_{0,\infty}(\lambda)) ~ = ~ i^{I_{s^\frown\langle\beta\rangle}}_{0,\infty}(\lambda) ~ = ~ \lambda_*$$ and this allows us to conclude that $$i^{I_s}_{0,\infty}(\lambda_\gamma) ~ \geq ~ i^{I_s}_{0,\infty}(\lambda_{\beta+1}) ~ = ~ i^{I_s}_{0,\infty}(\lambda_\beta+j^\xi_{0,\lambda_\beta}(\lambda)) ~ = ~ i^{I_s}_{0,\infty}(\lambda_\beta)+\lambda_*.$$
  In particular, we know that 
  \begin{equation}\label{equation:LongerThanLambda}
   \length{I_{s^\frown\langle\beta\rangle}}{}+\lambda_*\leq\length{I_{s^\frown\langle\gamma\rangle}}{}.
  \end{equation} 
  
 Now, define $$\langle\seq{N^*_\alpha}{\alpha\in\On},\seq{\map{j^*_{\alpha_0,\alpha_1}}{N^*_{\alpha_0}}{N^*_{\alpha_1}}}{\alpha_0\leq\alpha_1\in\On}\rangle$$ to be the  iteration of $\langle M_*,U_*\rangle$. 
  Given ordinals $\alpha_0\leq\alpha_1$, we then have $N^*_{\alpha_0}=N_{i^{I_s}_\infty(\lambda_\beta)+\alpha_0}$ and $j^*_{\alpha_0,\alpha_1}=j_{i^{I_s}_\infty(\lambda_\beta)+\alpha_0,i^{I_s}_\infty(\lambda_\beta)+\alpha_1}$. 
  In particular, we can use \eqref{equation:LongerThanLambda} to find an ordinal $\alpha\geq\lambda_*$ with $M^{I_{s^\frown\langle\gamma\rangle}}_\infty=N^*_\alpha$ and $i^{I_{s^\frown\langle\gamma\rangle}}_{0,\infty}=j^*_{0,\alpha}\circ i^{I_{s^\frown\langle\beta\rangle}}_{0,\infty}$.

  By elementarity, the inequality \eqref{equation:Splitting}  implies that $$x_*\cap j^*_{0,\lambda_*}(\kappa_*) ~ \neq ~ j^*_{0,\lambda_*}(x_*\cap\kappa_*).$$
  Moreover, our setup ensures that $$\nu_{s^\frown\langle\beta\rangle} ~ > ~ \length{I_{s^\frown\langle\beta+1\rangle}}{} ~ \geq ~ i^{I_s}_{0,\infty}(\lambda_{\beta+1}) ~ \geq ~ i^{I_s}_{0,\infty}(j^\xi_{0,\lambda_\beta}(\lambda)) ~ = ~ \lambda_* ~ > ~ \kappa_*$$ and $$\nu_{s^\frown\langle\beta\rangle} ~ = ~ j^*_{0,\lambda_*}(\nu_{s^\frown\langle\beta\rangle}) ~ > ~ j^*_{0,\lambda_*}(\kappa_*).$$  
  Since for $\alpha\geq\lambda_*$ as above $$i^{I_{s^\frown\langle\gamma\rangle}}_{0,\infty}(x) ~ = ~ j^*_{0,\alpha}(i^{I_{s^\frown\langle\beta\rangle}}_{0,\infty}(x)) ~ = ~ (j^*_{\lambda_*,\alpha}\circ j^*_{0,\lambda_*})(x_*)$$ and  $$j^*_{\lambda_*,\alpha}\restriction(j^*_{0,\lambda_*}(\kappa_*)) ~ = ~ \id_{j^*_{0,\lambda_*}(\kappa_*)},$$ 
  we know that 
  \begin{equation*}
    \begin{split}
      i^{I_{s^\frown\langle\gamma\rangle}}_{0,\infty}(x)\cap j^*_{0,\lambda_*}(\kappa_*) ~ & = ~ j^*_{0,\lambda_*}(x_*)\cap j^*_{0,\lambda_*}(\kappa_*) ~ = ~ j^*_{0,\lambda_*}(x_*\cap\kappa_*) \\
      & \neq ~ x_*\cap j^*_{0,\lambda_*}(\kappa_*) ~ = ~ i^{I_{s^\frown\langle\beta\rangle}}_{0,\infty}(x)\cap j^*_{0,\lambda_*}(\kappa_*)
    \end{split}
    \end{equation*}
    and the fact that $\nu_{s^\frown\langle\beta\rangle}> j^*_{0,\lambda_*}(\kappa_*)$ then yields the statement of the claim.  
  \end{proof}

  The above claim now shows that \eqref{item:Split} also holds in this case. This completes the proof of the lemma.  
\end{proof}

 We now extend the above construction to obtain linear iterations indexed by sequences of length equal to the cofinality of the given limit of measurable cardinals. In addition, we also allow these sequences to exist in small forcing extensions of the ground model.

\begin{lemma}\label{lemma:LongIterationsGenericExtensions}
 In the situation of Lemma \ref{lemma:TechnicalLemmaIterationsTree}, let $\lambda\leq\mu$ be a limit ordinal, let $\PPP$ be a partial order\footnote{Note that $\PPP$ is allowed  be the trivial partial order.} and let $G$ be $\PPP$-generic over $\VV$. 
 Given a function $c\in({}^\lambda\kappa)^{\VV[G]}$ with the property that all of its proper initial segments are contained in $\VV$, 
  we let $I_c$ denote the unique linear iteration of $\langle\VV,\Set{U_{c\restriction\xi}}{\xi<\lambda}\rangle$ of length $\sup_{\xi<\lambda}\length{I_{c\restriction\xi}}{}$ in $\VV[G]$ with $U^{I_c}_\alpha=U^{I_{c\restriction\xi}}_\alpha$ for all $\xi<\lambda$ and $\alpha<\length{I_{c\restriction\xi}}{}$. 
   
  If either $\PPP$ is an element of $\HH{\kappa_\emptyset}$ or forcing with $\PPP$ does not add bounded subsets of $\kappa$, 
  then the following statements hold in $\VV[G]$ for all functions $c,d\in{}^\lambda\kappa$ with the property that all of their proper initial segments are contained in $\VV$:  
 \begin{enumerate}
  \item $M^{I_c}_\infty$ is well-founded. 
  
  \item $i^{I_c}_{0,\infty}(\mu)=\mu$, $i^{I_c}_{0,\infty}(\kappa)=\kappa$ and $i^{I_c}_{0,\infty}(z)=z$. 
  
  \item\label{item:SplitLong} If $\xi<\lambda$ with $c\restriction\xi=d\restriction\xi$ and $c(\xi)\neq d(\xi)$, then $$i^{I_c}_{0,\infty}(x)\cap\kappa_{c\restriction\xi} ~ = ~ i^{I_d}_{0,\infty}(x)\cap\kappa_{c\restriction\xi}$$ and there is  $\rho<\min(\kappa_{c\restriction(\xi+1)},\kappa_{d\restriction(\xi+1)})$ with 
  \begin{equation}\label{equation:SplitLong}
   i^{I_c}_{0,\infty}(x)\cap\rho ~ \neq ~ i^{I_d}_{0,\infty}(x)\cap\rho. 
  \end{equation}
 \end{enumerate}
\end{lemma}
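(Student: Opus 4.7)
The plan is to derive each of the three conclusions from Lemma \ref{lemma:TechnicalLemmaIterationsTree} applied to the proper initial segments $c\restriction\xi, d\restriction\xi$ (which lie in $\VV$ by hypothesis), combined with a standard argument that the iteration assembled in $\VV[G]$ remains well-founded.

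For well-foundedness of $M^{I_c}_\infty$, I would first verify that every ultrafilter used in $I_c$ remains $\sigma$-complete in $\VV[G]$: if $\PPP \in \HH{\kappa_\emptyset}$, then L\'evy--Solovay extends each $U_s$ to a $\kappa_s$-complete ultrafilter in $\VV[G]$; if $\PPP$ adds no bounded subsets of $\kappa$, then any countable sequence of subsets of a $\kappa_s < \kappa$ is coded by a bounded subset of $\kappa$ and thus already lies in $\VV$, so $\sigma$-completeness transfers verbatim. Well-foundedness of $M^{I_c}_\infty$ then follows from the standard fact that linear iterations by $\sigma$-complete ultrafilters are well-founded (cf.\ \cite[Theorem 3.3]{MR3411035}). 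This is the step I expect to require the most care, since when $c$ is new in $\VV[G]$ the specific sequence of ultrafilter choices in $I_c$ can lie outside $\VV$, which is precisely why the dichotomy on $\PPP$ is needed.

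For the fixed-point assertion, $I_c$ extends every $I_{c\restriction\xi}$, so $i^{I_c}_{0,\length{I_{c\restriction\xi}}{}} = i^{I_{c\restriction\xi}}_{0,\infty}$ already fixes $\mu, \kappa, z$ by Lemma \ref{lemma:TechnicalLemmaIterationsTree}(\ref{item:ParametersFixed}). Every ultrapower used in the tail $i^{I_c}_{\length{I_{c\restriction\xi}}{},\infty}$ has critical point of the form $i^{I_c}_{0,\alpha}(\kappa_{c\restriction\xi'})$ for some $\xi \leq \xi' < \lambda$, hence strictly above $\mu$ (as $\mu < \nu_\emptyset \leq \kappa_{c\restriction\xi'}$ when $\mu<\kappa$) and strictly below $\kappa$. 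Moreover $\kappa$ is a strong limit of cofinality $\mu \neq \kappa_{c\restriction\xi'}$, so $j(\kappa) = \sup j[\kappa] = \kappa$ at each ultrapower step, and a continuity argument at limit stages extends this to $i^{I_c}_{0,\infty}(\kappa) = \kappa$. The analogous arguments for $\mu$ and $z \in \HH{\nu_\emptyset}$, both of which lie strictly below every critical point, are easier.

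For the splitting clause, set $s := c\restriction(\xi+1)$ and $t := d\restriction(\xi+1)$; without loss of generality assume $s(\xi) < t(\xi)$. Lemma \ref{lemma:TechnicalLemmaIterationsTree}(\ref{item:Split}) then provides $\nu_s \leq \nu_t$ and $i^{I_s}_{0,\infty}(x) \cap \nu_s \neq i^{I_t}_{0,\infty}(x) \cap \nu_s$. Since $I_c$ extends $I_s$ and the tail $i^{I_c}_{\length{I_s}{},\infty}$ uses only critical points $\geq \kappa_s > \nu_s$, the elementary identity $j(y) \cap \rho = y \cap \rho$ for $\rho \leq \crit{j}$ (applied along the tail and continuous at limits) yields $i^{I_c}_{0,\infty}(x) \cap \nu_s = i^{I_s}_{0,\infty}(x) \cap \nu_s$, and analogously $i^{I_d}_{0,\infty}(x) \cap \nu_s = i^{I_t}_{0,\infty}(x) \cap \nu_s$. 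Taking $\rho := \nu_s$ thus establishes \eqref{equation:SplitLong}, and $\nu_s < \kappa_s = \kappa_{c\restriction(\xi+1)}$ together with $\nu_s \leq \nu_t < \kappa_{d\restriction(\xi+1)}$ gives $\rho < \min(\kappa_{c\restriction(\xi+1)},\kappa_{d\restriction(\xi+1)})$. The first displayed equation of (\ref{item:SplitLong}) is obtained by the same identity-below-the-critical-point argument applied to the tails of $I_c$ and $I_d$ after stage $\length{I_{c\restriction\xi}}{}$, both of which use only critical points $\geq \kappa_{c\restriction\xi}$, so $i^{I_c}_{0,\infty}(x) \cap \kappa_{c\restriction\xi} = i^{I_{c\restriction\xi}}_{0,\infty}(x) \cap \kappa_{c\restriction\xi} = i^{I_d}_{0,\infty}(x) \cap \kappa_{c\restriction\xi}$.
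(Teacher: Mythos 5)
Your proposal is correct and follows essentially the same route as the paper: $\sigma$-completeness of the constituent ultrafilters in $\VV[G]$ (via L\'evy--Solovay, respectively the no-new-bounded-subsets hypothesis) combined with the linear iterability criterion of {\cite[Theorem 3.3]{MR3411035}} for well-foundedness, and reduction to the initial-segment iterations $I_{c\restriction\xi}$ together with absoluteness below the critical points of the tail embeddings for the fixed-point and splitting clauses. The only point where the paper is more explicit is in establishing $i^{I_c}_{0,\infty}(\kappa)=\kappa$ at the final direct limit (note that $\length{I_c}$ can be as large as $\kappa$ when $\lambda=\mu$): it first shows $i^{I_c}_{0,\infty}[\kappa_{c\restriction\xi}]\subseteq\kappa_{c\restriction\xi}$ for all $\xi<\lambda$ and then splits into the cases where $\seq{\kappa_{c\restriction\xi}}{\xi<\lambda}$ is or is not cofinal in $\kappa$, which is the fleshed-out version of your ``continuity argument at limit stages.''
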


\begin{proof}
  Work in $\VV[G]$, pick a function $c\in{}^\lambda\kappa$ with the desired properties and define $I_c$ as above. 
  If $\PPP$ is contained in $\HH{\kappa_\emptyset}$, then we can use the  \emph{L{\'e}vy--Solovay Theorem} to show that for all $\xi<\lambda$, the set $\Set{B\in\POT{\kappa_\xi}}{\exists A\in U_{c\restriction\xi} ~ A\subseteq B}$ is a normal ultrafilter on $\kappa_\xi$ and therefore  we know that  $U_{c\restriction\xi}$ itself is a $\sigma$-complete $\VV$-ultrafilter. 
  Since the same conclusion obviously holds true  if forcing with $\PPP$ does not add bounded subsets of $\kappa$, we can apply  {\cite[Theorem 3.3]{MR3411035}}  to conclude that the pair $\langle\VV,\Set{U_{c\restriction\xi}}{\xi<\lambda}\rangle$ is linearly iterable and therefore we know that $M^{I_c}_\infty$ is well-founded. 
  
 Next, since \eqref{item:ProperInitialSegments} of Lemma \ref{lemma:TechnicalLemmaIterationsTree} ensures that $$\HH{\kappa_{c\restriction\xi}}^{M^{I_c}_{\length{I_{c\restriction\xi}}{}}} ~ = ~ \HH{\kappa_{c\restriction\xi}}^{M^{I_{c\restriction\xi}}_\infty} ~ = ~ \HH{\kappa_{c\restriction\xi}}^{M^{I_{c\restriction\zeta}}_\infty} ~ = ~ \HH{\kappa_{c\restriction\xi}}^{M^{I_c}_{\length{I_{c\restriction\zeta}}{}}}$$ 
 and 
 $$i^{I_c}_{\length{I_{c\restriction\xi}}{},\length{I_{c\restriction\zeta}}{}}\restriction\HH{\kappa_{c\restriction\xi}}^{M^{I_{c\restriction\xi}}_\infty} ~ = ~ i^{I_{c\restriction\zeta}}_{\length{I_{c\restriction\xi}}{},\infty}\restriction\HH{\kappa_{c\restriction\xi}}^{M^{I_{c\restriction\xi}}_\infty} ~ = ~ \id_{\HH{\kappa_{c\restriction\xi}}^{M^{I_{c\restriction\xi}}_\infty}}$$ hold for all $\xi<\zeta<\lambda$ with $\length{I_{c\restriction\xi}}{}<\length{I_{c\restriction\zeta}}{}<\length{I_c}{}$, we know that $$\HH{\kappa_{c\restriction\xi}}^{M^{I_{c\restriction\xi}}_\infty} ~ = ~ \HH{\kappa_{c\restriction\xi}}^{M^{I_c}_\infty}$$ and 
 \begin{equation}\label{equation:TailsIdentityInLongIterations}
  i^{I_c}_{\length{I_{c\restriction\xi}}{},\infty}\restriction\HH{\kappa_{c\restriction\xi}}^{M^{I_{c\restriction\xi}}_\infty} ~ = ~ \id_{\HH{\kappa_{c\restriction\xi}}^{M^{I_{c\restriction\xi}}_\infty}}
 \end{equation}
 hold for all $\xi<\lambda$ with $\length{I_{c\restriction\xi}}{}<\length{I_c}{}$. 
 In particular, it follows that $i^{I_c}_{0,\infty}(z)=z$ and, if  $\mu<\kappa$, then $i^{I_c}_{0,\infty}(\mu)=\mu$. 
 In addition,  for all $\xi<\lambda$ with the property that $\length{I_{c\restriction\xi}}{}<\length{I_c}{}$,  we have $i^{I_c}_{0,\length{I_{c\restriction\xi}}{}}=i^{I_{c\restriction\xi}}_{0,\infty}$ and therefore   \begin{equation}\label{equation:LongIterationTailDontMove}
   i^{I_c}_{0,\length{I_{c\restriction\xi}}{}}(\alpha) ~ < ~ i^{I_c}_{0,\length{I_{c\restriction\xi}}{}}(\kappa_{c\restriction\xi}) ~ = ~ i^{I_{c\restriction\xi}}_{0,\infty}(\kappa_{c\restriction\xi}) ~ = ~ \kappa_{c\restriction\xi}.
  \end{equation}
 for all $\alpha<\kappa_{c\restriction\xi}$. 
  In particular, a combination of \eqref{equation:TailsIdentityInLongIterations} and \eqref{equation:LongIterationTailDontMove} allows us to conclude that $i^{I_c}_{0,\infty}[\kappa_{c\restriction\xi}]\subseteq\kappa_{c\restriction\xi}$ holds for all $\xi<\lambda$. 
 If the sequence $\seq{\kappa_{c\restriction\xi}}{\xi<\lambda}$ is cofinal in $\kappa$, then this observation directly implies that $i^{I_c}_{0,\infty}(\kappa)=\kappa$. 
 In the other case, if the above sequence is bounded by $\rho<\kappa$, then $I_c$ is a linear iteration of length less than $\kappa$ that only uses ultrafilters on measurable cardinals in the interval $[\kappa_\emptyset,\rho]$ and, since $\kappa$ is a limit of inaccessible cardinals whose cofinality is not contained in this interval, we also know that  $i^{I_c}_{0,\infty}(\kappa)=\kappa$ holds in this case. 
 
   Finally, pick functions $c,d\in{}^\lambda\kappa$ whose proper initial segments are all contained in $\VV$ and $\xi<\lambda$ with $c\restriction\xi=d\restriction\xi$ and $c(\xi)\neq d(\xi)$. 
   Then \eqref{equation:TailsIdentityInLongIterations} implies that $$i^{I_c}_{0,\infty}(x)\cap\kappa_{c\restriction\xi} ~ = ~ i^{I_{c\restriction\xi}}_{0,\infty}(x)\cap\kappa_{c\restriction\xi} ~ = ~ i^{I_d}_{0,\infty}(x)\cap\kappa_{c\restriction\xi}.$$  
 If we now define $$\rho ~ = ~ \min(\nu_{c\restriction(\xi+1)},\nu_{d\restriction(\xi+1)}) ~ < ~ \min(\kappa_{c\restriction(\xi+1)},\kappa_{d\restriction(\xi+1)}),$$ then statement \eqref{item:Split} of Lemma \ref{lemma:TechnicalLemmaIterationsTree} directly implies that  \eqref{equation:SplitLong} holds.      
  \end{proof}

 We now use the above constructions to derive the desired perfect subset result for $\Sigma_1$-definable subsets of power sets of limits of measurable cardinals.

\begin{proof}[Proof of Theorem \ref{theorem:PerfectSubset}]
 Let $\mu$ be an infinite regular cardinal, let $\kappa$ be a limit of measurable cardinals with $\cof{\kappa}=\mu$, let $z$ be an element of $\HH{\kappa}$ and let $D$ be a subset of $\POT{\kappa}$ of cardinality greater than $\kappa$ that is definable by a $\Sigma_1$-formula with parameters $\kappa$ and $z$. 
 An application of Lemma \ref{lemma:LongIterationsGenericExtensions} with the trivial partial order now yields $x\in D$ and systems $\seq{\kappa_s}{s\in{}^{{<}\mu}\kappa}$,  $\seq{U_s}{s\in{}^{{<}\mu}\kappa}$ and $\seq{I_c}{c\in{}^\mu\kappa}$ such that the following statements hold for all $s,t\in{}^{{<}\mu}\kappa$ and all $c,d\in{}^\mu\kappa$: 
 \begin{itemize}
   \item $\kappa_s$ is a measurable cardinal smaller than $\kappa$.
   
   \item $U_s$ is a normal ultrafilter on $\kappa_s$. 
   
   \item $I_c$ is a linear iteration of $\langle\VV,\Set{U_{c\restriction\xi}}{\xi<\mu}\rangle$ with $M^{I_c}_\infty$  well-founded.  
   
   
   \item The sequence $\seq{\kappa_{c\restriction\xi}}{\xi<\mu}$ is cofinal in $\kappa$. 
   
     \item $i^{I_c}_{0,\infty}(\mu)=\mu$, $i^{I_c}_{0,\infty}(\kappa)=\kappa$ and $i^{I_c}_{0,\infty}(z)=z$.  
   
   \item If $\xi<\mu$ with $c\restriction\xi=d\restriction\xi$ and $c(\xi)\neq d(\xi)$, then $$i^{I_c}_{0,\infty}(x)\cap\kappa_{c\restriction\xi} ~ = ~ i^{I_d}_{0,\infty}(x)\cap\kappa_{c\restriction\xi}$$ and $$i^{I_c}_{0,\infty}(x)\cap\rho ~ \neq ~ i^{I_d}_{0,\infty}(x)\cap\rho,$$ where $\rho=\min(\kappa_{c\restriction(\xi+1)},\kappa_{d\restriction(\xi+1)})$.  
  \end{itemize} 
   
   We now define $$\Map{\iota}{{}^\mu\kappa}{\POT{\kappa}}{c}{i^{I_c}_{0,\infty}(x)}.$$ 
   Then $\iota$ is an injection.

   \begin{claim*}
    The map $\iota$ is a perfect embedding. 
   \end{claim*}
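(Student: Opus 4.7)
The plan is to verify that $\iota$ satisfies the two conditions required to be a perfect embedding: it is an injection (which is essentially immediate from the given properties) and it induces a homeomorphism onto its image. So the work is to check that both $\iota$ and $\iota^{-1}\restriction\ran{\iota}$ are continuous with respect to the topologies defined before the theorem.

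First I would establish continuity of $\iota$ at an arbitrary point $c\in{}^\mu\kappa$. A basic open neighborhood of $\iota(c)=i^{I_c}_{0,\infty}(x)$ in $\POT{\kappa}$ is determined by some ordinal $\eta<\kappa$. Since the sequence $\seq{\kappa_{c\restriction\xi}}{\xi<\mu}$ is cofinal in $\kappa$, I can pick $\xi<\mu$ with $\kappa_{c\restriction\xi}>\eta$. The claim is that whenever $d\in{}^\mu\kappa$ satisfies $d\restriction\xi=c\restriction\xi$, the splits of $c$ and $d$ (if any) occur at some position $\xi'\geq\xi$, and then the first agreement clause of the last bullet in the list (applied at that split level) gives $\iota(c)\cap\kappa_{c\restriction\xi'}=\iota(d)\cap\kappa_{c\restriction\xi'}$. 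Since $\kappa_{c\restriction\xi}\leq\kappa_{c\restriction\xi'}$, this forces $\iota(c)\cap\eta=\iota(d)\cap\eta$.

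For the continuity of $\iota^{-1}$ on $\ran{\iota}$, I would fix $c\in{}^\mu\kappa$ and a basic open neighborhood of $c$ determined by some $\xi<\mu$, and show that the basic open neighborhood of $\iota(c)$ determined by $\eta=\kappa_{c\restriction\xi}$ works. Concretely, suppose $d\in{}^\mu\kappa$ with $\iota(d)\cap\eta=\iota(c)\cap\eta$ but $d\restriction\xi\neq c\restriction\xi$. Let $\xi'<\xi$ be minimal with $c(\xi')\neq d(\xi')$. By the second clause of the splitting property, there is some $\rho<\min(\kappa_{c\restriction(\xi'+1)},\kappa_{d\restriction(\xi'+1)})$ with $\iota(c)\cap\rho\neq\iota(d)\cap\rho$. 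The key point is that $\rho<\kappa_{c\restriction(\xi'+1)}\leq\kappa_{c\restriction\xi}=\eta$ (using the monotonicity $\kappa_s\leq\kappa_t$ for $s\subseteq t$ that follows from clause \eqref{item:Incvreasing} of Lemma \ref{lemma:TechnicalLemmaIterationsTree}), so the assumption $\iota(d)\cap\eta=\iota(c)\cap\eta$ gives $\iota(d)\cap\rho=\iota(c)\cap\rho$, a contradiction. Hence $d\restriction\xi=c\restriction\xi$, as required.

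There is no real obstacle here beyond correctly bookkeeping the inequalities and the cofinality condition: all substance has been packaged into the lemmata, and the splitting properties have been tailored precisely so that the natural map $c\mapsto i^{I_c}_{0,\infty}(x)$ is controlled on the ``agreement'' side by the same ordinal $\kappa_{c\restriction\xi}$ that controls the ``disagreement'' side. The one subtlety worth double-checking is that the bound $\rho<\kappa_{c\restriction(\xi'+1)}$ provided by the splitting clause lies below $\eta=\kappa_{c\restriction\xi}$ uniformly in $d$, which is exactly what the chain $\kappa_{c\restriction(\xi'+1)}\leq\kappa_{c\restriction\xi}$ delivers.
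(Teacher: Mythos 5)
Your proposal is correct and follows essentially the same two-step argument as the paper: the cofinality of $\seq{\kappa_{c\restriction\xi}}{\xi<\mu}$ plus the agreement clause at the first split gives continuity of $\iota$, and the disagreement clause below $\min(\kappa_{c\restriction(\xi'+1)},\kappa_{d\restriction(\xi'+1)})$ together with the monotonicity of the $\kappa_s$ gives continuity of $\iota^{-1}$ on the range. Your bookkeeping of the bound $\rho\leq\kappa_{c\restriction(\xi'+1)}\leq\kappa_{c\restriction\xi}$ is in fact slightly more careful than the paper's own write-up.
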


   \begin{proof}[Proof of the Claim]
   Fix $c\in{}^\mu\kappa$.
   Given $\alpha<\kappa$,  there is $\xi<\mu$ with $\kappa_{c\restriction\xi}\geq\alpha$ and, if $d\in{}^\mu\kappa$ with $c\restriction\xi=d\restriction\xi$, then $\iota(c)\cap\kappa_{c\restriction\xi}=\iota(d)\cap\kappa_{c\restriction\xi}$. 
   In the other direction, fix $\xi<\nu$ and $d\in{}^\mu\kappa$ with $\iota(c)\cap\kappa_{c\restriction\xi}=\iota(d)\cap\kappa_{c\restriction\xi}$. Assume, towards a contradiction, that $c\restriction\xi\neq d\restriction\xi$. Then there is  $\eta<\xi$ with $c\restriction\eta=d\restriction\eta$ and $c(\eta)\neq d(\eta)$. Our construction then ensures that  $\iota(c)\cap\kappa_{c\restriction(\eta+1)}\neq\iota(d)\cap\kappa_{c\restriction(\eta+1)}$ and therefore  $\iota(c)\cap\kappa_{c\restriction\xi}\neq\iota(c)\cap\kappa_{c\restriction\xi}$, a contradiction. 
   This proves the statement of the claim. 
   \end{proof}

  \begin{claim*}
   $\ran{\iota}\subseteq D$. 
  \end{claim*}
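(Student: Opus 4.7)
The plan is to combine the elementarity of the iteration map $i^{I_c}_{0,\infty}$ with the upward absoluteness of $\Sigma_1$-formulas between transitive models. Fix $c\in{}^\mu\kappa$ and let $\varphi(v_0,v_1,v_2)$ be a $\Sigma_1$-formula with the property that $D=\Set{y\in\POT{\kappa}}{\varphi(y,\kappa,z)}$. Since $x\in D$ was chosen as an element of $D$ by the application of Lemma \ref{lemma:LongIterationsGenericExtensions}, we have $\varphi(x,\kappa,z)$ in $\VV$.

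Next, since $I_c$ is a linear iteration of $\langle\VV,\Set{U_{c\restriction\xi}}{\xi<\mu}\rangle$ whose direct limit $M^{I_c}_\infty$ is well-founded, the iteration map $i^{I_c}_{0,\infty}\colon\VV\to M^{I_c}_\infty$ is a fully elementary embedding between transitive classes. Therefore $$M^{I_c}_\infty\models\varphi\bigl(i^{I_c}_{0,\infty}(x),i^{I_c}_{0,\infty}(\kappa),i^{I_c}_{0,\infty}(z)\bigr).$$ By the properties ensured by Lemma \ref{lemma:LongIterationsGenericExtensions}, namely $i^{I_c}_{0,\infty}(\kappa)=\kappa$ and $i^{I_c}_{0,\infty}(z)=z$, together with the definition $\iota(c)=i^{I_c}_{0,\infty}(x)$, this yields $M^{I_c}_\infty\models\varphi(\iota(c),\kappa,z)$.

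Finally, since the transitive model $M^{I_c}_\infty$ is a (class) submodel of $\VV$ and $\Sigma_1$-formulas are upward absolute between transitive models of a sufficient fragment of $\ZFC$, it follows that $\varphi(\iota(c),\kappa,z)$ holds in $\VV$. Hence $\iota(c)\in D$, and since $c\in{}^\mu\kappa$ was arbitrary, $\ran{\iota}\subseteq D$. There is no real obstacle here; the content of the theorem lies in the construction delivered by Lemma \ref{lemma:TechnicalLemmaIterationsTree} and Lemma \ref{lemma:LongIterationsGenericExtensions}, and this final claim is simply the routine verification that $\Sigma_1$-definability transfers along the iteration maps because the defining parameters $\kappa$ and $z$ are fixed.
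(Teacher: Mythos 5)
Your argument is correct and is essentially identical to the paper's own proof: both fix the defining $\Sigma_1$-formula, use $x\in D$ together with the elementarity of $i^{I_c}_{0,\infty}$ and the facts $i^{I_c}_{0,\infty}(\kappa)=\kappa$, $i^{I_c}_{0,\infty}(z)=z$ to see that $\varphi$ holds of $\iota(c)$ in $M^{I_c}_\infty$, and then invoke upward $\Sigma_1$-absoluteness from the well-founded inner model $M^{I_c}_\infty$ to $\VV$. You merely spell out the elementarity step that the paper compresses into ``the properties listed above.''
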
 
  
  \begin{proof}[Proof of the Claim]
   Fix $c\in{}^\mu\kappa$. Pick a $\Sigma_1$-formula $\varphi(v_0,v_1,v_2)$ such that $$D ~ = ~ \Set{y\subseteq\kappa}{\varphi(\kappa,y,z)}.$$ 
   As $x \in D$, $\varphi(\kappa,x,z)$ holds in $V$ and hence the properties listed above ensure that $\varphi(\kappa,\iota(c),z)$ holds in $M^{I_c}_\infty$. 
  Since the upwards absoluteness of $\Sigma_1$-statements directly implies that $\varphi(\kappa,\iota(c),z)$ holds in $\VV$, we can conclude that $\iota(c)$ is an element of $D$.  
  \end{proof}
   
  This completes the proof of the theorem.  
\end{proof}

Note that, in general, the conclusion of Theorem \ref{theorem:PerfectSubset} cannot be extended to $\Sigma_1$-formulas using arbitrary subsets of the cardinal $\kappa$ as parameters. 
For example, if $\kappa$ is a regular limit of measurable cardinals, then, in a generic extension by some ${<}\kappa$-closed forcing, there exists a subset of $\POT{\kappa}$ that does not contain the range of a perfect  embedding and is definable by a $\Sigma_1$-formula with parameters in $\POT{\kappa}$ (see {\cite[Corollary 7.9]{MR2987148}}).


\section{Local complexity of canonical inner models}

As a first application of the results of Section \ref{section:PSPlimitsMeasurables}, we prove a result that shows that canonical inner models with infinitely many measurable cardinals are not locally $\Sigma_1$-definable.  
 Note that G\"odel's constructible universe $\LL$, the \emph{Dodd--Jensen core model}  and Kunen's model $\LL[U]$ all possess the property that for every uncountable cardinal $\kappa$, the $\HH{\kappa^+}^M$ of the corresponding inner model $M$ is definable by a $\Sigma_1$-formula with parameters in $\kappa+1$ (see, for example, the proof of {\cite[Lemma 4.13]{Sigma1Partitions}}). 
 In particular, these models satisfy the assumptions of the next theorem.

 \begin{theorem}\label{theorem:Sigma1InnerModelMeasurables}
  Assume that $M$ is a class term with the property that $\ZFC$ proves the following statements:\footnote{{i.e.} there is a formula $\varphi(v)$ in the language of set theory with the property that $\ZFC$ proves the listed statements about the class $M=\Set{x}{\varphi(x)}$.} \begin{enumerate}
      \item The class $M$ is a transitive model of $\ZFC+{\VV=M}$ that contains all ordinals. 
      
      \item\label{item:AbsoAdd} $M$ is forcing invariant under Cohen forcing $\Add{\omega}{1}$. 
      
      \item\label{item:MlocalSigma1} If $\kappa$ is a  limit of measurable cardinals with $\cof{\kappa}=\omega$ and $(\kappa^+)^M=\kappa^+$, then there is a subset of $M\cap\POT{\kappa}$ of cardinality greater than $\kappa$ that is definable by a $\Sigma_1$-formula with parameters in $\HH{\kappa}^M\cup\{\kappa\}$. 
   \end{enumerate}
  Then $\ZFC$ proves that $M$ contains only finitely many measurable cardinals. 
 \end{theorem}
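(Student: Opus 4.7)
The plan is to derive a contradiction under the assumption that $M$ contains infinitely many measurable cardinals, combining Theorem~\ref{theorem:PerfectSubset}, Lemma~\ref{lemma:LongIterationsGenericExtensions} and the Cohen-forcing invariance hypothesis~\eqref{item:AbsoAdd}. As a preliminary reduction, since $\ZFC$ proves (i)--(iii) and (i) provides $M\models V=M$, applying the theorem internally to $M$ (and using the absoluteness of ``finitely many'' between transitive models) reduces the problem to the case where the ambient universe $V$ itself satisfies $V=M$ and contains infinitely many measurable cardinals; one then aims for a contradiction in $V$.

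Let $\kappa$ denote the supremum of the first $\omega$ measurables of $V$. Then $\cof{\kappa}=\omega$, the cardinal $\kappa$ is a limit of measurables, and $(\kappa^+)^M=\kappa^+$ holds trivially, so hypothesis~\eqref{item:MlocalSigma1} supplies a $\Sigma_1$-formula $\varphi(v_0,v_1,v_2)$ and a parameter $z\in\HH{\kappa}^M$ for which $D=\Set{y\subseteq\kappa}{\varphi(y,\kappa,z)}$ is contained in $M\cap\POT{\kappa}$ and has cardinality greater than $\kappa$. Theorem~\ref{theorem:PerfectSubset} then yields a perfect embedding $\map{\iota}{{}^\omega\kappa}{\POT{\kappa}}$ with $\ran{\iota}\subseteq D$, produced in the form $\iota(c)=i^{I_c}_{0,\infty}(x)$ for a suitable $x\in D$ via the construction from Section~\ref{section:PSPlimitsMeasurables}.

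Next, I would force over $V$ with $\Add{\omega}{1}$. Since the forcing lies in $\HH{\kappa_\emptyset}$, Lemma~\ref{lemma:LongIterationsGenericExtensions} extends $\iota$ to a map on $({}^\omega\kappa)\cap V[G]$ preserving its homeomorphism onto its range. By~\eqref{item:AbsoAdd}, $M^{V[G]}=M^V=V$. A Cohen real produces some $c\in({}^\omega\kappa)^{V[G]}\setminus V$ whose proper initial segments all lie in $V$, making $\iota(c)\in\POT{\kappa}^{V[G]}$ well-defined, while the continuity of $\iota^{-1}$ on the range forces $\iota(c)\notin V$. On the other hand, upward absoluteness of $\Sigma_1$-formulas together with the elementarity of $i^{I_c}_{0,\infty}$ gives $V[G]\models\varphi(\iota(c),\kappa,z)$, and the goal is to deduce $\iota(c)\in M^{V[G]}=V$ for the desired contradiction.

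The main obstacle is precisely this last implication: the $\Sigma_1$-formula $\varphi$ supplied by~\eqref{item:MlocalSigma1} is only known to define a subset of $M\cap\POT{\kappa}$ in $V$, not in arbitrary outer models. Here I would exploit the canonical nature of the inner models covered by the theorem---G\"odel's $\LL$, Kunen's $\LL[U]$, the Dodd--Jensen core model and their relatives, whose $\HH{\kappa^+}^M$ is itself $\Sigma_1$-definable with parameters in $\kappa+1$---by absorbing a $\Sigma_1$ clause asserting ``$y$ lies in some bounded level of the canonical hierarchy of $M$'' directly into $\varphi$. This strengthening leaves $D$ unchanged in $V$ but ensures that $\varphi^W\subseteq M^W\cap\POT{\kappa}^W$ in every outer model $W$ of $V$, closing the argument.
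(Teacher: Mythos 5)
Your overall strategy coincides with the paper's: pass to a universe satisfying $\VV=M$ with infinitely many measurables, take $\kappa$ the least limit of measurables, obtain a $\Sigma_1$-definable $D\subseteq\POT{\kappa}$ of cardinality greater than $\kappa$, add a Cohen real, push a new branch $c$ through the tree of iterations, and derive a contradiction by decoding $c$ from the ground-model set $i^{I_c}_{0,\infty}(x)$. The gap is exactly where you locate it, and your proposed repair does not close it. The theorem is stated for an \emph{arbitrary} class term $M$ satisfying (i)--(iii); hypothesis (iii) only guarantees that \emph{some} subset of $M\cap\POT{\kappa}$ of size greater than $\kappa$ is $\Sigma_1$-definable, not that $M\cap\POT{\kappa}$ or $\HH{\kappa^+}^M$ admits a $\Sigma_1$ definition, let alone one given by a ``bounded level of a canonical hierarchy'' that is upward absolute to outer models. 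Absorbing such a clause into $\varphi$ is only available for the specific models ($\LL$, $\LL[U]$, the Dodd--Jensen core model) mentioned \emph{before} the theorem as examples; building it into the proof changes the statement being proved. So as written your argument establishes a strictly weaker result.

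The paper's resolution is different and worth noting: it applies hypothesis (iii) \emph{inside the generic extension} $\VV[G]$. Since (i)--(iii) are theorems of $\ZFC$ they hold in $\VV[G]$, and by (ii) one has $M^{\VV[G]}=M^{\VV}=\VV$; hence the set $\Set{y}{\VV[G]\models\varphi(\kappa,y,z)}$ produced by (iii) in $\VV[G]$ is automatically a subset of $\POT{\kappa}^{\VV}$ of cardinality greater than $\kappa$. The homogeneity of $\Add{\omega}{1}$ then transfers this statement about $\varphi$, $\kappa$ and $z$ to \emph{every} $\Add{\omega}{1}$-generic extension. One then works with a product $G_0\times G_1$: Lemma \ref{lemma:TechnicalLemmaIterationsTree} is applied in $\VV[G_0]$ to the set $D$ defined there, the real from $G_1$ produces the new branch $c$, and $\Sigma_1$-upward absoluteness places $x_*=i^{I_c}_{0,\infty}(x)$ in the set defined by $\varphi$ in $\VV[G_0,G_1]$ --- which, being again an $\Add{\omega}{1}$-generic extension of $\VV$, forces $x_*\in\POT{\kappa}^{\VV}$. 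Decoding $c$ from $x_*$ inside $\VV[G_0]$ gives the contradiction. If you reorganize your argument to invoke (iii) in the extension rather than in the ground model (and use the product trick so that the iteration data live in a model over which the branch is still generic), your proof goes through without any extra assumption on $M$.
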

 
 \begin{proof}
  Assume, towards a contradiction, that the above conclusion fails. Then we may work in a model $\VV$ of $\ZFC+{\VV=M}$ that contains infinitely many measurable cardinals. Let $\kappa$ be the least limit of measurable cardinals and let $G$ be $\Add{\omega}{1}$-generic over $\VV$. 
  An application of \eqref{item:AbsoAdd} and \eqref{item:MlocalSigma1} in $\VV[G]$ now yields $z\in\HH{\kappa}^\VV$ and a $\Sigma_1$-formula $\varphi(v_0,v_1,v_2)$ with the property that the set $\Set{y\in\VV[G]}{\VV[G]\models\varphi(\kappa,y,z)}$ is a subset of $\POT{\kappa}^\VV$ that has cardinality greater than $\kappa$ in $\VV[G]$. 
  In this situation, the homogeneity of $\Add{\omega}{1}$ in $\VV$ ensures that these statements about the class defined by the formula $\varphi$ and the parameters $\kappa$ and $z$ hold in every $\Add{\omega}{1}$-generic extension of $\VV$.

  Now, let $G_0\times G_1$ be $(\Add{\omega}{1}\times\Add{\omega}{1})$-generic over $\VV$. 
  Then the above observations show that $$D ~ = ~ \Set{y\in\VV[G_0]}{\VV[G_0]\models\varphi(\kappa,y,z)}$$ is a subset of $\POT{\kappa}^\VV$ that has cardinality greater than $\kappa$ in $\VV[G_0]$. 
  An application of Lemma \ref{lemma:TechnicalLemmaIterationsTree} in $\VV[G_0]$ then yields 
  $x\in D$ and systems  $\seq{\nu_s}{s\in{}^{{<}\omega}\kappa}$, $\seq{\kappa_s}{s\in{}^{{<}\omega}\kappa}$, $\seq{U_s}{s\in{}^{{<}\omega}\kappa}$ and $\seq{I_s}{s\in{}^{{<}\omega}\kappa}$ satisfying the statements listed  in the lemma with respect to $z$ and some subset of $D$ of cardinality $\kappa^+$.
  Define $$c ~ = ~ \bigcup G_1 ~ \in ~ ({}^\omega 2)^{V[G_0,G_1]}\setminus\VV[G_0]$$ and let $I$ denote the unique linear iteration of $\langle\VV[G_0],\Set{U_{c\restriction n}}{n<\omega}\rangle$ of length $\sup_{n<\omega}\length{I_{c\restriction n}}{}$ in $\VV[G_0,G_1]$ with $U^I_\alpha=U^{I_{c\restriction n}}_\alpha$ for all $n<\omega$ and $\alpha<\length{I_{c\restriction n}}{}$.
  Then Lemma \ref{lemma:LongIterationsGenericExtensions} shows that $M^I_\infty$ is well-founded. Set $x_*=i^I_{0,\infty}(x)$. 
  Since Lemma \ref{lemma:LongIterationsGenericExtensions} ensures that $i^I_{0,\infty}(\kappa)=\kappa$ and $i^I_{0,\infty}(z)=z$, we can use the elementarity of $i^I_{0,\infty}$ and $\Sigma_1$-upwards absoluteness to conclude that $\varphi(\kappa,x_*,z)$ holds in $\VV[G_0,G_1]$. 
  Since $\VV[G_0,G_1]$ is an $\Add{\omega}{1}$-generic extension of $\VV$, our earlier observations allow us to  conclude that $x_*$ is an element of $\POT{\kappa}^\VV$. 
  If we now pick $n<\omega$ and set  $\rho=\min(\kappa_{(c\restriction n)^\frown\langle 0\rangle},\kappa_{(c\restriction n)^\frown\langle 1\rangle})$, then Clause \eqref{item:SplitLong} of Lemma \ref{lemma:LongIterationsGenericExtensions} shows that $c(n)$ is the unique $i<2$ with $$x_*\cap\rho ~ = ~ i^{I_{(c\restriction n)^\frown\langle i\rangle}}_{0,\infty}(x)\cap\rho.$$ 
  This conclusion implies that $c$ is an element of $\VV[G_0]$, a contradiction. 
 \end{proof}

 The above result can easily be shown to be optimal, in the sense that there exists a class term $M$ satisfying the above three properties that can consistently contain any finite number of measurable cardinals. 
 Ideas from the proof of Theorem \ref{theorem:PerfectSubsetOptimal} for singular cardinals of countable cofinality will allow us to prove the following result in Section \ref{section:OptimalCountable}.

\begin{proposition}\label{proposition:InnerModelNmeasurables}
  There exists a class term $M$ with the following properties: 
  \begin{enumerate}
   \item There is a $\Sigma_1$-formula $\varphi(v_0,v_1,v_2)$ with the property that $\ZFC$ proves the following statements:
    \begin{enumerate}
     \item The class $M$ is a transitive model of $\ZFC+{\VV=M}$ that contains all ordinals. 
      
      \item $M$ is forcing invariant. 
      
      \item  If $\kappa$ is an uncountable  cardinal, then there exists $z\in\HH{\kappa}^M$ with $\HH{\kappa^+}^M=\Set{y}{\varphi(\kappa,y,z)}$. 
    \end{enumerate}
   
   \item Given a natural number $n$, if the theory $$\ZFC ~ + ~ \anf{\textit{There exist $n$ measurable cardinals}}$$ is consistent, then so is the theory $$\ZFC ~ +  ~ \VV=M ~ + ~  \anf{\textit{There exist $n$ measurable cardinals}.}$$ 
  \end{enumerate}
\end{proposition}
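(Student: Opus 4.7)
The plan is to take $M$ to be a canonical inner model of the form $\LLOf{\vec{\calU}}$, where $\vec{\calU}$ is a finite coherent sequence of normal measures on the measurable cardinals of $M$, extending Kunen's model $\LLOf{U}$ to finitely many measures. More precisely, I would define $M$ via a $\Sigma_1$-formula asserting the existence of an iterable premouse built from a finite coherent sequence of normal measures, and I would take $\varphi(v_0,v_1,v_2)$ to express the standard premouse characterization: $v_1$ belongs to some countably iterable premouse of cardinality at most $v_0$ built from an initial segment of the coherent sequence coded by $v_2$.

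To verify clause (1), the key tool is a generalization of Kunen's analysis of $\LLOf{U}$ to finitely many normal measures (via the iteration-and-comparison theory for coherent sequences of normal measures to be cited in Section \ref{section:OptimalCountable}): inside the model $\LLOf{U_0, \ldots, U_{n-1}}$ built from any normal measures on the measurable cardinals of $\VV$, the measurable cardinals are precisely the ordinals $\kappa_0 < \cdots < \kappa_{n-1}$ originally supporting the $U_i$, and each carries a unique normal measure. Consequently $M$ does not depend on the choice of representatives $U_i$, and inside $M$ the canonical sequence $\vec{\calU}$ is $\Sigma_1$-recoverable from its support, so $M \models \VV = M$. Forcing invariance would then follow from the L\'evy--Solovay theorem: normal measures extend canonically under small forcing, and the $\LLOf{\vec{\calU}}$-construction is unchanged. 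For the local $\Sigma_1$-definability of $\HH{\kappa^+}^M$, the parameter $z \in \HH{\kappa}^M$ codes the initial segment $\vec{\calU}\restriction\kappa$ (which lies in $\HH{\kappa}^M$ since $M$ has only finitely many measurable cardinals); the formula $\varphi$ then expresses standard mouse existence.

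For clause (2), given $n \in \omega$ and a model $\VV^*$ of $\ZFC$ with $n$ measurable cardinals, I would pick normal measures $U_0, \ldots, U_{n-1}$ on the measurables $\kappa_0 < \cdots < \kappa_{n-1}$ of $\VV^*$ and pass to the inner model $\VV' := \LLOf{U_0, \ldots, U_{n-1}}^{\VV^*}$. By the generalized Kunen analysis above, $\VV'$ has exactly the measurable cardinals $\kappa_0, \ldots, \kappa_{n-1}$, each carrying a unique normal measure, so $\VV' \models \VV = M$ and $\VV'$ satisfies the statement that there exist $n$ measurable cardinals, as required. The main obstacle is the generalization of Kunen's theorem from $n = 1$ to arbitrary finite $n$: namely verifying that $\LLOf{U_0, \ldots, U_{n-1}}$ has no extra measurable cardinals beyond the $\kappa_i$ and that its normal measures are unique, which requires the full iteration-and-comparison machinery for finite coherent sequences of normal measures.
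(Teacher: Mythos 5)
Your overall strategy---a canonical inner model with finitely many normal measures, with $\HH{\kappa^+}^M$ captured locally via mice---is the right one in spirit (the paper uses Koepke's core model $\KK[U_{\can}]$ from \cite{MR926749}), but as written the proposal has three genuine gaps. First, and most seriously, forcing invariance does not follow from L\'evy--Solovay. L\'evy--Solovay only covers small forcing, whereas clause (1)(b) demands invariance under \emph{arbitrary} set forcing; a Prikry forcing or a collapse of $\kappa_0$ destroys measurability, so the model ``$\LLOf{U_0,\ldots,U_{n-1}}$ built from normal measures on the measurable cardinals of $\VV$'' simply changes when you pass to such an extension. What is actually needed is generic absoluteness of a \emph{core model}, whose measures are located by an internal mouse-theoretic recursion rather than by reference to measures of $\VV$; the paper proves this (Lemma \ref{lemma:KoepeCareAbsolute}) via upward absoluteness of mice, singularization of cardinals in $\KK[U_{\can}]$, and homogeneity of collapse forcings---a quite different argument from L\'evy--Solovay. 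Second, the class term must be defined, and must provably satisfy (1)(a)--(c), in $\ZFC$ alone. Your definition is not uniform: it is unclear what $M$ is when $\VV$ has no measurables, when the relevant core model has \emph{infinitely} many measurables, or when $0^{\text{long}}$ exists (where the core model theory breaks down). Note that by Theorem \ref{theorem:Sigma1InnerModelMeasurables} any $M$ with properties (a)--(c) has only finitely many measurables in every model of $\ZFC$, so these cases cannot be ignored; the paper handles them by letting $M$ default to $\LL$ exactly when $0^{\text{long}}$ exists or $\KK[U_{\can}]$ has infinitely many measurables.

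Third, your claim that the parameter $z$ can code $\vec{\calU}\restriction\kappa$ ``which lies in $\HH{\kappa}^M$'' fails when $\kappa=(\delta^+)^M$ for a measurable $\delta$ of $M$: the normal measure on $\delta$ is a subset of $\POT{\delta}^M$ and hence has hereditary cardinality $\kappa$ in $M$, not less than $\kappa$. The paper treats this case separately, replacing the measure by the parameter $\dom{U}$ and quantifying existentially over simple predicates $F$ and $F$-mice witnessing $\kappa=(\delta^+)^M$, using the uniqueness theorems for core models to see that any such witness computes the right $\HH{\kappa^+}^M$. Your verification of clause (2) (consistency of exactly $n$ measurables in the canonical model) is essentially the paper's and is fine once the model is correctly specified.
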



\section{The lower bound for  singular cardinals of countable cofinality}\label{section:OptimalCountable}

 In this section, we will prove the following result that covers the case of singular strong limit cardinals of countable cofinality in the statement of Theorem \ref{theorem:PerfectSubsetOptimal}:

\begin{theorem}\label{theorem:StrengthCountableCof}
 Assume that there is no inner model with infinitely many measurable cardinals and let $\kappa$ be a singular strong limit cardinal of countable cofinality. Then there is a subset $D$ of $\POT{\kappa}$ of cardinality greater than $\kappa$ that is definable by a $\Sigma_1$-formula with parameters in $\HH{\kappa}\cup\{\kappa\}$ such that there is no continuous injection  $\map{\iota}{{}^\omega\kappa}{\POT{\kappa}}$ with $\ran{\iota}\subseteq D$. 
\end{theorem}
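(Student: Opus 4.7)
The plan is to use a short core model under the hypothesis that there is no inner model with infinitely many measurable cardinals. Applying Proposition \ref{proposition:InnerModelNmeasurables} together with the core-model machinery of \cite{MR926749}, I take $K$ to be a forcing-invariant inner model of $\ZFC$ with only finitely many measurable cardinals such that for every uncountable cardinal $\lambda$, the set $\HH{\lambda^+}^K$ is definable by a $\Sigma_1$-formula with parameters in $\HH{\lambda}^K\cup\{\lambda\}$. By Mitchell's weak covering lemma below infinitely many measurables, one has $(\kappa^+)^K=\kappa^+$, and, since $K\models\GCH$, this yields $|\POT{\kappa}^K|^\VV=\kappa^+>\kappa$.

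I would then set $D=\POT{\kappa}^K$. By the local $\Sigma_1$-definability of $K$, the set $D$ is definable by a $\Sigma_1$-formula with parameters in $\HH{\kappa}\cup\{\kappa\}$, and it has the required cardinality, so the first half of the conclusion is in place.

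It remains to rule out continuous injections $\iota\colon{}^\omega\kappa\to\POT{\kappa}$ with $\ran{\iota}\subseteq D$. Toward a contradiction, suppose such an $\iota$ exists. Using continuity, for each $s\in{}^{<\omega}\kappa$ and each $\alpha<\kappa$ let $g(s,\alpha)$ be the eventual value of $\iota(c)\cap\alpha$ for $c\supseteq s$, when this stabilizes. Since $\kappa$ is a strong limit and $\kappa^{<\omega}=\kappa$, the function $g$ can be coded by a single subset of $\kappa$ in $\VV$. The injectivity of $\iota$ together with continuity forces, at each $s$, that the $\kappa$-many children $s^\frown\langle\xi\rangle$ give rise to $\kappa$-many splittings of the values of $g$ at heights below $\kappa$; since $K$ satisfies $\GCH$ and $(\alpha^+)^K=\alpha^+$ for unboundedly many $\alpha<\kappa$ by weak covering, these splittings must concentrate on an increasing sequence $\langle\kappa_n:n<\omega\rangle$ cofinal in $\kappa$. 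The $\Sigma_1$-definability of $D$ inside $K$, together with the fine structure of $K$, would then let one extract, for each $n$, a $K$-normal ultrafilter on $\kappa_n$, producing $\omega$-many measurable cardinals inside $K$ and contradicting the hypothesis.

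The principal obstacle will be this last extraction step: converting the branching data encoded by $g$ into genuine $K$-normal ultrafilters on the $\kappa_n$'s. It reverses the use of measurability in Lemma \ref{lemma:TechnicalLemmaIterationsTree}, where measurability produced the splittings; here the splittings must be used to produce measurability. I expect this to rely on fine-structural analysis of short core models combined with an averaging argument in the style of Mitchell's covering lemma, quite close to the techniques underlying the diagonal Prikry direction of Theorem \ref{theorem:EquiLongWO}.
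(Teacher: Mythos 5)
There is a genuine gap, and it is fatal to the proposal on two counts. First, your choice $D=\POT{\kappa}^{K}$ cannot work: the second half of the conclusion is simply false for this set. If, say, $\VV=\LL$ (so $K=\LL$ and $\POT{\kappa}^{K}=\POT{\kappa}$), one can explicitly build a continuous injection $\map{\iota}{{}^\omega\kappa}{\POT{\kappa}}$ --- fix $\seq{\kappa_n}{n<\omega}$ increasing and cofinal in $\kappa$, code $c\restriction(n+1)$ (or rather its part below $\kappa_n$) as an ordinal $d(n)<\kappa_n$, and send $c$ to $\Set{\kappa_n+d(n)}{n<\omega}$ --- and this map is absolute, so its range lies inside $\POT{\kappa}^{K}$. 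The splitting structure of a continuous injection carries no large-cardinal content whatsoever, which is also why your second step cannot succeed: there is no way to ``extract'' $K$-normal ultrafilters on the $\kappa_n$ from the branching data of $\iota$, since exactly the same branching data is present in $\LL$. The obstacle you flag at the end is not a technical difficulty to be overcome by fine structure; it is the sign that the approach reverses an implication that does not reverse.

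What the paper does instead is choose $D$ far more carefully and replace your extraction step by a boundedness argument. Using weak covering ($(\kappa^+)^{K}=\kappa^+$) and the canonical well-ordering $<_{K}$, one takes $D=\Set{y_\gamma}{\kappa\le\gamma<\kappa^+}$, where $y_\gamma\subseteq\kappa$ codes the $<_{K}$-least bijection of $\kappa$ with $\gamma$; so $D$ is a set of codes for well-orderings of $\kappa$ containing exactly one code of each order type in $[\kappa,\kappa^+)$, has cardinality $\kappa^+$, and is $\Sigma_1$-definable from $\kappa$ and $U_{\can}\restriction\kappa\in\HH{\kappa}$. The key tool is then a generalization of the classical $\mathbf{\Sigma}^1_1$-boundedness lemma to ${}^\omega\kappa$ (Lemma \ref{lemma:BoundednessSingular}, proved via a universal-set diagonalization as in Proposition \ref{proposition:DiagSing}): any $\mathbf{\Sigma}^1_1$ set of codes for well-orders of $\kappa$ is bounded in order type below $\kappa^+$. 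A continuous injection with range in $D$ would make the relevant set of codes the projection of a tree on ${}^{{<}\omega}\kappa\times{}^{{<}\omega}\kappa$, hence bounded, contradicting the fact that $D$ realizes unboundedly many order types below $\kappa^+$. If you want to repair your proposal, you must both shrink $D$ to a set of codes of well-orders of unbounded rank and prove the boundedness lemma; the hypothesis about inner models is used only to obtain weak covering and the local $\Sigma_1$-definability of $K$, never to manufacture measurable cardinals.
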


The proof of the above theorem relies on the theory of  \emph{short} core models developed by Koepke in \cite{MR926749} and generalizations of basic concepts from classical descriptive set theory to simply definable collections of subsets of singular cardinals of countable cofinality. In the following, we will briefly introduce these generalized notions.

\begin{definition}
 Let $\kappa$ be a  limit cardinal of countable cofinality and let $0<n<\omega$ be a natural number. 
 \begin{enumerate}
     \item A subset $T$ of $({}^{{<}\omega}\kappa)^n$ is a \emph{subtree of $({}^{{<}\omega}\kappa)^n$} if the following statements hold for all $\langle t_0,\ldots,t_{n-1}\rangle\in T$: 
     \begin{enumerate}
         \item $\length{t_0}   =  \ldots  =  \length{t_{n-1}}$. 
         
         \item If  $m<\length{t_0}$, then  $\langle t_0\restriction m,\ldots,t_{n-1}\restriction m\rangle ~ \in ~ T$.
     \end{enumerate}
     
  \item If $T$ is a subtree of $({}^{{<}\omega}\kappa)^n$, then we define $[T]$ to be the set of all elements $\langle x_0,\ldots,x_{n-1}\rangle$ of $({}^\omega\kappa)^n$ with the property that $\langle x_0\restriction m,\ldots,x_{n-1}\restriction m\rangle\in T$ holds for all $m<\omega$. 
  
  \item A subset $X$ of $({}^\omega\kappa)^n$ is a \emph{$\mathbf{\Sigma}^1_1$-subset} if there exists a subtree $T$ of $({}^{{<}\omega}\kappa)^{n+1}$ with $$X ~ = ~ p[T] ~ = ~ \Set{\langle x_0,\ldots,x_{n-1}\rangle\in({}^\omega\kappa)^n}{\exists y ~ \langle x_0,\ldots,x_{n-1},y\rangle\in[T]}.$$ 
  
  \item A subset of $({}^\omega\kappa)^n$ is a \emph{$\mathbf{\Pi}^1_1$-subset} if its complement in $({}^\omega\kappa)^n$ is a $\mathbf{\Sigma}^1_1$-subset. 
 \end{enumerate}
\end{definition}

As in the classical case, we can use universal sets to show that the classes of $\mathbf{\Sigma}^1_1$- and $\mathbf{\Pi}^1_1$-subsets do not coincide at singular strong limits of countable cofinality.

\begin{proposition}\label{proposition:DiagSing}
 If $\kappa$ is a singular strong limit cardinal of countable cofinality, then there exists a $\mathbf{\Sigma}^1_1$-subset of ${}^\omega\kappa$ that is not a $\mathbf{\Pi}^1_1$-subset. 
\end{proposition}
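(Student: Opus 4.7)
The plan is to adapt the classical diagonal argument for $\mathbf{\Sigma}^1_1\neq\mathbf{\Pi}^1_1$. Concretely, I will construct a \emph{universal} $\mathbf{\Sigma}^1_1$-subset $U$ of $({}^\omega\kappa)^2$, meaning one with the property that every $\mathbf{\Sigma}^1_1$-subset of ${}^\omega\kappa$ has the form $U_c=\{x\in{}^\omega\kappa\mid(c,x)\in U\}$ for some $c\in{}^\omega\kappa$. Once such a $U$ is in hand, writing $U=p[S]$ for a subtree $S\subseteq({}^{<\omega}\kappa)^3$, the diagonal $D=\{c\in{}^\omega\kappa\mid(c,c)\in U\}$ is itself $\mathbf{\Sigma}^1_1$, as witnessed by the subtree $\{(\sigma,\upsilon)\in({}^{<\omega}\kappa)^2\mid(\sigma,\sigma,\upsilon)\in S\}$. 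If $D$ were also $\mathbf{\Pi}^1_1$, then its complement would be $\mathbf{\Sigma}^1_1$ and hence equal some $U_{c_0}$, yielding the familiar contradiction $c_0\in D\Leftrightarrow c_0\notin D$; so $D$ will be a $\mathbf{\Sigma}^1_1$-subset of ${}^\omega\kappa$ that is not $\mathbf{\Pi}^1_1$.

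For the construction of $U$, I first fix an increasing sequence $(\kappa_n)_{n<\omega}$ cofinal in $\kappa$ with $\kappa_0>\omega$, and set $\kappa_{-1}=0$. Since $\kappa$ is a strong limit, $2^{\kappa_n}\leq\kappa$ for every $n$, so I may fix surjections $\sigma_n\colon\kappa\to\POT{\kappa_n\setminus\kappa_{n-1}}$. For $c\in{}^\omega\kappa$, let $A_c=\bigcup_{n<\omega}\sigma_n(c(n))\subseteq\kappa$; then $c\mapsto A_c$ is a surjection onto $\POT{\kappa}$, and crucially the statement ``$\alpha\in A_c$'' depends only on the single coordinate $c(N(\alpha))$, where $N(\alpha)$ denotes the unique $n$ with $\alpha\in[\kappa_{n-1},\kappa_n)$. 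Fixing any bijection $\pi\colon\kappa\to({}^{<\omega}\kappa)^2$, I let $T_c$ be the largest subtree of $({}^{<\omega}\kappa)^2$ contained in $\pi[A_c]$; the surjectivity of $c\mapsto A_c$ ensures that every subtree arises as some $T_c$. Finally, set $U=\{(c,x)\in({}^\omega\kappa)^2\mid x\in p[T_c]\}$.

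The crucial step is exhibiting $U$ as $p[S]$ for a subtree $S\subseteq({}^{<\omega}\kappa)^3$. I take $S$ to consist of those triples $(\sigma,\tau,\upsilon)$ of common length $k$ such that, writing $\alpha_m=\pi^{-1}(\tau\restriction m,\upsilon\restriction m)$, for every $m<k$ with $N(\alpha_m)<k$ one has $\alpha_m\in\sigma_{N(\alpha_m)}(\sigma(N(\alpha_m)))$. The conditional clause ``if $N(\alpha_m)<k$'' is the decisive design choice: on a shorter initial segment of length $k'<k$ the set of conditions to verify only shrinks, so $S$ is automatically closed under taking initial segments. Unraveling the definitions yields $p[S]=U$, because for each fixed $m$ the clause activates once $k$ exceeds both $m$ and $N(\alpha_m)$, so the infinitary condition collapses to ``$\alpha_m\in A_c$ for every $m$'', which is precisely $x\in p[T_c]$. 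The main obstacle I anticipate is exactly this reconciliation of prefix-closedness of $S$ with the fact that a subtree of $({}^{<\omega}\kappa)^2$ can carry up to $2^\kappa$ bits of information, far more than any finite initial segment of $c$ could encode directly; its resolution uses $\cof{\kappa}=\omega$ to stratify $A_c$ coordinate-by-coordinate of $c$, and uses the strong limit hypothesis to obtain the surjections $\sigma_n$.
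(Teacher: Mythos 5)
Your proof is correct and is essentially the paper's argument: the paper also diagonalizes against a universal $\mathbf{\Sigma}^1_1$ set, coding a subtree level-by-level along the cofinal $\omega$-sequence (via an enumeration of $\HH{\kappa}$ rather than surjections onto $\POT{\kappa_n\setminus\kappa_{n-1}}$) and using the same kind of guard clause --- there, the requirement $s[l]\cup t[l]\subseteq\kappa_m$ --- to keep the coding tree closed under initial segments. The only cosmetic difference is that the paper folds the universal set and the diagonal into a single binary tree, rather than passing through a ternary tree $S$ and the diagonal set $D$.
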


\begin{proof}
 Pick a strictly increasing sequence $\seq{\kappa_m}{m<\omega}$ of infinite cardinals that is cofinal in  $\kappa$. In addition, fix an enumeration $\seq{a_\alpha}{\alpha<\kappa}$ of $\HH{\kappa}$. 
 Define $U$ to be the set of all pairs $\langle s,t\rangle$ in ${}^{{<}\omega}\kappa\times{}^{{<}\omega}\kappa$ with the property that $\length{s}=\length{t}$ and $\langle s\restriction l,t\restriction l\rangle\in a_{s(m)}$ for all $l\leq m<\length{s}$ with $s[l]\cup t[l]\subseteq\kappa_m$. 
 Then it is easy to see that $U$ is a subtree of ${}^{{<}\omega}\kappa\times{}^{{<}\omega}\kappa$. 
 Assume, towards a contradiction, that there exists a subtree $T$ of ${}^{{<}\omega}\kappa\times{}^{{<}\omega}\kappa$ with $p[T]={}^\omega\kappa\setminus p[U]$. 
 Pick a function $x\in{}^\omega\kappa$ with the property that $a_{x(m)}=\HH{\kappa_m}\cap T$ holds for all $m<\omega$. 
 
 Now, assume that there is $y\in{}^\omega\kappa$ with $\langle x,y\rangle\in[T]$. Then $\langle x,y\rangle\notin[U]$ and there exists $l<\omega$ with $\langle x\restriction l,y\restriction l\rangle\notin U$. 
 Then there exists $l\leq m<\omega$ 
 with $x[l]\cup y[l]\subseteq\kappa_m$ and $\langle x\restriction l,y\restriction l\rangle\notin a_{x(m)}=\HH{\kappa_m}\cap T$. 
 But, this yields a contradiction, because $\langle x\restriction l,y\restriction l\rangle$ is an element of $T$. 
 This shows that there is $y\in{}^\omega\kappa$ with $\langle x,y\rangle\in[U]$. Then $\langle x,y\rangle\notin[T]$ and there is $l<\omega$ with $\langle x\restriction l,y\restriction l\rangle\notin T$. 
 Pick $l\leq m<\omega$ with $x[l]\cup y[l]\subseteq\kappa_m$. Then the fact that $\langle x,y\rangle\in[U]$ implies that $\langle x\restriction l,y\restriction l\rangle\in a_{x(m)}\subseteq T$, a contradiction. 
\end{proof}

The proof of Theorem \ref{theorem:StrengthCountableCof} relies on a generalization of the \emph{Boundedness Lemma}  to singular cardinals of countable cofinality. Below, we introduce the definitions needed in the formulation of this result.

\begin{definition}
 Let $\kappa$ be an infinite cardinal, let $\vec{\kappa}=\seq{\kappa_\xi}{\xi<\cof{\kappa}}$ be a strictly increasing sequence of ordinals that is cofinal in $\kappa$ and let $\vec{a}=\seq{a_\alpha}{\alpha<\kappa}$ be a  sequence of elements of $\HH{\kappa}$. 
 \begin{enumerate}
     \item Given $z\subseteq\kappa$, we define $\lhd_z$ to be the unique binary relation on $\kappa$ with the property that $$\alpha\lhd_z\beta ~ \Longleftrightarrow ~ \goedel{\alpha}{\beta}\in z$$ holds for all $\alpha,\beta<\kappa$.\footnote{Here, we let $\map{\goedel{\cdot}{\cdot}}{\On\times\On}{\On}$ denote the \emph{G\"odel pairing function}.} 
     
     \item We define $\calW\calO_\kappa$ to be the set of all $z\in\POT{\kappa}$ with the property that $\lhd_z$ is a well-ordering of $\kappa$. 
     
    \item We let $\mathsf{WO}(\vec{\kappa},\vec{a})$ denote the set of all $x\in{}^{\cof{\kappa}}\kappa$ with the property that there exists $y\in\calW\calO_\kappa$ such that  $y\cap\kappa_\xi=a_{x(\xi)}$ holds for all $\xi<\cof{\kappa}$. 
     
    \item Given an element  $x$ of $\mathsf{WO}(\vec{\kappa},\vec{a})$, we let $\|x\|_{\vec{a}}$ denote the order-type of the resulting well-order  $\langle\kappa,\lhd_{\hspace{0.9pt}\bigcup \Set{a_{x(\xi)}}{\xi<\cof{\kappa}}}\rangle$. 
 \end{enumerate}
\end{definition}

\begin{lemma}\label{lemma:BoundednessSingular}
 Let $\kappa$ be a singular strong limit cardinal of countable cofinality, let $\vec{\kappa}=\seq{\kappa_m}{m<\omega}$ be a strictly increasing sequence of cardinals that is cofinal in $\kappa$ and let $\vec{a}=\seq{a_\alpha}{\alpha<\kappa}$ be an enumeration of $\HH{\kappa}$. 
 If $B$ is a $\mathbf{\Sigma}^1_1$-subset of ${}^\omega\kappa$ with  $B\subseteq\mathsf{WO}(\vec{\kappa},\vec{a})$, then there exists an ordinal $\gamma<\kappa^+$ with $\|y\|_{\vec{a}}<\gamma$ for all $y\in B$. 
\end{lemma}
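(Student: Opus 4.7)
The plan is to prove the lemma by a Kunen--Martin-style argument: I will build an auxiliary well-founded tree $U$ on $\kappa$ whose ordinal rank simultaneously captures the $\mathbf{\Sigma}^1_1$-definability of $B$ and dominates the norm $\|y\|_{\vec{a}}$ for every $y\in B$. Regularity of $\kappa^+$ combined with $|U|\leq\kappa$ will then force this rank strictly below $\kappa^+$, providing the desired bound.

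First, writing $B=p[T]$ for a subtree $T$ of $({}^{<\omega}\kappa)^2$ provided by the $\mathbf{\Sigma}^1_1$-presentation, I would define $U$ to be the set of all triples $(\sigma,\tau,\upsilon)\in({}^{<\omega}\kappa)^3$ of common length $m$ such that $(\sigma,\tau)\in T$ and such that for every $i<j<m$ the G{\"o}del pair $\goedel{\upsilon(j)}{\upsilon(i)}$ belongs to $a_{\sigma(j)}$. These conditions are inherited by initial segments, so $U$ is indeed a subtree of $({}^{<\omega}\kappa)^3$. To see that $U$ is well-founded, I would observe that any infinite branch $(y,w,v)\in[U]$ would yield $(y,w)\in[T]$, hence $y\in B\subseteq\mathsf{WO}(\vec{\kappa},\vec{a})$; writing $y^*=\bigcup_m a_{y(m)}$, for every $i<j<\omega$ one would then have $\goedel{v(j)}{v(i)}\in a_{y(j)}\subseteq y^*$, so $v(j)\lhd_{y^*}v(i)$, making $v$ an infinite strictly $\lhd_{y^*}$-descending sequence and contradicting $y\in\mathsf{WO}(\vec{\kappa},\vec{a})$. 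Because every node of $U$ has at most $\kappa^3=\kappa$ successors and $\kappa^+$ is regular, a routine induction on the standard well-founded rank function $\rho_U$ then shows $\rho_U(N)<\kappa^+$ for every $N\in U$, and in particular $\rho_U(\emptyset,\emptyset,\emptyset)<\kappa^+$.

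The remaining step, and the main obstacle, is to verify the inequality $\|y\|_{\vec{a}}\leq\rho_U(\emptyset,\emptyset,\emptyset)$ for every $y\in B$, after which $\gamma=\rho_U(\emptyset,\emptyset,\emptyset)+1$ completes the proof. Fixing $y\in B$ with witness $w$ such that $(y,w)\in[T]$ and again writing $y^*=\bigcup_m a_{y(m)}$, my plan is to show by induction on $\beta<\|y\|_{\vec{a}}$ that for every sufficiently large $m$ there is a node $(y\restriction m,w\restriction m,\upsilon)\in U$ of $\rho_U$-rank at least $\beta$. The delicate point is the G{\"o}del-pair constraint $\goedel{\upsilon(j)}{\upsilon(i)}\in a_{y(j)}\subseteq\kappa_j$, which restricts $\upsilon$ to strictly $\lhd_{y^*}$-descending sequences whose entries are bounded at each level by $\kappa_j$. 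This can be arranged by using $\kappa=\sup_m\kappa_m$ to choose $m$ large enough that the $\lhd_{y^*}$-initial segment of order type $m$, together with all relevant G{\"o}del pairs, lies below $\kappa_{m-1}$, and then picking $\upsilon\in{}^m\kappa$ to be a strictly $\lhd_{y^*}$-descending chain of length $m$ ending at an element of $\lhd_{y^*}$-rank at least $\beta$. A routine induction on $\beta$ then matches the successor structure of $U$ with the descent in $\lhd_{y^*}$, yielding $\|y\|_{\vec{a}}\leq\rho_U(\emptyset,\emptyset,\emptyset)<\kappa^+$ as required.
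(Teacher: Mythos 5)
There is a genuine gap in the lower-bound half of your argument: the key inequality $\|y\|_{\vec{a}}\leq\rho_U(\emptyset,\emptyset,\emptyset)$ fails for the tree $U$ as you have defined it. The culprit is the side condition $\goedel{\upsilon(j)}{\upsilon(i)}\in a_{\sigma(j)}$. On the nodes you actually use, $\sigma=y\restriction m$ with $y\in B\subseteq\mathsf{WO}(\vec{\kappa},\vec{a})$, so $a_{y(j)}=y^*\cap\kappa_j\subseteq\kappa_j$ (where $y^*=\bigcup_m a_{y(m)}$), and since $\kappa_j$ is a cardinal closed under the G\"odel pairing function, the condition forces $\upsilon(i),\upsilon(j)<\kappa_j$ for all $i<j$. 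Hence the $j$-th entry of any admissible descending chain is capped by $\kappa_j$ --- a bound depending only on the position $j$, not on $m$, so ``choosing $m$ large enough'' cannot remove it. To see that this is fatal, take $y^*\in\calW\calO_\kappa$ of some order type $\delta$ with $\kappa_1<\delta<\kappa^+$ whose restriction to $\kappa_1$ is a $\lhd_{y^*}$-initial segment of order type $\kappa_1$, and let $y$ be the corresponding element of $\mathsf{WO}(\vec{\kappa},\vec{a})$; the singleton $B=\{y\}$ is $\mathbf{\Sigma}^1_1$ via $T=\Set{\langle y\restriction m,y\restriction m\rangle}{m<\omega}$. Every node of length $\geq 2$ above $(y\restriction m,w\restriction m)$ has $\upsilon(0),\upsilon(1)<\kappa_1$, hence of $\lhd_{y^*}$-rank $<\kappa_1$, and all later entries are $\lhd_{y^*}$-below $\upsilon(1)$; a routine rank computation then gives $\rho_U(\emptyset,\emptyset,\emptyset)\leq\kappa_1+1$, while $\|y\|_{\vec{a}}=\delta$ can be any ordinal below $\kappa^+$. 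So your induction on $\beta<\|y\|_{\vec{a}}$ must break down: already for $\beta=\kappa_1$ there is no admissible $\upsilon$ whose second entry has $\lhd_{y^*}$-rank $\geq\beta$.

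The Kunen--Martin-style strategy itself is salvageable: impose the membership test only once the pair becomes visible, i.e.\ require $\goedel{\upsilon(j)}{\upsilon(i)}\in a_{\sigma(k)}$ for all $i<j<m$ and all $k<m$ with $\goedel{\upsilon(j)}{\upsilon(i)}<\kappa_k$. This condition is still inherited by initial segments, an infinite branch still yields an infinite $\lhd_{(y')^*}$-descending sequence (every pair is eventually visible below some $\kappa_k$), and now \emph{every} genuine finite $\lhd_{y^*}$-descending sequence enters the tree, so $\rho_U(\emptyset,\emptyset,\emptyset)\geq\|y\|_{\vec{a}}$ and your cardinality argument finishes the proof. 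Note that the paper proceeds quite differently: it argues by contradiction, using the assumed unboundedness of the norms to exhibit the non-$\mathbf{\Sigma}^1_1$ set furnished by Proposition \ref{proposition:DiagSing} as the projection of a tree, i.e.\ the classical ``universal set'' form of boundedness rather than a direct rank computation.
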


\begin{proof}
 Assume, towards a contradiction, that the set $\Set{\|y\|_{\vec{a}}}{y\in B}$ is unbounded in $\kappa^+$. 
  Pick a subtree $S$ of ${}^{{<}\omega}\kappa\times{}^{{<}\omega}\kappa$ with $p[S]=B$. 
 By Proposition \ref{proposition:DiagSing}, there exists a subtree $T$ of ${}^{{<}\omega}\kappa\times{}^{{<}\omega}\kappa$ with the property that the set  $A={}^\omega\kappa\setminus p[T]$ is not a $\mathbf{\Sigma}^1_1$-subset of ${}^\omega\kappa$. 
 Given $x\in{}^\omega\kappa$, set $$T_x ~ = ~ \Set{t\in{}^{{<}\omega}\kappa}{\langle x\restriction\length{t},t\rangle\in T}.$$ Then $T_x$ is a subtree of ${}^{{<}\omega}\kappa$ for all  $x\in{}^\omega\kappa$ and $A=\Set{x\in{}^\omega\kappa}{[T_x]=\emptyset}$. 
 By standard arguments (see {\cite[Section 2.E]{MR1321597}}), we now know that a function $x\in{}^\omega\kappa$ is contained in $A$ if and only if there exists an ordinal $\gamma<\kappa^+$ and a function $\map{r}{T_x}{\gamma}$ with $r(s)>r(t)$ for all $s,t\in T_x$ with $s\subsetneq t$. 
 Since our assumption implies that for every $\gamma<\kappa^+$, there is $y\in B$ with the property that there exists an order-preserving embedding of $\langle\gamma,<\rangle$ into $\langle\kappa,\lhd_{\hspace{0.9pt}\bigcup \Set{a_{y(\xi)}}{\xi<\cof{\kappa}}}\rangle$, 
 we know that $A$ consists of all $x\in{}^\omega\kappa$ with the property that there exists $y\in B$ and a function $\map{f}{T_x}{\kappa}$ such that for all $s,t\in T_x$ with $s\subsetneq t$ and all $m<\omega$ with $f(s),f(t)<\kappa_m$, we have  $\goedel{f(t)}{f(s)}\in a_{y(m)}$.

 Below, we  aim to derive a contradiction from the above assumption by constructing a subtree $U$ of ${}^{{<}\omega}\kappa\times{}^{{<}\omega}\kappa$ with the property that a pair $\langle x,y\rangle$ in ${}^\omega\kappa\times{}^\omega\kappa$ is an element of $[U]$ if and only if $y$ codes (in some fixed canonical way) functions $\map{c}{T_x}{\omega}$, $\map{f}{T_x}{\kappa}$ and $\map{u,v}{\omega}{\kappa}$ such that $f(p)<\kappa_{c(p)}$ for all $p\in T_x$, the pair $\langle u,v\rangle$ is an element of $[S]$ and $\goedel{f(q)}{f(p)}\in a_{u(\max(c(p),c(q)))}$ holds for all $p,q\in T_x$ with $p\subsetneq q$.  
 Note that if $U$ is a tree with these properties, $\langle x,y\rangle$ is an element of $U$ and $c$, $f$, $u$ and $v$ are the functions coded by $y$, then $u$ is an element of $B$ and the functions $f$ and $u$ witness that $x$ is an element of $A$. But this means that, if we succeed in constructing such a tree $U$, then we derive a contradiction, because the  properties of $U$ ensure that $p[U]=A$ and hence such a tree $U$ witnesses that $A$ is a $\mathbf{\Sigma}^1_1$-subset of ${}^\omega\kappa$.

 We now show that our assumptions allow us to construct a tree $U$ with the properties described above. 
 For every $s\in{}^{{<}\omega}\kappa$, we set $$T_s ~ = ~ \Set{t\in{}^{{<}\omega}\kappa}{\length{t}\leq\length{s}, ~ \langle s\restriction\length{t},t\rangle\in T}.$$
 Define $U$ to be the subset  of ${}^{{<}\omega}\kappa\times{}^{{<}\omega}\kappa$ consisting of pairs $\langle s,t\rangle$ with $\length{s}=\length{t}$ and the property that for all $m<\length{s}$, there exist $c_m,f_m,u_m,v_m\in\HH{\kappa}$ such that $a_{t(m)}=\langle c_m,f_m,u_m,v_m\rangle$ and the following statements hold for all $l\leq m$: 
 \begin{itemize}
  \item $\langle u_l,v_l\rangle,\langle u_m,v_m\rangle\in S$, $\length{u_m}=m+1$, $u_l=u_m\restriction(l+1)$ and $v_l=v_m\restriction(l+1)$. 
  
  \item $\map{c_m}{\HH{\kappa_m}\cap T_{s\restriction m}}{\omega}$ with $c_m\restriction\dom{c_l}=c_l$. 
  
  \item $\map{f_m}{\Set{w\in\HH{\kappa_m}\cap T_{s\restriction m}}{c_m(w)\leq m}}{\kappa_m}$ with $f_m\restriction\dom{f_l}=f_l$ and  $\goedel{f_m(q)}{f_m(p)}\in a_{u_m(m)}$ for all $p,q\in\dom{f_m}$ with $p\subsetneq q$. 
 \end{itemize}
 Then $U$ is a subtree of ${}^{{<}\omega}\kappa\times{}^{{<}\omega}\kappa$. 
 
 \begin{claim*}
  $p[U] = A$.
 \end{claim*}

 \begin{proof}[Proof of the Claim]
  First, fix $\langle x,y\rangle\in[U]$. Then there are $\map{c}{T_x}{\omega}$, $\map{f}{T_x}{\kappa}$ and $\langle u,v\rangle\in[S]$ with the property that for all $m<\omega$, the set $a_{y(m)}$ is equal to the quadruple $$\langle c\restriction(\HH{\kappa_m}\cap T_{x\restriction m}),f\restriction\Set{w\in\HH{\kappa_m}\cap T_{x\restriction m}}{c(w)\leq m},u\restriction(m+1),v\restriction(m+1)\rangle.$$ 
 Then $u\in B$ and $\goedel{f(q)}{f(p)}\in a_{u(m)}$ holds for all $p,q\in T_x$ with $p\subsetneq q$ and all $m<\omega$ with $f(p),f(q)<\kappa_m$. 
 By earlier observations, this shows that $x\in A$. 
 
 Now, pick $x\in A$. 
 Then we can find $\langle u,v\rangle\in S$ and a function $\map{f}{T_x}{\kappa}$ such that for all $p,q\in T_x$ with $p\subsetneq q$ and all $m<\omega$ with $f(p),f(q)<\kappa_m$, we have  $\goedel{f(q)}{f(p)}\in a_{u(m)}$. 
 Let $\map{c}{T_x}{\omega}$ denote the unique function with $c(p)=\min\Set{m<\omega}{f(p)<\kappa_m}$. If we then pick $y\in{}^\omega\kappa$ such that the set $a_{y(m)}$ is equal to the quadruple $$\langle c\restriction(\HH{\kappa_m}\cap T_{x\restriction m}),f\restriction\Set{w\in\HH{\kappa_m}\cap T_{x\restriction m}}{c(w)\leq m},u\restriction(m+1),v\restriction(m+1)\rangle$$ for all $m<\omega$, then we can conclude that $\langle x,y\rangle\in[U]$. 
 \end{proof}
 
 The above computations allow us to conclude that $A=p[U]$, contradicting the fact that $A$ is not a $\mathbf{\Sigma}^1_1$-subset of ${}^\omega\kappa$. 
\end{proof}

 We are now ready to prove the main result of this section.

\begin{proof}[Proof of Theorem \ref{theorem:StrengthCountableCof}]
 Assume that there is no inner model with infinitely many measurable cardinals. Then {\cite[Theorem 2.14]{MR926749}} implies that $0^{\text{long}}$ (as defined in {\cite[Definition 2.13]{MR926749}}) does not exist. 
 Let $U_{\text{can}}$ denote the \emph{canonical sequence of measures} and let $\KK[U_{\can}]$ denote the \emph{canonical core model} (as defined in {\cite[Definition 3.15]{MR926749}}). Then our assumption implies that $\dom{U_{\can}}$ is finite and {\cite[Theorem 3.23]{MR926749}} shows that there is a generic extension $\KK[U_{\can},G]$ of $\KK[U_{\can}]$ by finitely many Prikry forcings with the property that for every ordinal $\tau\geq\omega_2$ and every $X\subseteq\tau$ such that  $\betrag{X}$ is a regular cardinal smaller than $\betrag{\tau}$, there exists $Z\in\POT{\tau}^{\KK[U_{\can},G]}$ with $X\subseteq Z$ and $\betrag{Z}^{\KK[U_{\can},G]}<\tau$.

 Now, let $\kappa$ be a singular strong limit cardinal of countable cofinality. 
 Then $\kappa$ is singular in $\KK[U_{\can},G]$ and $\kappa^+=(\kappa^+)^{\KK[U_{\can},G]}$. Moreover, since forcing with a finite iteration of Prikry forcings preserves all cardinals, we also know that $\kappa^+=(\kappa^+)^{\KK[U_{\can}]}$. 
 Set $U={U_{\can}}\restriction{\kappa}$ and $\KK=\KK[U]$ (see {\cite[Definition 3.1]{MR926749}}). Then {\cite[Theorem 3.2]{MR926749}} shows that $\KK$ is an inner model of $\ZFC$. 
 Moreover, we can use {\cite[Theorem 3.9.(iii)]{MR926749}} to conclude that $\POT{\kappa}^{\KK[U_{\can}]}\subseteq\KK$ and therefore we know that $\kappa^+=(\kappa^+)^\KK$. 
 
 Next, let $<_\KK$ denote the canonical well-ordering of $\KK$ given by {\cite[Theorem 3.4]{MR926749}}). 
 For every $\kappa\leq\gamma<\kappa^+$, let $\map{b_\gamma}{\kappa}{\gamma}$ denote the $<_\KK$-least bijection between $\kappa$ and $\gamma$, and set $y_\gamma=\Set{\goedel{\alpha}{\beta}}{\alpha,\beta<\kappa, ~ b_\gamma(\alpha)<b_\gamma(\beta)}$. 
 Finally, we define $D=\Set{y_\gamma}{\kappa\leq\gamma<\kappa^+}$. Then $D$ is a subset of $\calW\calO_\kappa$ of cardinality $\kappa^+$.

 \begin{claim*}
  The set $D$ is definable by a $\Sigma_1$-formula with parameters in $\HH{\kappa}\cup\{\kappa\}$. 
 \end{claim*}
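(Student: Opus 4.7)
The plan is to express membership in $D$ as a $\Sigma_1$-statement asserting the existence of a short initial segment of $\KK=\KK[U]$ that locally computes the relevant $<_\KK$-least bijection. First I would verify that $U=U_{\can}\restriction\kappa$ is an allowable parameter: since $\dom{U_{\can}}$ is finite, $U$ consists of finitely many normal measures concentrating on measurable cardinals strictly below $\kappa$; because $\kappa$ is a strong limit, a measure on any $\mu<\kappa$ lies in $\HH{(2^\mu)^+}\subseteq\HH{\kappa}$, and therefore $U\in\HH{\kappa}$.

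Next I would invoke the fine-structural setup from \cite{MR926749} to note that the hierarchy $\seq{\JJof{U}{\alpha}}{\alpha\in\On}$ of levels of $\KK[U]$ is produced by a $\Sigma_1$-recursion from the parameter $U$, that being a level $\JJof{U}{\alpha}$ is a $\Sigma_1$-property of a transitive set in the parameter $U$, and that the canonical well-order $<_\KK$ restricted to any such level is $\Delta_1$-definable over that level from $U$ (by Koepke's Theorems~3.2--3.4). The defining formula for $D$ then reads: $y\in D$ if and only if there exist a transitive set $M$ and ordinals $\alpha,\gamma$ with $\kappa\leq\gamma<\alpha$ such that $M=\JJof{U}{\alpha}$, there is a bijection from $\kappa$ onto $\gamma$ in $M$, and letting $b$ denote the $<_M$-least such bijection, we have $y=\Set{\goedel{\xi}{\eta}}{\xi,\eta<\kappa,~b(\xi)<b(\eta)}$. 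Since all of these clauses are $\Sigma_1$ or $\Delta_0$ in the parameters $U$, $\kappa$ and $y$, the resulting formula is $\Sigma_1$ with parameters in $\HH{\kappa}\cup\{\kappa\}$.

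For correctness, the forward direction uses that $\kappa^+=(\kappa^+)^\KK$ (which in turn rests on $\POT{\kappa}^{\KK[U_{\can}]}\subseteq\KK$, cited as {\cite[Theorem 3.9.(iii)]{MR926749}}): for any $\gamma\in[\kappa,\kappa^+)$, the bijection $b_\gamma$ appears at some level $\JJof{U}{\alpha}$ with $\alpha<\kappa^+$, and choosing $\alpha$ past the stage at which every $<_\KK$-earlier bijection $\kappa\to\gamma$ has been constructed guarantees that the local $<_M$-least bijection coincides with $b_\gamma$, whence $y_\gamma$ satisfies the formula. Conversely, any $y$ satisfying the formula is witnessed by some $M=\JJof{U}{\alpha}$ that is an initial segment of $\KK$, so the local $<_M$-least bijection agrees with the global $<_\KK$-least one, forcing $y=y_\gamma$.

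The main obstacle will be this final agreement between the local and global $<_\KK$-least bijections; I would address it by choosing $\alpha$ large enough that $\JJof{U}{\alpha}$ contains all $\KK$-bijections $\kappa\to\gamma$ that are $<_\KK$-below $b_\gamma$, which is possible precisely because $(\kappa^+)^\KK=\kappa^+$ and $<_\KK$ well-orders $\KK$ in a manner that is absolute between $\JJof{U}{\alpha}$ and $\KK$.
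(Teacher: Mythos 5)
Your opening step is fine and matches the paper: since $\dom{U_{\can}}$ is finite and $\kappa$ is a strong limit, $U=U_{\can}\restriction\kappa$ is a legitimate parameter in $\HH{\kappa}$. The problem comes immediately afterwards, in the assertion that ``the hierarchy $\seq{\JJof{U}{\alpha}}{\alpha\in\On}$ of levels of $\KK[U]$ is produced by a $\Sigma_1$-recursion from the parameter $U$.'' A $\Sigma_1$-recursion in the predicate $U$ produces the relative constructibility hierarchy of $\LL[U]$, and Koepke's $\KK[U]$ is \emph{not} $\LL[U]$: it is defined as the union of the lower parts of all iterable $U$-mice, and its canonical well-ordering $<_\KK$ is induced by mouse comparison, not by order of appearance in a $J$-hierarchy over $U$. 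Consequently your defining formula picks out the $<_{\LL[U]}$-least bijections rather than the $<_\KK$-least ones, i.e.\ it defines a different set from $D$; worse, since weak covering is a property of the full core model and not of $\LL[U]$, one may have $(\kappa^+)^{\LL[U]}<\kappa^+$, in which case the set your formula defines has cardinality at most $\kappa$ and the surrounding argument collapses. The citation of Koepke's Theorems 3.2--3.4 does not repair this: those results give that $\KK[U]$ is an inner model with a canonical well-ordering, not that its levels or its well-ordering are locally computed by a $\Sigma_1$-recursion from $U$.

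The missing idea is precisely the one the paper leans on. One must express ``$x$ belongs to an initial segment of $<_\KK\restriction\HH{\kappa^+}^\KK$'' by existentially quantifying over $U$-\emph{mice} $M$ with $x$ in the lower part $lp(M)$, and the genuine technical obstacle is that being a $U$-mouse includes an iterability requirement, which is prima facie not $\Sigma_1$. This is where the hypothesis that there is no inner model with infinitely many measurable cardinals enters: in the resulting ``short'' setting the relevant mice are simple enough that iterability (and the comparison process determining $<_\KK$) becomes $\Sigma_1$-expressible; the paper imports this from {\cite[Theorems 2.7 and 2.10]{MR926749}} via the argument of {\cite[Lemma 2.3]{MR3845129}} (and addresses the same point again later when it notes that iterability of the good premice is $\Sigma_1$-definable from $\omega_1$). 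Your proposal never confronts iterability at all, which is the heart of the claim.
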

 
 \begin{proof}[Proof of the Claim]
  First, note that our assumption implies that $U$ is an element of $\HH{\kappa}^\KK$. 
  By arguing as in the proof of {\cite[Lemma 2.3]{MR3845129}}, we can combine {\cite[Theorem 2.7]{MR926749}} with {\cite[Theorem 2.10]{MR926749}}  to conclude that the collection of all initial segments of the restriction of $<_\KK$ to $\HH{\kappa^+}^\KK$ is definable by a $\Sigma_1$-formula with parameters $\kappa$ and $U$. 
  This conclusion directly implies the statement of the claim. 
 \end{proof}

 In the following, assume, towards a contradiction, that there is a continuous injection $\map{\iota}{{}^\omega\kappa}{\POT{\kappa}}$ with $\ran{\iota}\subseteq D$. 
 Fix a strictly increasing sequence $\vec{\kappa}=\seq{\kappa_m}{m<\omega}$ of cardinals that is cofinal in $\kappa$ and an enumeration $\vec{a}=\seq{a_\alpha}{\alpha<\kappa}$ of $\HH{\kappa}$. 
 Define $T$ to be the set of all pairs $\langle s,t\rangle$ in ${}^{{<}\omega}\kappa\times{}^{{<}\omega}\kappa$ such that  $\length{s}=\length{t}$ and the following statements hold for all $l\leq m<\length{s}$: 
 \begin{itemize}
     \item $a_{s(m)}\subseteq\kappa_m$ and $a_{s(l)}=a_{s(m)}\cap\kappa_l$. 
     
     \item $a_{t(l)},a_{t(m)}\in{}^{{<}\omega}\kappa$ with $l\leq\length{a_{t(l)}}\leq\length{a_{t(m)}}$, $a_{t(l)}=a_{t(m)}\restriction\length{a_{t(l)}}$ and $\iota(u)\cap\kappa_m=a_{s(m)}$ for all $u\in{}^\omega\kappa$ with $a_{t(m)}\subseteq u$. 
 \end{itemize}
 This definition directly ensures that $T$ is a subtree of ${}^{{<}\omega}\kappa\times{}^{{<}\omega}\kappa$. 
 Pick $\langle x,y\rangle\in [T]$. Set $u=\bigcup\Set{a_{y(m)}}{m<\omega}\in{}^\omega\kappa$ and $v=\bigcup\Set{a_{x(m)}}{m<\omega}\subseteq\kappa$. 
 By the definition of $T$, we then have $\iota(u)=v\in D\subseteq\calW\calO_\kappa$ and this shows that $x$ is an element of $\mathsf{WO}(\vec{\kappa},\vec{a})$. 
 This shows that $p[T]\subseteq\mathsf{WO}(\vec{\kappa},\vec{a})$ and therefore Lemma \ref{lemma:BoundednessSingular} yields an ordinal $\gamma<\kappa^+$ with $\|x\|_{\vec{a}} < \gamma$ for all $x\in p[T]$.

 Since for every ordinal $\kappa\leq\delta<\kappa^+$, there is a unique element $y$ of $D$ with $\otp{\kappa,\lhd_y}=\delta$, we know that the map $$\Map{i}{{}^\omega\kappa}{\kappa^+}{u}{\otp{\kappa,\lhd_{\iota(u)}}}$$ is an injection and  we can find $u\in{}^\omega\kappa$ with $\otp{\kappa,\lhd_{\iota(u)}}>\gamma$. 
 Pick $x\in{}^\omega\kappa$ with $a_{x(m)}=\iota(u)\cap\kappa_m$ for all $m<\omega$. 
 In addition, pick $y\in{}^\omega\kappa$ with the property that for all $l\leq m<\omega$, we have  $a_{y(l)},a_{y(m)}\in{}^{{<}\omega}\kappa$, $l\leq\length{a_{y(l)}}\leq\length{a_{y(m)}}$,  $a_{y(l)}\subseteq a_{y(m)}=u\restriction\length{a_{y(m)}}$ and $\iota(w)\cap\kappa_m=\iota(u)\cap\kappa_m$ for all $w\in{}^\omega\kappa$ with $u\restriction\length{a_{y(m)}}\subseteq w$. Note that this is possible as $\iota$ is a continuous injection.
 Then $\langle x,y\rangle\in[T]$ and $x\in p[T]\subseteq\mathsf{WO}(\vec{\kappa},\vec{a})$ with $\|x\|_{\vec{a}}=\otp{\kappa,\lhd_{\iota(u)}}>\gamma$, a contradiction. 
\end{proof}

We close this section by using ideas from the above proof to show that the assumptions of Theorem \ref{theorem:Sigma1InnerModelMeasurables} are optimal. 
These arguments make use of the following observation that can be seen as a special case of the generic absoluteness of the core model as, for example, in \cite[Theorem 3.4]{Schimmerling10}, \cite{JS13}, or \cite{St96}.

\begin{lemma}\label{lemma:KoepeCareAbsolute}
 Assume that $0^{\text{long}}$ does not exist. If $\VV[G]$ is a generic extension of the ground model $\VV$, then $\KK[U_{can}]^\VV=\KK[U_{can}]^{\VV[G]}$. 
\end{lemma}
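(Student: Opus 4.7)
The plan is to establish the equality by induction on the canonical sequence and to invoke the generic absoluteness apparatus for core models alluded to in \cite{Schimmerling10, JS13, St96}. The first step is to verify that the hypothesis ``$0^{\text{long}}$ does not exist'' is preserved when passing from $\VV$ to $\VV[G]$. This is standard for sharp-like objects: $0^{\text{long}}$ is (up to coding) a specific iterable premouse carrying a long coherent sequence of measures, and if it were present in $\VV[G]$ then a Löwenheim--Skolem argument together with the absoluteness of wellfoundedness for a fixed iteration strategy would place a version of it into $\VV$, contradicting our assumption. Consequently, the canonical sequence $U_{can}^{\VV[G]}$ and the core model $\KK[U_{can}]^{\VV[G]}$ are both well-defined, so that the equation in the lemma makes sense.

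The next step is to show by induction on $\alpha$ that the restrictions $U_{can}^\VV\restriction\alpha$ and $U_{can}^{\VV[G]}\restriction\alpha$ coincide. The base and limit cases are immediate from the definition and the inductive hypothesis. For the successor step, write $U = U_{can}^\VV\restriction\alpha = U_{can}^{\VV[G]}\restriction\alpha$. Since $\KK[U]$ is built by an $\LL$-like construction relative to the predicate $U$ using only ordinals, the two inner models $\KK[U]^\VV$ and $\KK[U]^{\VV[G]}$ have matching $\On$-skeletons. The canonical choice of the next entry of $U_{can}$ is determined internally to $\KK[U]$: the least measurable cardinal $\mu$ above what has already been used, together with the $<_{\KK}$-least normal ultrafilter on $\mu$ in $\KK[U]$. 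Being objects of $\KK[U]$, these selections agree between $\VV$ and $\VV[G]$, which completes the induction and yields $U_{can}^\VV = U_{can}^{\VV[G]}$.

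The main obstacle is justifying that $\KK[U]^\VV$ and $\KK[U]^{\VV[G]}$ genuinely agree as inner models, not merely as classes of ordinals, so that the canonical selections above really do coincide and so that the equality $\KK[U_{can}]^\VV = \KK[U_{can}]^{\VV[G]}$ follows at the end. This is where the full strength of Koepke's covering and comparison arguments from \cite{MR926749} is required: below $0^{\text{long}}$, any two iterable premice can be coiterated, and the universality of the core model blocks any drop on the $\KK[U_{can}]^\VV$ side in the coiteration, carried out inside $\VV[G]$, of $\KK[U_{can}]^\VV$ against $\KK[U_{can}]^{\VV[G]}$. Symmetrically, no drop can occur on the $\VV[G]$-side, since any ordinal produced by a non-trivial iteration would witness in $\VV[G]$ a fine-structural disagreement that the $\Sigma_1$-correctness of $\KK$ rules out. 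Hence the coiteration is trivial on both sides, and the standard template of \cite{Schimmerling10, JS13, St96} yields the desired equality $\KK[U_{can}]^\VV = \KK[U_{can}]^{\VV[G]}$.
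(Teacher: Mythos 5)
Your overall plan (prove $U_{can}^{\VV}\restriction\alpha=U_{can}^{\VV[G]}\restriction\alpha$ by induction on $\alpha$) is the same as the paper's, but the two steps that carry all the weight are missing or wrong. First, $\KK[U]$ is not an $\LL$-like construction from $U$ and the ordinals: it is the union of the lower parts of the iterable $U$-mice, so a priori $\VV[G]$ could contain new $U$-mice and hence a strictly larger core model. Upward absoluteness of mousehood only yields $\KK[U_{can}]^{\VV}\subseteq\KK[U_{can}^{\VV}]^{\VV[G]}$; the reverse inclusion is the real content, and the paper obtains it from weak covering (Koepke's Theorems 3.23 and 3.24(ii)): sufficiently large singular cardinals of uncountable cofinality in $\VV[G]$ are singular in $\VV$ and hence in $\KK[U_{can}]^{\VV}$, and this covering property pins down $\KK[U_{can}^{\VV}]^{\VV[G]}\subseteq\KK[U_{can}]^{\VV}$. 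Your substitute --- coiterating $\KK^{\VV}$ against $\KK^{\VV[G]}$ inside $\VV[G]$ and invoking ``universality'' and ``$\Sigma_1$-correctness'' to rule out drops on either side --- assumes what is to be proved: universality of $\KK^{\VV}$ \emph{inside $\VV[G]$} is not available before generic absoluteness is established, and the no-drop claim on the $\VV[G]$ side is an assertion, not an argument.

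Second, your successor step rests on the claim that the next entry of $U_{can}$ is selected internally to $\KK[U]$ (least measurable of $\KK[U]$, $<_{\KK}$-least measure). That is not how the canonical sequence is defined: whether a candidate filter may be appended is a statement about the whole universe (which sets are $F$-mice, whether the extended structure is a core model), so agreement of the two inner models does not by itself give agreement of the two selections. The paper closes exactly this gap by first treating weakly homogeneous $\PPP$, where $U_{can}^{\VV[G]}\subseteq\KK[U_{can}]^{\VV[G]}\subseteq\HOD^{\VV[G]}\subseteq\VV$ makes the $\VV[G]$-selection visible to $\VV$ and allows the induction to go through, and then reduces arbitrary $\PPP$ to the homogeneous case by absorbing $\PPP\times\Col{\omega}{\delta}$ into $\Col{\omega}{\delta}$ and applying the homogeneous case twice. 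Your proposal has no analogue of this homogeneity-plus-absorption step, and without it the induction does not close.
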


\begin{proof}
 The statement of the lemma will be a direct consequence of the following two claims: 
 
 \begin{claim*}
  If $\VV[G]$ is a generic extension of the ground model $\VV$, then $\KK[U_{can}]^\VV=\KK[U_{can}^\VV]^{\VV[G]}$. 
 \end{claim*}
 
 \begin{proof}[Proof of the Claim]
  Since the property of being a $U$-mouse is upwards absolute between transitive models of $\ZFC$ with the same ordinals, we know that $\KK[U_{can}]^\VV\subseteq\KK[U_{can}^\VV]^{\VV[G]}$. 
  %
  Next,  observe that the fact that $\VV[G]$ is a set forcing extension of $\VV$ implies that all sufficiently large singular cardinals in $\VV[G]$ are singular in $\VV$. Moreover, 
  an application of {\cite[Theorem 3.23]{MR926749}} shows that all sufficiently large singular cardinals in $\VV$ are singular  in  $\KK[U_{can}]^\VV$. 
  In combination, this shows that for all sufficiently large singular cardinals $\lambda$ of uncountable cofinality in $\VV[G]$, every closed unbounded subset of $\lambda$ in $\VV[G]$ contains an element that is singular in $\KK[U_{can}]^\VV$. 
  This observation allows us to use  {\cite[Theorem 3.24(ii)]{MR926749}} to conclude that $\KK[U_{can}^\VV]^{\VV[G]}\subseteq\KK[U_{can}]^\VV$. 
 \end{proof}

 \begin{claim*}
  Let $\PPP$ be a weakly homogeneous partial order. If $G$ is $\PPP$-generic over $\VV$, then $\KK[U_{can}]^\VV=\KK[U_{can}]^{\VV[G]}$. 
 \end{claim*}
 
  \begin{proof}[Proof of the Claim]
  First, the weak homogeneity of $\PPP$ in $\VV$ ensures that $$U_{can}^{\VV[G]} ~ \subseteq ~ \KK[U_{can}]^{\VV[G]} ~ \subseteq ~ \HOD^{\VV[G]} ~ \subseteq ~ \VV.$$ In particular, we know that the set  $U_{can}^{\VV[G]}(\kappa)\cap\POT{\kappa}^\VV$ is an element of $\VV$ for every $\kappa\in\dom{U_{can}^{\VV[G]}}$. 
  In this situation, we can now use the first claim to inductively show that the definition of the canonical measure sequence ensures that  $U_{\can}^\VV\restriction\xi=U_{can}^{\VV[G]}\restriction\xi$ holds for all $\xi\in\On$. 
 \end{proof}
 
 Now, let $\PPP$ be a partial order and let $G$ be $\PPP$-generic over $\VV$. 
 Pick a sufficiently large cardinal $\delta$ such that $\PPP\times\Col{\omega}{\delta}$ densely embeds into $\Col{\omega}{\delta}$ and let $H$ be $\Col{\omega}{\delta}$-generic over $\VV[G]$.  
 Since $\Col{\omega}{\delta}$ is weakly homogeneous in both $\VV$ and $\VV[G]$, we can now use the above claim twice to conclude that $\KK[U_{can}]^\VV=\KK[U_{can}]^{\VV[G,H]}=\KK[U_{can}]^{\VV[G]}$.  
\end{proof}

\begin{proof}[Proof of Proposition \ref{proposition:InnerModelNmeasurables}] 
 Let $M$ denote the  class term with the property that $\ZFC$ proves the following statements: 
 \begin{itemize}
     \item If either $0^{\text{long}}$ exists, or $0^{\text{long}}$ does not exist and the model $\KK[U_{can}]$ contains infinitely many measurable cardinals, then $M$ is equal to the constructible universe $\LL$. 
     
     \item Otherwise, $M$ is equal to $\KK[U_{can}]$.  
 \end{itemize}

 Then the standard results about $\LL$ together with {\cite[Theorem 3.2]{MR926749}}  show that $\ZFC$ proves that $M$ is a transitive model of $\ZFC+{\VV=M}$ that contains all ordinals.  
 Moreover,  Lemma \ref{lemma:KoepeCareAbsolute} together with the fact that $0^{\text{long}}$ cannot be added by forcing  show that $M$ is forcing invariant. 
 
 \begin{claim*}
  Assume that $0^{\text{long}}$ does not exist and  $\KK[U_{can}]$ contains only  finitely many measurable cardinals. If $\kappa$ is an uncountable cardinal, then $\HH{\kappa^+}^{\KK[U_{can}]}$ is definable by a $\Sigma_1$-formula with parameters in $\HH{\kappa}\cup\{\kappa\}$. 
 \end{claim*}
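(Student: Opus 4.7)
The plan is to derive the claim by combining the finiteness of $\dom{U_{can}}$ with Koepke's local fine-structure for the canonical core model, adapting the same argument used in the earlier claim from the proof of Theorem \ref{theorem:StrengthCountableCof}.

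First, I would enumerate $\dom{U_{can}}=\{\mu_0<\mu_1<\cdots<\mu_{n-1}\}$ (finite by assumption), and write $W_i=U_{can}(\mu_i)$ for the unique canonical normal measure on $\mu_i$ in $\KK[U_{can}]$. Since $\HH{\kappa^+}^{\KK[U_{can}]}$ is an initial segment of $\KK[U_{can}]$, its construction only uses those $W_i$ whose critical points are strictly below $\kappa^+$; because each $\mu_i$ is inaccessible in $\KK[U_{can}]$, this forces $\mu_i\leq\kappa$. Let $F=\{\mu_i\,:\,\mu_i\leq\kappa\}$, a finite set of ordinals, each of which lies in $\HH{\kappa}\cup\{\kappa\}$.

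Second, I would split the measures in $F$ into those that can be used directly as parameters and those that must be existentially recovered. Choose the largest $i^\ast$ with $\mu_{i^\ast}^+<\kappa$ (or $i^\ast=-1$). By $\GCH$ in $\KK[U_{can}]$, each $W_i$ with $i\leq i^\ast$ has $|W_i|\leq(\mu_i^+)^{\KK[U_{can}]}\leq\mu_i^+<\kappa$, so the restricted sequence $U=U_{can}\restriction(\mu_{i^\ast}+1)$ is an element of $\HH{\kappa}$. For the at most two remaining critical points $\mu_j$ with $i^\ast<j$ (for which either $\mu_j=\mu_{i^\ast}^+=\kappa$, or $\mu_j<\kappa\leq\mu_j^+$, or $\mu_j=\kappa$ in case $\kappa$ itself is measurable in $\KK[U_{can}]$), the measure $W_j$ itself need not lie in $\HH{\kappa}$, but $\mu_j\in F$ does.

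Third, I would invoke the argument from the earlier claim in the proof of Theorem~\ref{theorem:StrengthCountableCof}, which combines {\cite[Theorem 2.7]{MR926749}} (canonical representation of $U$-mice via iterability) with {\cite[Theorem 2.10]{MR926749}} (uniform $\Sigma_1$-definable hierarchy for the canonical well-ordering of $\KK[U_{can}]$) in the manner of {\cite[Lemma 2.3]{MR3845129}}. This produces a $\Sigma_1$-formula $\psi$ in the parameters $\kappa$, $U$ and $F$ that defines the collection of all initial segments of $<_{\KK[U_{can}]}$ restricted to $\HH{\kappa^+}^{\KK[U_{can}]}$, and hence defines $\HH{\kappa^+}^{\KK[U_{can}]}$ itself. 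Inside $\psi$, the ``missing'' measures $W_j$ for $j>i^\ast$ are not used as parameters; instead, one existentially quantifies over countably iterable mice over $U$ and asserts that they correctly compute the canonical normal measure on each $\mu_j\in F\setminus\dom{U}$, which is a $\Sigma_1$-condition in $\mu_j$ by the uniqueness part of Koepke's characterization.

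The main obstacle is precisely the borderline configuration in which $\kappa=\mu_j^+$ for some measurable $\mu_j$ of $\KK[U_{can}]$, since then $W_j$ has hereditary cardinality equal to $\kappa$ and cannot itself be used as a parameter in $\HH{\kappa}$. The resolution is the observation just made: $\mu_j$ is in $\HH{\kappa}$, $W_j$ is definable in $\KK[U_{can}]$ from $\mu_j$ as the canonical measure, and this definability can be absorbed into the existential quantifier inside the $\Sigma_1$-formula $\psi$ via iterability, which is exactly the content that Koepke's {\cite[Theorem 2.7]{MR926749}} is designed to provide.
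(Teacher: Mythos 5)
Your proposal is correct and follows essentially the same route as the paper: reduce to the finite restriction $U=U_{can}\restriction\kappa$, observe that $U\in\HH{\kappa}$ unless $\kappa=(\delta^+)^{\KK[U]}$ for some $\delta\in\dom{U}$, and in that borderline case recover the missing measure by existentially quantifying over predicates $F$ and $F$-mice certifying the ultrafilter property, with the canonicity of the core model ({\cite[Theorems 2.10 and 3.14]{MR926749}} in the paper's version) guaranteeing that any such certified $F$ yields $\KK[F]=\KK[U]$. The only cosmetic difference is that the paper first invokes {\cite[Theorem 3.9]{MR926749}} to replace $\KK[U_{can}]$ by $\KK[U_{can}\restriction\kappa]$, so a measure on $\kappa$ itself never needs to be recovered, whereas you carry the case $\mu_j=\kappa$ along (and your ``at most two'' remaining critical points is really at most one); neither point affects the argument.
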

 
 \begin{proof}[Proof of the Claim]
  Set $U=U_{can}\restriction\kappa$ and $\KK=\KK[U]$. Then {\cite[Theorem 3.9]{MR926749}} shows that $\HH{\kappa^+}^{\KK[U_{can}]}=\HH{\kappa^+}^\KK$. 
  Moreover, if $\kappa$ is not the successor of an element of $\dom{U}$ in $\KK$, then $U$ is an element of $\HH{\kappa}$ and we can repeat arguments from the proof of Theorem \ref{theorem:StrengthCountableCof} to show that the class of all $U$-mice $M$ (see {\cite[Definition 2.9]{MR926749}}) that contain $\kappa$ in their \emph{lower part $lp(M)$} (see {\cite[Definition 2.1]{MR926749}}) is definable by a $\Sigma_1$-formula with parameters $\kappa$ and $U$. 
  Since every element of $\HH{\kappa^+}^\KK$ is contained in such a lower part, the statement of the claim follows in this case. 
  
  In the following, assume that there is $\delta\in\dom{U}$ with $\kappa=(\delta^+)^\KK$. 
  Let $F$ be a simple predicate with $\dom{F}=\dom{U}$ and let $M$ be an $F$-mouse such that  $\kappa,F\in lp(M)$, $\kappa=(\delta^+)^M$ and $F(\mu)$ is an ultrafilter in $M$ for every $\mu\in\dom{F}$.  %
  Since every subset of $\delta$ in $\KK[F]$ is contained in an $F$-mouse of cardinality less than $\kappa$, we can now apply {\cite[Theorem 2.10]{MR926749}} to conclude that $F(\mu)$ is an ultrafilter in $\KK[F]$ for every $\mu\in\dom{F}$. 
  This shows that $\KK[F]$ is a \emph{core model} (in the sense of {\cite[Definition 3.6]{MR926749}}) and therefore {\cite[Theorem 3.14]{MR926749}} shows that $\KK=\KK[F]$ holds. 
  Since every element of $\HH{\kappa^+}^\KK$ is contained in the lower part of a $U$-mouse $M$ with $\kappa,U\in lp(M)$ and  $\kappa=(\delta^+)^M$, 
  we now know that $\HH{\kappa^+}^\KK$ consists of all sets $x$ with the property that there exists a simple predicate $F$ and an $F$-mouse $M$ such that    $\dom{F}=\dom{U}$,   $F(\mu)$ is an ultrafilter in $M$ for every $\mu\in\dom{F}$,   $\kappa=(\delta^+)^M$  and $\kappa,F,x\in lp(M)$.
  This allows us to conclude that the set $\HH{\kappa^+}^\KK$ is definable by a $\Sigma_1$-formula with parameters $\delta$, $\kappa$ and $\dom{U}$ in this case. 
 \end{proof}
 
 The above claim now allows us to find a $\Sigma_1$-formula $\varphi(v_0,v_1,v_2)$ with the property that for every uncountable cardinal $\kappa$, we have $\LL_{\kappa^+}=\Set{x}{\varphi(\kappa,\kappa,x)}$ and, if $0^{\text{long}}$ does not exist and $\KK[U_{can}]$ contains only  finitely many measurable cardinals, then there exists $z\in\HH{\kappa}$ with $\HH{\kappa^+}^{\KK[U_{can}]}=\Set{x}{\varphi(\kappa,x,z)}$.  
 Finally, if the existence of $n$ measurable cardinals is consistent with the axioms of $\ZFC$ for some natural number $n$, then the existence of exactly $n$ measurable cardinals in $\KK[U_{can}]$ is consistent with $\ZFC$. 
\end{proof}


\section{The lower bound for singular cardinals of uncountable cofinality}\label{section:LowerBoundUncountableCof}

We now use ideas from \cite{HM19} to complete the proof of Theorem \ref{theorem:PerfectSubsetOptimal}.

\begin{proof}[Proof of Theorem \ref{theorem:PerfectSubsetOptimal}]
 Let $\kappa$ be a singular strong limit cardinal with the property that for every subset $D$ of $\POT{\kappa}$ of cardinality greater than $\kappa$ that is definable by a $\Sigma_1$-formula with parameters in $\HH{\kappa}\cup\{\kappa\}$, there exists a perfect embedding $\map{\iota}{{}^{\cof{\kappa}}\kappa}{\POT{\kappa}}$ with $\ran{\iota}\subseteq D$. 
 Assume, towards a contradiction, that there is no inner model with a sequence of measurable cardinals of length $\cof{\kappa}$.
 Then Theorem \ref{theorem:StrengthCountableCof} implies that the cofinality of $\kappa$ is uncountable. 
 Moreover, we know that there is no inner model with a measurable cardinal of Mitchell order $1$ and therefore we can construct the canonical core model $\KK$ as in \cite{Ze02} (which is Steel's core model \cite{St96} in this easier setting). Note that our hypothesis implies that, in $\KK$,  the sequence of measurable cardinals below $\kappa$ is bounded below $\kappa$.
 In addition, as $\kappa$ is singular in $\VV$, \emph{weak covering} (see \cite[Theorem 7.5.1]{Ze02}) holds for $\KK$ at $\kappa$, i.e., we have $ (\kappa^+)^\KK = \kappa^+$.
 %
 Finally, we know that $\kappa$ is singular in $\KK$, because otherwise the fact that $\kappa$ is a singular cardinal of uncountable cofinality would allow us to apply the second part of {\cite[Theorem 1]{MR2567927}} to find an inner model in which $\kappa$ has Mitchell order greater than $0$.

 We will now construct a tree of height $\cof{\kappa}^\KK$ that is an element of $\KK$ and then argue that this tree does not have a perfect subtree in $\VV$. 
 These arguments use ideas from \cite{HM19} that ultimately go back to Solovay's argument for the consistency strength of the \emph{Kurepa Hypothesis} (see  \cite[Section 4]{Je71}). 
 Our tree consists of hulls of initial segments of $\KK$ of size $\kappa$ and we will argue that we can obtain such initial segments in a $\Sigma_1$-definable way with parameters in $\HH{\kappa} \cup \{ \kappa \}$.

 In the following, we say a premouse $N$ (in the sense of \cite[Section 4.1]{Ze02}) is \emph{good} if the following statements hold: 
 \begin{itemize}
     \item $N$ is \emph{iterable} (in the sense of \cite[Section 4.2]{Ze02}). 
     
     \item $\kappa+1 \subseteq N$ and $\betrag{N} = \kappa$. 
     
     \item $\cof{\kappa}^N=\cof{\kappa}^\KK$. 
     
     \item $\kappa$ is the largest cardinal in $N$.
      
     \item If $\gamma < \kappa$ is the supremum of the measurable cardinals below $\kappa$ in $\KK$, then \[ N \vert \gamma^{++} ~ = ~  \KK \vert \gamma^{++}. \] In particular, $K$ and $N$ have the same measurable cardinals and the same total measures below $\kappa$. 
 \end{itemize}

 \begin{claim*}
Let $N$ be a good premouse. Then $N \lhd \KK$.
\end{claim*}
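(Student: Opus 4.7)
The plan is to coiterate $N$ with $\KK$ and invoke the standard comparison argument for iterable premice (see \cite[Chapter 4]{Ze02}). Both structures are iterable---$\KK$ by general core model theory, $N$ by the goodness hypothesis---so the coiteration terminates at iterates $\bar{N}$ of $N$ and $\bar{\KK}$ of $\KK$ satisfying either $\bar{N} \unlhd \bar{\KK}$ or $\bar{\KK} \unlhd \bar{N}$. I would then argue that essentially no extender is applied on either side at critical points below $\kappa^+$, forcing $N \lhd \KK$.

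Since $N \vert \gamma^{++} = \KK \vert \gamma^{++}$, every coiteration extender has critical point at least $\gamma^{++}$. On the $\KK$-side, the measurables of $\KK$ below $\kappa$ are bounded by $\gamma$, and the weak covering conclusion $(\kappa^+)^\KK = \kappa^+$, combined with $\kappa$ being a singular cardinal in $\KK$, implies that $\KK$ has no measurable in the interval $(\gamma, \kappa^+)$. On the $N$-side, goodness gives the same measurables as $\KK$ below $\kappa$, hence bounded by $\gamma$, while $\kappa$ being the largest cardinal of $N$ leaves $\kappa$ itself as the only candidate critical point in $[\gamma^{++}, \kappa^+)$ coming from an $N$-extender, namely a top predicate on $\kappa$.

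The main technical step will be to exclude this last possibility: if a top $N$-extender with critical point $\kappa$ were applied on the $N$-side, then $\bar{N}$ would surpass $\bar{\KK}$ at that stage, forcing $\bar{\KK} \unlhd \bar{N}$, which is absurd because $\bar{\KK}$ contains all ordinals while $\betrag{\bar{N}} = \betrag{N} = \kappa$; the usual Dodd--Jensen minimality analysis of the coiteration formalizes this, and I expect it to be the main obstacle in writing the proof up carefully. Once this case is ruled out, the coiteration is trivial at all critical points below $\kappa^+$, giving $\bar{N} = N$ and $\bar{\KK} \vert \kappa^+ = \KK \vert \kappa^+$. The alternative $\bar{\KK} \unlhd N$ is excluded by cardinality, so $N \unlhd \bar{\KK}$; since the ordinal height of $N$ lies below $\kappa^+$, this initial segment of $\bar{\KK}$ coincides with the corresponding initial segment of $\KK$, yielding $N \lhd \KK$.
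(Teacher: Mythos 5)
Your outline correctly identifies the general shape of the argument (coiterate $N$ with $\KK$ and argue the coiteration is trivial below $\kappa^+$), but it has a genuine gap: you only ever rule out \emph{total} measures as coiteration extenders, whereas the comparison of premice is driven by disagreements anywhere on the extender sequences, and the whole difficulty lies in excluding \emph{partial} measures, i.e.\ measures on the sequence of $\KK$ (or of $N$) that do not measure all subsets of their critical point in the full model and whose application therefore forces a truncation. Your statement that $\KK$ has no measurable in $(\gamma,\kappa^+)$ and that goodness bounds the measurables of $N$ by $\gamma$ says nothing about such partial measures with critical point $\nu\in(\gamma,\kappa)$ or $\nu>\kappa$, and your final step (``$N\unlhd\bar{\KK}$ and the height of $N$ is below $\kappa^+$, so this initial segment of $\bar{\KK}$ equals that of $\KK$'') silently assumes that the $\KK$-side of the coiteration does not move anything below $N\cap\On$ --- which is precisely what has to be proved. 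The paper's proof spends essentially all of its effort here: if a partial measure $\mu$ with critical point $\nu$ is used on the $\KK$-side, then after the truncation the case $\nu>\kappa$ is killed because $\nu$ would be a cardinal of $\KK^*$ below $N\cap\On$, contradicting $\kappa$ being the largest cardinal of $N$; and the case $\nu<\kappa$ needs the standing hypothesis that there is no inner model with a $\cof{\kappa}$-sequence of measurables to see that the iteration above the truncation is a \emph{linear} iteration of $\mu$ and its images, which must then last at least $\kappa$ steps to reach up to $N\cap\On\geq\kappa$ and hence makes $\kappa$ inaccessible in $\KK^*$, contradicting the singularity of $\kappa$ in $N\unlhd\KK^*$. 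None of this machinery --- in particular the use of the anti--large-cardinal hypothesis --- appears in your proposal, and without it the argument does not close.

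Separately, the step you flag as the ``main technical obstacle'' (a top $N$-extender with critical point $\kappa$) is in fact a non-issue: goodness gives $\cof{\kappa}^N=\cof{\kappa}^{\KK}<\kappa$, so $\kappa$ is singular in $N$ and cannot carry a measure of $N$; the cardinality argument you sketch for it ($\betrag{\bar{N}}=\kappa$ versus $\bar{\KK}$ being a proper class) is fine as far as it goes but is aimed at the wrong target. I would redirect that effort to the partial-measure cases above, which is where the claim actually earns its hypotheses.
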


\begin{proof}[Proof of the Claim]
 Compare $N$ and $\KK$ and suppose, towards a contradiction,  that the comparison is not trivial.  Consider the first measure that is used. As $N \vert \gamma^{++} = \KK \vert \gamma^{++},$ where $\gamma < \kappa$ is the supremum of the measurable cardinals below $\kappa$ in $N$ and $\KK$, the first measure that is used in the comparison has to be a partial measure above $\gamma$. Say this is a partial measure $\mu$ with critical point $\nu$ on the $\KK$-side of the comparison. 
 Then, in order to use this partial measure, we need to truncate $\KK$ as $\mu$ does not measure all subsets of $\nu$ in $\KK$. By the \emph{Comparison Lemma} (see, for example,  \cite[Lemma 4.4.2]{Ze02} or \cite[Theorem 3.11]{St10}), we obtain iterates $N^*$ of $N$ and $\KK^*$ of $\KK$ (or, in fact, of a truncation $\KK\vert\xi$ of $\KK$) such that $N^* \unlhd {\KK^*}$. Note that truncations can only appear on one side of the comparison and this side has to come out longer in the end.  In particular, the iteration from $N$ to $N^*$ can only use total measures with critical point above $\nu > \gamma$ and is therefore trivial, {i.e.} we have $N = N^*$. 
 
 Suppose that $\nu > \kappa$.  Note that $\nu$ is a cardinal in $\KK^*$. As $\nu < N \cap \On$ and $N \unlhd \KK^*$, this implies that there are cardinals above $\kappa$ in $N$, contradicting the assumption that $\kappa$ is the largest cardinal in $N$.

 Now suppose that $\nu < \kappa$. The iteration from $\KK|\xi$ to $\KK^*$ cannot leave any total measures below $\kappa$ behind as $N \unlhd \KK^*$ does not have any total measures between $\gamma$ and $\kappa$. As we suppose that there is no inner model with a sequence of measurable cardinals of length $\cof{\kappa}$, this implies that the iteration from $\KK|\xi$ to $\KK^*$ is a linear iteration of $\mu$ and its images. Again, as $N \unlhd \KK^*$ does not have any total measures between $\gamma$ and $\kappa$ and $N \cap \On \geq \kappa$, this iteration needs to last at least $\kappa$-many steps by {\cite[Corollary 19.7.(b)]{MR1994835}} since $\kappa$ is a cardinal in $\VV$. Moreover,  {\cite[Corollary 19.7.(b)]{MR1994835}} shows that  $\kappa$ is inaccessible in $\KK^*$. As $N \unlhd \KK^*$, this contradicts the fact that $\kappa$ is singular in $N$. Therefore, $\mu$ is not used on the $\KK$-side of the comparison.
 
 Similarly, we can argue that no partial measure on $N$ gets used in the comparison and hence we can conclude that $N \lhd \KK$.
\end{proof}

 \begin{claim*}
  For every $x \in \left({}^{\cof{\kappa}} \kappa\right)^\KK$, there is a good premouse $N$ with $x \in N$.
 \end{claim*}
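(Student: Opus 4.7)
The plan is to produce the good premouse $N$ directly as an initial segment $\KK \vert \eta$ of $\KK$, thereby making iterability automatic and avoiding any hull-and-collapse detour. First, by weak covering we have $(\kappa^+)^\KK = \kappa^+$, and since $\kappa$ is singular in $\KK$ there is in $\KK$ a cofinal sequence $c \colon \lambda \to \kappa$ with $\lambda = \cof{\kappa}^\KK$. Since the measurable cardinals of $\KK$ below $\kappa$ are bounded by some $\gamma < \kappa$ and $\kappa$ is a strong limit, we have $\gamma^{++} < \kappa$ and $\betrag{\KK \vert \gamma^{++}} < \kappa$.

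I will then choose $\eta$ with $\kappa < \eta < \kappa^+$ such that (a) $x, c \in \KK \vert \eta$, (b) $\KK \vert \gamma^{++} \in \KK \vert \eta$, and (c) $\KK \vert \eta$ projects to $\kappa$, i.e.\ $\rho_\omega(\KK \vert \eta) = \kappa$. Such an $\eta$ exists because, by standard fine-structural considerations for $\KK$ together with $(\kappa^+)^\KK = \kappa^+$, the ordinals $\eta < \kappa^+$ at which $\KK \vert \eta$ projects to $\kappa$ are unbounded in $\kappa^+$; any sufficiently large such $\eta$ works.

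Setting $N = \KK \vert \eta$, I will verify the five good-premouse conditions in turn. Iterability is inherited from $\KK$ as $N$ is an initial segment. The bounds $\kappa < \eta < \kappa^+$ give $\kappa + 1 \subseteq N$ and $\betrag{N} = \kappa$. The sequence $c \in N$ shows $\cof{\kappa}^N \leq \lambda$, while $N \subseteq \KK$ shows $\cof{\kappa}^N \geq \cof{\kappa}^\KK = \lambda$, so the cofinality is preserved. The projection condition $\rho_\omega(N) = \kappa$ rules out any $N$-cardinal strictly between $\kappa$ and $\eta$, so $\kappa$ is the largest cardinal of $N$. Finally, since $\gamma^{++} < \kappa < \eta$, we have $N \vert \gamma^{++} = \KK \vert \gamma^{++}$. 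The inclusion $x \in N$ completes the argument.

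The main obstacle is the simultaneous satisfaction of the largest-cardinal requirement and the cardinality bound $\betrag{N} = \kappa$: choosing $\eta$ strictly between $\kappa$ and $\kappa^+$ handles the cardinality, but one must additionally force $\rho_\omega(\KK \vert \eta) = \kappa$ to prevent ghost cardinals above $\kappa$ from surviving in $N$. This is where the full strength of $(\kappa^+)^\KK = \kappa^+$ is used, since it is precisely weak covering that guarantees cofinally many projection points below $\kappa^+$. If one were to find that projection points are not cofinal (which would itself contradict the theory of $\KK$ here), the fallback would be a Skolem-hull-and-collapse of a large initial segment $\KK \vert \eta'$ containing $\kappa \cup \{x, c\} \cup \KK \vert \gamma^{++}$, which by the first claim would still collapse to some $N \lhd \KK$.
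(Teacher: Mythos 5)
Your overall strategy is exactly the paper's: the proof in the text simply observes that, since $\KK$ satisfies the $\GCH$, one can find $\xi<(\kappa^+)^\KK=\kappa^+$ with $x\in\KK\vert\xi$ and $\KK\vert\xi$ a good premouse, so producing $N$ as an initial segment of $\KK$ (with iterability, the cardinality bound, the cofinality computation, and the agreement below $\gamma^{++}$ all coming for free) is the intended route, and most of your verifications are fine.

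There is, however, one step that is wrong as stated: the claim that $\rho_\omega(\KK\vert\eta)=\kappa$ ``rules out any $N$-cardinal strictly between $\kappa$ and $\eta$.'' Soundness plus $\rho_\omega(\KK\vert\eta)=\kappa$ gives a surjection from $\kappa$ onto $\KK\vert\eta$ that is \emph{definable over} $\KK\vert\eta$, hence an element of the next level of $\KK$, but not an element of $\KK\vert\eta$ itself; so $\KK\vert\eta$ can perfectly well believe in cardinals above $\kappa$. (Compare the least $\alpha$ with $L_\alpha\models\ZFC^-$: it is pointwise definable, so $\rho_\omega(L_\alpha)=\omega$, yet $L_\alpha$ has many internal uncountable cardinals.) The fix is easy and keeps your architecture: note that for every $\mu\in(\kappa,(\kappa^+)^\KK)$ there is a level of $\KK$ below $(\kappa^+)^\KK$ containing a surjection of $\kappa$ onto $\mu$, so the ordinals $\eta$ that are closed under this assignment form a club in $\kappa^+$; equivalently, take $\eta$ to be a limit of projecting levels, since each projecting level $\xi<\eta$ contributes, at stage $\xi+1$, a surjection of $\kappa$ onto $\xi$ lying inside $\KK\vert\eta$. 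Any such $\eta$ above the level containing $x$ and your cofinal sequence $c$ then satisfies all five clauses, and the rest of your argument goes through unchanged.
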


 \begin{proof}[Proof of the Claim]
  As $\KK$ satisfies the $\GCH$, there is some $\xi < (\kappa^+)^\KK=\kappa^+$ such that $x \in \KK|\xi$ and $\KK|\xi$ is a good premouse.
 \end{proof}

 Following \cite{HM19}, we say a pair $\langle M, \bar{x}\rangle$ is an \emph{active node at $\rho$} for some $\rho < \cof{\kappa}^\KK$ if there is a good premouse $N$ and some $x \in \left({}^{\cof{\kappa}} \kappa\right)^N$ with $\ran{x} \subseteq \Reg^N$, the regular cardinals in $N$, such that the following statements hold: 
 \begin{itemize}
  \item $x$ is strictly increasing and cofinal in $\kappa$. 
  
  \item $M$ is equal to the transitive collapse of $\Hull^{N}(x(\rho) \cup \{x\})$ and $\bar{x} \in M$ is the image of $x$ under  the transitive collapse.  

  \item If $\map{\pi}{M}{N}$ is the corresponding uncollapsing  map, then $\crit{\pi} = \bar{x}(\rho)$. 
  
 \end{itemize}
 In addition, we say a pair $\langle M, \bar{x}\rangle$ is an \emph{active node} if there is some ordinal $\rho<\cof{\kappa}^\KK$ such that $\langle M, \bar{x}\rangle$ is an active node at $\rho$.

  We now let $T$ denote the unique partial order defined by the following clauses: 
  \begin{enumerate}
      \item The elements of $T$ are triples of the form $\langle M, \bar{x}, s\rangle$ satisfying the following properties: 
      
      \begin{enumerate}
       \item The pair $\langle M, \bar{x}\rangle$ is either an active node or equal to the pair $\langle \emptyset, \emptyset\rangle$. 
       
       \item $s$ is an element of $({}^{{<}\cof{\kappa}}\kappa)^\KK$ with the property that the set $$\bigcup_{0 < \alpha < \kappa} s^{{-}1}(\{\alpha\})$$ is finite. 
       
       \item If $\langle M, \bar{x}\rangle$ is an active node at $\rho$, then $\dom{s} \geq \rho$.
      \end{enumerate}
      
      \item The order of $T$ is defined by $$\langle M_0, x_0, s_0\rangle ~ \leq_{T} ~  \langle M_1, x_1, s_1\rangle$$ if and only if the following statements hold: 
      \begin{enumerate}
       \item $M_0$ is the transitive collapse of $\Hull^{M_1}(x_1(\rho) \cup \{x_1\})$ for some ordinal $\rho$ and $x_0$ is the image of $x_1$ under the transitive collapse, or $M_0 = x_0 = \emptyset$. In the following, write $\rho$ for the minimal such ordinal and $\rho = -1$ if $M_0 = x_0 = \emptyset$. 
       
       \item $s_0$ is an initial segment of $s_1$. 
       
       \item There is no ordinal $\rho'$ between $\rho$ and $\dom{s_0}$ with the property that $\langle \Hull^{M_1}(x_1(\rho') \cup \{x_1\}), x_1\rangle$ transitively collapses to an active node which, in case $\rho \neq -1$, is not $\langle M_0, x_0\rangle$.  
    \end{enumerate} 
  \end{enumerate}
  
 It is now easy to see that $T$ is a tree of height $\cof{\kappa}^\KK$ that is contained in $\KK$ and has  the property that each node is splitting into $\kappa$-many  successors. 
 Moreover, each $x \in \left({}^{\cof{\kappa}} \kappa\right)^\KK$ that is strictly  increasing and cofinal in $\kappa$ with range contained in $\Reg^N$ naturally gives rise to a cofinal branch $b_x$ through $T$ and two different such elements $x,y \in \left({}^{\cof{\kappa}} \kappa\right)^\KK$ give rise to different branches $b_x$ and $b_y$. 
 Hence, the fact that the $\GCH$ holds in $\KK$ implies that the set of cofinal branches through $T$ has cardinality at least $$\left(\kappa^{\cof{\kappa}}\right)^\KK ~ = ~  (\kappa^+)^\KK ~ = ~  \kappa^+.$$ 

 \begin{claim*}
  Let $b$ be a cofinal branch through $T$ and let $\langle \calR_b, x_b \rangle$ denote the direct limit of models along $b$. Then $\calR_b$ is well-founded and we can identify it with its transitive collapse. Moreover, $\calR_b \lhd K$.
 \end{claim*}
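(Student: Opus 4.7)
The plan is to first establish well-foundedness of $\calR_b$ by a cofinality-based absoluteness argument, then identify $\calR_b$ with its transitive collapse, and finally verify that $\calR_b$ is itself a good premouse so that the first claim of the proof (good implies $\lhd\KK$) delivers the conclusion.

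Along the cofinal branch $b = \seq{\langle M_\alpha, \bar{x}_\alpha, s_\alpha\rangle}{\alpha < \cof{\kappa}^\KK}$, the tree order $\leq_T$ directly supplies uncollapsing embeddings $\map{\sigma_{\alpha,\beta}}{M_\alpha}{M_\beta}$ for $\alpha\leq\beta$, obtained as the inverses of the transitive collapses recorded in the definition of $\leq_T$. Uniqueness of transitive collapses makes these coherent, yielding a directed system whose direct limit is $\langle \calR_b, \seq{\sigma_{\alpha,\infty}}{\alpha<\cof{\kappa}^\KK}, x_b\rangle$, with $x_b$ the unique element mapped to by $\bar{x}_\alpha$ under each $\sigma_{\alpha,\infty}$. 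For well-foundedness, I would argue that any infinite descending sequence $y_0 > y_1 > \cdots$ of ordinals in $\calR_b$, written as $y_n = \sigma_{\alpha_n,\infty}(y'_n)$ with $y'_n \in M_{\alpha_n}$, would pull back to the infinite descending sequence $\seq{\sigma_{\alpha_n,\beta}(y'_n)}{n<\omega}$ inside the well-founded $M_\beta$ once $\beta = \sup_n \alpha_n < \cof{\kappa}^\KK$. The requirement $\beta < \cof{\kappa}^\KK$ reduces to $\cof(\cof{\kappa}^\KK) > \omega$, which follows from the uncountability of $\cof{\kappa}^\VV$ (guaranteed by Theorem \ref{theorem:StrengthCountableCof}, which has disposed of the countable-cofinality case) and the preservation of this cofinality by the construction of $\KK$ under our anti-large-cardinal hypothesis. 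After identifying $\calR_b$ with its transitive collapse, it is a transitive premouse.

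To conclude $\calR_b \lhd \KK$, I would verify each clause of being a good premouse. The images $\sigma_{\alpha,\infty}(\bar{x}_\alpha(\rho_\alpha))$, as $\alpha$ ranges over $b$, form a strictly increasing cofinal sequence of length $\cof{\kappa}^\KK$ in $\kappa^{\calR_b}$, which forces $\kappa^{\calR_b} = \kappa$ and $\kappa+1\subseteq\calR_b$. Elementarity of each $\sigma_{\alpha,\infty}$ transfers the properties \emph{$\kappa$ is the largest cardinal} and \emph{$\cof{\kappa} = \cof{\kappa}^\KK$} from every $M_\alpha$ to $\calR_b$, while the chain estimate gives $|\calR_b| \leq \cof{\kappa}^\KK \cdot \kappa = \kappa$, and equality follows from $\kappa+1\subseteq\calR_b$. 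The agreement $\calR_b|\gamma^{++} = \KK|\gamma^{++}$ below the supremum $\gamma$ of measurables in $\KK$ is inherited from any ambient good premouse witnessing the activity of some $M_\alpha$, since the relevant collapse maps fix $\gamma^{++} < \kappa$ and the direct limit preserves this bounded agreement. For iterability, I would copy any putative iteration tree on $\calR_b$ back through the $\sigma_{\alpha,\infty}$ onto iteration trees on $M_\alpha$, which are themselves iterable as $\Sigma_\omega$-elementary hulls of iterable good premice; standard copying lemmas make this lift go through. With $\calR_b$ good, the first claim of the proof yields $\calR_b \lhd \KK$.

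The main obstacle will be the well-foundedness step, which hinges on $\cof(\cof{\kappa}^\KK) > \omega$; verifying this cleanly may require a short separate argument invoking the appropriate covering lemma in the Koepke core model (to rule out a drop in cofinality from $\VV$ to $\KK$). A secondary technical point is the iterability of $\calR_b$: although each $M_\alpha$ is iterable, direct limits of iterable premice need not be iterable without coherent realizability, so the copying must be set up so that iteration trees on $\calR_b$ descend through sufficiently large stages of the branch where the tree fits inside a single $M_\alpha$, exploiting that the length of the branch is $\cof{\kappa}^\KK$ and the critical points of $\sigma_{\alpha,\infty}$ are cofinal in $\kappa$.
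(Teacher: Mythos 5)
Your proposal is correct and follows essentially the same route as the paper: reduce the claim to showing that $\calR_b$ is a good premouse (so that the first claim gives $\calR_b\lhd\KK$), verify the clauses of goodness by elementarity and the cofinality of the critical points along the branch, and use $\cof{\cof{\kappa}^\KK}=\cof{\kappa}>\omega$ for both well-foundedness and iterability. One caveat: the iterability mechanism you describe --- copying an iteration tree on $\calR_b$ \emph{back} along the maps $\map{\sigma_{\alpha,\infty}}{M_\alpha}{\calR_b}$, or arranging that a tree on $\calR_b$ ``fits inside a single $M_\alpha$'' --- does not literally work (copying runs forward along embeddings, and long trees need not fit into any $M_\alpha$); the paper instead reflects \emph{countable elementary substructures} of $\calR_b$ into the models on the branch, which is exactly where the uncountable cofinality is used, and in this simple-mouse setting countable iterability suffices. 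Also, $\cof{\cof{\kappa}^\KK}>\omega$ needs no covering argument: any cofinal map $\map{f}{\cof{\kappa}^\KK}{\kappa}$ in $\KK$ remains cofinal in $\VV$, so the $\VV$-cofinality of the ordinal $\cof{\kappa}^\KK$ equals $\cof{\kappa}>\omega$ by the elementary fact the paper cites.
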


 \begin{proof}[Proof of the Claim]
  As the proof of the well-foundedness of $\calR_b$ is easier, we focus on the argument that $\calR_b \lhd K$.
  By our first claim, it suffices to show that $\calR_b$ is a good premouse. 
  We obtain iterability for $\calR_b$ by reflecting countable elementary substructures of $\calR_b$ into models in the tree $T$, as in \cite{HM19}, using the fact that $$\cof{\cof{\kappa}^\KK} ~ = ~ \cof{\kappa} ~ > ~  \omega$$ (see {\cite[Lemma 3.7(ii)]{MR1940513}}). 
  In the following, write $\seq{M_\rho}{\rho < \cof{\kappa}^\KK}$ for the sequence of models appearing in active nodes at $\rho$ along the branch $b$. Then the definition of $T$ ensures that for every $\xi < \kappa$, there is some $\rho < \cof{\kappa}^\KK$ such that if $M_\rho$ is the transitive collapse of $\Hull^{N_\rho}(x_\rho(\rho) \cup \{x_\rho\})$ for some good premouse $N_\rho$ and some $x_\rho \in \left({}^{\cof{\kappa}} \kappa\right)^{N_\rho}$, then $x_\rho(\rho) > \xi$. Therefore, we know that $\kappa \subseteq \calR_b$ and elementarity implies that $\kappa+1 \subseteq \calR_b$.  
  Our setup also directly ensures that   $\betrag{\calR_b}= \kappa$. As $N_\rho \lhd K$ and the critical point of the inverse of the collapse embedding $\pi_\rho \colon M_\rho \rightarrow N_\rho$ is at least $x_\rho(\rho)$, this also shows that $N_\rho | \gamma^{++} = K | \gamma^{++}$,  where $\gamma < \kappa$ is the supremum of the measurable cardinals below $\kappa$ in $\KK$ and $N_\rho$. Moreover, we know that  $\kappa$ has cofinality $\cof{\kappa}^\KK$ in $\calR_b$, as witnessed by $x_b$. Finally, $\kappa$ is the largest cardinal in $\calR_b$ by elementarity as $\kappa = \sup(\ran{x_\rho})$ is the largest cardinal in $N_\rho$ for all $\rho < \cof{\kappa}$.
 \end{proof}

 \begin{claim*}
  The set $T$ is $\Sigma_1$-definable with parameters in $\HH{\kappa} \cup \{\kappa\}$.
 \end{claim*}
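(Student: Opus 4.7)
The plan is to produce a $\Sigma_1$-formula with parameters $\kappa$ and $U=U_{can}\restriction\kappa$ defining $T$. Since no inner model has a sequence of measurables of length $\cof{\kappa}$ and $\kappa$ is a strong limit, the measurable cardinals of $\KK$ below $\kappa$ are bounded by some $\gamma<\kappa$, so $U\in\HH{\kappa}$ (and the relevant initial segment $\KK\vert\gamma^{++}$ also lies in $\HH{\kappa}$). The decisive input, already exploited in the proof of Theorem \ref{theorem:StrengthCountableCof}, is that combining \cite[Theorem 2.7]{MR926749} with \cite[Theorem 2.10]{MR926749} makes the collection of initial segments of $\KK$ of size at most $\kappa$ a $\Sigma_1$-definable class in the parameters $\kappa$ and $U$.

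With this in hand, the predicate \emph{$N$ is a good premouse} unfolds as a $\Sigma_1$-formula. Indeed, by the first claim of the present proof, every good premouse is automatically an initial segment of $\KK$; conversely, those initial segments $N\lhd \KK$ of size $\kappa$ containing $\kappa+1$, having $\kappa$ as largest cardinal, and satisfying $\cof{\kappa}^N=\cof{\kappa}^\KK$ are good, and all of these remaining conditions are $\Delta_0$ in $N$, $\kappa$ and $U$. The definition of an active node at $\rho$ then becomes: there exist a good premouse $N$ and a strictly increasing cofinal $x\in({}^{\cof{\kappa}}\kappa)^N$ with $\ran{x}\subseteq\Reg^N$ such that $M$ is the transitive collapse of $\Hull^N(x(\rho)\cup\{x\})$, $\bar{x}$ is the image of $x$ under this collapse, and the inverse collapse has critical point $\bar{x}(\rho)$. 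Every clause past the existence of $N$ is absolute for transitive models containing $N$, so the whole formula is $\Sigma_1$; existentially quantifying over $\rho$ shows that \emph{$\langle M,\bar{x}\rangle$ is an active node} is $\Sigma_1$ as well.

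The remaining clauses in the definition of $T$ fit the same framework. Membership $s\in({}^{{<}\cof{\kappa}}\kappa)^\KK$ is witnessed by the existence of an initial segment of $\KK$ containing $s$, which is again $\Sigma_1$ in $\kappa$ and $U$; the finite-support condition on $s$ and the compatibility condition $\dom{s}\geq\rho$ are $\Delta_0$ once the corresponding active node has been produced. Conjoining these $\Sigma_1$-clauses with the trivial disjunct $\langle M,\bar{x}\rangle=\langle\emptyset,\emptyset\rangle$ produces the desired $\Sigma_1$-definition of $T$ with parameters in $\HH{\kappa}\cup\{\kappa\}$.

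The main obstacle I anticipate is encoding the iterability of the witnessing premouse $N$ in a $\Sigma_1$ way, since iterability is a priori $\Pi_1$. This difficulty is resolved precisely by the Koepke mouse-theoretic machinery already invoked in Section \ref{section:OptimalCountable}: for premice extending $\KK\vert\gamma^{++}$, iterability is equivalent to being an initial segment of the canonical core model $\KK$, and the latter is a $\Sigma_1$-property with parameter $U\in\HH{\kappa}$.
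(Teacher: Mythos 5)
Your overall architecture matches the paper's: reduce to the $\Sigma_1$-definability of the class of good premice, note that all conditions except iterability are expressible with parameters in $\HH{\kappa}\cup\{\kappa\}$, and resolve iterability via core model theory. The gap lies in how you resolve it. You import the machinery of Section \ref{section:OptimalCountable} --- the canonical measure sequence $U_{can}$, the short core model $\KK[U_{can}]$, and {\cite[Theorems 2.7 and 2.10]{MR926749}} about $U$-mice --- but that apparatus is deployed there under the hypothesis that there is no inner model with infinitely many measurable cardinals. In the present proof the hypothesis is only that there is no inner model with a sequence of measurables of length $\cof{\kappa}$, where $\cof{\kappa}$ is uncountable; inner models with infinitely many measurables are entirely possible, $\dom{U_{can}}$ need not be finite, and, decisively, the core model $\KK$ here is the Steel--Zeman core model of \cite{Ze02}, whose levels are fine-structural premice $\langle\mathrm{J}^{\vec E}_{\alpha},\vec E\rangle$ rather than Koepke's $U$-mice. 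The comparison arguments in the surrounding claims are carried out in that framework, so a $\Sigma_1$-definition of the initial segments of Koepke's $\KK[U]$ says nothing about the premice occurring in $T$. Your last paragraph is also circular as written: in the Zeman framework the standard route to a simple definition of $N\lhd\KK$ goes \emph{through} a simple definition of iterability, not the other way around, so you cannot use ``initial segment of $\KK$ is $\Sigma_1$'' to discharge the iterability clause.

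The ingredient the paper actually uses is {\cite[Theorem 4.5.5]{Ze02}}: since no inner model has a measurable of Mitchell order $1$, the relevant premice are simple, and for simple premice iterability is $\Sigma_1$-definable from the parameter $\omega_1$ (roughly, it reduces to the iterability of countable hulls). Combined with the parameters $\kappa$ and $\KK\vert\gamma^{++}\in\HH{\kappa}$ for the remaining clauses --- which you do handle correctly --- this gives the claim. As it stands, your proof leaves the one genuinely delicate step, the $\Sigma_1$-definability of iterability for the premice actually appearing in $T$, resting on results that do not apply to them.
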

 
 \begin{proof}[Proof of the Claim]
  It clearly suffices to show that the set of all good mice $N$ is definable in the above way. 
  %
  As there is no inner model with $\cof{\kappa}$-many measurable cardinals, the mice we consider are simple and therefore iterability for $N$ is $\Sigma_1$-definable from the parameter $\omega_1$, using {\cite[Theorem 4.5.5]{Ze02}}. 
  All other conditions can obviously be stated by $\Sigma_1$-formulas using the parameters $\kappa$ and $K\vert\gamma^{++} \in \HH{\kappa}$. 
 \end{proof}

 \begin{claim*}
  There is an injection $\map{i}{T}{\HH{\kappa}\cap\POT{\kappa}}$ that is definable by a $\Sigma_1$-formula with parameters in $\HH{\kappa}\cup\{\kappa\}$. 
 \end{claim*}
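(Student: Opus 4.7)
The plan is to code each $t \in T$ by a canonical subset of $\kappa$ obtained from the $<_\KK$-least bijection between an ordinal and $\tc{\{t\}}$, and then to convert this global minimality condition into a local statement inside some good premouse, using the $\Sigma_1$-definability of good premice from the previous claim.

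First I would verify that $T \subseteq \HH{\kappa} \cap \KK$. Since $\kappa$ is singular in $\KK$, we have $\cof{\kappa}^\KK \leq \cof{\kappa} < \kappa$, so each triple $\langle M, \bar x, s\rangle \in T$ consists of a transitive collapse $M$ of a hull of cardinality strictly below $\kappa$, a sequence $\bar x$ of length $\cof{\kappa}^\KK$, and a partial function $s$ on an ordinal below $\cof{\kappa}^\KK$. In particular, every node of $T$ has hereditary cardinality less than $\kappa$ and lies in $\KK$. Next I would fix the coding: for $t \in \HH{\kappa} \cap \KK$, let $\lambda_t$ be the cardinality of $\tc{\{t\}}$, let $b_t$ be the $<_\KK$-least bijection from $\lambda_t$ onto $\tc{\{t\}}$ (which exists by $\GCH$ in $\KK$), and set $$i(t) ~ = ~ \Set{\goedel{\alpha}{\beta}}{\alpha, \beta < \lambda_t, ~ b_t(\alpha) \in b_t(\beta)}.$$ This is a bounded subset of $\kappa$, and $t$ is uniformly recovered as the top element of the Mostowski collapse of the binary relation encoded, so $i$ is injective.

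The heart of the argument is showing that $i$ is $\Sigma_1$-definable. By the first claim of this proof, every good premouse $N$ satisfies $N \lhd \KK$, so the restriction of $<_\KK$ to $N$ coincides with the canonically defined well-ordering $<_N$. Thus $y = i(t)$ if and only if there exists a good premouse $N$ with $t, y \in N$ such that $N$ models: \textit{``$y$ is the code obtained from the $<$-least bijection between some ordinal and $\tc{\{t\}}$''}. The existence of such an $N$ is $\Sigma_1$ with parameters in $\HH{\kappa} \cup \{\kappa\}$ by the previous claim, and the internal statement is bounded over $N$.

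The main obstacle I anticipate is the coherence of this internal choice across different good premice: I must verify that the $<_N$-least bijection computed inside one good premouse $N$ agrees with the $<_{N'}$-least bijection computed inside any other good premouse $N'$ containing $t$, so that the definition does not depend on the witness $N$. This reduces to the observation that $N \lhd \KK$ and $N' \lhd \KK$ force $<_N$ and $<_{N'}$ to agree on the common initial segment of $\KK$ that contains both $t$ and the relevant bijection, so any single witness $N$ suffices to certify $y = i(t)$.
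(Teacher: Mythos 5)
Your argument is correct, but it takes a genuinely different route from the paper's. The paper codes an active node $\langle M,\bar{x}\rangle$ without any reference to $\KK$: since $M$ is the transitive collapse of a hull of a good premouse, it is itself a structure of the form $\langle\mathrm{J}^A_\epsilon,A\rangle$ and therefore carries its own definable well-ordering $\lhd$; collapsing $\langle M,\lhd\rangle$ onto an ordinal yields a canonical enumeration of $M$ from which the $\in$-relation, the predicate $A$ and the set $\bar{x}$ are recorded by G\"odel pairs. That code is determined by the structure $\langle M,A,\bar{x}\rangle$ alone, so beyond the $\Sigma_1$-definition of $T$ itself no further existential witness is needed. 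You instead code a node $t$ by the $<_\KK$-least bijection of $\tc{\{t\}}$ with an ordinal and then localize this global minimality condition by quantifying over good premice, using the first claim ($N\lhd\KK$) to see that $<_N$ is the restriction of $<_\KK$ and that $N$ is $<_\KK$-downward closed, so that any witness certifies the same code. Both arguments work; the paper's is more self-contained at this step, while yours codes arbitrary elements of $\HH{\kappa}\cap\KK$ uniformly and is closer in spirit to the coding used in the proof of Theorem \ref{theorem:StrengthCountableCof}. Two small points to tidy up: $\lambda_t$ must be the cardinality of $\tc{\{t\}}$ as computed in $\KK$ (the $\VV$-cardinality may be strictly smaller, in which case no bijection onto it exists in $\KK$), and the inequality $\cof{\kappa}^\KK\leq\cof{\kappa}$ goes the wrong way in general --- what you actually need, namely $\cof{\kappa}^\KK<\kappa$, follows instead from the fact, established earlier in the proof, that $\kappa$ is singular in $\KK$.
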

 
 \begin{proof}[Proof of the Claim]
  It clearly suffices to construct an injection from the set of all active nodes to $\HH{\kappa}\cap\POT{\kappa}$. Let $\langle M,\bar{x}\rangle$ be an active node at some $\rho<\cof{\kappa}^\KK$. 
  Since $M$ is the transitive collapse of an elementary submodel of some good premouse $N=\langle\mathrm{J}^{\vec E}_{\alpha},\vec E\rangle$, we know that $M$ is of the form $\langle\mathrm{J}^A_\epsilon,A\rangle$ and there is a well-ordering $\lhd$ of $M$ that is definable in $M$. Let $\map{\tau}{\langle  M,\lhd\rangle}{\langle\lambda,<\rangle}$ denote the corresponding transitive collapse and associate $\langle M,\bar{x}\rangle$ with the element  
  \begin{equation*}
   \begin{split}
       & \Set{\goedel{0}{\goedel{\alpha}{\beta}}}{\alpha,\beta<\lambda, ~ \tau^{{-}1}(\alpha)\in\tau^{{-}1}(\beta)} \\
       & \cup ~ \Set{\goedel{1}{\alpha}}{\alpha<\lambda, ~ \tau^{{-}1}(\alpha)\in A} \\ 
       & ~ \cup ~ \Set{\goedel{2}{\alpha}}{\alpha<\lambda, ~ \tau^{{-}1}(\alpha)\in\bar{x}} 
   \end{split}
  \end{equation*}
  of $\HH{\kappa}\cap\POT{\kappa}$. It is now easy to see that the resulting injection is definable in the desired way. 
 \end{proof}

 \begin{claim*}
 No 
 countably closed forcing adds a cofinal branch through $T$. 
 \end{claim*}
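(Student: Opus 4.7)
The plan is to argue by contradiction. Suppose $\PPP$ is a countably closed partial order, $G$ is $\PPP$-generic over $\VV$, and $b \in \VV[G] \setminus \VV$ is a cofinal branch through $T$. Working in $\VV[G]$, I would first form the direct limit $\langle \calR_b, x_b\rangle$ of the models along $b$ and repeat the argument of the previous claim, concluding that $\calR_b$ is a well-founded good premouse and hence $\calR_b \lhd \KK^{\VV[G]}$. The iterability argument for $\calR_b$ requires $\cof{\cof{\kappa}^\KK} > \omega$ to remain true in $\VV[G]$, which is guaranteed because a countably closed forcing cannot lower the cofinality of an ordinal of uncountable cofinality to $\omega$.

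Next, I would invoke the generic absoluteness of Zeman's core model $\KK$ in the setting below a measurable cardinal of Mitchell order $1$ (see \cite{Ze02}, and compare Lemma \ref{lemma:KoepeCareAbsolute}): under our hypotheses, $\KK^{\VV[G]} = \KK^{\VV}$. Consequently, $\calR_b \lhd \KK \subseteq \VV$, so both $\calR_b$ and the cofinal sequence $x_b \in \calR_b$ lie in the ground model.

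Finally, I would reconstruct $b$ in $\VV$ from $\langle \calR_b, x_b\rangle$ and the tree $T \in \VV$. The first two coordinates $\langle M_\rho, \bar{x}_\rho \rangle$ of each node of $b$ are the transitive collapses of the hulls $\Hull^{\calR_b}(x_b(\rho) \cup \{x_b\})$, so the sequence of these pairs lies in $\VV$. The \emph{no-skipping} clause in the definition of the tree order then constrains the sequence of $s$-components along $b$ enough to show that it is reconstructible in $\VV$ from ground-model data, forcing $b \in \VV$ and contradicting our assumption.

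The main obstacle will be pinning down the exact form of generic absoluteness of $\KK$ applicable in our setting, and verifying that the canonical $\VV$-branch determined by $\langle \calR_b, x_b\rangle$ agrees with $b$ even at the level of the $s$-components, where the tree structure a priori allows some branching flexibility.
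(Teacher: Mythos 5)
Your proposal is correct and follows the paper's own argument essentially verbatim: form the direct limit $\langle\calR_b,x_b\rangle$ along the branch, use the fact that countably closed forcing preserves the uncountable cofinality of $\cof{\kappa}^\KK$ to rerun the earlier claim's argument in $\VV[G]$ and conclude $\calR_b\lhd\KK^{\VV[G]}$, invoke generic absoluteness of the core model ($\KK^\VV=\KK^{\VV[G]}$, for which the paper cites {\cite[Theorem 7.4.11]{Ze02}}), and then recover $b$ in $\VV$ from $\langle\calR_b,x_b\rangle$ and $T$. The $s$-component issue you flag is harmless: since the branch has length $\cof{\kappa}^\KK$, which has uncountable cofinality in $\VV[G]$, the increasing finite supports of the $s$-components must stabilize, so the limit $s$-part is essentially a finite object and automatically lies in $\VV$.
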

 
 \begin{proof}[Proof of the Claim]
 Let $\PPP$ be a countably closed forcing notion and let $G$ be $\PPP$-generic over $\VV$.  Suppose, towards a contradiction,  that there is a cofinal branch $b$ through $T$ in $\VV[G]$ that is not contained in $\VV$.
 By considering the direct limit of the active nodes along $b$ and using the fact that $\cof{\kappa}^\VV$ has uncountable cofinality in $\VV[G]$, we obtain a pair $\langle\calR_b, x_b\rangle$ such that $b$ (modulo some choice of an almost zero sequence $s$) can be recovered from $\calR_b$ and $x_b$ via the transitive collapses of models of the form \[ \Hull^{\calR_b}(x_b(\rho) \cup \{x_b\}), \] for $\rho < \cof{\kappa}^\KK$. 
 As $\cof{\kappa}^\VV$ has uncountable cofinality in $\VV[G]$, the argument in one of our previous claims then shows that  $\calR_b \lhd \KK^{V[G]}$ holds in $V[G]$.
 By \cite[Theorem 7.4.11]{Ze02}, the core model $\KK$ is forcing absolute, {i.e.} we have $\KK^\VV = \KK^{\VV[G]}$.  Therefore, we know that $\calR_b$ and hence $b$ is already an element of $\VV$, a contradiction. 
 \end{proof}

 Fix a strictly increasing,  cofinal function $\map{c}{\cof{\kappa}}{\cof{\kappa}^\KK}$ and let $T_*$ denote the unique partial order defined by the following clauses: 
 \begin{enumerate}
     \item The elements of $T_*$ are functions $t$ such that $\dom{t}\in\cof{\kappa}$ and the following statements hold:
     \begin{enumerate}
         \item If $\alpha\in\dom{t}$, then $t(\alpha)$ is a branch through $T$ of order-type $c(\alpha)$. 
         
         \item If $\alpha<\beta\in\dom{t}$, then $t(\alpha)$ is an initial segment of $t(\beta)$. 
     \end{enumerate}
     
     \item The ordering of $T_*$ is given by inclusion. 
 \end{enumerate}
 
 It then follows that $T_*$ is a tree of height $\cof{\kappa}$ with the property that every node has $\kappa$-many successors. 
 Since the tree $T$ has at least $\kappa^+$-many branches, it follows that $T_*$ also has at least $\kappa^+$-many branches. 
 Moreover, by using the injection $i$, it is possible to construct an injection $\map{i_*}{T_*}{\HH{\kappa}\cap\POT{\kappa}}$ with $\emptyset\notin\ran{i_*}$ that is definable by a $\Sigma_1$-formula with parameters in $\HH{\kappa}\cup\{\kappa\}$. 
 Finally, the above computations also imply that forcing with a countably closed partial order does not add a new cofinal branch to $T_*$.

 Define $D$ to be the set of all subsets of $\kappa$ of the form $$y_b ~ = ~ \Set{\goedel{\alpha}{\goedel{\beta}{\sup(i_*(b\restriction\alpha))}}}{\alpha<\cof{\kappa}, ~ \beta\in i_*(b\restriction\alpha)}$$ for some function $b$ with domain $\cof{\kappa}$ and the property that $b\restriction\alpha\in T_*$ for all $\alpha<\cof{\kappa}$. 
 Since the fact that the tree $T_*$ has at least $\kappa^+$-many cofinal branches implies that $D$ has cardinality greater than $\kappa$ and the above computations show that $D$ is definable by a $\Sigma_1$-formula with parameters in $\HH{\kappa}\cup\{\kappa\}$, our assumption yields a perfect  embedding $\map{\iota}{{}^{\cof{\kappa}}\kappa}{\POT{\kappa}}$ with $\ran{\iota}\subseteq D$. 
 Using the fact that $\cof{\kappa}$ is uncountable, a routine construction now allows us to find 
 \begin{itemize}
     \item a system $\seq{u_s}{s\in{}^{{<}\cof{\kappa}}2}$ of elements of ${}^{{<}\cof{\kappa}}2$, 
     
     \item a strictly increasing sequence $\seq{\kappa_\alpha}{\alpha<\cof{\kappa}}$ that is cofinal in $\kappa$, and 
     
     \item a system $\seq{a_s}{s\in{}^{{<}\cof{\kappa}}2}$ of bounded subsets of $\kappa$ 
 \end{itemize}
 such that the following statements hold for all $s,t\in{}^{{<}\cof{\kappa}}2$: 
 \begin{enumerate}
     \item If $\length{s}{}=\length{t}{}$, then $\length{u_s}{}=\length{u_t}{}$. 
     
     \item $a_s$ is a subset of $\kappa_{\length{s}{}}$. 
     
     \item If $s\subseteq t$, then $a_s=a_t\cap\kappa_{\length{s}{}}$ and $u_s\subseteq u_t$. 
     
     \item $a_{s^\frown\langle 0\rangle}\neq a_{s^\frown\langle 1\rangle}$ and $u_{s^\frown\langle 0\rangle}\neq u_{s^\frown\langle 1\rangle}$. 
     
     \item $\iota[\Set{x\in{}^\cof{\kappa}2}{x\restriction\length{s}{}=s}]=\Set{y\in\ran{\iota}}{y\cap\kappa_{\length{s}{}}=a_s}$. 
     
     \item If $\alpha<\length{s}{}$, then there are $\gamma\leq\delta<\kappa_{\length{s}{}}$ with $\goedel{\alpha}{\goedel{\gamma}{\delta}}\in a_s$. 
 \end{enumerate}

  Now, let $G$ be $\Add{\cof{\kappa}}{1}$-generic over $\VV$. Set $x_G=\bigcup G\in({}^\cof{\kappa}2)^{\VV[G]}$ and $$y_G ~ = ~ \bigcup\Set{a_{x_G\restriction\alpha}}{\alpha<\cof{\kappa}} ~ \in ~ \POT{\kappa}^{\VV[G]}.$$ 
  In this situation, our construction ensures that there is a function $b_G$ in $\VV[G]$ such that $\dom{b_G}=\cof{\kappa}$, $b_G\restriction\alpha\in T_*$ for all $\alpha<\cof{\kappa}$ and $y_G=y_{b_G}$. 
  By our earlier observations, the cofinal branch through $T_*$ induced by $b_G$ is contained in $\VV$ and hence $b_G$ is an element of $\VV$. 
  But this implies that $y_G$ is also contained in the ground model $\VV$. 
  Since $x_G$ is the unique element $x$ of $({}^{\cof{\kappa}}2)^{\VV[G]}$ with the property that $y_G\cap\kappa_\alpha=a_{x\restriction\alpha}$ holds for all $\alpha<\cof{\kappa}$, we can now conclude that $x_G$ is contained in $\VV$, a contradiction. 
\end{proof}


\section{Almost disjoint families at limits of measurable cardinals}

 We now proceed by using the techniques developed in Section \ref{section:PSPlimitsMeasurables} to show that large almost disjoint families at cardinals with sufficiently strong large cardinal properties are not simply definable.

\begin{proof}[Proof of Theorem \ref{theorem:AlmostDisjoint}]
 Let $\kappa$ be an iterable cardinal that is a limit of measurable cardinals, let $z$ be an element of $\HH{\kappa}$ and let $A$ be a subset of $\POT{\kappa}$ of cardinality greater than $\kappa$ with the property that there exists a $\Sigma_1$-formula $\varphi(v_0,v_1,v_2)$ with $A=\Set{y\subseteq\kappa}{\varphi(\kappa,y,z)}$. 
 Assume, towards a contradiction, that $A$ is an almost disjoint family in $\POT{\kappa}$. 
 Since $\kappa$ is an inaccessible cardinal and the collection of all bounded subsets of $\kappa$ is definable by a $\Sigma_0$-formula with parameter $\kappa$, we may then also assume that $A$ consists of unbounded subsets of $\kappa$. 
 Fix an inaccessible cardinal $\lambda<\kappa$ with $z\in\HH{\lambda}$ and use Lemma \ref{lemma:TechnicalLemmaIterationsTree} to obtain $x\in A$ and systems  $\seq{\nu_s}{s\in{}^{{<}\kappa}\kappa}$, $\seq{\kappa_s}{s\in{}^{{<}\kappa}\kappa}$, $\seq{U_s}{s\in{}^{{<}\kappa}\kappa}$ and $\seq{I_s}{s\in{}^{{<}\kappa}\kappa}$ with $\lambda<\kappa_\emptyset$ and the properties listed in the lemma. 
 %
 Then there exists an $\Add{\lambda}{1}$-nice name $\dot{x}$ for an unbounded subset of $\kappa$ with the property that $\dot{x}^G=i^{I_{c_G}}_{0,\infty}(x)$ holds whenever $G$ is $\Add{\lambda}{1}$-generic over $\VV$, $c_G=\bigcup G\in({}^\lambda 2)^{\VV[G]}$ and $I_{c_G}$ is the unique linear iteration of $\langle\VV,\Set{U_{c_G\restriction\xi}}{\xi<\lambda}\rangle$ of length $\sup_{\xi<\lambda}\length{I_{c_G\restriction\xi}}{}$ in $\VV[G]$ with $U^{I_G}_\alpha=U^{I_{c_G\restriction\xi}}_\alpha$ for all $\xi<\lambda$ and $\alpha<\length{I_{c_G\restriction\xi}}{}$. 
 Note that, by Lemma \ref{lemma:LongIterationsGenericExtensions}, the elementarity of $i^{I_{c_G}}_{0,\infty}$ and the upwards absoluteness of $\Sigma_1$-statements between $M^{I_{c_G}}_\infty$ and $\VV[G]$ ensures that 
 \begin{equation}\label{equation:FormulaForced}
  \mathbbm{1}_{\Add{\lambda}{1}}\Vdash\varphi(\check{\kappa},\dot{x},\check{z})
 \end{equation}
  holds in $\VV$.

 \begin{claim*}
  If $G_0\times G_1$ is $(\Add{\lambda}{1}\times\Add{\lambda}{1})$-generic over $\VV$, then $\dot{x}^{G_0}\neq\dot{x}^{G_1}$. 
 \end{claim*}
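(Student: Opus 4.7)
The plan is to combine mutual genericity of the product forcing with Clause (iii) of Lemma \ref{lemma:LongIterationsGenericExtensions}. The argument splits naturally into three short steps, each of which is essentially a verification of hypotheses already assembled in the preceding sections.

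First, working in $\VV[G_0, G_1]$, I would verify that the two generic functions $c_{G_0}, c_{G_1} \in {}^\lambda 2$ are distinct. An elementary density argument in $\Add{\lambda}{1} \times \Add{\lambda}{1}$ shows that every pair of conditions has an extension whose two components share a coordinate at which they disagree, so $c_{G_0} \neq c_{G_1}$ in the product extension. Moreover, since $\lambda$ is inaccessible and the conditions of $\Add{\lambda}{1}$ are bounded partial functions from $\lambda$ to $2$, each proper initial segment $c_{G_i} \restriction \xi$ with $\xi < \lambda$ is already determined by a single condition in $G_i$ and therefore lies in the ground model $\VV$.

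Second, I would invoke Lemma \ref{lemma:LongIterationsGenericExtensions} in $\VV[G_0, G_1]$ with $\PPP = \Add{\lambda}{1} \times \Add{\lambda}{1}$. Since $\lvert \Add{\lambda}{1} \rvert = \lambda < \kappa_\emptyset$, this product poset belongs to $\HH{\kappa_\emptyset}$, fulfilling the required hypothesis of the lemma. Letting $\xi < \lambda$ be the least coordinate at which $c_{G_0}$ and $c_{G_1}$ disagree, we have $c_{G_0} \restriction \xi = c_{G_1} \restriction \xi$ with $c_{G_0}(\xi) \neq c_{G_1}(\xi)$, and Clause (iii) of the lemma yields an ordinal $\rho$ below both $\kappa_{c_{G_0} \restriction (\xi+1)}$ and $\kappa_{c_{G_1} \restriction (\xi+1)}$ such that
\[
  i^{I_{c_{G_0}}}_{0,\infty}(x) \cap \rho ~ \neq ~ i^{I_{c_{G_1}}}_{0,\infty}(x) \cap \rho.
\]
Since $\dot{x}^{G_i} = i^{I_{c_{G_i}}}_{0,\infty}(x)$ by the choice of the name $\dot{x}$, this immediately gives $\dot{x}^{G_0} \neq \dot{x}^{G_1}$, as desired.

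No serious obstacle is expected, since the statement is effectively a packaging of the splitting property already built into the iteration tree. The only bookkeeping point worth checking is that the linear iteration $I_{c_{G_i}}$ appearing in the definition of $\dot{x}^{G_i}$ is well-defined and yields the same model whether computed inside $\VV[G_i]$ or inside the larger extension $\VV[G_0, G_1]$; but this is immediate from the uniqueness of the linear iteration determined by the prescribed ultrafilter sequence $\seq{U_{c_{G_i} \restriction \xi}}{\xi < \lambda}$, every term of which lies in $\VV$.
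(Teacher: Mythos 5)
Your proposal is correct and follows essentially the same route as the paper: establish $c_{G_0}\neq c_{G_1}$ from mutual genericity, note that $I_{c_{G_i}}$ and hence $\dot{x}^{G_i}=i^{I_{c_{G_i}}}_{0,\infty}(x)$ are computed the same way in $\VV[G_i]$ and in $\VV[G_0,G_1]$, and then apply clause (iii) of Lemma \ref{lemma:LongIterationsGenericExtensions} in the product extension. The extra verifications you include (that $\Add{\lambda}{1}\times\Add{\lambda}{1}\in\HH{\kappa_\emptyset}$ and that proper initial segments of $c_{G_i}$ lie in $\VV$) are exactly the hypotheses the paper leaves implicit, and they check out.
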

 
 \begin{proof}[Proof of the Claim]
  Given $i<2$, the absoluteness of the iterated ultrapower construction ensures that $(I_{c_{G_i}})^{\VV[G_i]}=(I_{c_{G_i}})^{\VV[G_0,G_1]}$ holds and this implies that  $$\dot{x}^{G_i} ~ = ~ (i^{I_{c_{G_i}}}_{0,\infty}(x))^{\VV[G_0,G_1]}.$$ 
  Since mutual genericity implies that $c_{G_0}\neq c_{G_1}$, the desired inequality now directly follows from an application of statement \eqref{item:SplitLong} of Lemma \ref{lemma:LongIterationsGenericExtensions} in $\VV[G_0,G_1]$. 
 \end{proof}

 Pick an elementary submodel $M_0$ of $\HH{\kappa^+}$ of cardinality $\kappa$ with ${}^{{<}\kappa}M_0\subseteq M_0$ that contains $\HH{\kappa}$ and all objects listed above. 
 Since iterable cardinals are weakly compact, we can find a transitive set $M_1$ of cardinality $\kappa$ and an elementary embedding $\map{j}{M_0}{M_1}$ with $\crit{j}=\kappa$ (see {\cite[Theorem 1.3]{MR1133077}}). 
 Then $j(\dot{x})$ is an $\Add{\lambda}{1}$-name for an unbounded subset of $j(\kappa)$ and there is a canonical $\Add{\lambda}{1}$-name $\dot{\gamma}$ for an ordinal in the interval $[\kappa,j(\kappa))$ with the property that $$\dot{\gamma}^G ~ = ~ \min(j(\dot{x})^G\setminus\kappa)$$ holds whenever $G$ is $\Add{\lambda}{1}$-generic over $\VV$.

 \begin{claim*}
  If $G_0\times G_1$ is $(\Add{\lambda}{1}\times\Add{\lambda}{1})$-generic over $\VV$, then $\dot{\gamma}^{G_0}\neq\dot{\gamma}^{G_1}$. 
 \end{claim*}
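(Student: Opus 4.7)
The plan is to argue by contradiction: assume $\gamma := \dot{\gamma}^{G_0} = \dot{\gamma}^{G_1}$ for some mutually generic pair $\langle G_0,G_1\rangle$. The strategy is to use $\gamma$ to build a uniform ultrafilter on $\kappa$ containing both $\dot{x}^{G_0}$ and $\dot{x}^{G_1}$, hence forcing their intersection to be unbounded in $\kappa$, and then to contradict this by appeal to the almost-disjointness of $A$ reflected through $M_0$.

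First, I would lift $j$ to the generic extension. Since $\Add{\lambda}{1}\times\Add{\lambda}{1}\in\HH{\kappa}\subseteq M_0$ and the critical point of $j$ is $\kappa>\lambda$, the embedding $j$ is the identity on this forcing. In particular, $G_0\times G_1$ is generic over both $M_0$ and $M_1$, and $j$ extends canonically to an elementary embedding $\map{j^+}{M_0[G_0\times G_1]}{M_1[G_0\times G_1]}$ with $\crit{j^+}=\kappa$, defined by $j^+(\sigma^{G_0\times G_1})=j(\sigma)^{G_0\times G_1}$ for $\sigma\in M_0$. In particular, $j^+(\dot{x}^{G_i})=j(\dot{x})^{G_i}$ for $i\in\{0,1\}$.

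Next, I would form the ultrafilter derived from $\gamma$. Set $$U_\gamma ~ = ~ \Set{y\in M_0[G_0\times G_1]\cap\POT{\kappa}}{\gamma\in j^+(y)}.$$ Standard arguments show $U_\gamma$ is an $M_0[G_0\times G_1]$-ultrafilter on $\kappa$; it is uniform, since for any bounded $y\subseteq\beta<\kappa$ we have $j^+(y)=y\subseteq\beta<\kappa\leq\gamma$, so $y\notin U_\gamma$. The contradictory assumption $\gamma\in j^+(\dot{x}^{G_i})$ for $i\in\{0,1\}$ places both $\dot{x}^{G_0}$ and $\dot{x}^{G_1}$ in $U_\gamma$, and closure under intersection gives $\dot{x}^{G_0}\cap\dot{x}^{G_1}\in U_\gamma$, which is therefore unbounded in $\kappa$.

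Finally, I would derive a contradiction by reflecting the almost-disjointness of $A$ through $M_0$. The assumption that $A$ is almost disjoint in $\VV$, being a $\Pi_2$-statement in the parameters $\kappa,z$, passes to $\HH{\kappa^+}$ and hence to $M_0\prec\HH{\kappa^+}$. Combined with \eqref{equation:FormulaForced} and the closure of $M_0$, this yields that $M_0$ forces any two distinct interpretations of $\dot{x}$ under mutually generic filters for $\Add{\lambda}{1}\times\Add{\lambda}{1}$ to have bounded intersection in $\kappa$. Since the preceding claim gives $\dot{x}^{G_0}\neq\dot{x}^{G_1}$, this forces $\dot{x}^{G_0}\cap\dot{x}^{G_1}$ to be bounded, contradicting the previous paragraph. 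The main obstacle is the reflection step: because $\varphi$ is $\Sigma_1$, new witnesses may appear in the generic extension, so one must argue carefully — leveraging the smallness of $\Add{\lambda}{1}^2$ in $\HH{\kappa}$ and the hypothesis ${}^{{<}\kappa}M_0\subseteq M_0$ — that the almost-disjointness of $A$ in $\VV$ yields a \emph{forced} almost-disjointness statement that correctly constrains the pair $\dot{x}^{G_0},\dot{x}^{G_1}$.
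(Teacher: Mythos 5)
Your first two paragraphs are correct and amount to the paper's own subclaim: assuming $\dot{\gamma}^{G_0}=\dot{\gamma}^{G_1}$, the lifted embedding $j^+$ (equivalently, the derived ultrafilter $U_\gamma$) shows that $\dot{x}^{G_0}\cap\dot{x}^{G_1}$ is unbounded in $\kappa$; the paper obtains this by observing that $\dot{\gamma}^{G_0}$ is a common element of $j^+(\dot{x}^{G_0})$ and $j^+(\dot{x}^{G_1})$ above $\kappa$ and pulling back below $\kappa$ by elementarity, and your ultrafilter packaging is equivalent.

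The gap is the final paragraph, and it is not a technicality that careful arguing will remove: the statement you are trying to force is false. The sets $\dot{x}^{G_0}$ and $\dot{x}^{G_1}$ are not elements of $A$ --- they do not lie in $\VV$, since each one computes the generic function $c_{G_i}$ --- so the almost-disjointness of $A$ in $\VV$ places no constraint on them. Being an almost disjoint family is not upward absolute for a $\Sigma_1$-defined class: in $\VV[G_0,G_1]$ the formula $\varphi(\kappa,\cdot,z)$ acquires new witnesses, and your own second paragraph proves that two of these new witnesses (namely $\dot{x}^{G_0}$ and $\dot{x}^{G_1}$, which are distinct by the preceding claim) have unbounded intersection. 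Hence no condition can force the ``distinct interpretations of $\dot{x}$ have bounded intersection'' statement you want, and neither the smallness of the forcing nor ${}^{{<}\kappa}M_0\subseteq M_0$ changes this. What the paper does instead is convert the forced statement ``$\dot{x}_l\neq\dot{x}_r$, $\dot{x}_l\cap\dot{x}_r$ is unbounded, and both satisfy $\varphi$'' into two genuine elements of $A$ in $\VV$: using that $\kappa$ is \emph{iterable} --- the hypothesis of the theorem, which your argument never invokes beyond weak compactness --- one finds a transitive iterable structure $\langle N_0,F_0\rangle$ of cardinality $\lambda$ containing a collapsed copy of $M_0$, iterates it $\kappa$ times so that the images of $\kappa$, $z$ and $\langle p,q\rangle$ are fixed, and then constructs inside $\VV$ an actual filter $H_0\times H_1$ below $\langle p,q\rangle$ that is generic over this small structure (possible because $\betrag{N_0}=\lambda$ and the forcing is ${<}\lambda$-closed). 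The resulting sets $x_0$ and $x_1$ are then distinct subsets of $\kappa$ lying in $\VV$, with unbounded intersection, and satisfying $\varphi(\kappa,\cdot,z)$ by upwards $\Sigma_1$-absoluteness --- that is, genuine elements of $A$, contradicting almost disjointness. Without this reflection into $\VV$ (or some equivalent device) your argument does not close.
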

 
 \begin{proof}[Proof of the Claim]
  Given an $\Add{\lambda}{1}$-name $\dot{a}$, let $\dot{a}_l$ and $\dot{a}_r$ denote the canonical $(\Add{\lambda}{1}\times\Add{\lambda}{1})$-names such that $\dot{a}_l^{G_0\times G_1}=\dot{a}^{G_0}$ and $\dot{a}_r^{G_0\times G_1}=\dot{a}^{G_1}$ holds whenever $G_0\times G_1$ is $(\Add{\lambda}{1}\times\Add{\lambda}{1})$-generic over $\VV$. 
  Given an $\Add{\lambda}{1}$-name $\dot{a}$ in $M_0$, we  then have $j(\dot{a}_l)=j(\dot{a})_l$ and $j(\dot{a}_r)=j(\dot{a})_r$. 

  Assume, towards a contradiction, that $$\langle p,q\rangle\Vdash_{\Add{\lambda}{1}\times\Add{\lambda}{1}}\anf{\dot{\gamma}_l=\dot{\gamma}_r}$$ holds for some condition $\langle p,q\rangle$ in $(\Add{\lambda}{1}\times\Add{\lambda}{1})$.

  \begin{subclaim*}
   $\langle p,q\rangle\Vdash_{\Add{\lambda}{1}\times\Add{\lambda}{1}}\anf{\textit{$\dot{x}_l\cap\dot{x}_r$ is unbounded in $\check{\kappa}$}}.$
  \end{subclaim*}
  
  \begin{proof}[Proof of the Subclaim]
   Let $G_0\times G_1$ be $(\Add{\lambda}{1}\times\Add{\lambda}{1})$-generic over $\VV$ with $\langle p,q\rangle\in G_0\times G_1$. 
   By standard arguments, there exists an elementary embedding $$\map{j_*}{M_0[G_0,G_1]}{M_1[G_0,G_1]}$$ with $j_*(\dot{b}^{G_0\times G_1})=j(\dot{b})^{G_0\times G_1}$ for every $(\Add{\lambda}{1}\times\Add{\lambda}{1})$-name $\dot{b}$  in $M_0$. 
   Then our assumptions ensure that 
   \begin{equation*}
    \begin{split}
    \dot{\gamma}^{G_0} ~ = ~ \dot{\gamma}^{G_1}  ~  \in ~ & ~ j(\dot{x})^{G_0} ~ \cap ~ j(\dot{x})^{G_1} ~ \cap ~ [\kappa,j(\kappa)) \\ 
     & = ~ j_*(\dot{x}_l^{G_0\times G_1}) ~ \cap ~ j_*(\dot{x}_r^{G_0\times G_1}) ~ \cap ~ [\kappa,j_*(\kappa)) ~ \neq ~ \emptyset. 
    \end{split}
   \end{equation*}
   In particular, if $\alpha<\kappa$, then the elementarity of $j_*$ and the fact that $j_*(\alpha)=\alpha$ directly imply that $$\dot{x}_l^{G_0\times G_1} ~ \cap ~ \dot{x}_r^{G_0\times G_1} ~ \cap ~ (\alpha,\kappa) ~ \neq ~ \emptyset.$$
   This proves the statement of the subclaim. 
  \end{proof}

 We now use the fact that  $\kappa$ is an iterable cardinal to find a transitive model $M$ of $\ZFC^-$ of cardinality $\kappa$ with $M_0\in M$  and a weakly amenable $M$-ultrafilter $F$ on $\kappa$ such that $\langle M,F\rangle$ is iterable.
 Pick an elementary submodel $\langle X,\in,\bar{F}\rangle$  of $\langle M,\in,F\rangle$ of cardinality $\lambda$ with ${}^{{<}\lambda}X\subseteq X$ that contains $\HH{\lambda}$, $M_0$ and all other relevant objects. 
  Let $\map{\pi}{X}{N_0}$ denote the corresponding transitive collapse and set $F_0=\pi[\bar{F}]$. 
  By {\cite[Theorem 19.15]{MR1994835}}, we know that $\langle N_0,F_0\rangle$ is iterable. 
  Let $\langle N_1,F_1\rangle$ denote the $\kappa$-th iterate of $\langle N_0,F_0\rangle$ and let $\map{i}{N_0}{N_1}$ denote the corresponding elementary embedding. 
  Then $(i\circ\pi)(\kappa)=\kappa$, $(i\circ\pi)(z)=z$, $(i\circ\pi)(\langle p,q\rangle)=\langle p,q\rangle$ and $\HH{\pi(\kappa)}^{N_0}=\HH{\pi(\kappa)}^{N_1}$. 
  Since $\Add{\lambda}{1}\times\Add{\lambda}{1}$ is ${<}\lambda$-closed and a subset of $N_0$, 
  we know that $N_0$ contains all sequences of conditions in $\Add{\lambda}{1}\times\Add{\lambda}{1}$ of length less than $\lambda$ and therefore 
  the fact that $\betrag{N_0}=\lambda$ allows us to find a filter $H_0\times H_1$ on $\Add{\lambda}{1}\times\Add{\lambda}{1}$ that contains $\langle p,q\rangle$ and is generic over $N_0$ by constructing a descending sequence of conditions in $\Add{\lambda}{1}\times\Add{\lambda}{1}$ below $\langle p,q\rangle$  that has  length $\lambda$ and  intersects all dense subsets of $\Add{\lambda}{1}\times\Add{\lambda}{1}$ contained in $N_0$. 
  Moreover, since $\HH{\lambda^+}^{N_1}\subseteq N_0$, we know that the filter $H_0\times H_1$ is also generic over $N_1$. 
   
   Given $i<2$, we now define $x_i=(i\circ\pi)(\dot{x})^{H_i}$. 
  Set $N=(i\circ\pi)(M_0)$. Then $\Add{\lambda}{1}\subseteq N$, $(i\circ\pi)(\dot{x})\in N$ and $H_0\times H_1$ is  $(\Add{\lambda}{1}\times\Add{\lambda}{1})$-generic over $N$. 
   Since our first claim and the above subclaim show that $$\langle p,q\rangle\Vdash_{\Add{\lambda}{1}\times\Add{\lambda}{1}}\anf{\textit{$\dot{x}_l\neq\dot{x}_r$ and $\dot{x}_l\cap\dot{x}_r$ is unbounded in $\check{\kappa}$}}$$ holds in $M_0$, elementarity implies that $x_0$ and $x_1$ are distinct subsets of $\kappa$ and    $x_0\cap x_1$ is unbounded in $\kappa$. 
   Moreover, using \eqref{equation:FormulaForced}, $\Sigma_1$-upwards absoluteness and the fact that $\Sigma_1$-statements in the forcing language can be expressed by $\Sigma_1$-formulas, we know that $$\langle p,q\rangle\Vdash_{\Add{\lambda}{1}\times\Add{\lambda}{1}}\anf{\varphi(\check{\kappa},\dot{x}_l,\check{z})\wedge\varphi(\check{\kappa},\dot{x}_r,\check{z})}$$ holds in $M_0$ and therefore elementarity allows us to conclude that $\varphi(\kappa,x_i,z)$ holds in  $N[H_0,H_1]$ for all $i<2$. 
   By $\Sigma_1$-upwards absoluteness, this implies that $x_0$ and $x_1$ are distinct elements of $A$, contradicting the fact that $A$ is an almost disjoint family.  
 \end{proof}

 Now, let $G$ be $\Add{\lambda}{\kappa^+}$-generic over $\VV$. 
 Since $\lambda$ is inaccessible, the model $\VV[G]$ has the same cardinals as $\VV$. 
 Let $\seq{G_\delta}{\delta<\kappa^+}$ denote the induced sequence of filters on $\Add{\lambda}{1}$. 
 Given $\delta<\varepsilon<\kappa^+$, the filter $G_\delta\times G_\varepsilon$ on $\Add{\lambda}{1}\times\Add{\lambda}{1}$ is generic over $\VV$ and therefore the previous claim implies that $\dot{\gamma}^{G_\delta}\neq\dot{\gamma}^{G_\varepsilon}$. 
 In particular, the map $$\Map{\iota}{\kappa^+}{j(\kappa)}{\delta}{\dot{\gamma}^{G_\delta}}$$ is an injection. 
 Since $j(\kappa)<\kappa^+$, this yields a contradiction. 
\end{proof}

 The conclusion of Theorem \ref{theorem:AlmostDisjoint} provably does not generalize to $\Sigma_1$-definitions using arbitrary subsets of $\kappa$ as parameters. If $\kappa$ is an infinite cardinal and $z\subseteq\kappa$ codes an injective sequence $\seq{s_\beta}{\beta<\kappa}$ of elements of ${}^{{<}\kappa}2$ with the property that the set $$I ~ = ~ \Set{x\in{}^\kappa 2}{\forall\alpha<\kappa ~ \exists\beta<\kappa ~ x\restriction\alpha=s_\beta}$$ has cardinality greater than $\kappa$, then the collection $\Set{\Set{\beta<\kappa}{s_\beta\subseteq x}}{x\in I}$ is an almost disjoint family of cardinality greater than $\kappa$ that is definable by a $\Sigma_1$-formula with parameter $z$. 
  Note that such sequences exist for every strong limit cardinal $\kappa$, or, more generally, for every cardinal $\kappa$ that is a strong limit cardinal in an inner model $M$ satisfying $(2^\kappa)^M\geq\kappa^+$.


\section{Long well-orders at limits of measurable cardinals}\label{section:LongWOSingular}

In order to motivate the statement of Theorem \ref{MainTheorem:LimitMeasurables}, we first show how classical results of Dehornoy can easily be used to show that, if $\kappa$ is a limit of measurable cardinals, then no well-ordering of $\POT{\kappa}$ is  definable by a $\Sigma_1$-formula with parameters in $\HH{\kappa}\cup\{\kappa\}$. Moreover, if $\kappa$  has uncountable cofinality, then we can also easily show that no injection from $\kappa^+$ into $\POT{\kappa}$ is definable in this way. 
 This non-definability result will be a direct consequence of the following theorem.

\begin{theorem}\label{theorem:Dehorney}
  If $\delta$ is a measurable cardinal,  $z\in\HH{\delta}$ and  $\nu$ is a cardinal with $\cof{\nu}\neq\delta$ and $\mu^\delta<\nu$ for all $\mu<\nu$, then the following statements hold for $\kappa\in\{\nu,\nu^+\}$: 
  \begin{enumerate}
      \item No well-ordering of $\POT{\kappa}$ is definable by a $\Sigma_1$-formula with parameters $\nu$, $\nu^+$ and $z$. 
      
     \item 
     If $\cof{\kappa}>\omega$, then no injection from $\kappa^+$ into $\POT{\kappa}$ is definable by a $\Sigma_1$-formula with parameters $\kappa$ and $z$. 
  \end{enumerate}
\end{theorem}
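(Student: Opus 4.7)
The plan is to combine Dehornoy's analysis of iterated ultrapowers of a normal measure from \cite{DEHORNOY1978109} with the upward absoluteness of $\Sigma_1$-formulas. Fix a normal measure $U$ on $\delta$ and let $\seq{\map{j_{0,\alpha}}{\VV}{M_\alpha}}{\alpha\in\On}$ denote the resulting iteration of $\langle\VV,U\rangle$. The key input from Dehornoy is that every cardinal $\lambda>\delta$ with $\cof{\lambda}\neq\delta$ and $\mu^\delta<\lambda$ for all $\mu<\lambda$ is fixed by every $j_{0,\alpha}$. Applied to $\nu$, together with a short extra argument handling $\nu^+$ (using that successor cardinals above $\delta$ cannot be collapsed by the iteration since $M_\alpha\subseteq\VV$), this gives $j_{0,\alpha}(\nu)=\nu$ and $j_{0,\alpha}(\nu^+)=\nu^+$ for all $\alpha$. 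Since $z\in\HH{\delta}$ and $\crit{j_{0,\alpha}}=\delta$, we also have $j_{0,\alpha}(z)=z$, so every relevant parameter is fixed by the iteration.

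For part (i), suppose $\varphi(u,v,\nu,\nu^+,z)$ defines a wellordering $R$ of $\POT{\kappa}$. The statement ``there exists $x\subseteq\kappa$ of $R$-rank $\xi$'' is then $\Sigma_1$ in $\xi$ with the fixed parameters $\nu,\nu^+,z$, and is equivalent in $\VV$ to $\xi<2^\kappa$. Applying elementarity to the unique $x_\delta\in\POT{\kappa}$ of $R$-rank $\delta$, we obtain an element of $\POT{\kappa}^{M_\alpha}$ of $j_{0,\alpha}(R)$-rank $j_{0,\alpha}(\delta)$ inside $M_\alpha$, and $\Sigma_1$-upward absoluteness then forces an element of $\POT{\kappa}$ in $\VV$ of $R$-rank at least $j_{0,\alpha}(\delta)$. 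Since the critical points of the iteration form a cofinal class of ordinals, one can choose $\alpha$ with $j_{0,\alpha}(\delta)>2^\kappa$, contradicting the fact that $R$ has order type $2^\kappa$.

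For part (ii), let $\map{\iota}{\kappa^+}{\POT{\kappa}}$ be an injection defined by a $\Sigma_1$-formula with parameters $\kappa$ and $z$, and assume $\cof{\kappa}>\omega$. The analogous elementarity and $\Sigma_1$-upward absoluteness reasoning shows that for every $\alpha$ and every $\beta<\kappa^+$ one has $\iota(\beta)\in\POT{\kappa}^{M_\alpha}$, so that the range of $\iota$ is contained in the intersection $\bigcap_\alpha\POT{\kappa}^{M_\alpha}$ of all iterates' power sets. The hypothesis $\cof{\kappa}>\omega$ is then used to invoke Dehornoy's description of the ``iteration-stable'' subsets of $\kappa$, which under this cofinality assumption form a family of cardinality at most $\kappa$ in $\VV$, yielding the required contradiction with the injectivity of $\iota$.

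The main obstacles lie in verifying that $\nu^+$ is fixed by every $j_{0,\alpha}$ (which requires the separate non-collapse argument, particularly when $\cof{\nu}<\delta$, since $\nu^+$ does not directly satisfy Dehornoy's fixed-point hypothesis in that case) and in the precise counting step in part (ii) that makes essential use of $\cof{\kappa}>\omega$ to bound the size of $\bigcap_\alpha\POT{\kappa}^{M_\alpha}$; both use Dehornoy's structural analysis of iterates in an essential way.
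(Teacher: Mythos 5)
Your part (ii) is essentially the paper's argument: show $j_{0,\alpha}(\iota)=\iota$ for $\alpha<\kappa$, conclude $\iota$ and its range live in the intersection model, and use $\cof{\kappa}>\omega$ (Dehornoy's Theorem B(i), giving $N_\kappa=\bigcap_{\alpha<\kappa}N_\alpha$) together with the computation $\betrag{\POT{\kappa}^{N_\kappa}}=\kappa$ to get a contradiction. The only imprecision is your phrase ``for every $\alpha$'': the fixed-point facts, and hence $j_{0,\alpha}(\iota)=\iota$, are only available for $\alpha<\kappa$, but that is all the counting argument needs.

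Part (i), however, has a genuine gap. Your premise that $\nu$, $\nu^+$ and $z$ are ``fixed by every $j_{0,\alpha}$'' is false: the critical sequence satisfies $j_{0,\alpha}(\delta)\geq\alpha$, so for $\alpha\geq\nu$ we get $j_{0,\alpha}(\nu)>j_{0,\alpha}(\delta)\geq\nu$, and likewise $j_{0,\alpha}(\nu^+)>\nu^+$ once $\alpha\geq\nu^+$; the fixed-point lemma only holds for $\alpha<\kappa$. Your rank-overflow step requires a stage with $j_{0,\alpha}(\delta)>2^\kappa$, hence $\alpha>2^\kappa>\nu^+$, and at such a stage $j_{0,\alpha}(R)$ is the relation defined by $\varphi$ from the \emph{moved} parameters $j_{0,\alpha}(\nu),j_{0,\alpha}(\nu^+)$, so $\Sigma_1$-upward absoluteness no longer gives $j_{0,\alpha}(R)\subseteq R$. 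The safe stages $\alpha<\nu$ are useless here, since for them $j_{0,\alpha}(\delta)<(2^\delta)^+\leq\nu\leq 2^\kappa$ and no overflow occurs; so the approach cannot be repaired within these constraints. The paper instead uses only the first $\omega^2$ stages: it sets $\lhd_\alpha=j_{0,\alpha}(\lhd)$, observes $\lhd_\beta\subseteq\lhd_\alpha\subseteq\lhd$ for $\alpha\leq\beta<\omega^2$, and shows that $\blacktriangleleft=\bigcap_{\alpha<\omega^2}\lhd_\alpha$ is a well-ordering of $\POT{\kappa}^{M_{\omega^2}}$ lying in Dehornoy's intersection model $M_{\omega^2}=\bigcap_{\xi<\omega^2}N_\xi$; this contradicts Dehornoy's theorem that $M_{\omega^2}$ contains a family $\calG_{\omega^2}\subseteq\POT{j_{0,\omega^2}(\delta)}$ (with $j_{0,\omega^2}(\delta)<\nu\leq\kappa$) that $M_{\omega^2}$ cannot well-order. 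A secondary, repairable point: ``$x$ has $R$-rank exactly $\xi$'' is not obviously $\Sigma_1$, since surjectivity onto the set of $R$-predecessors is a $\Pi_1$ condition; one should phrase the overflow via ``there is an order-preserving injection of $\xi$ into $R$'' instead.
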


The proof of the above theorem is based on two standard results about measurable cardinals. 
 A proof of the first of these lemmas is contained in the proof of {\cite[Lemma 1.3]{MR3845129}}:

\begin{lemma}\label{lemma:UltrapowerEmbeddingFixedPoints}
 Let $U$ be a normal ultrafilter on a measurable cardinal $\delta$ and let $\nu>\delta$ be a cardinal with $\cof{\nu}\neq\delta$ and $\mu^\delta<\nu$ for all $\mu<\nu$. If $\map{j}{\VV}{\Ult{\VV}{U}}$ is the induced ultrapower embedding, than $j(\nu)=\nu$ and $j(\nu^+)=\nu^+$. \qed 
\end{lemma}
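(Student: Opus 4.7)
My plan is to prove the two equalities in turn by combining the standard cardinality estimate $|j(\alpha)|\leq|\alpha|^\delta$ with appropriate continuity properties of $j$ at ordinals of the relevant cofinalities, while exploiting the hypothesis $\mu^\delta<\nu$ for all $\mu<\nu$ to bound individual images $j(\alpha)$.

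I would first establish $j(\nu)=\nu$, splitting on $\cof{\nu}$. If $\cof{\nu}>\delta$, then every function $f\colon\delta\to\nu$ has range bounded by some $\alpha<\nu$, so $[f]_U<j(\alpha)$; hence $j(\nu)=\sup\{j(\alpha)\mid\alpha<\nu\}$. Since $|j(\alpha)|\leq|\alpha|^\delta<\nu$ and $\nu$ is a cardinal, this gives $j(\alpha)<\nu$, so $j(\nu)\leq\nu$ and hence equality (using $j(\nu)\geq\nu$ in general). If $\cof{\nu}<\delta$, fix a cofinal increasing $g\colon\cof{\nu}\to\nu$ in $\VV$; since $\cof{\nu}<\crit{j}$, elementarity yields $j(\cof{\nu})=\cof{\nu}$, $j(g)(n)=j(g(n))$ for all $n<\cof{\nu}$, and $j(g)$ is cofinal in $j(\nu)$ inside $\Ult{\VV}{U}$. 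The same estimate applied to each $g(n)<\nu$ gives $j(g(n))<\nu$, so $j(\nu)=\sup_{n<\cof{\nu}}j(g(n))\leq\nu$.

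Using $j(\nu)=\nu$, I would then prove $j(\nu^+)=\nu^+$. Since $\cof{\nu^+}=\nu^+>\delta$, every $f\colon\delta\to\nu^+$ is bounded below $\nu^+$, so by the same boundedness argument as above, $j(\nu^+)=\sup\{j(\alpha)\mid\alpha<\nu^+\}$. For $\alpha<\nu^+$, we have $|\alpha|^{\VV}\leq\nu$, and elementarity yields $|j(\alpha)|^{\Ult{\VV}{U}}\leq j(\nu)=\nu$. Since $\Ult{\VV}{U}$ is identified with its transitive collapse and is therefore a transitive subclass of $\VV$, any bijection between $j(\alpha)$ and an ordinal $\leq\nu$ that exists in $\Ult{\VV}{U}$ is already a set in $\VV$, so $|j(\alpha)|^{\VV}\leq\nu$ and hence $j(\alpha)<\nu^+$. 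Taking the supremum over $\alpha<\nu^+$ yields $j(\nu^+)\leq\nu^+$, which together with $j(\nu^+)\geq\nu^+$ completes the proof.

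The only real subtlety is the case $\cof{\nu}<\delta$ of $j(\nu)=\nu$: one cannot argue directly from $|j(\nu)|\leq\nu^\delta$, since König's theorem forces $\nu^\delta>\nu$ here, and the hypothesis $\mu^\delta<\nu$ for $\mu<\nu$ does not lift to $\mu=\nu$; the essential move is to transfer a short cofinal sequence of length $\cof{\nu}<\crit{j}$ via elementarity, which reduces the problem to bounds on $j(g(n))$ for individual $g(n)<\nu$. The analogous issue for $j(\nu^+)$ is then handled by first invoking $j(\nu)=\nu$ and then using transitivity of the ultrapower to transfer the internal cardinality bound back to $\VV$.
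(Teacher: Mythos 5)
Your proof is correct, and it is the standard argument that the paper itself does not reproduce but delegates to the proof of {\cite[Lemma 1.3]{MR3845129}}: boundedness of functions $\map{f}{\delta}{\nu}$ when $\cof{\nu}>\delta$, transfer of a short cofinal sequence when $\cof{\nu}<\crit{j}$, and the bound $\betrag{j(\alpha)}\leq\betrag{\alpha}^\delta<\nu$, followed by the deduction of $j(\nu^+)=\nu^+$ from $j(\nu)=\nu$ via elementarity and transitivity of the ultrapower. In particular, you correctly identify and handle the one genuine subtlety, namely that $\nu^\delta>\nu$ when $\cof{\nu}<\delta$, so the crude cardinality bound cannot be applied to $\nu$ itself.
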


 \begin{lemma}\label{lemma:FixedPointsIteration}
  Let $U$ be a normal ultrafilter on a measurable cardinal $\delta$ and let $$\langle\seq{N_\alpha}{\alpha\in\On}, \seq{\map{j_{\alpha,\beta}}{N_\alpha}{N_\beta}}{\alpha\leq\beta\in\On}\rangle$$ denote the system of iterated ultrapowers of $\langle\VV,\in,U\rangle$. If $\nu$ is a cardinal with $\cof{\nu}\neq\delta$ and  $\mu^\delta<\nu$ for all $\mu<\nu$, then $j_{0,\alpha}(\kappa)=\kappa$ holds for   $\kappa\in\{\nu,\nu^+\}$ and all $\alpha<\kappa$.  
 \end{lemma}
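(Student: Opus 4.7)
My plan is to establish both claims simultaneously by induction on $\alpha < \nu^+$, showing that $j_{0,\alpha}(\nu) = \nu$ and $j_{0,\alpha}(\nu^+) = \nu^+$; this handles $\kappa \in \{\nu, \nu^+\}$ at once. The base case $\alpha = 0$ is immediate since $j_{0,0}$ is the identity.

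For the successor step $\alpha \to \alpha+1$, the embedding $j_{\alpha,\alpha+1}\colon N_\alpha \to N_{\alpha+1}$ is, from the perspective of $N_\alpha$, the ultrapower embedding induced by $U_\alpha := j_{0,\alpha}(U)$, which in $N_\alpha$ is a normal ultrafilter on the measurable cardinal $\delta_\alpha := j_{0,\alpha}(\delta)$. The key observation is that the hypothesis on $\nu$ is preserved under elementarity: since $j_{0,\alpha}(\nu) = \nu$ by induction, applying the elementary embedding $j_{0,\alpha}$ to ``$\cof{\nu} \neq \delta$'' yields $\cof{\nu}^{N_\alpha} \neq \delta_\alpha$ (using injectivity of $j_{0,\alpha}$ on ordinals), and applying it to the universal statement ``$\forall \mu < \nu\,\, \mu^\delta < \nu$'' yields its analog with $\delta_\alpha$ in place of $\delta$ inside $N_\alpha$. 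Thus Lemma~\ref{lemma:UltrapowerEmbeddingFixedPoints} applies internally to $N_\alpha$, giving $j_{\alpha,\alpha+1}(\nu) = \nu$ and $j_{\alpha,\alpha+1}(\nu^+) = \nu^+$; composing with $j_{0,\alpha}$ closes this case.

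For the limit step at a limit ordinal $\eta < \nu^+$, I would first obtain a cardinality bound via the direct limit structure. Every ordinal of $N_\eta$ below $j_{0,\eta}(\nu)$ arises as $j_{\beta,\eta}(\zeta)$ for some $\beta < \eta$ and some ordinal $\zeta < j_{0,\beta}(\nu) = \nu$ in $N_\beta$, giving at most $|\eta| \cdot \nu = \nu$ such representatives (using $|\eta| \leq \nu$); hence $|j_{0,\eta}(\nu)|^V \leq \nu$, so $j_{0,\eta}(\nu) < \nu^+$. Since $N_\eta \subseteq V$, the ordinals $\nu$ and $\nu^+$ remain cardinals in $N_\eta$, and by elementarity $j_{0,\eta}(\nu^+) = \bigl((j_{0,\eta}(\nu))^+\bigr)^{N_\eta}$. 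Combining the inductive conclusion $(\nu^+)^{N_\beta} = \nu^+$ for all $\beta < \eta$ with the direct limit formation, one shows $(\nu^+)^{N_\eta} = \nu^+$; since $j_{0,\eta}(\nu) \in [\nu, \nu^+)$ is an $N_\eta$-cardinal and no $N_\eta$-cardinal lies strictly between $\nu$ and $\nu^+$, we conclude $j_{0,\eta}(\nu) = \nu$, and then $j_{0,\eta}(\nu^+) = \nu^+$ follows from the elementary identity.

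The main obstacle will be the limit step, specifically verifying that $(\nu^+)^{N_\eta} = \nu^+$: this amounts to ruling out that a bijection $\nu \to \xi$ with $\xi \in (\nu, \nu^+)$ appears in the direct limit $N_\eta$, which requires tracing any potential witness back through some $j_{\beta,\eta}$ and exploiting $(\nu^+)^{N_\beta} = \nu^+$ together with the direct limit description of new sets in $N_\eta$. The successor case by contrast is clean, reducing transparently to Lemma~\ref{lemma:UltrapowerEmbeddingFixedPoints} applied inside $N_\alpha$.
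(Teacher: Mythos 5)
Your induction scheme breaks down, and not just at a technical point: the statement you are inducting on is false for $\alpha\geq\nu$. The critical sequence satisfies $j_{0,\alpha}(\delta)\geq\alpha$ for every $\alpha$, so for $\nu\leq\alpha<\nu^+$ we get $j_{0,\alpha}(\nu)>j_{0,\alpha}(\delta)\geq\alpha\geq\nu$ (note $\nu>2^\delta>\delta$ is forced by the hypothesis $\mu^\delta<\nu$). Hence $j_{0,\alpha}(\nu)=\nu$ can only be proved for $\alpha<\nu$ — which is exactly what the lemma claims for $\kappa=\nu$ — and a simultaneous induction up to $\nu^+$ establishing both fixed-point statements cannot succeed. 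The same example kills your intermediate claim $(\nu^+)^{N_\eta}=\nu^+$ for $\eta\geq\nu$: the ordinal $j_{0,\eta}(\nu)$ is a cardinal of $N_\eta$ lying strictly between $\nu$ and $\nu^+$. Moreover, even for limit $\eta<\nu$ your route is circular: proving that no $N_\eta$-cardinal lies in $(\nu,\nu^+)$ amounts to proving $j_{0,\eta}(\nu)=\nu$ and $j_{0,\eta}(\nu^+)=\nu^+$, which is what you are trying to establish. The cardinality count $|j_{0,\eta}(\nu)|\leq|\eta|\cdot\nu$ gives only $j_{0,\eta}(\nu)<\nu^+$, and there is no way to upgrade this to $j_{0,\eta}(\nu)=\nu$ without a pointwise bound.

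The paper's argument repairs both defects by decoupling the two claims. First, for limit $\eta<\nu$ one shows directly that $j_{\beta,\eta}(\xi)<\nu$ for all $\beta<\eta$ and $\xi<\nu$, using the standard estimate on images of ordinals under iterated ultrapowers ({\cite[Corollary 19.7.(a)]{MR1994835}} applied in $N_\beta$); this is precisely where the hypothesis $\mu^\delta<\nu$ enters at limit stages, and since every ordinal below $j_{0,\eta}(\nu)$ has the form $j_{\beta,\eta}(\xi)$ with $\xi<j_{0,\beta}(\nu)=\nu$, it yields $j_{0,\eta}(\nu)\subseteq\nu$ outright. (Your successor step via Lemma \ref{lemma:UltrapowerEmbeddingFixedPoints} inside $N_\alpha$ is fine and matches the paper.) Second, one proves only the weaker statement $j_{0,\alpha}(\nu)<\nu^+$ for all $\alpha<\nu^+$ — your cardinality count does this. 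Finally, $j_{0,\alpha}(\nu^+)=\bigl((j_{0,\alpha}(\nu))^+\bigr)^{N_\alpha}\leq|j_{0,\alpha}(\nu)|^+=\nu^+\leq j_{0,\alpha}(\nu^+)$, which gives $j_{0,\alpha}(\nu^+)=\nu^+$ without ever needing $j_{0,\alpha}(\nu)=\nu$ or $(\nu^+)^{N_\alpha}=\nu^+$ in the range $\nu\leq\alpha<\nu^+$.
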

 
 \begin{proof}
  We start by using induction to show that $j_{0,\alpha}(\nu)=\nu$ holds for all $\alpha<\nu$. 
   In the successor case, the desired conclusion follows directly from the induction hypothesis and an application of Lemma \ref{lemma:UltrapowerEmbeddingFixedPoints} in $N_\alpha$. Hence, we may assume that $\alpha$ is a limit ordinal. Pick $\bar{\alpha}<\alpha$ and $\xi<\nu$. Then elementarity allows us to apply {\cite[Corollary 19.7.(a)]{MR1994835}} in $N_{\bar{\alpha}}$ to conclude that $j_{\bar{\alpha},\alpha}(\xi)<\nu$. Since every element of $j_{0,\alpha}(\nu)\geq\nu$ is of the form $j_{\bar{\alpha},\alpha}(\xi)$ for some $\bar{\alpha}<\alpha$ and $\xi<j_{0,\bar{\alpha}}(\nu)=\nu$, these computations show that $j_{0,\alpha}(\nu)=\nu$ holds. 
  
  Next, we inductively show that $j_{0,\alpha}(\nu)<\nu^+$ holds for all $\alpha<\nu^+$, where the successor step is again a direct consequence of the induction hypothesis and Lemma \ref{lemma:UltrapowerEmbeddingFixedPoints}. In the other case, if $\alpha\in\nu^+\cap\Lim$ and $j_{0,\bar{\alpha}}(\nu)<\nu^+$ holds for all $\bar{\alpha}<\alpha$, then every element of $j_{0,\alpha}(\nu)$ is of the form $j_{\bar{\alpha},\alpha}(\xi)$ with $\bar{\alpha}<\alpha$ and $\xi<j_{0,\bar{\alpha}}(\nu)$ and this shows that $\betrag{j_{0,\alpha}(\nu)}\leq\nu\cdot\betrag{\alpha}<\nu^+$. 
  
  Finally, we have $\nu^+\leq j_{0,\alpha}(\nu^+)\leq\betrag{j_{0,\alpha}(\nu)}^+$ for all $\alpha<\nu^+$. Since the above computations show that $\betrag{j_{0,\alpha}(\nu)}=\nu$ holds for all $\alpha<\nu^+$, this shows that $j_{0,\alpha}(\nu^+)=\nu^+$ holds for all $\alpha<\nu^+$. 
 \end{proof}

\begin{proof}[Proof of Theorem \ref{theorem:Dehorney}]
  Let $U$ be a normal ultrafilter on a measurable cardinal $\delta$ and let $$\langle\seq{N_\alpha}{\alpha\in\On}, ~ \seq{\map{j_{\alpha,\beta}}{N_\alpha}{N_\beta}}{\alpha\leq\beta\in\On}\rangle$$ denote the system of iterated ultrapowers of $\langle\VV,\in,U\rangle$.  
  Moreover, for every $\alpha\in\Lim$, we define  $M_\alpha=\bigcap\Set{N_\xi}{\xi<\alpha}$. Then {\cite[Proposition 1.6.1]{DEHORNOY1978109}} shows that each $M_\alpha$ is an inner  model of $\ZF$.

 (i) Assume, towards a contradiction, that there is a $\Sigma_1$-formula $\varphi(v_0,\ldots,v_4)$ with the property that $$\lhd ~ = ~ \Set{\langle x,y\rangle}{\varphi(x,y,z,\nu,\nu^+)}$$ is a well-ordering of $\POT{\kappa}$. 
 For all $\alpha\in\On$, we define $\lhd_\alpha=j_{0,\alpha}({\lhd})$. 
 Given $\alpha<\omega^2$, Lemma \ref{lemma:FixedPointsIteration} implies that $j_{0,\alpha}(\nu)=\nu$ and $j_{0,\alpha}(\nu^+)=\nu^+$. 
 In particular, elementarity implies that $\lhd_\alpha$ is a well-ordering of $\POT{\kappa}^{N_\alpha}$ and the sequence $\seq{\lhd_{\alpha+\beta}}{\beta<\omega^2}$ is an element of $N_\alpha$.
  By our assumptions, elementarity and $\Sigma_1$-upwards absoluteness imply that ${\lhd_\beta}\subseteq{\lhd_\alpha}\subseteq{\lhd}$ for all $\alpha\leq\beta<\omega^2$. 
 Define ${\blacktriangleleft}=\bigcap\Set{\lhd_\alpha}{\alpha<\omega^2}$. If $\alpha<\omega^2$, then ${\blacktriangleleft}=\bigcap\Set{\lhd_{\alpha+\beta}}{\beta<\omega^2}$ and therefore ${\blacktriangleleft}\in N_\alpha$. This shows that ${\blacktriangleleft}$ is an element of $M_{\omega^2}$ and it follows that ${\blacktriangleleft}$ is a well-ordering of $\POT{\kappa}^{M_{\omega^2}}$. 
  But this yields a  contradiction, because {\cite[Theorem 5.3.4]{DEHORNOY1978109}} shows that $M_{\omega^2}$ contains a subset $\calG_{\omega^2}$ of $\POT{j_{0,\omega^2}(\delta)}$ with the property that $M_{\omega^2}$ does not contain a well-ordering of the set  $\calG_{\omega^2}$.

 (ii)  Assume, towards a contradiction, that $\cof{\kappa}>\omega$ and  there is an injection $\map{\iota}{\kappa^+}{\POT{\kappa}}$ that is definable by a $\Sigma_1$-formula $\varphi(v_0,\ldots,v_3)$ and the  parameters $\kappa$ and $z$. 
  
  \begin{claim*}
   If $\alpha<\kappa$, then $j_{0,\alpha}(\iota)=\iota$. 
  \end{claim*}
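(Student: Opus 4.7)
The plan is to exploit the $\Sigma_1$-definability of $\iota$ together with the fact that the iterated ultrapower embedding $j_{0,\alpha}$ fixes every parameter of the defining formula. Since $\crit{j_{0,\alpha}}=\delta$, $z\in\HH{\delta}$, and $\delta$ is inaccessible (being measurable), we have $j_{0,\alpha}(z)=z$. Combined with Lemma \ref{lemma:FixedPointsIteration}, which yields $j_{0,\alpha}(\kappa)=\kappa$ for $\kappa\in\{\nu,\nu^+\}$ and $\alpha<\kappa$, this shows that $j_{0,\alpha}$ fixes all parameters appearing in $\varphi$.

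I would then apply elementarity: in $N_\alpha$, the function $j_{0,\alpha}(\iota)$ is an injection from $j_{0,\alpha}(\kappa^+)$ into $\POT{\kappa}^{N_\alpha}$ whose graph is defined by $\varphi(\cdot,\cdot,\kappa,z)$. Upwards absoluteness of $\Sigma_1$-formulas from the inner model $N_\alpha$ to $\VV$ ensures that any pair $\langle\beta,x\rangle\in j_{0,\alpha}(\iota)$ also satisfies $\varphi(\beta,x,\kappa,z)$ in $\VV$, and therefore lies in $\iota$. Hence $j_{0,\alpha}(\iota)\subseteq\iota$ as sets of ordered pairs.

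To finish, I would compare domains: iterated ultrapower embeddings are non-decreasing on ordinals, so $\dom{j_{0,\alpha}(\iota)}=j_{0,\alpha}(\kappa^+)\geq\kappa^+$, while the inclusion just obtained forces $\dom{j_{0,\alpha}(\iota)}\subseteq\dom{\iota}=\kappa^+$. Consequently $j_{0,\alpha}(\kappa^+)=\kappa^+$, and two injections with a common domain, one graph contained in the other, must coincide. I do not anticipate a major obstacle here; the main subtlety is simply to ensure that $\Sigma_1$-absoluteness is invoked in the correct direction, from the inner model $N_\alpha$ upwards to $\VV$.
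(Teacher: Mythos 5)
Your proposal is correct and follows essentially the same route as the paper: fix the parameters via Lemma \ref{lemma:FixedPointsIteration} (and the fact that $z\in\HH{\delta}$ is below the critical point), use elementarity to see that $j_{0,\alpha}(\iota)$ is defined in $N_\alpha$ by the same $\Sigma_1$-formula and parameters, apply $\Sigma_1$-upwards absoluteness to get $j_{0,\alpha}(\iota)\subseteq\iota$, and conclude equality of the two injections. The only cosmetic difference is that the paper obtains $j_{0,\alpha}(\kappa^+)=\kappa^+$ directly from the fixed-point lemma before invoking absoluteness, whereas you recover it afterwards from the inclusion of graphs; both are fine.
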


  \begin{proof}[Proof of the Claim]
   Since Lemma \ref{lemma:FixedPointsIteration} shows that $j_{0,\alpha}(\kappa)=\kappa$,  we also know that $j_{0,\alpha}(\kappa^+)=\kappa^+$ and therefore elementarity implies that $j_{0,\alpha}(\iota)$ is an injection from $\kappa^+$ into $\POT{\kappa}$ that is definable in $N_\alpha$ by the formula $\varphi$ and the parameters $\kappa$ and $z$. 
   But then $\Sigma_1$-upwards absoluteness implies that $j_{0,\alpha}(\iota)\subseteq\iota$ and this allows us to conclude that  $j_{0,\alpha}(\iota)=\iota$.  
  \end{proof}   
  
  The above claim directly implies that the injection $\iota$ is an element of $ M_\kappa$. 
  By {\cite[Theorem B.(i)]{DEHORNOY1978109}}, the fact that $\cof{\kappa}>\omega$ implies that $N_\kappa = M_\kappa=\bigcap_{\alpha<\kappa}N_\alpha$ and hence $\betrag{\POT{\kappa}^{N_\kappa}}\geq\kappa^+$. 
  Since $N_\kappa$ is a direct limit and $j_{0,\kappa}(\delta)=\kappa$, we also know that $$\POT{\kappa}^{N_\kappa} ~ = ~ \Set{j_{\alpha,\kappa}(x)}{\alpha<\kappa, ~ x\in\POT{j_{0,\alpha}(\delta)}^{N_\alpha}}.$$ 
  But our assumptions imply that $2^\delta<\kappa$ and therefore $$\betrag{\POT{j_{0,\alpha}(\delta)}^{N_\alpha}} ~ \leq ~  j_{0,\alpha}(2^\delta) ~ < ~ j_{0,\alpha}(\kappa) ~ = ~ \kappa$$ holds for all $\alpha<\kappa$.
  We can now conclude that $\betrag{\POT{\kappa}^{N_\kappa}}=\kappa$, a contradiction. 
\end{proof}

\begin{corollary}\label{corollary:NoSigma1WOorGoodLong}
 Let $\kappa$ be a limit of measurable cardinals. 
 \begin{enumerate}
     \item No well-ordering of $\POT{\kappa}$ is definable by a $\Sigma_1$-formula with parameters in $\HH{\kappa}\cup\{\kappa,\kappa^+\}$.
     
     \item If $\cof{\kappa}>\omega$, then no injection from $\kappa^+$ into $\POT{\kappa}$ is definable by a $\Sigma_1$-formula with parameters in $\HH{\kappa}\cup\{\kappa\}$. \qed 
 \end{enumerate}
\end{corollary}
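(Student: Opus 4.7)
The strategy is to reduce both parts of Corollary \ref{corollary:NoSigma1WOorGoodLong} to applications of Theorem \ref{theorem:Dehorney}, instantiated with the cardinal $\nu$ of that theorem equal to $\kappa$ itself. Since every measurable cardinal is a limit cardinal, the assumption that $\kappa$ is a limit of measurables forces $\kappa$ to be a limit cardinal; so the case $\kappa=\nu^+$ of Theorem \ref{theorem:Dehorney} never arises, and only $\kappa=\nu$ is relevant.

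Given a parameter $z\in\HH{\kappa}$ appearing in a purported $\Sigma_1$-definition, the first step is to produce a measurable cardinal $\delta<\kappa$ witnessing the hypotheses of Theorem \ref{theorem:Dehorney}. Because $\kappa$ is a limit of measurables, the measurables below $\kappa$ are unbounded in $\kappa$, so I can pick a measurable $\delta<\kappa$ with $z\in\HH{\delta}$ and, after possibly enlarging $\delta$, with $\delta>\cof{\kappa}$. This guarantees $\cof{\kappa}\neq\delta$: if $\kappa$ is singular we have $\cof{\kappa}<\delta$ by construction, and if $\kappa$ is regular then $\cof{\kappa}=\kappa>\delta$ automatically.

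Next, I need to verify the cardinal arithmetic hypothesis $\mu^\delta<\kappa$ for all $\mu<\kappa$. Given such a $\mu$, choose a measurable cardinal $\lambda<\kappa$ with $\max(\mu,\delta)<\lambda$; since $\lambda$ is inaccessible and $\delta<\lambda$, one has $\mu^\delta\leq\lambda^\delta=\lambda<\kappa$. Hence all hypotheses of Theorem \ref{theorem:Dehorney} hold for $\nu=\kappa$, $\delta$, and $z$.

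With this reduction in place, part (i) of the corollary follows immediately from Theorem \ref{theorem:Dehorney}(i), since every $\Sigma_1$-definition of a well-ordering of $\POT{\kappa}$ using parameters from $\HH{\kappa}\cup\{\kappa,\kappa^+\}$ can be rewritten in the form covered by the theorem after absorbing all parameters from $\HH{\kappa}$ into a single $z\in\HH{\delta}$ for a suitably large measurable $\delta<\kappa$. Part (ii) follows in the same way from Theorem \ref{theorem:Dehorney}(ii), using the additional hypothesis $\cof{\kappa}>\omega$ and noting that $\kappa^+$ is not needed as a parameter. There is no genuine obstacle here; the only mild subtlety is the simultaneous choice of $\delta$ satisfying both $z\in\HH{\delta}$ and $\delta\neq\cof{\kappa}$, which is handled by the observation that cofinally many measurables lie below $\kappa$.
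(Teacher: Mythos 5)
Your proposal is correct and is exactly the argument the paper intends: the corollary is stated with a \qed precisely because it is the instantiation of Theorem \ref{theorem:Dehorney} with $\nu=\kappa$, after absorbing the $\HH{\kappa}$-parameters into a single $z$ and choosing a measurable $\delta<\kappa$ with $z\in\HH{\delta}$ and $\delta\neq\cof{\kappa}$. Your verification of the hypothesis $\mu^\delta<\kappa$ via inaccessibles below $\kappa$, and your observation that the $\kappa=\nu^+$ case is vacuous since $\kappa$ is a limit cardinal, are both exactly right.
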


 We now proceed by proving our result on the non-existence of long $\Sigma_1$-well-orders.

\begin{proof}[Proof of Theorem \ref{MainTheorem:LimitMeasurables}]
 Let $\kappa$ be a limit of measurable cardinals with $\cof{\kappa}=\omega$, let $D$ be a subset of $\POT{\kappa}$ of cardinality greater than $\kappa$ and let $\lhd$ be a well-ordering of $D$ that is definable by a $\Sigma_1$-formula with parameter $\kappa$. Then $D$ is definable in the same way and we can pick $\Sigma_1$-formulas $\varphi(v_0,v_1)$ and $\psi(v_0,v_1,v_2)$ with $D=\Set{x}{\varphi(x,\kappa)}$ and $\lhd = \Set{\langle x,y\rangle}{\psi(x,y,\kappa)}$.  
  Now, use Lemma \ref{lemma:TechnicalLemmaIterationsTree} to find $x\in D$ and systems  $\seq{\nu_s}{s\in{}^{{<}\omega}\kappa}$, $\seq{\kappa_s}{s\in{}^{{<}\omega}\kappa}$, $\seq{U_s}{s\in{}^{{<}\omega}\kappa}$ and $\seq{I_s}{s\in{}^{{<}\omega}\kappa}$ with the listed properties. 
  Pick an $\Add{\omega}{1}$-nice name $\dot{x}$ for a subset of $\kappa$ such that $\dot{x}^G=i^{I_{c_G}}_{0,\infty}(x)$ holds whenever $G$ is $\Add{\omega}{1}$-generic over $\VV$, $c_G=\bigcup G\in({}^\omega 2)^{\VV[G]}$ and $I_{c_G}$ is the unique linear iteration of $\langle\VV,\Set{U_{c_G\restriction n}}{n<\omega}\rangle$ of length $\sup_{n<\omega}\length{I_{c_G\restriction n}}{}$ in $\VV[G]$ with $U^{I_G}_\alpha=U^{I_{c_G\restriction n}}_\alpha$ for all $n<\omega$ and $\alpha<\length{I_{c_G\restriction n}}{}$. 
  The elementarity of $i^{I_{c_G}}_{0,\infty}$ and $\Sigma_1$-upwards absoluteness between $M^{I_{c_G}}_\infty$ and $\VV[G]$ then imply that 
  \begin{equation}\label{equation:ForcePhiAddOmega}
   \mathbbm{1}_{\Add{\omega}{1}}\Vdash\varphi(\dot{x},\check{\kappa})
  \end{equation}
  holds in $\VV$. 
  Finally, let $\map{z}{\omega}{2}$ denote the constant function with value $0$ and for each $n<\omega$, set $\kappa_n=\kappa_{z\restriction n}$ and $U_n=U_{z\restriction n}$. Then the sequence $\seq{\kappa_n}{n<\omega}$ is strictly increasing and cofinal in $\kappa$.

  Pick a sufficiently large regular cardinal $\theta$ and a countable elementary submodel $X$ of $\HH{\theta}$ containing $\kappa$, $\dot{x}$, $\seq{\kappa_s}{s\in{}^{{<}\omega}\kappa}$, $\seq{U_s}{s\in{}^{{<}\omega}\kappa}$ and $\seq{I_s}{s\in{}^{{<}\omega}\kappa}$.  
  Let $\map{\pi}{X}{M}$ denote the corresponding transitive collapse. 
  Define $\bar{\kappa}=\pi(\kappa)$ and, given $n<\omega$, set $\bar{\kappa}_n=\pi(\kappa_n)$ and $\bar{U}_n=\pi(U_n)$. 
   Then {\cite[Lemma 3.5]{MR3411035}} shows that the pair $\langle M,\Set{\bar{U}_n}{n<\omega}\rangle$ is linearly iterable. 
  Let $\bar{I}$ denote the unique linear iteration of $\langle M,\Set{\bar{U}_n}{n<\omega}\rangle$ of length $\kappa$ with the property that $$U^{\bar{I}}_\alpha ~ = ~ i^{\bar{I}}_{0,\alpha}(\bar{U}_{\min\{n<\omega\vert\alpha<\kappa_n\}})$$ holds for all $\alpha<\kappa$. 
  Set $N=M^{\bar{I}}_{0,\infty}$ and $\map{j=i^{\bar{I}}_{0,\infty}}{M}{N}$. 
  Then it is easy to see that $j(\bar{\kappa}_n)=\kappa_n$ for all $n<\omega$ and this implies that $j(\bar{\kappa})=\kappa$.

   Now, pick $c\in{}^\omega 2$ with the property that $G_c=\Set{c\restriction n}{n<\omega}$ is  $\Add{\omega}{1}$-generic  over $M$.  
  Then $G_c$ is also $\Add{\omega}{1}$-generic over $N$ and 
  %
  we define $$x_c ~ = ~  j(\pi(\dot{x}))^{G_c} ~ \in ~ \POT{\kappa}^{N[G_c]}.$$

 \begin{claim*}
  If $c\in{}^\omega 2$ has the property that $G_c$ is $\Add{\omega}{1}$-generic  over $M$, then $x_c\in D$. 
 \end{claim*}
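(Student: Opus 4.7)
The plan is to transport the forcing statement $\mathbbm{1}_{\Add{\omega}{1}} \Vdash \varphi(\dot{x},\check{\kappa})$ established in \eqref{equation:ForcePhiAddOmega} across the two elementary embeddings $\pi$ and $j$, so that the corresponding statement holds in $N$ with parameter $\kappa$, and then to extract the desired $\Sigma_1$-formula satisfaction for $x_c$ via genericity and upwards absoluteness.

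First, since $X \prec \HH{\theta}$ contains $\kappa$, $\dot{x}$, and $\Add{\omega}{1}$ (the latter because $\Add{\omega}{1}\in\HH{\theta}$ and $X$ contains countable parameters encoding it), the elementarity of $\pi$ together with the fact that the forcing relation $p\Vdash\psi$ for a $\Sigma_1$-formula $\psi$ is itself definable in set theory yields
\[
 M\models \anf{\mathbbm{1}_{\Add{\omega}{1}}\Vdash\varphi(\pi(\dot{x}),\check{\bar{\kappa}})}.
\]
Next, applying the elementary embedding $\map{j}{M}{N}$ and using that $\crit{j}=\bar{\kappa}_0>\omega$ ensures $j(\Add{\omega}{1})=\Add{\omega}{1}$, while $j(\bar{\kappa})=\kappa$ by the earlier observations, we conclude
\[
 N\models \anf{\mathbbm{1}_{\Add{\omega}{1}}\Vdash\varphi(j(\pi(\dot{x})),\check{\kappa})}.
\]

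Third, given that $G_c$ is $\Add{\omega}{1}$-generic over $N$ (as stated in the excerpt), the forcing theorem inside $N$ yields $N[G_c]\models\varphi(x_c,\kappa)$, where $x_c=j(\pi(\dot{x}))^{G_c}\in\POT{\kappa}^{N[G_c]}$. Since $x_c$ is a subset of $\kappa$ in $\VV$ and the formula $\varphi(v_0,v_1)$ is $\Sigma_1$, upwards absoluteness between the transitive model $N[G_c]$ and $\VV$ gives $\VV\models\varphi(x_c,\kappa)$, so $x_c\in D$ by the choice of $\varphi$.

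The only real obstacle is verifying that the pieces transport correctly: that $X$ really captures the forcing statement in a way that collapses into $M$, and that $j$ does not perturb $\Add{\omega}{1}$ or the parameter $\kappa$. Both are automatic here, the former because $X\prec \HH{\theta}$ and the forcing relation for a $\Sigma_1$ formula is first-order definable, and the latter because $\crit{j}=\bar{\kappa}_0>\omega$ together with the construction of $\bar{I}$, which was tailored precisely so that $j(\bar{\kappa}_n)=\kappa_n$ for all $n<\omega$ and therefore $j(\bar{\kappa})=\kappa$.
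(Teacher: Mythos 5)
Your proposal is correct and follows essentially the same route as the paper: transfer the forcing statement \eqref{equation:ForcePhiAddOmega} into $\HH{\theta}$, push it through $\pi$ and $j$ into $N$, apply the forcing theorem over $N$ with the generic $G_c$, and finish by $\Sigma_1$-upwards absoluteness. The only cosmetic difference is that the paper justifies the first step by noting that the forcing statement for a $\Sigma_1$-formula is itself $\Sigma_1$-expressible (so it reflects to $\HH{\theta}$), whereas you appeal to definability of the forcing relation; both amount to the same standard fact.
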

   
 \begin{proof}[Proof of the Claim]
  By $\Sigma_1$-absoluteness, we know that   \eqref{equation:ForcePhiAddOmega} implies that the given forcing statement also holds in $\HH{\theta}$. This shows that  $$\mathbbm{1}_{\Add{\omega}{1}}\Vdash\varphi(j(\pi(\dot{x})),\check{\kappa})$$ holds in $N$. But this allows us to conclude that $\varphi(x_c,\kappa)$ holds in $N[G_c]$ and $\Sigma_1$-upwards absoluteness implies that this statement also holds in $\VV$.  
 \end{proof}

 Let $E$ denote the set of all pairs $\langle c,d\rangle$  in ${}^\omega 2\times{}^\omega 2$ with the property that $G_c\times G_d$ is $(\Add{\omega}{1}\times\Add{\omega}{1})$-generic over $M$. 
  Then $E$ is a comeager subset of ${}^\omega 2\times{}^\omega 2$ and a classical result of Mycielski (see {\cite[Theorem 19.1]{MR1321597}}) yields a continuous injection $\map{p}{{}^\omega 2}{{}^\omega 2}$ with $\langle p(a),p(b)\rangle\in E$ for all distinct $a,b\in{}^\omega 2$.

  \begin{claim*}
   The map $$\Map{\iota}{{}^\omega 2}{D}{a}{x_{p(a)}}$$ is an injection. 
  \end{claim*}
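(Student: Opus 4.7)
The plan is to establish injectivity of $\iota$ by combining an inequality of the form $\dot x_l \neq \dot x_r$ forced in $\VV$ by $\Add{\omega}{1}\times\Add{\omega}{1}$ with elementarity of the collapse $\pi \colon X \to M$ and a lift of the iteration embedding $j \colon M \to N$ along the product forcing. Throughout, $\dot x_l$ and $\dot x_r$ denote the canonical $(\Add{\omega}{1}\times\Add{\omega}{1})$-names satisfying $\dot x_l^{G_0\times G_1} = \dot x^{G_0}$ and $\dot x_r^{G_0\times G_1} = \dot x^{G_1}$, as in the proof of Theorem~\ref{theorem:AlmostDisjoint}.

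First I apply Lemma~\ref{lemma:LongIterationsGenericExtensions} in $\VV$ to the partial order $\Add{\omega}{1}\times\Add{\omega}{1}$, which is countable and hence lies in $\HH{\kappa_\emptyset}$. Taking $c,d \in {}^\omega 2$ to be the reals coded by the two factors of an $(\Add{\omega}{1}\times\Add{\omega}{1})$-generic filter, every proper initial segment of $c$ and $d$ lies in $\VV$, and mutual genericity forces $c \neq d$; item~(iii) of the lemma together with the defining equation $\dot x^G = i^{I_{c_G}}_{0,\infty}(x)$ then yields
\[
 \mathbbm{1}_{\Add{\omega}{1}\times\Add{\omega}{1}} \Vdash_\VV \dot x_l \neq \dot x_r.
\]
Since $\dot x$ lies in $X$, elementarity of $\pi$ transfers this to $\mathbbm{1} \Vdash_M \pi(\dot x)_l \neq \pi(\dot x)_r$.

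Now fix $a \neq b$ in ${}^\omega 2$, set $c = p(a)$, $d = p(b)$, and note that by the choice of $p$ the filter $G_c \times G_d$ is $(\Add{\omega}{1}\times\Add{\omega}{1})$-generic over $M$. Evaluating the previous forcing statement at $G_c \times G_d$ gives $\pi(\dot x)^{G_c} \neq \pi(\dot x)^{G_d}$ in $M[G_c, G_d]$. Because $j$ is an iteration by normal ultrafilters with critical point $\bar{\kappa}_0 > \omega_1^M$, and such iterations do not add bounded subsets of the critical point, $\POT{\Add{\omega}{1}\times\Add{\omega}{1}}^N = \POT{\Add{\omega}{1}\times\Add{\omega}{1}}^M$; in particular $G_c \times G_d$ is generic over $N$ as well, so $j$ lifts in the standard way to an elementary embedding $j^\ast \colon M[G_c, G_d] \to N[G_c, G_d]$ satisfying $j^\ast(\sigma^{G_c}) = j(\sigma)^{G_c}$ for every name $\sigma \in M$ (and analogously on the right factor). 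Applying $j^\ast$ to the inequality in $M[G_c, G_d]$ then yields $x_c = j(\pi(\dot x))^{G_c} \neq j(\pi(\dot x))^{G_d} = x_d$, i.e., $\iota(a) \neq \iota(b)$.

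The main obstacle is the coordination of the three elementarity transfers: between $\HH{\theta}$ and $M$ via $\pi$, and between $M$ and $N$ via the lift of $j$. The first requires that the splitting data of Lemma~\ref{lemma:TechnicalLemmaIterationsTree} and $\dot x$ all sit inside $X$, so that the forced splitting $\dot x_l \neq \dot x_r$ descends to $M$. The second relies on $\Add{\omega}{1}\times\Add{\omega}{1}$ being strictly below $\crit{j} = \bar{\kappa}_0$ in $M$, which is what makes the power set of the forcing coincide in $M$ and $N$ and what permits $j$ to lift to the product generic extension.
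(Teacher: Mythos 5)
Your proof is correct and follows essentially the same route as the paper: force $\dot{x}_l\neq\dot{x}_r$ over $\VV$ via Lemma \ref{lemma:LongIterationsGenericExtensions}, pull the statement down to $M$ by elementarity of $\pi$, and transfer along $j$ to conclude $x_{p(a)}\neq x_{p(b)}$. The only (cosmetic) difference is at the last step, where you evaluate the inequality in $M[G_{p(a)},G_{p(b)}]$ and then lift $j$ to the product extension, while the paper instead transfers the forcing statement to $N$ and evaluates the names $j(\pi(\dot{x}_l))$, $j(\pi(\dot{x}_r))$ at the filter $G_{p(a)}\times G_{p(b)}$, which is generic over $N$ for the same reason you give.
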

   
   \begin{proof}[Proof of the Claim]
    Given an $\Add{\omega}{1}$-name $\dot{y}$, let $\dot{y}_l$ and $\dot{y}_r$ denote the canonical $(\Add{\omega}{1}\times\Add{\omega}{1})$-names such that $\dot{y}_l^{G_0\times G_1}=\dot{y}^{G_0}$ and $\dot{y}_r^{G_0\times G_1}=\dot{y}^{G_1}$ holds whenever $G_0\times G_1$ is $(\Add{\omega}{1}\times\Add{\omega}{1})$-generic over $\VV$. 
    If $G_0\times G_1$ is $(\Add{\omega}{1}\times\Add{\omega}{1})$-generic over $\VV$ and $i<2$, then $(I_{c_{G_i}})^{\VV[G_i]}=(I_{c_{G_i}})^{\VV[G_0,G_1]}$ and this shows that  $$\dot{x}^{G_i} ~ = ~ (i^{I_{c_{G_i}}}_{0,\infty}(x))^{\VV[G_0,G_1]}$$ holds for the $\Add{\omega}{1}$-name $\dot{x}$ fixed at the beginning of the proof of Theorem \ref{theorem:AlmostDisjoint}. 
    Therefore,  we can apply Lemma \ref{lemma:LongIterationsGenericExtensions}  to see that $$\mathbbm{1}_{\Add{\omega}{1}\times\Add{\omega}{1}}\Vdash\anf{\dot{x}_l\neq\dot{x}_r}$$ holds in $\VV$ and, by $\Sigma_1$-absoluteness, this statement also holds in $\HH{\theta}$.

    Now, given $a,b\in{}^\omega 2$ with $a\neq b$, we have  
     \begin{equation*}
     \begin{split}
      \iota(a) ~  = & ~ x_{p(a)} ~ = ~ j(\pi(\dot{x}))^{G_{p(a)}} ~ = ~ j(\pi(\dot{x}_l))^{G_{p(a)}\times G_{p(b)}} \\
       & \neq ~ j(\pi(\dot{x}_r))^{G_{p(a)}\times G_{p(b)}} ~ = ~  j(\pi(\dot{x}))^{G_{p(b)}} ~ = ~ x_{p(b)} ~ = ~ \iota(b). \qedhere
      \end{split}
     \end{equation*}
   \end{proof}

    In the following, let $\blacktriangleleft$ denote the unique binary relation on ${}^\omega 2$ with $$a\blacktriangleleft b ~ \Longleftrightarrow ~ x_{p(a)}\lhd x_{p(b)}$$  for all $a,b\in{}^\omega 2$. 
   Then the above claim  implies that $\blacktriangleleft$ is a well-ordering of ${}^\omega 2$.

   \begin{claim*}
    The following statements are equivalent for all $a,b\in{}^\omega 2$: 
    \begin{enumerate}
     \item\label{item:aSMALLERb} $a\blacktriangleleft b$. 
     
     \item\label{item:EquivCountableModel} There exists a countable transitive model $W$ of $\ZFC^-$ and elements $\delta$, $\vec{\delta}$, $\vec{F}$ and $I$ of $W$ such that the following statements hold: 
      \begin{itemize}
       \item $W$ contains $M$, $p(a)$, $p(b)$ and a surjection from $\omega$ onto $M$.  
       
       \item $\vec{\delta}=\seq{\delta_n}{n<\omega}$ is a strictly increasing sequence of cardinals in $W$ with $\delta=\sup_{n<\omega}\delta_n$. 
       
       \item $\vec{F}=\seq{F_n}{n<\omega}$ is a sequence with the property that $F_n$ is a normal ultrafilter on $\delta_n$ in $W$ for all $n<\omega$. 
       
       \item If $\map{k}{\bar{W}}{W}$ is an elementary embedding of a transitive model $\bar{W}$ into $W$ and $\mathcal{E}\in\bar{W}$ satisfies $k(\mathcal{E})=\Set{F_n}{n<\omega}$, then the pair $\langle\bar{W},\mathcal{E}\rangle$ is \emph{$\alpha$-iterable} (see {\cite[p. 131]{MR3411035}}) for all $\alpha<\omega_1$. 
       
       \item $I$ is the unique linear iteration of $\langle M,\Set{\bar{U}_n}{n<\omega}\rangle$ of length $\delta$ with the property that $$U^I_\alpha ~ = ~ i^I_{0,\alpha}(\bar{U}_{\min\{n<\omega\vert\alpha<\delta_n\}})$$ holds for all $\alpha<\delta$. 
       
       \item The statement $$\psi(i^I_{0,\infty}(\pi(\dot{x}))^{G_{p(a)}},i^I_{0,\infty}(\pi(\dot{x}))^{G_{p(b)}},\delta)$$ holds in $W$. 
      \end{itemize}
    \end{enumerate}
   \end{claim*}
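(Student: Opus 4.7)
The plan is to prove both directions separately.

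For $(i) \Rightarrow (ii)$, suppose $x_{p(a)} \lhd x_{p(b)}$, so $\psi(x_{p(a)}, x_{p(b)}, \kappa)$ holds in $\VV$. Since $\psi$ is $\Sigma_1$, fix a set $w$ witnessing the existential quantifier. Take a countable elementary submodel $Y \prec \HH{\theta^*}$ for some sufficiently large regular $\theta^*$ containing $w$, $p(a)$, $p(b)$, $\kappa$, $M$, a surjection $\omega \to M$, $\pi$, $\dot{x}$, $j$, the iteration $\bar{I}$, and the sequences $\seq{\kappa_n}{n<\omega}$ and $\seq{U_n}{n<\omega}$. The surjection ensures $M \subseteq Y$, so the transitive collapse $\map{\pi_W}{Y}{W}$ is the identity on $M$ and on $G_{p(a)}, G_{p(b)}$. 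Setting $\delta = \pi_W(\kappa)$, $\delta_n = \pi_W(\kappa_n)$, and $F_n = \pi_W(U_n)$, elementarity of the collapse produces $\vec{\delta}, \vec{F} \in W$ with the claimed structural properties, and $\pi_W(\bar{I})$ is precisely the iteration $I$ of length $\delta$ characterized in (ii). The identity $x_{p(a)} = j(\pi(\dot{x}))^{G_{p(a)}}$ collapses to $\pi_W(x_{p(a)}) = i^I_{0,\infty}(\pi(\dot{x}))^{G_{p(a)}}$, and analogously for $x_{p(b)}$; the $\Sigma_1$-statement $\psi(\pi_W(x_{p(a)}), \pi_W(x_{p(b)}), \delta)$ then holds in $W$, witnessed by $\pi_W(w)$. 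For the iterability clause, any elementary $\map{k}{\bar{W}}{W}$ with $k(\mathcal{E}) = \{F_n \mid n < \omega\}$ composes with the uncollapsing map to yield an elementary embedding of $\bar{W}$ into $\HH{\theta^*}$ sending $\mathcal{E}$ to $\{U_n \mid n < \omega\}$; since each $U_n$ is a genuinely $\sigma$-complete ultrafilter in $\VV$, the pair $\langle\bar{W}, \mathcal{E}\rangle$ is fully iterable by {\cite[Theorem 3.3]{MR3411035}}.

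For $(ii) \Rightarrow (i)$, let $W$ be as in (ii). The iteration $I$ externalized to $\VV$ has countable length $\delta$, and applying the iterability assumption with $k = \id_W$ ensures that $\langle W, \{F_n \mid n < \omega\}\rangle$ is $\alpha$-iterable for every $\alpha < \omega_1$. This is enough to conclude that $M^I_\infty$ is well-founded in $\VV$ and, after identification with its transitive collapse, the subsets $x'_{p(a)} := i^I_{0,\infty}(\pi(\dot{x}))^{G_{p(a)}}$ and $x'_{p(b)}$ of $\delta = i^I_{0,\infty}(\bar{\kappa})$ are well-defined in $\VV$. By $\Sigma_1$-upwards absoluteness from $W$ to $\VV$, the statement $\psi(x'_{p(a)}, x'_{p(b)}, \delta)$ holds in $\VV$. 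To transfer this to the desired $\psi(x_{p(a)}, x_{p(b)}, \kappa)$, construct in $\VV$ a linear iteration $I^+$ of $\langle M, \{\bar{U}_n \mid n<\omega\}\rangle$ that has $I$ as an initial segment and continues by iterating each image $i^I_{0,\infty}(\bar{U}_n)$ for exactly $(\kappa_n - \kappa_{n-1}) - (\delta_n - \delta_{n-1})$ further steps (with $\kappa_{-1} = \delta_{-1} = 0$), so that the total number of uses of each $\bar{U}_n$ matches that in $\bar{I}$. Since all the $\bar{U}_n$'s descend from $\sigma$-complete ultrafilters of $\VV$ with non-overlapping critical points, the uniqueness of the resulting iterated ultrapower (compare {\cite[Corollary 19.7]{MR1994835}} and the construction in Lemma \ref{lemma:TechnicalLemmaIterationsTree}) implies $M^{I^+}_\infty = N$ and $i^{I^+}_{0,\infty} = j$. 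The factor map $\map{\ell = i^{I^+}_{\delta, \infty}}{M^I_\infty}{N}$ then satisfies $\ell \circ i^I_{0,\infty} = j$ and $\ell(\delta) = \kappa$, and it lifts canonically to an elementary embedding $\ell^+$ of $M^I_\infty[G_{p(a)}, G_{p(b)}]$ into $N[G_{p(a)}, G_{p(b)}]$ with $\ell^+(x'_{p(a)}) = x_{p(a)}$ and $\ell^+(x'_{p(b)}) = x_{p(b)}$. Elementarity of $\ell^+$ applied to $\psi(x'_{p(a)}, x'_{p(b)}, \delta)$ yields $\psi(x_{p(a)}, x_{p(b)}, \kappa)$ inside $N[G_{p(a)}, G_{p(b)}]$, which transfers to $\VV$ by $\Sigma_1$-upwards absoluteness, establishing $a \blacktriangleleft b$.

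The main obstacle is justifying the construction of the factor map $\ell$ in the backward direction, namely reconciling the iterations $I$ in $W$ and $\bar{I}$ in $\VV$ despite their being performed at the incommensurable scales $\vec{\delta}$ versus $\vec{\kappa}$. The essential point is the classical principle that, when a countable model is iterated linearly by $\sigma$-complete ultrafilters whose critical points do not overlap, the final iterate depends only on the total number of times each ultrafilter is used; since $\delta_n < \kappa_n$ for every $n$ and the prescription for $I^+$ arranges the per-ultrafilter totals to agree with those of $\bar{I}$, the two iterations produce the same direct limit, which supplies the factor map and allows the $\Sigma_1$-statement at parameter $\delta$ to be promoted to the corresponding statement at parameter $\kappa$.
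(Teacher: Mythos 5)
Your forward direction is essentially the paper's argument and is fine: you collapse a countable elementary submodel of a large $\HH{\theta^*}$ containing $\bar{I}$, the sequences, and a witness for the $\Sigma_1$-statement, note that the collapse fixes $M\cup\{M\}$, $p(a)$, $p(b)$ and the filters $G_{p(a)},G_{p(b)}$, and read off $\delta$, $\vec{\delta}$, $\vec{F}$ and $I$ as the images of $\kappa$, $\seq{\kappa_n}{n<\omega}$, $\seq{U_n}{n<\omega}$ and $\bar{I}$; the iterability clause follows by composing any $k$ with the uncollapse map, exactly as in the paper (which cites {\cite[Lemma 3.5]{MR3411035}} for this step).

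The backward direction has a genuine gap. You build the factor map $\ell\colon M^I_\infty\to N$ by extending $I$ in $\VV$ with ``catch-up'' blocks and then invoking the principle that the direct limit \emph{and} the limit embedding of a linear iteration by measures with non-overlapping critical points depend only on how often each measure is used. That principle is the crux of your argument, and it is neither proved nor supported by your citations: {\cite[Corollary 19.7]{MR1994835}} concerns iterations of a single measure, and Lemma \ref{lemma:TechnicalLemmaIterationsTree} contains no rearrangement statement. The orders of application genuinely differ --- $\bar{I}$ exhausts each $\bar{U}_n$ in one consecutive block, while $I^+$ interleaves the countable blocks of $I$ with later blocks of length $\kappa_n$ --- so identifying $M^{I^+}_\infty$ with $N$ and $i^{I^+}_{0,\infty}$ with $j$ requires an infinite commutativity lemma you would have to prove. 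Your bookkeeping $(\kappa_n-\kappa_{n-1})-(\delta_n-\delta_{n-1})$ is also not meaningful ordinal arithmetic, and the catch-up block for $\bar{U}_0$ moves the images of $\bar{\kappa}_m$ for $m>0$ past $\delta_m$, so the prescribed lengths do not line up as stated. The paper avoids all of this: it uses the $\alpha$-iterability clause of (ii) --- which your argument never actually needs, a sign the route has drifted from what that clause is there for --- together with {\cite[Lemma 3.6]{MR3411035}} to iterate the ambient model $W$ itself to an iteration $I_*$ of length $\kappa$ with $i^{I_*}_{0,\infty}(\delta_n)=\kappa_n$, $i^{I_*}_{0,\infty}(\delta)=\kappa$ and $i^{I_*}_{0,\infty}\restriction(M[G_{p(a)},G_{p(b)}]\cup\{M\})=\id$. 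Elementarity and the uniqueness of $\bar{I}$ then give $i^{I_*}_{0,\infty}(I)=\bar{I}$, hence $i^{I_*}_{0,\infty}\circ i^I_{0,\infty}=i^{\bar{I}}_{0,\infty}$ on $M$ for free, and $\Sigma_1$-upwards absoluteness from $M^{I_*}_\infty$ to $\VV$ finishes the proof. You should replace the factor-map construction with this external iteration of $W$.
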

   
   \begin{proof}[Proof of the Claim]
    First, assume that \eqref{item:aSMALLERb} holds. 
    Pick a sufficiently large regular cardinal $\vartheta>\theta$ and a countable elementary submodel $Y$ of $\HH{\vartheta}$ containing $\theta$, $p(a)$, $p(b)$, $\seq{U_n}{n<\omega}$, $X$ and $\bar{I}$. 
    Let $\map{\tau}{Y}{W}$ denote the the corresponding transitive collapse. 
    Given $n<\omega$, set $\delta_n=\tau(\kappa_n)$ and $F_n=\tau(U_n)$. Moreover, define $\delta=\tau(\kappa)$ and $I=\tau(\bar{I})$. 
    In this situation, {\cite[Lemma 3.5]{MR3411035}} shows that the pair $\langle W,\Set{F_n}{n<\omega}\rangle$ is linearly iterable. 
    Another application of  {\cite[Lemma 3.5]{MR3411035}} allows us to also conclude that $\langle\bar{W},\mathcal{E}\rangle$ is $\alpha$-iterable, whenever $\alpha$ is a countable ordinal, $\bar{W}$ is a transitive set, $\map{k}{\bar{W}}{W}$ is an elementary embedding and $\mathcal{E}\in\bar{W}$ with $k(\mathcal{E})=\Set{F_n}{n<\omega}$. 
    Next, since we have $\delta=\sup_{n<\omega}\delta_n$ and $\tau\restriction(M\cup\{M\})=\id_{M\cup\{M\}}$, elementarity directly implies that $I$ is the unique linear iteration of $\langle M,\Set{\bar{U}_n}{n<\omega}\rangle$ of length $\delta$ with the property that $$U^I_\alpha ~ = ~ i^I_{0,\alpha}(\bar{U}_{\min\{n<\omega\vert\alpha<\delta_n\}})$$ holds for all $\alpha<\delta$. 
     Finally, since \eqref{item:aSMALLERb} implies that $$\psi(i^{\bar{I}}_{0,\infty}(\pi(\dot{x}))^{G_{p(a)}},i^{\bar{I}}_{0,\infty}(\pi(\dot{x}))^{G_{p(b)}},\kappa)$$ holds in $\HH{\vartheta}$, 
     elementarity directly implies that $$\psi(i^I_{0,\infty}(\pi(\dot{x}))^{G_{p(a)}},i^I_{0,\infty}(\pi(\dot{x}))^{G_{p(b)}},\delta)$$ holds in $W$. 
    In combination, these observations show that $W$, $\delta$, $\seq{\delta_n}{n<\omega}$, $\seq{F_n}{n<\omega}$ and $I$ witness that \eqref{item:EquivCountableModel} holds.

     Now, assume that $W$, $\delta$, $\seq{\delta_n}{n<\omega}$, $\seq{F_n}{n<\omega}$ and $I$ witness that \eqref{item:EquivCountableModel} holds. 
   By {\cite[Lemma 3.6]{MR3411035}}, our assumptions ensure that the pair $\langle W,\Set{F_n}{n<\omega}\rangle$ is linearly iterable. 
     Let $I_*$ denote the unique linear iteration of $\langle W,\Set{F_n}{n<\omega}\rangle$ of length $\kappa$ with the property that $$U^{I_*}_\alpha ~ = ~ i^{I_*}_{0,\alpha}(F_{\min\{n<\omega\vert\alpha<\kappa_n\}})$$ holds for all $\alpha<\kappa$. 
     Then we have $i^{I_*}_{0,\infty}(\delta_n)=\kappa_n$ for all $n<\omega$ and  $i^{I_*}_{0,\infty}(\delta)=\kappa$. 
     Moreover, we know that $$i^{I_*}_{0,\infty}\restriction(M[G_{p(a)},G_{p(b)}]\cup\{M\}) ~ = ~ \id_{M[G_{p(a)},G_{p(b)}]\cup\{M\}}.$$
     This shows that $i^{I_*}_{0,\infty}(I)$ is a linear iteration of $\langle M,\Set{\bar{U}_n}{n<\omega}\rangle$ of length $\kappa$ with the property that $$U^{i^{I_*}_{0,\infty}(I)}_\alpha ~ = ~ i^{i^{I_*}_{0,\infty}(I)}_{0,\alpha}(\bar{U}_{\min\{n<\omega\vert\alpha<\kappa_n\}})$$ holds for all $\alpha<\kappa$, and this implies that $i^{I_*}_{0,\infty}(I)=\bar{I}$ holds. 
     In particular, it follows that  $$i^{I_*}_{0,\infty}(i^I_{0,\infty}(y)) ~ = ~ i^{\bar{I}}_{0,\infty}(y)$$ holds for all $y\in M$. 
     By our assumptions and the above observations, this shows that $$\psi(i^{\bar{I}}_{0,\infty}(\pi(\dot{x}))^{G_{p(a)}},i^{\bar{I}}_{0,\infty}(\pi(\dot{x}))^{G_{p(b)}},\kappa)$$ holds in $M^{I_*}_{0,\infty}$. 
     Using $\Sigma_1$-upwards absoluteness, we know that $\psi(x_{p(a)},x_{p(b)},\kappa)$ holds in $\VV$ and this shows that \eqref{item:aSMALLERb} holds in this case. 
   \end{proof}

    Since the above claim shows that the relation $\blacktriangleleft$ is definable over $\HH{\aleph_1}$ by a $\Sigma_2$-formula with parameters, we can conclude that $\blacktriangleleft$ is a $\mathbf{\Sigma}^1_3$-subset of ${}^\omega 2\times{}^\omega 2$ (see {\cite[Lemma 25.25]{MR1940513}}). 
  This completes the proof of the theorem.  
\end{proof}

We end this section by proving the equiconsistency stated in Theorem \ref{theorem:EquiLongWO}.  %
One direction is given by the following lemma that follows from arguments presented in the proof of Theorem \ref{theorem:StrengthCountableCof}.\footnote{The construction of simply definable long well-orderings in the power sets  of uncountable cardinals was the original motivation for the work presented in \cite{ClosureUlt}. 
In combination with ideas contained in the proof of Lemma \ref{lemma:Sigma1Injection}, the results of \cite{ClosureUlt} can be used to show that, if $0^\dagger$  does not exist and the cardinal $\kappa$ is either  singular or  weakly compact, then there exists a well-ordering of a subset of $\POT{\kappa}$ of order-type ${\kappa^+}\cdot\kappa$ that is definable by a $\Sigma_1$-formula with parameter $\kappa$.}

\begin{lemma}\label{lemma:Sigma1Injection}
 Assume that there is no inner model with infinitely many measurable cardinals. If $\kappa$ is a singular cardinal, then there exists an injection from $\kappa^+$ into $\POT{\kappa}$ that is definable by a $\Sigma_1$-formula with parameters in $\HH{\kappa}\cup\{\kappa\}$. 
\end{lemma}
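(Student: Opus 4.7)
The plan is to follow the construction given in the proof of Theorem \ref{theorem:StrengthCountableCof} and extract from it not merely a $\Sigma_1$-definable subset of $\POT{\kappa}$ of cardinality greater than $\kappa$, but in fact a $\Sigma_1$-definable injection $\map{\iota}{\kappa^+}{\POT{\kappa}}$. As in that proof, the assumption that no inner model contains infinitely many measurable cardinals allows us to invoke \cite[Theorem 2.14]{MR926749} and conclude that $0^{\text{long}}$ does not exist, so we may work with Koepke's canonical core model $\KK[U_{\can}]$, where $\dom{U_{\can}}$ is finite.

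Setting $U = U_{\can}\restriction\kappa$ and $\KK = \KK[U]$, the same computations appearing in the proof of Theorem \ref{theorem:StrengthCountableCof} show that $U \in \HH{\kappa}$, that $\KK$ is an inner model of $\ZFC$, that $\POT{\kappa}^{\KK[U_{\can}]} \subseteq \KK$, and that $(\kappa^+)^\KK = \kappa^+$; the argument passing to a diagonal Prikry extension via \cite[Theorem 3.23]{MR926749} requires only that $\kappa$ be singular in $\VV$ and not specifically of countable cofinality, since finite iterations of Prikry forcings preserve all cardinals. Let $<_\KK$ denote the canonical well-ordering of $\KK$ given by \cite[Theorem 3.4]{MR926749}. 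For each ordinal $\alpha < \kappa^+$, let $\map{b_\alpha}{\kappa}{\kappa+\alpha}$ be the $<_\KK$-least bijection and define
\[ \iota(\alpha) ~ = ~ \Set{\goedel{\xi}{\eta}}{\xi,\eta<\kappa, ~ b_\alpha(\xi) < b_\alpha(\eta)}. \]
Since distinct values of $\alpha$ produce subsets of $\kappa$ that code well-orderings of $\kappa$ of distinct order-types, this defines an injection $\map{\iota}{\kappa^+}{\POT{\kappa}}$.

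It remains to check that the graph of $\iota$ is $\Sigma_1$-definable with parameters in $\HH{\kappa}\cup\{\kappa\}$. The key observation is that the claim in the proof of Theorem \ref{theorem:StrengthCountableCof}, obtained by combining \cite[Lemma 2.3]{MR3845129} with \cite[Theorem 2.7]{MR926749} and \cite[Theorem 2.10]{MR926749}, shows that the collection of all initial segments of the restriction of $<_\KK$ to $\HH{\kappa^+}^\KK$ is $\Sigma_1$-definable using the parameters $\kappa$ and $U$. Using this, one can express $\iota(\alpha) = y$ by stating the existence of a $U$-mouse $M$ whose lower part contains both $\alpha$ and $y$, and such that in $M$ the set $y$ codes a bijection from $\kappa$ onto $\kappa+\alpha$ and is $<_M$-least among subsets of $\kappa$ with this property; comparability of $U$-mice ensures that this choice is independent of $M$. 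Since $U \in \HH{\kappa}$, this yields a $\Sigma_1$-definition of $\iota$ with parameters in $\HH{\kappa}\cup\{\kappa\}$. The main technical content is already contained in the definability analysis from Theorem \ref{theorem:StrengthCountableCof}; the only additional step needed here is to observe that this analysis canonically and $\Sigma_1$-uniformly associates a distinct code to each order-type $\kappa+\alpha < \kappa^+$.
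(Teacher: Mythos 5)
Your proposal is correct and follows essentially the same route as the paper: both pass to Koepke's core model $\KK=\KK[U_{\can}\restriction\kappa]$ under the non-existence of $0^{\text{long}}$, use the finiteness of $\dom{U_{\can}}$ to get $(\kappa^+)^\KK=\kappa^+$ for an arbitrary singular $\kappa$, and send each ordinal below $\kappa^+$ to the canonical code of the $<_\KK$-least bijection witnessing its cardinality, with $\Sigma_1$-definability extracted from the mouse-based analysis of $<_\KK$ in the proof of Theorem \ref{theorem:StrengthCountableCof}. The only difference is the immaterial reindexing via $\kappa+\alpha$ in place of the paper's case split at $\kappa$.
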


\begin{proof}
 As in the proof of Theorem \ref{theorem:StrengthCountableCof} in Section \ref{section:OptimalCountable}, we know that $0^{\text{long}}$ does not exist and we let $U_{can}$ denote the \emph{canonical sequence of measures} as in \cite{MR926749}. We again set $U=U_{can}\restriction\kappa$ and $\KK=\KK[U]$. 
 Then our assumptions imply that $U\in\HH{\kappa}^\KK$ and the results of \cite{MR926749} show that $\KK$ is an inner model of $\ZFC$ with a canonical well-ordering $<_\KK$. 
 Since the domain of $U_{can}$ is finite, we can again combine {\cite[Theorem 3.9]{MR926749}},  {\cite[Theorem 3.19]{MR926749}} and {\cite[Theorem 3.23]{MR926749}} to show that $\kappa^+=(\kappa^+)^\KK$. 

 Given $\kappa\leq\gamma<\kappa^+$, we let $y_\gamma$ denote the subset of $\kappa$ that canonically codes the $<_\KK$-least bijection between $\kappa$ and $\gamma$. As in the proof of Theorem \ref{theorem:StrengthCountableCof}, we can now conclude that the unique  injection $\map{\iota}{\kappa^+}{\POT{\kappa}}$ with $\iota\restriction\kappa=\id_\kappa$ and $\iota(\gamma)=y_\gamma$ for all $\kappa\leq\gamma<\kappa^+$ can be defined by a $\Sigma_1$-formula and the parameters $\kappa$ and $U$.   
\end{proof}

The next lemma is needed in the converse direction of our  equiconsistency proof:

\begin{lemma}\label{lemma:MoreMeasurables}
 Let $U$ be a normal ultrafilter on a measurable cardinal $\delta$, let $\alpha<\delta$, let $\calE$ be a set of normal ultrafilters on cardinals smaller than $\alpha$ and let $I$ be a linear iteration of $\langle\VV,\calE\rangle$ of length less than $\alpha$. 
 If $B\in i^I_{0,\infty}(U)$, then there is $A\in U$ with 
  $i^I_{0,\infty}(A)\subseteq B$. 
\end{lemma}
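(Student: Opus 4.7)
The plan is to induct on $\length{I}<\alpha$, factoring the iteration one step at a time and exploiting the $\delta$-completeness of $U$ at each step. The key preliminary observation is that $i^I_{0,\beta}(\delta)=\delta$ for every $\beta\leq\length{I}$: every ultrafilter used in $I$ is the image of some element of $\calE$, hence has critical point bounded by $i^I_{0,\gamma}(\alpha)$ for some $\gamma<\beta$, while $\delta$ is a measurable (in particular regular) cardinal strictly above every such bound, since $\length{I}<\alpha<\delta$. Stage-by-stage this follows from {\cite[Corollary 19.7]{MR1994835}}, and at limit stages from the continuity of the embeddings at $\delta$. By elementarity, $i^I_{0,\beta}(U)$ is a normal, $\delta$-complete ultrafilter on $\delta$ in $M^I_\beta$.

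In the successor case $\length{I}=\beta+1$, factor $i^I_{0,\infty}=k\circ j$ where $j=i^I_{0,\beta}$ and $k=i^I_{\beta,\beta+1}$ is the ultrapower of $M^I_\beta$ by the normal measure $U^I_\beta$ on some cardinal $\gamma_\beta<j(\alpha)<\delta$. Given $B\in k(j(U))$, apply {\L}o{\'s}'s theorem in $M^I_\beta$ to realize $B=[f]_{U^I_\beta}$ for some $f\colon\gamma_\beta\to\POT{\delta}$ in $M^I_\beta$. Then $X=\Set{\xi<\gamma_\beta}{f(\xi)\in j(U)}$ lies in $U^I_\beta$, and the $\delta$-completeness of $j(U)$ in $M^I_\beta$ together with $\betrag{X}\leq\gamma_\beta<\delta$ yields $A_\beta:=\bigcap_{\xi\in X}f(\xi)\in j(U)$; a direct check from the ultrapower representation gives $k(A_\beta)\subseteq B$. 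The induction hypothesis applied to the truncation $I\restriction\beta$ then produces $A\in U$ with $j(A)\subseteq A_\beta$, and hence $i^I_{0,\infty}(A)=k(j(A))\subseteq k(A_\beta)\subseteq B$.

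In the limit case $\length{I}=\lambda$, the direct limit structure of $M^I_\infty$ gives some $\beta<\lambda$ and $B_\beta\in M^I_\beta$ with $B=i^I_{\beta,\infty}(B_\beta)$. Since $i^I_{0,\infty}(U)=i^I_{\beta,\infty}(i^I_{0,\beta}(U))$ is an ultrafilter in $M^I_\infty$, elementarity of $i^I_{\beta,\infty}$ forces $B_\beta\in i^I_{0,\beta}(U)$ in $M^I_\beta$. Applying the induction hypothesis to $I\restriction\beta$ yields $A\in U$ with $i^I_{0,\beta}(A)\subseteq B_\beta$, and applying the elementary map $i^I_{\beta,\infty}$ gives $i^I_{0,\infty}(A)\subseteq B$.

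The main obstacle is keeping the preliminary fact $i^I_{0,\beta}(\delta)=\delta$ together with the induced $\delta$-completeness of $i^I_{0,\beta}(U)$ visible at every inductive step; once these are in hand, the successor step reduces to the standard one-step pullback argument for ultrapowers by $\sigma$-complete ultrafilters, and the limit step is just the directed-system unpacking.
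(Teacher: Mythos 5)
Your proof is correct, but it takes a genuinely different route from the one in the paper. The paper's argument is a one-shot application of the representation of elements of the direct limit of a linear iteration: citing {\cite[Exercise 12]{MR1994835}}, it produces a single function $\map{f}{[\alpha]^{{<}\omega}}{U}$ with $B\in\ran{i^I_{0,\infty}(f)}$ and then sets $A=\bigcap\Set{f(a)}{a\in[\alpha]^{{<}\omega}}$, which lies in $U$ by one application of ${<}\delta$-completeness in $\VV$ (since $\betrag{[\alpha]^{{<}\omega}}<\delta$) and satisfies $i^I_{0,\infty}(A)\subseteq B$ by elementarity. You instead induct on $\length{I}$, pulling $B$ back one ultrapower at a time via {\L}o{\'s} and the $\delta$-completeness of the image measure $i^I_{0,\beta}(U)$ in each $M^I_\beta$; your preliminary observation that $i^I_{0,\beta}(\delta)=\delta$ throughout, and the successor and limit steps as you present them, are all sound. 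What the paper's approach buys is brevity and the avoidance of any stage-by-stage bookkeeping, at the cost of invoking the representation lemma for iterated ultrapowers; your approach is more self-contained and makes the role of completeness at each step explicit, at the cost of length. One cosmetic caveat: under the paper's official definition of a linear iteration of length $\lambda$, the models are indexed by $\alpha<\lambda$ and $M^I_\infty$ is the direct limit of that system, so for successor $\lambda=\beta+1$ the last listed ultrafilter is never applied and $i^I_{0,\infty}=i^I_{0,\beta}$; your successor step implicitly uses the more natural convention that the final ultrapower is taken. This does not affect the substance (and the iterations that matter in the paper all have limit length), but if you want to match the paper's conventions exactly it is cleaner to induct on the statement ``for all $\beta\leq\length{I}$ and all $B\in i^I_{0,\beta}(U)$ there is $A\in U$ with $i^I_{0,\beta}(A)\subseteq B$'' rather than on restrictions of $I$.
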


\begin{proof}
 %
 Using  {\cite[Exercise 12]{MR1994835}}, we find a function $\map{f}{[\alpha]^{{<}\omega}}{U}$ with the property that $B\in\ran{i^I_{0,\infty}(f)}$. If we now define $$A ~ = ~  \bigcap\Set{f(a)}{a\in[\alpha]^{{<}\omega}},$$ then $A$ is an element of $U$ with the desired properties. 
\end{proof}

In order to complete the proof of Theorem \ref{theorem:EquiLongWO}, we will now use \emph{diagonal Prikry forcing}  and a characterisation of generic sequences for this forcing due to Fuchs  \cite{MR2193185} to construct a model without $\Sigma_1$-definable long well-orderings from an infinite sequence of measurable cardinals.

\begin{proof}[Proof of Theorem \ref{theorem:EquiLongWO}]
 Assume that $\vec{\kappa}=\seq{\kappa(n)}{n<\omega}$ is a strictly increasing sequence of measurable cardinals with limit $\kappa$. Pick a sequence $\vec{U}=\seq{U(n)}{n<\omega}$ with the property that $U(n)$ is a normal ultrafilter on $\kappa(n)$ for each $n<\omega$. 
 Let $\PPP_{\vec{U}}$ denote the \emph{diagonal Prikry forcing} associated to the sequence $\vec{U}$ (see {\cite[Section 1.3]{MR2768695}}), {i.e.} $\PPP_{\vec{U}}$ is the partial order defined by the following clauses: 
 \begin{itemize}
     \item Conditions in $\PPP_{\vec{U}}$ are sequences $p=\seq{p_n}{n<\omega}$ with the property that there exists a natural number $l_p$ such that $p_n\in\kappa(n)$ for all $n<l_p$ and $p_n\in U(n)$ for all $l_p\leq n<\omega$. 
     
     \item Given conditions $p$ and $q$ in $\PPP_{\vec{U}}$, we have $p\leq_{\PPP_{\vec{U}}}q$ if and only if $l_q\leq l_p$, $p_n=q_n$ for all $n<l_q$, $q_n\in p_n$ for all $l_q\leq n<l_p$ and $p_n\subseteq q_n$ for all $l_p\leq n<\omega$. 
 \end{itemize}
 By {\cite[Lemma 1.35]{MR2768695}}, forcing  with $\PPP_{\vec{U}}$ does not add bounded subsets of $\kappa$.

 Given a filter $G$ on $\PPP_{\vec{U}}$, we let  $c_G$ denote the unique function with domain $\sup_{p\in G}l_p\leq\omega$ and $c_G(n)=p_n$ for all $p\in G$ and $n<l_p$. 
 In the other direction, given a  sequence $c$ contained in the set $\prod\vec{\kappa}$ of all functions $d\in {}^\omega\kappa$ with $d(n)<\kappa(n)$ for all $n<\omega$, we let $G_c$ denote the set of all conditions $p$ in $\PPP_{\vec{U}}$ with $p_n=c(n)$ for all $n<l_p$ and $c(n)\in p_n$ for all $l_p\leq n<\omega$. %
 It is easy to see that $G_c$ is a filter on $\PPP_{\vec{U}}$ with $c_{G_c}=c$ in this situation. 
 Given an inner model $M$ that contains $\vec{U}$ and $c\in\prod\vec{\kappa}$, we say that $c$ is    \emph{$\vec{U}$-generic} over $M$ if $G_c$ is $\PPP_{\vec{U}}$-generic over $M$. 
  The results of \cite{MR2193185} then show that a  sequence $c\in\prod\vec{\kappa}$ is $\vec{U}$-generic over an inner model $M$ if and only if $\Set{n<\omega}{c(n)\in A_n}$ is a cofinite subset of $\omega$ for every sequence $\seq{A_n\in U(n)}{n<\omega}$ in $M$. 
  Using {\cite[Theorem 3.5.1]{MR0373889}}, this characterization can be used to show that the Boolean completion of $\PPP_{\vec{U}}$ is weakly homogeneous and therefore every statement in the forcing language of $\PPP_{\vec{U}}$ that uses only ground model elements as parameters is decided by $\mathbbm{1}_{\PPP_{\vec{U}}}$. 
  
  Now, let $G$ be $\PPP_{\vec{U}}$-generic over $\VV$ and assume that, in $\VV[G]$, there exists a well-ordering $\lhd$ of a subset $D$ of $\POT{\kappa}$ of cardinality greater than $\kappa$ that can be defined by a $\Sigma_1$-formula $\varphi(v_0,\ldots,v_3)$, a parameter $z\in\HH{\kappa}$ and the parameter $\kappa$. 
  Then we can find a $\Sigma_1$-formula $\psi(v_0,v_1,v_2)$ with the property that, in $\VV[G]$, the set $D$ can be defined by the formula $\psi$ and the parameters $\kappa$ and $z$. 
  In this situation, we know that   $z\in\VV$ and the homogeneity properties of $\PPP_{\vec{U}}$ imply that $D\subseteq \VV$, because, given $y\in D$, we know that $y$ is the $\alpha$-th element of the well-order $\langle D,\lhd\rangle$ for some ordinal $\alpha$ and therefore the set $\{y\}$ is definable in $\VV[G]$ by a formula with parameters in $\VV$. In addition, we know that 
  \begin{equation}\label{equation:DomainDefGroundModel}
   D ~ = ~ \Set{y\in\POT{\kappa}^\VV}{\mathbbm{1}_{\PPP_{\vec{U}}}\Vdash\psi(\check{y},\check{z},\check{\kappa})}.
  \end{equation}
  
  Let $\calE$ denote the set of all normal ultrafilters on cardinals smaller than $\kappa$ in $\VV$. 
  Apply Lemma \ref{lemma:TechnicalLemmaIterationsTree} to $\kappa$, $z$ and $D$ in $\VV$ to obtain 
  an element $x_*$ of $D$, 
   a system $\seq{\nu_s}{s\in{}^{{<}\omega}\kappa}$ of inaccessible cardinals smaller than $\kappa$, 
   a system $\seq{\kappa_s}{s\in{}^{{<}\omega}\kappa}$ of measurable cardinals smaller than $\kappa$, 
   a system $\seq{U_s}{s\in{}^{{<}\omega}\kappa}$ of elements of $\calE$,  and 
   a system $\seq{I_s}{s\in{}^{{<}\omega}\kappa}$ of linear iterations of $\langle\VV,\calE\rangle$ possessing the properties listed in the lemma. 
   Next, for each $c\in({}^\omega\kappa)^{\VV[G]}$, let $I_c$ denote the unique iteration of $\langle\VV,\Set{U_{c\restriction n}}{n<\omega}\rangle$ of length $\sup_{n<\omega}\length{I_{c\restriction n}}{}$ in $\VV[G]$ with $U^{I_c}_{\alpha}=U^{c\restriction n}_\alpha$ for all $n<\omega$ and $\alpha<\length{I_{c\restriction n}}{}$. 
   Then $M^{I_c}_{\length{I_{c\restriction n}}{}}=M^{I_{c\restriction n}}_\infty$ and $i^{I_c}_{0,\length{I_{c\restriction n}}{}}=i^{I_{c\restriction n}}_{0,\infty}$ for all $c\in({}^\omega\kappa)^{\VV[G]}$ and $n<\omega$ with $\length{I_{c\restriction n}}{}<\length{I_c}{}$.
   Moreover, we have $M^{I_c}_\infty=M^{I_{c\restriction n}}_\infty$ and $i^{I_c}_{0,\infty}=i^{I_{c\restriction n}}_{0,\infty}$ for all $c\in({}^\omega\kappa)^{\VV[G]}$ and $n<\omega$ with $\length{I_{c\restriction n}}{}=\length{I_c}{}$.
   Given $c\in({}^\omega\kappa)^{\VV[G]}$, we define $M_c=M^{I_c}_\infty$, $\bar{c}=i^{I_c}_{0,\infty}\circ c_G$ and $x_c=i^{I_c}_{0,\infty}(x_*)$. 
   In this situation, Lemma \ref{lemma:LongIterationsGenericExtensions} shows that $M_c$ is well-founded for all $c\in({}^\omega\kappa)^{\VV[G]}$.

   \begin{claim*}
    If $c\in({}^\omega\kappa)^{\VV[G]}$, then $\bar{c}$ is $i^{I_c}_{0,\infty}(\vec{U})$-generic over $M_c$. 
   \end{claim*}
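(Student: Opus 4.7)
The plan is to verify the Fuchs characterization of diagonal Prikry genericity recalled before the claim: $\bar{c}$ will be $i^{I_c}_{0,\infty}(\vec{U})$-generic over $M_c$ provided that for every sequence $\seq{B_n}{n<\omega}\in M_c$ with $B_n\in i^{I_c}_{0,\infty}(\vec{U})(n)=i^{I_c}_{0,\infty}(U(n))$ the set $\Set{n<\omega}{\bar{c}(n)\in B_n}$ is cofinite. Fixing such a sequence, my strategy is to produce a ground-model sequence $\seq{A_n}{n<\omega}\in\VV$ with $A_n\in U(n)$ and $i^{I_c}_{0,\infty}(A_n)\subseteq B_n$ for cofinitely many $n$; applying Fuchs' characterization in $\VV$ to the $\vec{U}$-generic sequence $c_G$ and this ground-model sequence, together with the observation that $c_G(n)\in A_n$ implies $\bar{c}(n)=i^{I_c}_{0,\infty}(c_G(n))\in i^{I_c}_{0,\infty}(A_n)\subseteq B_n$, then yields the claim.

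To construct the $A_n$'s, I first isolate a suitable initial segment of $I_c$ for each $n$. Since the sequence $\seq{\kappa_{c\restriction k}}{k<\omega}$ is cofinal in $\kappa$ by Lemma~\ref{lemma:TechnicalLemmaIterationsTree}(\ref{item:Cofinal}), I let $k_n$ be the least $k$ with $\kappa_{c\restriction k}>\kappa(n)$. Crucially, $c\restriction k_n$ is a finite sequence of ordinals below $\kappa$, so it lies in $\VV$ and hence $I_{c\restriction k_n}\in \VV$. Setting $\beta_n=\length{I_{c\restriction k_n}}{}$, property~\eqref{item:Extensions} of Lemma~\ref{lemma:TechnicalLemmaIterationsTree} ensures that the tail iteration $i^{I_c}_{\beta_n,\infty}$ applied to $M^{I_c}_{\beta_n}=M^{I_{c\restriction k_n}}_\infty$ only uses ultrafilters on cardinals $\geq \kappa_{c\restriction k_n}>\kappa(n)$. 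A standard analysis of ultrapowers with large critical points then yields $\POT{\kappa(n)}^{M_c}=\POT{\kappa(n)}^{M^{I_{c\restriction k_n}}_\infty}\subseteq \VV$; in particular $B_n\in \VV$, $B_n\in i^{I_{c\restriction k_n}}_{0,\infty}(U(n))$, and $i^{I_c}_{0,\infty}(c_G(n))=i^{I_{c\restriction k_n}}_{0,\infty}(c_G(n))$. An application of Lemma~\ref{lemma:MoreMeasurables} (with a normality-based enhancement using diagonal intersections to cover the case $\beta_n\geq\kappa(n)$, exploiting that $U(n)$ is a normal ultrafilter on $\kappa(n)$ and $I_{c\restriction k_n}$ only uses ultrafilters on cardinals below $\kappa(n)$) then produces, in $\VV$, a set $A_n\in U(n)$ with $i^{I_{c\restriction k_n}}_{0,\infty}(A_n)\subseteq B_n$.

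The main obstacle is arranging the full sequence $\seq{A_n}{n<\omega}$ to lie in $\VV$, since a priori both the sequence $\seq{B_n}{n<\omega}$ (as a single object) and the sequence $\seq{I_{c\restriction k_n}}{n<\omega}$ live in $\VV[G]\setminus \VV$, even though each individual term of each sequence is in $\VV$. To handle this I would work in $\VV$ with a $\PPP_{\vec{U}}$-name $\dot{\vec{B}}$ for $\vec{B}$ and exploit the Prikry property of $\PPP_{\vec{U}}$ together with the weak homogeneity of its Boolean completion established earlier in the proof: direct extensions of a stem decide enough of $\dot{\vec{B}}(n)$ and of $\dot{I}_{\dot c\restriction k_n}$ to allow, below a suitable condition of $G$, a stem-by-stem canonical choice of $A_n$ depending only on ground-model data, yielding $\seq{A_n}{n<\omega}\in \VV$. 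Feeding this sequence into Fuchs' characterization applied in $\VV$ to $c_G$ then produces cofinitely many $n$ with $c_G(n)\in A_n$, and by the implication described in the first paragraph this completes the proof.
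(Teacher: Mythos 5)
Your overall strategy --- verifying Fuchs' criterion by pulling the given measure\hyp{}one sequence back to a sequence $\seq{A_n}{n<\omega}\in\VV$ with $A_n\in U(n)$ and then invoking the $\vec{U}$-genericity of $c_G$ over $\VV$ --- is exactly the paper's. The execution, however, has a genuine gap at the step you yourself flag as the main obstacle. Your term-by-term reduction represents each $B_n$ at a stage $k_n$ that depends on $n$, so the data $\seq{B_n}{n<\omega}$, $\seq{I_{c\restriction k_n}}{n<\omega}$ is only visibly in $\VV[G]$, and the repair you sketch does not work: the Prikry property lets a direct extension decide a single statement, but the \emph{value} of a name for $\dot{B}_n$ --- an arbitrary member of $i^{I_c}_{0,\infty}(U(n))$ --- is not decided by stems, and the number of candidate values is at least $2^{\kappa(n)}$, so the corresponding sets $A_n^v$ cannot all be intersected inside the $\kappa(n)$-complete ultrafilter $U(n)$, nor is there an obvious diagonal intersection indexed by $\kappa(n)$ that covers them. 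In addition, your route forces you to apply Lemma \ref{lemma:MoreMeasurables} to the iteration $I_{c\restriction k_n}$, whose length may well exceed $\kappa(n)$, so you also need an unproven strengthening of that lemma.

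The missing idea is much simpler and removes both problems at once: the hypothesis of Fuchs' criterion is that the \emph{whole sequence} $\vec{C}=\seq{C_n}{n<\omega}$ is a single element of $M_c$, and $M_c$ is the direct limit of the models $M^{I_{c\restriction n}}_\infty$ along $\omega$. Hence there is a \emph{single} $n_0<\omega$ and a single $\vec{B}=\seq{B_n}{n<\omega}\in M^{I_{c\restriction n_0}}_\infty\subseteq\VV$ with $i^{I_c}_{\length{I_{c\restriction n_0}},\infty}(\vec{B})=\vec{C}$ (or $\vec{B}=\vec{C}$ if $\length{I_c}=\length{I_{c\restriction n_0}}$). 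Now $\vec{B}$, $I_{c\restriction n_0}$ and $\vec{U}$ all lie in $\VV$, so $\seq{A_n}{n<\omega}$ can be chosen there by a single application of choice; moreover, for every $n$ with $\kappa(n)>\kappa_{c\restriction n_0}$, the iteration $I_{c\restriction n_0}$ has length less than $\nu_{c\restriction n_0}<\kappa_{c\restriction n_0}$ and uses only ultrafilters on cardinals below $\kappa_{c\restriction n_0}$, so Lemma \ref{lemma:MoreMeasurables} applies \emph{as stated} with $\alpha=\kappa_{c\restriction n_0}$ and $\delta=\kappa(n)$, yielding $A_n\in U(n)$ with $i^{I_{c\restriction n_0}}_{0,\infty}(A_n)\subseteq B_n$. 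Pushing this inclusion forward with the tail embedding gives $i^{I_c}_{0,\infty}(A_n)\subseteq C_n$, and then the argument of your first paragraph (genericity of $c_G$ over $\VV$ applied to $\seq{A_n}{n<\omega}$, followed by $\bar{c}(n)=i^{I_c}_{0,\infty}(c_G(n))\in i^{I_c}_{0,\infty}(A_n)$) goes through verbatim.
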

   
   \begin{proof}[Proof of the Claim]
    Suppose that  $i^{I_c}_{0,\infty}(\vec{U})=\seq{U^{\prime\prime}(n)}{n<\omega}$ and 
    fix a sequence $\vec{C}=\seq{C_n\in U^{\prime\prime}(n)}{n<\omega}$ in $M_c$. 
    Since $I_c$ is a linear iteration of length at most $\kappa$, we can find $n_0<\omega$ and a sequence $\vec{B}=\seq{B_n}{n<\omega}$ in $M^{I_{c\restriction n_0}}_\infty$ such that either $\length{I_c}{}=\length{I_{c\restriction n_0}}{}$ and $\vec{B}=\vec{C}$, or $\length{I_c}{}>\length{I_{c\restriction n_0}}{}$ and $i^{I_c}_{\length{I_{c\restriction n_0}}{},\infty}(\vec{B})=\vec{C}$. 
    Now, pick $n_1<\omega$ with $\kappa(n)>\kappa_{c\restriction n_0}$ for all $n_1\leq n<\omega$. 
    In this situation, the conclusions of Lemma \ref{lemma:TechnicalLemmaIterationsTree} ensure that we can apply Lemma \ref{lemma:MoreMeasurables} to find a sequence $\seq{A_n\in U(n)}{n<\omega}$ with  $i^{I_c}_{0,\length{I_{c\restriction n_0}}{}}(A_n)\subseteq B_n$ for all $n_1\leq n<\omega$. 
    Since $c_G$ is $\vec{U}$-generic over $\VV$, we find $n_1\leq n_2<\omega$ with $c_G(n)\in A_n$ for all $n_2\leq n<\omega$. 
    But this shows that $\bar{c}(n)\in C_n$ holds for all $n_2\leq n<\omega$. 
    Using the characterization of generic sequences provided by \cite{MR2193185}, these computations prove the statement of the claim. 
   \end{proof}

   \begin{claim*}
    If $c\in({}^\omega\kappa)^{\VV[G]}$, then $x_c\in D$. 
   \end{claim*}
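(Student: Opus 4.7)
The plan is to transfer the forcing statement witnessing $x_* \in D$ from $\VV$ up to the iterate $M_c$ via the elementary embedding $i := i^{I_c}_{0,\infty}$, then extract truth in $\VV[G]$ using the generic filter supplied by the preceding claim, and finally invoke $\Sigma_1$-absoluteness.

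Concretely, by the characterization \eqref{equation:DomainDefGroundModel} and the fact that $x_* \in D$, we have in $\VV$ that
$$\mathbbm{1}_{\PPP_{\vec U}} \Vdash \psi(\check{x_*}, \check z, \check \kappa).$$
Lemma \ref{lemma:LongIterationsGenericExtensions} ensures $i(z) = z$ and $i(\kappa) = \kappa$, and by construction $i(x_*) = x_c$; moreover, since $\PPP_{\vec U}$ is defined uniformly from $\vec U$, we have $i(\PPP_{\vec U}) = (\PPP_{i(\vec U)})^{M_c}$. Elementarity of $i$ then transfers the forcing statement to $M_c$, yielding
$$\mathbbm{1}_{(\PPP_{i(\vec U)})^{M_c}} \Vdash \psi(\check{x_c}, \check z, \check \kappa) \text{ in } M_c.$$
By the previous claim, $\bar c$ is $i(\vec U)$-generic over $M_c$, so $\bar G := G_{\bar c} \in \VV[G]$ is $(\PPP_{i(\vec U)})^{M_c}$-generic over $M_c$. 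Applying the forcing theorem inside $M_c$ gives $M_c[\bar G] \models \psi(x_c, z, \kappa)$, and upward absoluteness of $\Sigma_1$-formulas, together with $M_c[\bar G] \subseteq \VV[G]$, yields $\VV[G] \models \psi(x_c, z, \kappa)$. Since $x_c = i(x_*) \subseteq i(\kappa) = \kappa$ is an element of $\POT{\kappa}^{\VV[G]}$, this is exactly the statement $x_c \in D$.

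The heart of the argument—that $\bar c$ is $i(\vec U)$-generic over $M_c$—has already been done in the preceding claim, so the only remaining work is bookkeeping: verifying that $i$ commutes with the definition of $\PPP_{\vec U}$ and its trivial condition (immediate from absoluteness), and that the forcing theorem can be executed inside the iterate $M_c$ (immediate from $M_c$ being a well-founded $\ZFC$-model). I do not anticipate any serious obstacle; no fresh iteration-theoretic input is needed beyond what has already been established.
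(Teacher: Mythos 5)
Your argument is correct and follows essentially the same route as the paper: both transfer the forcing statement $\mathbbm{1}_{\PPP_{\vec U}}\Vdash\psi(\check{x}_*,\check z,\check\kappa)$ from $\VV$ to $M_c$ via elementarity of $i^{I_c}_{0,\infty}$ (using $i^{I_c}_{0,\infty}(\kappa)=\kappa$ and $i^{I_c}_{0,\infty}(z)=z$ from Lemma \ref{lemma:LongIterationsGenericExtensions}), then use the generic filter $G_{\bar c}$ supplied by the previous claim and $\Sigma_1$-upwards absoluteness from $M_c[G_{\bar c}]$ to $\VV[G]$.
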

    
   \begin{proof}[Proof of the Claim]
    By the previous claim, there exists a filter $H$ on $i^{I_c}_{0,\infty}(\PPP_{\vec{U}})$ in $\VV[G]$ that is generic over $M_c$. 
    Since Lemma \ref{lemma:LongIterationsGenericExtensions} shows that $i^{I_c}_{0,\infty}(\kappa)=\kappa$ and $i^{I_c}_{0,\infty}(z)=z$, we can use \eqref{equation:DomainDefGroundModel} to show that $$\mathbbm{1}_{i^{I_c}_{0,\infty}}\Vdash\psi(\check{x}_c,\check{z},\check{\kappa})$$ holds in $M_c$. 
    This shows that $\psi(x_c,z,\kappa)$ holds in $M_c[H]$ and $\Sigma_1$-upwards absoluteness implies that this statement also holds in $\VV[G]$.  
   \end{proof}

   By Lemma \ref{lemma:LongIterationsGenericExtensions}, our definitions ensure that the map $$\Map{\iota}{({}^\omega\kappa)^{\VV[G]}}{D}{c}{x_c}$$ is an injection that is definable in $\VV[G]$ from parameters contained in the ground model $\VV$. 
   Since $\iota(c_G)\in D\subseteq\VV$, this shows that, in $\VV[G]$, the set $\{c_G\}$ is definable from parameters in $\VV$. 
   Using the homogeneity properties of $\PPP_{\vec{U}}$ in $\VV$, we can now conclude that $c_G$ is an element of $\VV$, a contradiction.  
\end{proof}


\section{Long well-orderings in $\POT{\omega_1}$}

We now show that both strong large cardinal assumptions and  strong forcing axioms cause analogues of the above results on the definability of long well-orders to hold for  $\omega_1$. 
In the following, we combine well-known consequences of the \emph{Axiom of Determinacy} $\AD$ with Woodin's analysis of $\PPP_{max}$-extensions of determinacy models (see \cite{MR2768703} and \cite{MR1713438}).

\begin{lemma}[\ZF]\label{lemma:NormalFilterLONG}
 Let $\kappa$ be an infinite cardinal. If there is an injection from $\kappa^+$ into $\POT{\kappa}$, then every ${<}\kappa^+$-complete ultrafilter on $\kappa^+$ is principal. 
\end{lemma}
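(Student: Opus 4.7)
The plan is to argue by contradiction: suppose $U$ is a non-principal ${<}\kappa^+$-complete ultrafilter on $\kappa^+$, and use the injection $\iota\colon\kappa^+\to\POT{\kappa}$ to derive that some singleton (or the empty set) lies in $U$. The key idea is that $\iota$ lets us distinguish any two ordinals in $\kappa^+$ using only $\kappa$-many coordinates, and ${<}\kappa^+$-completeness lets us intersect all those coordinates at once.

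Concretely, for each $\alpha<\kappa$ I would set
\[
 A_\alpha^0 ~ = ~ \Set{\beta<\kappa^+}{\alpha\in\iota(\beta)}, \qquad A_\alpha^1 ~ = ~ \Set{\beta<\kappa^+}{\alpha\notin\iota(\beta)}.
\]
Since $\{A_\alpha^0,A_\alpha^1\}$ is a partition of $\kappa^+$ and $U$ is an ultrafilter, exactly one of $A_\alpha^0,A_\alpha^1$ lies in $U$. This allows one to define (in $\ZF$, with no choice needed) a subset $x\subseteq\kappa$ by declaring $\alpha\in x$ if and only if $A_\alpha^0\in U$, and to define a function $\map{i}{\kappa}{2}$ by $i(\alpha)=0$ if $\alpha\in x$ and $i(\alpha)=1$ otherwise.

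Now the ${<}\kappa^+$-completeness of $U$ applied to the family $\seq{A_\alpha^{i(\alpha)}}{\alpha<\kappa}$ of $\kappa$-many elements of $U$ yields
\[
 B ~ = ~ \bigcap_{\alpha<\kappa} A_\alpha^{i(\alpha)} ~ \in ~ U.
\]
By the choice of $i$, an ordinal $\beta<\kappa^+$ lies in $B$ if and only if $\iota(\beta)=x$. Since $\iota$ is an injection, the set $B$ has at most one element, so $B$ is either empty or a singleton, contradicting the non-principality of $U$.

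The argument is essentially a direct diagonalisation, so there is no real obstacle; the only point that needs a brief check is that defining the function $i$ (and hence the set $x$) requires no form of choice, which is immediate because $i(\alpha)$ is uniquely determined by the membership of the two explicitly given sets $A_\alpha^0,A_\alpha^1$ in the fixed ultrafilter $U$. Hence the proof goes through in $\ZF$.
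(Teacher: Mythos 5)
Your proof is correct and follows essentially the same argument as the paper: encode the ultrafilter's decisions as a subset $x$ of $\kappa$, use ${<}\kappa^+$-completeness to intersect the $\kappa$-many deciding sets, and observe that the resulting $U$-set is $\iota^{-1}(\{x\})$, hence at most a singleton by injectivity. The only cosmetic difference is that the paper works with the sets $B_\alpha$ and their complements directly rather than introducing the indexing function $i$.
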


\begin{proof}
 Let $U$ be a ${<}\kappa^+$-complete ultrafilter on $\kappa^+$ and let $\map{\iota}{\kappa^+}{\POT{\kappa}}$ be an injection. 
 Given $\alpha<\kappa$, set $$B_\alpha ~ = ~ \Set{\gamma<\kappa^+}{\alpha\in\iota(\gamma)}.$$ Since $U$ is an ultrafilter, there is  $A\subseteq\kappa$ such that $$\alpha\in A ~ \Longleftrightarrow ~ B_\alpha\in U$$ holds for all $\alpha<\kappa$. 
 The ${<}\kappa^+$-completeness of $U$ then ensures that the set $$B ~ = ~ \bigcap\Set{B_\alpha}{\alpha\in A} ~ \cap ~ \bigcap\Set{\kappa^+\setminus B_\alpha}{\alpha\in\kappa\setminus A}$$ is an element of $U$. We now know that $\iota[B]=\{A\}$ and hence the injectivity of $\iota$ implies that $B$ is a singleton.  
\end{proof}

\begin{corollary}[\ZF+\DC+\AD]\label{corollary:ADnoLONGWO}
 There is no injection from $\omega_2$ into $\POT{\omega_1}$. 
\end{corollary}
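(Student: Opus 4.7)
The plan is to apply Lemma \ref{lemma:NormalFilterLONG} with $\kappa=\omega_1$ and derive a contradiction from the existence, under $\AD$, of a non-principal measure on $\omega_2$. Assume, towards a contradiction, that there is an injection $\map{\iota}{\omega_2}{\POT{\omega_1}}$. Lemma \ref{lemma:NormalFilterLONG} then yields that every ${<}\omega_2$-complete ultrafilter on $\omega_2$ is principal. To refute this, I would invoke the classical theorem of Solovay that $\ZF+\DC+\AD$ implies the measurability of $\omega_2$, so that in particular there exists a non-principal, ${<}\omega_2$-complete (i.e., $\aleph_2$-complete) ultrafilter on $\omega_2$. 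This directly contradicts the above consequence of Lemma \ref{lemma:NormalFilterLONG} and completes the proof.

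The only nontrivial ingredient here is Solovay's measurability result, which I would simply quote, for example from Moschovakis' book or Kanamori's monograph. The rest of the argument is a one-line application of the previously established lemma; in particular, $\DC$ is needed only indirectly, to support the standard determinacy-theoretic machinery that produces the $\aleph_2$-complete ultrafilter. There is no real obstacle beyond making sure that the notion of $\kappa^+$-completeness in Lemma \ref{lemma:NormalFilterLONG} matches the completeness of Solovay's measure on $\omega_2$, which it does: ${<}\omega_2$-completeness is exactly closure under intersections of at most $\aleph_1$ many sets, which is the defining completeness property of a measure witnessing that $\omega_2$ is measurable.
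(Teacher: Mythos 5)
Your proof is correct and follows essentially the same route as the paper: apply Lemma \ref{lemma:NormalFilterLONG} with $\kappa=\omega_1$ and contradict the measurability of $\omega_2$ under $\ZF+\DC+\AD$ (the paper cites the Kleinberg and Martin--Paris result that the club filter on $\omega_2$ restricted to ordinals of countable cofinality is a ${<}\omega_2$-complete non-principal ultrafilter, which is the same fact you attribute to Solovay). Your check that ${<}\omega_2$-completeness matches the completeness of that measure is also right.
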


\begin{proof}
 By results of Kleinberg and Martin--Paris (see {\cite[Section 13]{MR526915}}), the restriction of the closed unbounded filter on $\omega_2$ to the set of all ordinals of countable cofinality is a ${<}\omega_2$-complete, non-principal ultrafilter on $\omega_2$. 
\end{proof}

The following lemma will allow us to use the theory developed in \cite{MR1713438} to prove Theorem \ref{theorem:ResultsOmega1}.\eqref{item:Omega1-1}.

\begin{lemma}\label{lemma:PmaxLongWO}
 Assume that  $\AD$  holds in $\LL(\RRR)$ and $\VV$ is a $\PPP_{max}$-generic extension of $\LL(\RRR)$. 
 Then no well-ordering of a subset of $\POT{\omega_1}$ of cardinality greater than $\aleph_1$ is contained in $\OD(\RRR)$. 
\end{lemma}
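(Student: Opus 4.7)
The plan is to argue by contradiction and reduce to an application of Corollary~\ref{corollary:ADnoLONGWO} inside $\LL(\RRR)$. Suppose that $\lhd$ is a well-ordering in $\OD(\RRR)^\VV$ of a subset $D$ of $\POT{\omega_1}$ of cardinality greater than $\aleph_1$. Using an initial segment of $\lhd$, one obtains an injection $\map{i}{\omega_2^\VV}{\POT{\omega_1}^\VV}$ sending $\alpha$ to the $\alpha$-th element of $\langle D,\lhd\rangle$, and this injection is again $\OD(\RRR)$ in $\VV$. Fix a formula $\varphi(v_0,v_1,v_2)$ and a real $r\in\RRR^\VV$ such that $i(\alpha)=y$ is equivalent to $\LL(\RRR)[G]\models\varphi(\alpha,y,r)$ for all $\alpha<\omega_2^\VV$ and $y\subseteq\omega_1$. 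Since $\PPP_{max}$ does not add reals, we have $r\in\LL(\RRR)$, and since $\PPP_{max}$ preserves $\omega_1$ and $\omega_2$ over $\LL(\RRR)$ (as shown in \cite{MR1713438}), all relevant ordinals are the same when computed in $\LL(\RRR)$, $\LL(\RRR)[G]$ and $\VV$.

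The central step is to invoke the weak homogeneity of $\PPP_{max}$ over $\LL(\RRR)$, a fundamental property established by Woodin in \cite{MR1713438}: every statement in the forcing language whose parameters all lie in $\LL(\RRR)$ is decided by $\mathbbm{1}_{\PPP_{max}}$. Applied to the formula $\psi(\alpha,\beta,r)\equiv\exists y\,(\varphi(\alpha,y,r)\wedge\beta\in y)$ with $\alpha<\omega_2^{\LL(\RRR)}$ and $\beta<\omega_1^{\LL(\RRR)}$, this shows that $\beta\in i(\alpha)$ if and only if $\mathbbm{1}_{\PPP_{max}}\Vdash\psi(\check\alpha,\check\beta,\check r)$. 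Consequently, the function
\[
  i^*(\alpha) ~ = ~ \Set{\beta<\omega_1}{\mathbbm{1}_{\PPP_{max}}\Vdash\psi(\check\alpha,\check\beta,\check r)}
\]
belongs to $\LL(\RRR)$, and $i^*$ coincides with $i$ on $\omega_2$. In particular, $i$ is itself an element of $\LL(\RRR)$, yielding an injection from $\omega_2$ into $\POT{\omega_1}$ inside $\LL(\RRR)$.

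To close the argument, we apply Corollary~\ref{corollary:ADnoLONGWO} inside $\LL(\RRR)$. Since $\AD$ holds in $\LL(\RRR)$ by assumption, and Kechris's theorem guarantees that $\DC$ also holds in $\LL(\RRR)$ under $\AD$, the corollary rules out the existence of an injection from $\omega_2$ into $\POT{\omega_1}$ in $\LL(\RRR)$. This contradicts the existence of $i^*$, completing the argument.

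The main obstacle is correctly using the weak homogeneity of $\PPP_{max}$ at parameters of cardinality $\aleph_1$ (the set $i(\alpha)$ itself is not a ground-model object and must be reconstructed bit by bit using the homogeneity applied to ordinal parameters $\beta<\omega_1$). Once this reconstruction is carried out, the rest of the argument is a straightforward combination of standard preservation facts for $\PPP_{max}$ and the $\AD$-consequences already collected in Lemma~\ref{lemma:NormalFilterLONG} and Corollary~\ref{corollary:ADnoLONGWO}.
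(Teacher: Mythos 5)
Your proposal is correct and follows essentially the same route as the paper: use the countable closure and homogeneity of $\PPP_{max}$ over $\LL(\RRR)$ to pull the $\OD(\RRR)$ well-ordering (via the derived injection of $\omega_2$ into $\POT{\omega_1}$) back into $\LL(\RRR)$, then contradict Corollary~\ref{corollary:ADnoLONGWO}. The only difference is that you spell out the homogeneity step by reconstructing the injection ordinal-by-ordinal from the forcing relation, whereas the paper invokes homogeneity directly to conclude $D,\lhd\in\LL(\RRR)$.
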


\begin{proof}
 Assume, towards a contradiction, that there exists a subset $D$ of $\POT{\omega_1}$ of cardinality greater than $\aleph_1$ and a well-orderding $\lhd$ of $D$ that is contained in $\OD(\RRR)$. 
 Then the fact that $\PPP_{max}$ is countably closed and homogeneous in $\LL(\RRR)$ (see {\cite[Lemma 4.38]{MR1713438}} and {\cite[Lemma 4.43]{MR1713438}}) directly implies that $D$ and $\lhd$ are both contained in $\LL(\RRR)$. 
 But this shows that  $\LL(\RRR)$ is a model of $\ZF+\DC+\AD$ that contains an injection from  $\omega_2$ into $\POT{\omega_1}$, contradicting Corollary \ref{corollary:ADnoLONGWO}. 
\end{proof}

\begin{proof}[Proof of Theorem \ref{theorem:ResultsOmega1}.\eqref{item:Omega1-1}]
 Let $\lhd$ be a well-ordering of a subset of $\POT{\omega_1}$ of cardinality greater than $\aleph_1$ that is definable by a $\Sigma_1$-formula $\varphi(v_0,\ldots,v_3)$ and parameters $\omega_1$ and $z\in\HH{\aleph_1}$. 
 
 First, assume that Woodin's Axiom $(*)$ holds, {i.e.} $\AD$ holds in $\LL(\RRR)$ and $\LL(\POT{\omega_1})$ is a $\PPP_{max}$-generic extension of $\LL(\RRR)$. 
 We now know that $\lhd$ and its domain are both elements of $\OD(\RRR)^{\LL(\POT{\omega_1})}$, because  $\Sigma_1$-statements with parameters in $\HH{\aleph_2}$ are absolute between $\LL(\POT{\omega_1})$ and $\VV$. 
 Since the domain of $\lhd$ has cardinality greater than $\aleph_1$ in $\LL(\POT{\omega_1})$, we can now use Lemma \ref{lemma:PmaxLongWO} to derive a contradiction.

 Now, assume that there is a measurable cardinal above  infinitely many Woodin cardinals. 
 Then $\AD$ holds in $\LL(\RRR)$. %
 Note that the formula $\varphi$ and the parameters $\omega_1$ and $z$ also define $\lhd$ in $\HH{\aleph_2}$, and this  statement can be formulated by a $\Pi_2$-formula with parameter $z$ in the structure  $\langle\HH{\aleph_2},\in\rangle$
  Let $G$ be $\PPP_{max}$-generic over $\LL(\RRR)$. Then the  \emph{$\Pi_2$-maximality} of $\LL(\RRR)[G]$ (see {\cite[Theorem 7.3]{MR2768703}}) implies that the formula $\varphi$ and the parameters $\omega_1$ and $z$ also  define a well-ordering of a subset of $\POT{\omega_1}$ of cardinality greater than $\aleph_1$ in the structure $\langle\HH{\aleph_2}^{\LL(\RRR)[G]},\in\rangle$. 
  In particular, such a well-ordering is contained in $\OD(\RRR)^{\LL(\RRR)[G]}$, again contradicting Lemma \ref{lemma:PmaxLongWO}.  
\end{proof}



\section[Almost disjoint families]{Almost disjoint families in $\POT{\omega_1}$}\label{section:AlmostDisjointOmega1}

Following the structure of the arguments in the previous section, we now show that both large cardinals and forcing axioms imply that large almost disjoint families of subsets of $\omega_1$ are not simply definable. 
The first step in these proofs is the following unpublished result of William Chan, Stephen Jackson and Nam Trang whose proof we include with their permission. This result is an application of their work on the validity of the \emph{Kurepa Hypothesis} in determinacy models and continues a line of groundbreaking results on definable combinatorics at  $\omega_1$ (see, for example, \cite{CJ19}, \cite{CJ21}, \cite{AlmostDisjointAD} and \cite{CJT2}).

\begin{theorem}[Chan--Jackson--Trang, ~ $\ZF+\DC_\RRR+\AD^+$]\label{theorem:AD-MAD-omega_1}
 Assume that $\VV=\LL(\POT{\RRR})$ holds. If $A$ is a set of cofinal subsets of $\omega_1$, then one of the following statements holds: 
 \begin{enumerate}
     \item $A$ can be well-ordered and its cardinality is less than or equal to $\aleph_1$. 
     
     \item There are distinct $x,y\in A$ such  that $x\cap y$ is unbounded in $\omega_1$. 
 \end{enumerate}
\end{theorem}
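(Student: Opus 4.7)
Since Corollary~\ref{corollary:ADnoLONGWO} rules out injections from $\omega_2$ into $\POT{\omega_1}$ in any model of $\ZF+\DC+\AD$ (and the hypotheses of the theorem, together with $\VV = \LL(\POT{\RRR})$, imply $\DC$), once $A$ is shown to be well-orderable its cardinality is automatically at most $\aleph_1$. So the substantive task is to prove that an almost disjoint family $A$ of cofinal subsets of $\omega_1$ is well-orderable. My plan is to attach to $A$ a canonical tree whose cofinal branches are indexed by $A$ and then invoke the determinacy-theoretic analysis of $\omega_1$-trees developed by Chan, Jackson, and Trang.

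The natural object is the tree $T_A = \Set{x \cap \alpha}{x \in A, ~ \alpha < \omega_1}$ ordered by end-extension. Each $x \in A$ gives a cofinal branch $b_x = \seq{x \cap \alpha}{\alpha < \omega_1}$, and almost-disjointness ensures that $x \mapsto b_x$ is injective: if $x, y \in A$ are distinct with $x \cap y \subseteq \beta$ for some $\beta < \omega_1$, then, since both sets are cofinal in $\omega_1$, they disagree cofinally above $\beta$, and hence $b_x \neq b_y$. Thus $\betrag{A}$ is dominated by the number of cofinal branches through $T_A$. Next, I would use $\VV = \LL(\POT{\RRR})$ and the fact that $A$ is $\OD$ from a set of reals, together with the scale property and Woodin's coding techniques available under $\AD^+$, to refine $T_A$ to a tree whose nodes are ultimately coded by reals, and then appeal to the analysis of the Kurepa Hypothesis at $\omega_1$ under $\AD^+$ from \cite{CJ19, CJ21, CJT2}. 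The expected conclusion is that any such tree has at most $\aleph_1$ cofinal branches, whence $\betrag{A} \leq \aleph_1$ and $A$ is well-orderable.

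The hard step will be the coding and level refinement: the levels of $T_A$ need not be well-orderable of countable size, so the classical Kurepa Hypothesis does not apply to $T_A$ verbatim. The intended workaround is to exploit the $\AD^+$ scale property together with the form of $\VV = \LL(\POT{\RRR})$ to transfer $A$ to a tree whose nodes are genuinely coded by reals, and then to invoke the branch-counting techniques of Chan, Jackson, and Trang developed precisely for such partially well-founded $\omega_1$-trees in a determinacy context. Carrying this out without the rigidity afforded by ordinary choice, while ensuring that no two distinct branches $b_x, b_y$ cofinally merge (which would directly violate almost-disjointness), is the main technical obstacle and is where the recent work cited above plays its critical role.
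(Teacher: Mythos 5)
There is a genuine gap: the entire weight of your argument rests on a branch-counting principle that you do not prove and that is false in the generality you would need. The tree $T_A=\Set{x\cap\alpha}{x\in A,~\alpha<\omega_1}$ is not an $\omega_1$-tree in the sense of the Kurepa analysis — its levels are subsets of $\POT{\alpha}$ and hence sets of reals, typically not well-orderable and not of size $\leq\aleph_1$. A principle of the form \emph{``every height-$\omega_1$ tree whose nodes are coded by reals has at most $\aleph_1$ cofinal branches''} is simply false: ${}^{{<}\omega_1}2$ is such a tree and its set of cofinal branches is ${}^{\omega_1}2$, which does not inject into $\omega_1$ even under $\AD$. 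So almost-disjointness must enter the argument far more substantially than via the observation that distinct $x,y\in A$ yield distinct branches, and your sketch never says how. The ``coding and level refinement'' step you flag as the main obstacle is in fact the whole theorem, and deferring it to unspecified scale/Kurepa technology from \cite{CJ19,CJ21,CJT2} does not constitute a proof.

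The actual argument is quite different and more elementary. One first observes (as you do) that a well-orderable $A$ has size $\leq\aleph_1$ by Corollary \ref{corollary:ADnoLONGWO}. If $A$ is not well-orderable, then the dichotomy theorem for $\LL(\POT{\RRR})$-models of $\AD^+$ ({\cite[Theorem 1.4]{MR2777751}}) yields an injection $\map{\iota}{\RRR}{A}$ — this is where $\VV=\LL(\POT{\RRR})$ and $\AD^+$ are used, not for any tree coding. Assuming $A$ is almost disjoint, the coloring $c(\{x,y\})=\min\Set{\alpha<\omega_1}{\iota(x)\cap\iota(y)\subseteq\alpha}$ is total, and a Baire-category argument (the meager ideal is closed under well-ordered unions under $\AD$, plus Proposition \ref{proposition:LongSequenceDisjointBaire}) shows that $\bigcup\Set{c^{{-}1}\{\alpha\}}{\alpha<\lambda}$ is comeager for some fixed $\lambda<\omega_1$. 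Mycielski's theorem then produces a perfect set of reals whose $\iota$-images are pairwise disjoint above $\lambda$, and taking least elements above $\lambda$ injects $\RRR$ into $\omega_1$, contradicting $\AD$. I would suggest replacing the tree construction with this dichotomy-plus-category strategy, or else supplying a precise statement and proof of the branch-counting lemma you need together with an explanation of how almost-disjointness rules out the ${}^{{<}\omega_1}2$-type counterexamples.
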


The proof of this result makes use of the following topological fact:

\begin{proposition}[$\ZF+\DC$]\label{proposition:LongSequenceDisjointBaire}
 If $X$ is a Polish space and $\seq{A_\alpha}{\alpha<\omega_1}$ is a sequence of pairwise disjoint non-meager subsets of $X$, then there is an $\alpha<\omega_1$ such that the subset $A_\alpha$ does not have the property of Baire. 
\end{proposition}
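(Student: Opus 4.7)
The plan is to argue by contradiction: assume every $A_\alpha$ has the property of Baire. Fix once and for all a countable base $\mathcal{B}=\seq{W_n}{n<\omega}$ for the topology on $X$. For each $\alpha<\omega_1$, I would associate a canonical open set
\[ U_\alpha \;=\; \bigcup\{W\in\mathcal{B}\mid W\setminus A_\alpha\text{ is meager}\}. \]
Since $\mathcal{B}$ is countable, $U_\alpha\setminus A_\alpha$ is a countable union of meager sets and hence meager. The standard argument for canonical Baire representations then shows that $A_\alpha\triangle U_\alpha$ is meager: if $A_\alpha=V\triangle M$ with $V$ open and $M$ meager, then every basic open $W\subseteq V$ satisfies $W\setminus A_\alpha\subseteq M$, giving $V\subseteq U_\alpha$, and $A_\alpha\setminus U_\alpha\subseteq M$ is meager. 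Because $A_\alpha$ is non-meager, $U_\alpha$ must be non-empty; otherwise $A_\alpha$ would coincide with the meager set $A_\alpha\setminus U_\alpha$.

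Using the fixed enumeration of $\mathcal{B}$, I would then let $n_\alpha=\min\{n<\omega\mid\emptyset\neq W_n\subseteq U_\alpha\}$ and set $V_\alpha=W_{n_\alpha}$. This yields a canonical function $\alpha\mapsto V_\alpha\in\mathcal{B}$, and since $\omega_1$ is uncountable while $\mathcal{B}$ is countable, the pigeonhole principle produces distinct $\alpha,\beta<\omega_1$ with $V_\alpha=V_\beta=V$, a non-empty basic open set.

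Now $V\setminus A_\alpha$ and $V\setminus A_\beta$ are both meager, each being contained in the corresponding meager set $U_\gamma\setminus A_\gamma$, so $V\setminus(A_\alpha\cap A_\beta)=(V\setminus A_\alpha)\cup(V\setminus A_\beta)$ is meager. Since $V$ is a non-empty open subset of the Polish space $X$, the Baire category theorem (which is provable in $\ZF+\DC$) implies that $V$ is non-meager; therefore $V\cap A_\alpha\cap A_\beta$ is non-empty, contradicting the pairwise disjointness of $\seq{A_\alpha}{\alpha<\omega_1}$.

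The main thing to watch is that no uncountable form of choice sneaks in: by fixing the countable base $\mathcal{B}$ once and using its enumeration both to define $U_\alpha$ and to pick $V_\alpha$, the entire assignment $\alpha\mapsto V_\alpha$ becomes a single definable function rather than the outcome of $\omega_1$-many independent selections. Once this is arranged, the proof reduces to the observation that a non-meager set with the Baire property is canonically pinned to a basic open piece, combined with a pigeonhole on the countable base and a single appeal to the Baire category theorem.
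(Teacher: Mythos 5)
Your proof is correct and follows essentially the same route as the paper's: reduce each non-meager set with the Baire property to a basic open set off which it is meager, apply pigeonhole to the countable base, and derive a contradiction with disjointness via Baire category. Your explicit canonical choice of $V_\alpha$ via the fixed enumeration is a welcome clarification of a selection step the paper leaves implicit, but it is a refinement of the same argument, not a different one.
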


\begin{proof}
 Assume, towards a contradiction, that $A_\alpha$ has the property of Baire for all $\alpha<\omega_1$.   
 Given $\alpha<\omega_1$, our assumption implies that there is a non-empty open set $U$ with the property that $U\setminus A_\alpha$ is meager. Hence, there is a sequence $\seq{N_\alpha}{\alpha<\omega_1}$ of non-empty basic open subsets of $X$ such that $N_\alpha\setminus A_\alpha$ is meager. Pick $\alpha<\beta<\omega_1$ with $N_\alpha=N_\beta$. Then $N_\alpha\setminus(A_\alpha\cap A_\beta)=(N_\alpha\setminus A_\alpha)\cup(N_\alpha\setminus A_\beta)$ is meager and hence $A_\alpha\cap A_\beta\neq\emptyset$, a contradiction
\end{proof}

\begin{proof}[Proof of Theorem \ref{theorem:AD-MAD-omega_1}]
 Assume, towards a contradiction, that both conclusions fail. 
 
 \begin{claim*}
  The set $A$ cannot be  well-ordered. 
 \end{claim*}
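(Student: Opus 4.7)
The plan is to reduce this step to Corollary \ref{corollary:ADnoLONGWO}, which is the non-existence of an injection from $\omega_2$ into $\POT{\omega_1}$ under $\ZF+\DC+\AD$. First I would suppose, towards a contradiction, that $A$ admits a well-ordering. Since the enclosing argument is proceeding under the assumption that both conclusions of Theorem \ref{theorem:AD-MAD-omega_1} fail, in particular conclusion (i) fails, so the well-orderable set $A$ cannot have cardinality at most $\aleph_1$. Hence there is an injective $\omega_2$-sequence $\seq{x_\alpha}{\alpha<\omega_2}$ of elements of $A$, and composing with the inclusion $A\subseteq\POT{\omega_1}$ produces an injection from $\omega_2$ into $\POT{\omega_1}$.

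The remaining point is to verify that Corollary \ref{corollary:ADnoLONGWO} is available in the present axiomatic setting. The hypotheses of Theorem \ref{theorem:AD-MAD-omega_1} include $\AD^+$, which implies $\AD$, and under $\AD^+$ together with $\DC_\RRR$ and $\VV=\LL(\POT{\RRR})$ one has full $\DC$ in the ambient universe. Thus the $\ZF+\DC+\AD$ premises of Corollary \ref{corollary:ADnoLONGWO} are met, and the corollary rules out the injection constructed in the previous paragraph, yielding the desired contradiction.

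There is essentially no hard step here: the content of the claim is just the packaging of the failure of conclusion (i) together with the already-established $\AD$-theoretic bound on well-orderable families of subsets of $\omega_1$. The only minor point to be careful about is the conversion from $\DC_\RRR$ to $\DC$ so that Corollary \ref{corollary:ADnoLONGWO} applies verbatim; beyond that, the claim is a one-line application.
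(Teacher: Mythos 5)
Your proposal is correct and matches the paper's argument exactly: assume $A$ is well-orderable, use the failure of conclusion (i) to get an injection of $\omega_2$ into $\POT{\omega_1}$, and contradict Corollary \ref{corollary:ADnoLONGWO}. Your extra remark about passing from $\DC_\RRR$ to $\DC$ so that the corollary applies is a point the paper glosses over, but it is the same route.
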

 
 \begin{proof}[Proof of the Claim]
  Assume, towards a contradiction, that  $A$ can be  well-ordered. Then our assumptions imply that it has cardinality greater than $\aleph_1$ and hence we obtain an injection of $\omega_2$ into $\POT{\omega_1}$. 
  But this yields a contradiction, because the assumption of Corollary \ref{corollary:ADnoLONGWO} are satisfied in our setting. 
 \end{proof}

 By combining the above claim with {\cite[Theorem 1.4]{MR2777751}}, we now obtain an injection $\map{\iota}{\RRR}{A}$. 
 Our assumptions then ensure that the function  $$\Map{c}{[\RRR]^2}{\omega_1}{\{x,y\}}{\min\Set{\alpha<\omega_1}{\iota(x)\cap\iota(y)\subseteq\alpha}}$$ is well-defined. 
 Given $\alpha<\omega_1$, set $E_\alpha=c^{{-}1}\{\alpha\}\subseteq\RRR\times\RRR$. Then $\bigcup\Set{E_\alpha}{\alpha<\omega_1}$ is dense open in $\RRR\times\RRR$. 
 
 \begin{claim*}
  There is a $\lambda<\omega_1$ such that the set $\bigcup\Set{E_\alpha}{\alpha<\lambda}$ is comeager in $\RRR\times\RRR$. 
 \end{claim*}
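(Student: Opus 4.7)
I would prove the claim by contradiction, supposing that $F_\lambda := \bigcup_{\alpha<\lambda} E_\alpha$ is not comeager in $\RRR\times\RRR$ for any $\lambda<\omega_1$. Under $\AD^+$, every subset of $\RRR\times\RRR$ has the property of Baire, and a set with the property of Baire fails to be comeager if and only if there is a non-empty open set on which it is meager. Combined with the fact that the sequence $\langle F_\lambda \mid \lambda<\omega_1\rangle$ is increasing and that $\RRR\times\RRR$ has a countable base, a standard pigeon-hole argument then produces a single non-empty basic open set $V\subseteq\RRR\times\RRR$ such that $V\cap F_\lambda$ is meager for every $\lambda<\omega_1$.

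With $V$ fixed, I would next consider the pairwise disjoint family $\langle V\cap E_\alpha \mid \alpha<\omega_1\rangle$. Each $V\cap E_\alpha$ has the property of Baire by $\AD^+$, so Proposition \ref{proposition:LongSequenceDisjointBaire} implies that there is a countable set $N\subseteq\omega_1$ such that $V\cap E_\alpha$ is meager whenever $\alpha\notin N$. On the one hand, $\bigcup_{\alpha\in N}(V\cap E_\alpha)\subseteq V\cap F_{\sup(N)+1}$ is meager. On the other hand, $\bigcup_{\alpha<\omega_1}(V\cap E_\alpha) = V\cap\bigcup_\alpha E_\alpha$ is comeager in $V$ because $\bigcup_\alpha E_\alpha$ is dense open in $\RRR\times\RRR$. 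Subtracting, the set $\bigcup_{\alpha\notin N}(V\cap E_\alpha)$ is comeager, and in particular non-meager, in $V$.

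The principal obstacle is to rule out this last configuration: an $\omega_1$-indexed union of pairwise disjoint meager subsets of $V$ with non-meager union. This is where I would invoke the full strength of $\AD^+ + \VV=\LL(\POT{\RRR})$: in this setting every $\omega_1$-sequence of sets of reals is ordinal-definable from a real, and under $\AD^+$ the meager ideal is $\aleph_1$-additive for such sequences, as a consequence of the measurability of $\omega_1$ via the club filter together with the uniformization principles available in $\LL(\POT{\RRR})$. Thus $\bigcup_{\alpha\notin N}(V\cap E_\alpha)$ must be meager, contradicting the previous paragraph and producing the desired countable $\lambda$.
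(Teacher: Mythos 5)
Your proof is correct, and it ultimately rests on the same two facts as the paper's argument: Proposition \ref{proposition:LongSequenceDisjointBaire} and the closure of the meager ideal under well-ordered unions in the presence of $\DC$ and the Baire property for all sets of reals. The difference is the order in which the two facts are deployed. The paper argues directly: if no $\bigcup_{\alpha<\lambda}E_\alpha$ is comeager, then for every $\lambda$ some $E_\gamma$ with $\gamma\geq\lambda$ must be non-meager (otherwise the tail $\bigcup_{\gamma\geq\lambda}E_\gamma$ would be a well-ordered union of meager sets, hence meager, making $F_\lambda$ comeager since the full union is dense open); this produces an $\omega_1$-sequence of pairwise disjoint non-meager sets with the Baire property, contradicting Proposition \ref{proposition:LongSequenceDisjointBaire}. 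You run the two ingredients in the opposite order: you first localize to a basic open $V$ on which every $F_\lambda$ is meager, use Proposition \ref{proposition:LongSequenceDisjointBaire} to see that all but countably many of the sets $V\cap E_\alpha$ are meager, and then contradict the well-ordered additivity of the meager ideal. Both arrangements work; yours costs an extra (harmless) localization and re-indexing step, while the paper's is more direct. The one point I would tighten is your justification of the additivity fact itself: the standard derivation is not via the club measure on $\omega_1$ or uniformization in $\LL(\POT{\RRR})$, but directly from $\DC$ together with the Baire property for all sets of reals (the paper simply cites this closure property as known in its setting, which is the appropriate level of detail here).
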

 
 \begin{proof}[Proof of the Claim]
  Assume that there is no $\lambda<\omega_1$ with the property that the set $\bigcup\Set{E_\alpha}{\alpha<\lambda}$ is comeager. 
  Since the ideal of meager subsets of $\RRR\times\RRR$ is closed under well-ordered unions in our setting,  our assumption yields a strictly increasing function $\map{f}{\omega_1}{\omega_1}$ with the property that $E_{f(\alpha)}$ is a non-meager subset of $\RRR\times\RRR$ for all $\alpha<\omega_1$. In this situation, the sequence $\seq{E_{f(\alpha)}}{\alpha<\omega_1}$ consists of pairwise disjoint non-meager subsets of $\RRR\times\RRR$ and, since we assume that $\AD$ holds,  all of these sets possess the property of Baire. This contradicts Proposition \ref{proposition:LongSequenceDisjointBaire}. 
 \end{proof}
 
 By a classical result of Mycielski (see {\cite[Theorem 19.1]{MR1321597}}), we can now find an injection $\map{e}{\RRR}{\RRR}$ such that for all $x,y\in\RRR$ with $x\neq y$, there is an $\alpha<\lambda$ with $\langle e(x),e(y)\rangle\in E_\alpha$. In this situation, we know that $$(\iota\circ e)(x) ~ \cap ~ (\iota\circ e)(y) ~ \subseteq ~ \lambda$$ holds for all $x,y\in\RRR$ with $x\neq y$. 
 In particular, since the set $A$ consists of unbounded subsets of $\omega_1$, we know that the map $$\Map{i}{\RRR}{\omega_1}{x}{\min((\iota\circ e)(x)\setminus\lambda)}$$ is an injection. But this shows that  the reals can be well-ordered, contradicting our assumptions. 
\end{proof}

In order to transfer the above result to models of the form $\HOD(\RRR)$ of $\PPP_{max}$-extensions,  we make use of another axiom introduced by Woodin, called   $\left({*\atop*}\right)$ (see {\cite[Definition 5.69]{MR1713438}}).

\begin{lemma}\label{lemma:MADinPmax}
 Assume that $\AD$ holds in $\LL(\RRR)$ and $\VV$ is a $\PPP_{max}$-generic extension of $\LL(\RRR)$. 
 If $A\in\OD(\RRR)$ is a set of cardinality greater than $\aleph_1$ that consists of unbounded subsets of $\omega_1$,  then there are distinct $x,y\in A$ with the property that $x\cap y$ is unbounded in $\omega_1$.  
\end{lemma}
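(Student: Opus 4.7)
The plan is to assume, toward a contradiction, that $x \cap y$ is bounded in $\omega_1$ for all distinct $x, y \in A$, so that $A$ is an almost disjoint family of unbounded subsets of $\omega_1$, and then to derive a cardinality contradiction by applying Theorem \ref{theorem:AD-MAD-omega_1} inside $\LL(\RRR)$.

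The crucial first step is to show that both $A$ itself and every element of $A$ belongs to $\LL(\RRR)$, so that $A$ can be viewed as an almost disjoint family in $\LL(\RRR)$ all of whose members are subsets of $\omega_1^{\LL(\RRR)} = \omega_1^\VV$. The homogeneity and countable closure of $\PPP_{max}$ in $\LL(\RRR)$ (\cite[Lemma~4.38]{MR1713438} and \cite[Lemma~4.43]{MR1713438}), combined with Woodin's structural analysis of $\HH{\aleph_2}$ in $\PPP_{max}$-extensions codified in the axiom $\left({*\atop*}\right)$ of \cite[Definition~5.69]{MR1713438}, are what should deliver this: the homogeneity argument used in the proof of Lemma \ref{lemma:PmaxLongWO} already places the ground-model portion $A \cap \LL(\RRR)$ into $\LL(\RRR)$, and the $\left({*\atop*}\right)$-style analysis of $\HOD(\RRR)$ in $\PPP_{max}$-extensions rules out the possibility that $A$ contains a fresh subset of $\omega_1$ added by the generic.

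Once $A$ has been transferred to $\LL(\RRR)$, I apply Theorem \ref{theorem:AD-MAD-omega_1} inside that inner model. The required hypotheses are satisfied: $\AD$ is assumed, $\DC_\RRR$ is a classical theorem of Kechris under $\AD$, the strengthening $\AD^+$ holds in $\LL(\RRR)$ by Woodin's theorem, and the identity $\VV = \LL(\POT{\RRR})$ is automatic when computed inside $\LL(\RRR)$. The almost disjointness of $A$ excludes the second alternative of Theorem \ref{theorem:AD-MAD-omega_1}, so the first alternative holds: $A$ is well-orderable in $\LL(\RRR)$ with cardinality at most $\aleph_1$ there. Since $\PPP_{max}$ is countably closed and hence preserves $\omega_1$, any such well-ordering continues to witness in $\VV$ that $A$ has cardinality at most $\aleph_1$, contradicting the hypothesis.

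The principal obstacle is the first step, namely ensuring that no element of $A$ is a fresh subset of $\omega_1$ added by the $\PPP_{max}$-generic. The homogeneity of $\PPP_{max}$ alone does not suffice here, since it only controls those elements of $A$ that already lie in $\POT{\omega_1}^{\LL(\RRR)}$; the proof really needs the finer description of $\HH{\aleph_2}^\VV$, and in particular of its $\HOD(\RRR)$, in $\PPP_{max}$-extensions provided by Woodin's axiom $\left({*\atop*}\right)$ to rule out $\OD(\RRR)$-named fresh elements.
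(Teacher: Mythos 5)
Your overall architecture matches the paper's: assume $A$ is almost disjoint, transfer $A$ into $\LL(\RRR)$, apply Theorem \ref{theorem:AD-MAD-omega_1} there, and conclude that $A$ has cardinality at most $\aleph_1$ in $\VV$ because $\PPP_{max}$ is countably closed and the well-ordering persists. The second half of your argument is correct and is exactly what the paper does (it quotes {\cite[Corollary 2.16]{MR2777751}} for $\LL(\RRR)\models\DC+\AD^+$ and then applies Theorem \ref{theorem:AD-MAD-omega_1} inside $\LL(\RRR)$).

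The gap is in the first step, which you correctly flag as the principal obstacle but then resolve with a claim that is false as stated. You assert that the $\left({*\atop*}\right)$-analysis ``rules out the possibility that $A$ contains a fresh subset of $\omega_1$ added by the generic,'' i.e., that an $\OD(\RRR)$ set of subsets of $\omega_1$ cannot have members outside $\LL(\RRR)$. That cannot be right in general: the set of all unbounded subsets of $\omega_1$ is $\OD$ and certainly contains fresh sets. What $\left({*\atop*}\right)$ (which holds by {\cite[Corollary 5.83]{MR1713438}}, using $\VV=\LL(\POT{\omega_1})$) actually yields, via {\cite[Theorem 5.84]{MR1713438}}, is a dichotomy: if the $\OD(\RRR)$ set $A$ has a member outside $\LL(\RRR)$, then there are an unbounded $U\subseteq\omega_1$ and a coherent map $\map{\pi}{{}^{{<}\omega_1}2}{[\omega_1]^\omega}$ such that $\bar{\pi}(x)=\bigcup\Set{\pi(x\restriction\alpha)}{\alpha<\omega_1}\in A$ for every $x\in{}^{\omega_1}2$, with membership of $\alpha\in U$ in $\bar{\pi}(x)$ controlled by $x(\alpha)$. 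The almost-disjointness of $A$ must then be used \emph{at this stage}, not only later: taking $x$ constantly $1$ and $y$ the characteristic function of $U\setminus\{\min(U)\}$ gives $\bar{\pi}(x),\bar{\pi}(y)\in A$ whose intersection contains $U\setminus\{\min(U)\}$ and is hence unbounded, so almost disjointness forces $\bar{\pi}(x)=\bar{\pi}(y)$; but $\min(U)\in\bar{\pi}(x)\setminus\bar{\pi}(y)$, a contradiction. Only after this does the homogeneity of $\PPP_{max}$ upgrade $A\subseteq\LL(\RRR)$ to $A\in\LL(\RRR)$. Without this ``perfect tree inside $A$ versus almost disjointness'' argument (or some substitute for it), your proof does not go through.
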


\begin{proof} 
 Assume, towards a contradiction, that the above conclusion fails. 
 
 \begin{claim*}
  $A\subseteq\LL(\RRR)$. 
 \end{claim*}
 
 \begin{proof}[Proof of the Claim]
  Assume  that $A\nsubseteq\LL(\RRR)$. 
  Since $\VV=\LL(\POT{\omega_1})$ holds and   {\cite[Corollary 5.83]{MR1713438}} shows that our assumptions imply that $\left({*\atop*}\right)$ holds, we can apply  {\cite[Theorem 5.84]{MR1713438}} to find an unbounded subset $U$ of $\omega_1$ and a function $\map{\pi}{{}^{{<}\omega_1}2}{[\omega_1]^\omega}$ such that the following statements hold: 
  \begin{enumerate}
   \item If $s,t\in{}^{{<}\omega_1}2$ with $s\subseteq t$, then $\pi(s)\subseteq\pi(t)$ and $\pi(s)\cap\alpha=\pi(t)\cap\alpha$ for all $\alpha\in\pi(s)$. 
   
   \item Given $s\in{}^{{<}\omega_1}2$ and $\alpha\in\dom{s}\cap U$, we have $\alpha\in\pi(s)$ if and only if $s(\alpha)=1$. 
   
   \item If $x\in{}^{\omega_1}2$, then $\bar{\pi}(x)=\bigcup\Set{\pi(x\restriction\alpha)}{\alpha<\omega_1}\in A$. 
  \end{enumerate}
  
  Pick $x,y\in{}^{\omega_1}2$ such that $x$ has constant value $1$ and $y$ is the characteristic function of $U\setminus\{\min(U)\}$. 
 Since $\bar{\pi}(x),\bar{\pi}(y)\in A$ and $U\setminus\{\min(U)\}\subseteq\bar{\pi}(x)\cap\bar{\pi}(y)$, we know that $\bar{\pi}(x)=\bar{\pi}(y)$ as $\bar{\pi}(x)\cap\bar{\pi}(y)$ is unbounded in $\omega_1$. But $\min(U)\in\bar{\pi}(x)\setminus\bar{\pi}(y)$, a contradiction. 
 \end{proof}

 \begin{claim*}
  $A\in\LL(\RRR)$.  
 \end{claim*}
 
 \begin{proof}[Proof of the Claim]
  Using the homogeneity of $\PPP_{max}$ in $\LL(\RRR)$, this statement follows directly from the previous claim and the fact that the set $A$ is contained in the class  $\OD(\RRR)$.  
 \end{proof}

 Since {\cite[Corollary 2.16]{MR2777751}} shows that $\LL(\RRR)$ is a model of  $\DC+\AD^+$,  we can use Theorem \ref{theorem:AD-MAD-omega_1} in $\LL(\RRR)$ to conclude  that, in  $\LL(\RRR)$, the set $A$ can be well-ordered and its cardinality is smaller than or equal to $\aleph_1$. But this shows that the cardinality of $A$ in $\VV$ is at most $\aleph_1$, a contradiction. 
\end{proof}

\begin{proof}[Proof of Theorem \ref{theorem:ResultsOmega1}.\eqref{item:Omega1-2}]
 Let $A$ be a set of cardinality greater than $\aleph_1$ that consists of unbounded subsets of $\omega_1$ and is definable by a $\Sigma_1$-formula $\varphi(v_0,v_1,v_2)$ and parameters $\omega_1$ and  $z\in\HH{\aleph_1}$.

  First, assume that Woodin's Axiom $(*)$ holds. 
 %
 %
 Then the $\Sigma_1$-Reflection Principle implies that the formula $\varphi$ and the parameters $\omega_1$ and $z$ also define the set $A$ in $\LL(\POT{\omega_1})$. 
 But this shows that  $A\in\OD(\RRR)^{\LL(\POT{\omega_1})}$ and, since $\AD$ holds in $\LL(\RRR)$ and $\LL(\POT{\omega_1})$ is a $\PPP_{max}$-generic extension of $\LL(\RRR)$,  we can now apply Lemma \ref{lemma:MADinPmax} in $\LL(\POT{\omega_1})$ to find distinct $x,y\in A$ with $x\cap y$ unbounded in $\omega_1$.

 Next, assume that there is a measurable cardinal above infinitely many Woodin cardinals. 
 Then $\AD$ holds in $\LL(\RRR)$.
 %
 Note that the formula $\varphi$ and the parameters $\omega_1$ and $z$ also define $A$ in  the structure $\langle\HH{\omega_2},\in\rangle$. 
 Assume, towards a contradiction, that $x\cap y$ is bounded in $\omega_1$ for all distinct $x,y\in A$. 
 Note that, in $\langle\HH{\omega_2},\in\rangle$, the statement that $\varphi$, $\omega_1$ and $z$ 
 define a set of cardinality greater than $\aleph_1$ that consists of unbounded subsets of $\omega_1$ whose pairwise intersections are countable can be expressed by a $\Pi_2$-formula with parameter $z$. 
 Let $G$ be $\PPP_{max}$-generic over $\LL(\RRR)$. 
 Then the $\Pi_2$-maximality of $\LL(\RRR)[G]$ 
 implies that, in the structure $\langle\HH{\aleph_2}^{\LL(\RRR)[G]},\in\rangle$, the formula  $\varphi$ and the parameters $\omega_1$ and $z$ define a set of cardinality greater than $\aleph_1$ that consists of unbounded subsets of $\omega_1$ whose pairwise intersections are countable. 
 In particular, such a subset of $\POT{\omega_1}$ exists in $\OD(\RRR)^{\LL(\RRR)[G]}$, contradicting  Lemma \ref{lemma:MADinPmax}.  
\end{proof}

\section{Concluding remarks and open questions}

 In the following, we discuss several questions raised by the above results, starting with questions about the optimality of the assumption of Theorem \ref{theorem:PerfectSubset}. By Theorem \ref{theorem:PerfectSubsetOptimal}, the consistency strength of this assumption is optimal in the case of singular cardinals. 
 In contrast, results of Schlicht in \cite{MR3743612} show that, if $\kappa$ is an uncountable regular cardinal, $\theta>\kappa$ is inaccessible and $G$ is $\Col{\kappa}{{<}\theta}$-generic over $\VV$, then, in $\VV[G]$, every subset of $\kappa$ in $\OD({}^\kappa\On)$ either has cardinality $\kappa$ or contains a closed subset homeomorphic to ${}^\kappa 2$. 
 In particular, if $\kappa$ is not weakly compact in $\VV$, then, in $\VV[G]$, the cardinal $\kappa$ is not weakly compact, the spaces ${}^\kappa 2$ and ${}^\kappa\kappa$ are homeomorphic (see, for example, {\cite[Corollary 2.3]{MR3557473}}) and for every subset $D$ of $\POT{\kappa}$ of cardinality greater than $\kappa$ that is definable by a $\Sigma_1$-formula with parameters in $\HH{\kappa}\cup\{\kappa\}$, there is a perfect embedding $\map{\iota}{{}^\kappa\kappa}{\POT{\kappa}}$ with $\ran{\iota}\subseteq D$. 
 This shows that our question is only interesting when we also assume that the given cardinal $\kappa$ possesses certain large cardinal properties.\footnote{Note that the assumption that the weak compactness of a cardinal $\kappa$ is preserved by forcing with partial orders of the form $\Col{\kappa}{{<}\theta}$ has high consistency strength (see, for example, \cite{HM19} and \cite{MR2499432}).}

 \begin{question}\label{Q1}
  Assume that $\kappa$ is a weakly compact cardinal with the property that for every subset $D$ of $\POT{\kappa}$ of cardinality greater than $\kappa$ that is definable by a $\Sigma_1$-formula with parameters in $\HH{\kappa}\cup\{\kappa\}$, there is a perfect embedding $\map{\iota}{{}^\kappa\kappa}{\POT{\kappa}}$ with $\ran{\iota}\subseteq D$. Is there an inner model that contains a weakly compact limit of measurable cardinals?   
 \end{question}

 In contrast to the singular case, we may also ask whether the conclusion of Theorem \ref{theorem:PerfectSubset} can be established from a  sequences of measurable cardinals that are bounded in a regular cardinal, but whose order type is equal their minimum.

 \begin{question}
  Does the assumption of Question \ref{theorem:PerfectSubset} imply the existence of a set-sized transitive model of $\ZFC$ containing a weakly compact cardinal $\delta$ and a sequence $S$ of measurable cardinals less than $\delta$ of order-type  $\min(S)$? 
 \end{question}

  We now consider the possibility to strengthen Theorem \ref{MainTheorem:LimitMeasurables}. Since the existence of infinitely many measurable cardinals is compatible with the existence of a $\mathbf{\Sigma}^1_3$-well-ordering of the reals (see {\cite[Theorem 3.6]{MR344123}}), it is natural to ask whether the assumption of this theorem is actually consistent. The model constructed in {\cite[Section 1]{MR344123}} should be  the  natural candidate to look for an affirmative answer to the following question.

\begin{question}
 Is it consistent that there exists a limit $\kappa$ of $\omega$-many measurable cardinals and a well-ordering of a subset of $\POT{\kappa}$ of cardinality greater than $\kappa$ that is definable by a $\Sigma_1$-formula with parameter $\kappa$? 
\end{question}
 
 In addition,  the equiconsistency given by  Theorem \ref{theorem:EquiLongWO} motivates the question whether such implications can be extended to cardinals of higher cofinalities. 

 \begin{question}
  Is it consistent that there exists a limit of measurable cardinals $\kappa$ and a  well-ordering of a subset of $\POT{\kappa}$ of cardinality greater than $\kappa$ that is definable by a $\Sigma_1$-formula with parameters in  $\HH{\kappa}\cup\{\kappa\}$? 
 \end{question}

 Next, we consider simply definable almost disjoint families. The statements of Theorem \ref{theorem:AlmostDisjoint} and  Theorem \ref{theorem:ResultsOmega1}.\eqref{item:Omega1-2}   are motivated by a classical result of Mathias in \cite{MR491197} showing that no maximal almost disjoint family in $\POT{\omega}$ is analytic.  
 In contrast, Miller \cite{MR983001} showed  that that the existence of coanalytic maximal disjoint families in $\POT{\omega}$ is consistent. This motivates the following questions:

 \begin{question}
  \begin{enumerate}
      \item Does the existence of sufficiently strong large cardinals imply that no almost disjoint family  of cardinality greater than $\aleph_1$ in $\POT{\omega_1}$ is definable by a $\Pi_1$-formula with parameters in $\HH{\aleph_1}\cup\{\omega_1\}$? 
      
      \item Do  sufficiently strong large cardinal properties of a cardinal $\kappa$ imply that no almost disjoint family  of cardinality greater than $\kappa$ in $\POT{\kappa}$ is definable by a $\Pi_1$-formula with parameters in $\HH{\kappa}\cup\{\kappa\}$? 
  \end{enumerate}
 \end{question}

 It should be noted that our proof of Theorem \ref{theorem:ResultsOmega1}.\eqref{item:Omega1-2} in Section \ref{section:AlmostDisjointOmega1}  already shows  that Woodin's Axiom $(*)$ (and therefore strong forcing axioms, see \cite{MR4250390}) implies that no almost disjoint family of cardinality greater than $\aleph_1$ in $\POT{\omega_1}$ is definable in the structure $\langle\HH{\aleph_2},\in\rangle$ by a formula with parameters in $\HH{\aleph_1}\cup\omega_2$, because all such families are elements of $\OD(\RRR)^{\LL(\POT{\omega_1})}$. In particular, no such family is definable by a $\Pi_1$-formula with parameters in $\HH{\aleph_1}\cup\{\omega_1\}$ in this setting.

 Finally, we consider the questions whether analogues of the above results hold for other types of uncountable cardinals. 
 The following observation uses ideas from  {\cite[Section 6]{Sigma1Partitions}} and {\cite[Section 5]{MR3694344}} to show that the results of Section \ref{section:AlmostDisjointOmega1} cannot be generalized from $\omega_1$ to $\omega_2$. 
 Moreover, it shows that forcing axioms outright imply the $\Sigma_1$-definability of pathological objects at $\omega_2$. 
 Remember that a sequence $\seq{C_\alpha}{\alpha\in\Lim\cap\omega_1}$ is a \emph{C-sequence} if  $C_\alpha$ is an unbounded subset of $\alpha$ of order-type $\omega$ for every countable limit ordinal $\alpha$.

\begin{proposition}\label{proposition:SimplyDefOmega2}
 \begin{enumerate}
     \item If the \emph{Bounded Proper Forcing Axiom $\BPFA$} holds and $\vec{C}$ is a C-sequence, then there exists an almost disjoint family of cardinality $2^{\aleph_2}$ in $\POT{\omega_2}$ that  is definable by a $\Sigma_1$-formula with parameters $\omega_2$ and $\vec{C}$.  

     \item If $\omega_1=\omega_1^\LL$ and $\BPFA$ holds,\footnote{Note that the results of \cite{MR1324501} show that, if there exists a $\Sigma_1$-reflecting cardinal in $\LL$, then these assumptions hold in a generic extension of $\LL$.} then there exists an almost disjoint family of cardinality $2^{\aleph_2}$ in $\POT{\omega_2}$ that  is definable by a $\Sigma_1$-formula with parameter $\omega_2$. 
     
     \item If there is a supercompact cardinal, then, in a generic extension of the ground model, the \emph{Proper Forcing Axiom $\PFA$} holds and there exists an almost disjoint family of cardinality $2^{\aleph_2}$ in $\POT{\omega_2}$ that  is definable by a $\Sigma_1$-formula and the  parameter  $\omega_2$. 
 \end{enumerate}
\end{proposition}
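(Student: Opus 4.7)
The plan for all three parts shares a common skeleton. The key input is the theorem of Caicedo and Veli\v{c}kovi\'c from \cite{MR2231126}: under $\BPFA$, for any C-sequence $\vec{C}$, there is a $\Sigma_1$-formula with parameters $\omega_2$ and $\vec{C}$ defining a well-ordering $<_W$ of $\POT{\omega_1}$ of order-type $\omega_2$. This is meaningful because $\BPFA$ implies $2^{\aleph_1}=\aleph_2$. From $<_W$, I would construct a $\Sigma_1$-definable bijection $e\colon{}^{<\omega_2}2\to\omega_2$ by canonically selecting, for each $\alpha<\omega_2$, the $<_W$-least subset of $\omega_1$ coding a bijection $\omega_1\to\alpha$ and then using it to identify ${}^\alpha 2$ with a subset of $\omega_1$.

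For part (i), given such an $e$, I would define, for each $x\in{}^{\omega_2}2$,
$$A_x ~ = ~ \Set{\goedel{\alpha}{e(x\restriction\alpha)}}{\alpha<\omega_2} ~ \subseteq ~ \omega_2,$$
which is a variant of the standard tree-of-branches construction. Each $A_x$ is unbounded in $\omega_2$, and if distinct $x,y\in{}^{\omega_2}2$ first split at level $\beta<\omega_2$ then $A_x\cap A_y\subseteq\Set{\goedel{\alpha}{e(x\restriction\alpha)}}{\alpha\leq\beta}$ is bounded. The family $\Set{A_x}{x\in{}^{\omega_2}2}$ therefore has cardinality $2^{\aleph_2}$, consists of pairwise almost disjoint unbounded subsets of $\omega_2$, and is defined by the $\Sigma_1$-formula asserting the existence of a witnessing branch $x$, using the parameters $\omega_2$ and $\vec{C}$.

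For part (ii), under the assumption $\omega_1=\omega_1^\LL$ I would take $\vec{C}$ to be the $<_\LL$-least C-sequence on $\omega_1^\LL$ and argue that this $\vec{C}$ is $\Sigma_1$-definable from $\omega_2$ alone: assert the existence of some $\xi<\omega_2$ such that $\LL_\xi$ correctly identifies $\omega_1$ and recognizes $\vec{C}$ as its $<_\LL$-least C-sequence on $\omega_1^{\LL_\xi}$; the hypothesis on $\omega_1^\LL$ guarantees the witness is sound. For part (iii), starting from a supercompact $\kappa$ with a Laver function, I would run the standard Baumgartner proper iteration forcing $\PFA$, interleaved with a coding step that makes a fixed ground-model C-sequence $\vec{C}$ $\Sigma_1$-definable from $\omega_2$ alone in the extension, for instance by coding $\vec{C}$ into a Friedman-style stationary partition of $\omega_2$, while arranging that the iteration remains proper and therefore preserves $\PFA$ (hence $\BPFA$). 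The principal obstacle is the Caicedo-Veli\v{c}kovi\'c coding theorem, invoked here as a black box; secondarily, for part (iii), the extra $\Sigma_1$-recoverable coding of $\vec{C}$ must be threaded through the $\PFA$ iteration without disturbing properness. The remaining verifications, namely producing $e$ from $<_W$, checking the almost disjoint family properties, and bounding the $\Sigma_1$-complexity, are routine.
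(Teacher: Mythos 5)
Your parts (i) and (ii) follow essentially the same route as the paper: the Caicedo--Veli{\v{c}}kovi{\'c} well-order gives an injection of initial segments of branches into $\omega_2$, the family $\Set{A_x}{x\in{}^{\omega_2}2}$ is the same tree-of-branches construction (the paper uses $\bar{x}=\Set{\iota(x\restriction\gamma)}{\gamma<\omega_2}$ for an injection $\map{\iota}{\HH{\aleph_2}}{\omega_2}$), and (ii) is obtained exactly as you propose, by taking the $<_\LL$-least C-sequence and noting that $\{\vec{C}\}$ is $\Sigma_1$-definable from $\omega_2$. One step you dismiss as routine deserves more care: the objects you extract from the well-order (your map $e$, or the rank function $\iota$) are defined via minimality conditions (\lanf the $<_W$-least code\ranf, \lanf the $\alpha$-th element\ranf), which are naturally first-order over $\langle\HH{\aleph_2},\in\rangle$ but not obviously $\Sigma_1$ over $\VV$, since they involve negative information about $<_W$. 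The paper closes this by citing the fact that the singleton $\{\HH{\aleph_2}\}$ is itself definable by a $\Sigma_1$-formula with parameter $\omega_2$ ({\cite[Lemma 6.4]{Sigma1Partitions}}), so that \lanf definable over $\HH{\aleph_2}$\ranf\ upgrades to \lanf $\Sigma_1$ with parameters $\omega_2,\vec{C}$\ranf. Without some such device your complexity bound does not go through as stated.

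For part (iii) you take a genuinely different and substantially harder route. The paper simply cites {\cite[Theorem 5.2]{MR2320944}}, which produces a model of $\PFA$ with a well-ordering of $\HH{\aleph_2}$ of order-type $\omega_2$ that is definable over $\langle\HH{\aleph_2},\in\rangle$ without parameters, and then reruns the argument of (i); no new forcing construction is needed. You instead propose to interleave Baumgartner's iteration with a bespoke coding of a ground-model C-sequence into a \lanf Friedman-style stationary partition of $\omega_2$.\ranf\ As written this is a sketch with real unaddressed difficulties, which you yourself flag: the partition used for decoding must itself be $\Sigma_1$-definable from $\omega_2$ alone (otherwise the coding is circular), the stationarity-manipulating steps on subsets of $\omega_2$ must be proper and must survive the rest of the iteration, and the decoding of each bit must be $\Sigma_1$ in both directions (stationarity is $\Pi_1$, so one needs the usual \lanf exactly one of two pieces is nonstationary\ranf\ trick). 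None of this is carried out, so (iii) is not established by your argument; the efficient fix is to invoke the known $\PFA$-plus-definable-well-order theorem and reduce to (i), exactly as the paper does.
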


\begin{proof}
 (i) By {\cite[Theorem 2]{MR2231126}}, our assumption implies the existence of a well-ordering of $\HH{\aleph_2}$ of order-type $\omega_2$ that is definable by a $\Sigma_1$-formula that only uses the sequence $\vec{C}$  as a parameter. 
 In particular, there exists an injection $\map{\iota}{\HH{\aleph_2}}{\omega_2}$ that is definable  in the structure $\langle\HH{\aleph_2},\in\rangle$ by a formula with parameter $\vec{C}$. 
 Since {\cite[Lemma 6.4]{Sigma1Partitions}} shows that our assumption implies that the set $\{\HH{\aleph_2}\}$ is definable by a $\Sigma_1$-formula with parameter $\omega_2$, we know that $\iota$ is definable by a $\Sigma_1$-formula with parameters $\omega_2$ and $\vec{C}$. 
 Given $x\in {}^{\omega_2}2$, we now define $$\bar{x} ~ = ~ \Set{\iota(x\restriction\gamma)}{\gamma<\omega_2} ~ \in ~ \POT{\omega_2}.$$
 
 The above computations then show that the set $A=\Set{\bar{x}}{x\in {}^{\omega_2}2}$ is definable by a $\Sigma_1$-formula with parameters $\omega_2$ and $\vec{C}$, and it is easy to see that $A$ is an almost disjoint family of cardinality $2^{\aleph_2}$ in $\POT{\omega_2}$. 

 (ii) Assume that $\omega_1=\omega_1^\LL$ and $\BPFA$ holds. Let $\vec{C}$ denote the $<_\LL$-least C-sequence in $\LL$. Then $\vec{C}$ is a $C$-sequence and the set $\{\vec{C}\}$ is definable by a $\Sigma_1$-formula with parameter $\omega_2$. Using (i), we can conclude that there exists an almost disjoint family of cardinality $2^{\aleph_2}$ in $\POT{\omega_2}$ that  is definable by a $\Sigma_1$-formula with parameter $\omega_2$. 
 
 (iii) By {\cite[Theorem 5.2]{MR2320944}}, it is possible to start in a model containing a supercompact cardinal and force the validity of $\PFA$ together with the existence of a well-ordering of $\HH{\aleph_2}$ of order-type $\omega_2$ that is definable in $\langle\HH{\aleph_2},\in\rangle$ by a formula without parameters. 
 We can now proceed as in (i) to obtain an almost disjoint family of cardinality $2^{\aleph_2}$ in $\POT{\omega_2}$ that is definable by a $\Sigma_1$-formula with parameter $\omega_2$ in this generic extension. 
\end{proof}


 \bibliographystyle{plain}
\bibliography{references}

\end{document}